\let\visiblecomments y 
\def\setof#1{\mbox{$\{\,#1\,\}$}}
\newcommand{\inte}{\operatorname{int}}
\newcommand{\pto}{\nrightarrow}
\renewcommand{\epsilon}{\varepsilon}
\renewcommand{\rho}{\varrho}
\renewcommand{\phi}{\varphi}
\newcommand{\DS}{\displaystyle}
\newcommand{\N}{{\mathbb N}}
\newcommand{\Z}{{\mathbb Z}}
\newcommand{\R}{{\mathbb R}}
\newcommand{\mbbP}{{\mathbb P}}
\renewcommand{\P}{{\mathbb P}}
\newcommand{\cA}{{\mathcal A}}
\newcommand{\cB}{{\mathcal B}}
\newcommand{\cC}{{\mathcal C}}
\newcommand{\cD}{{\mathcal D}}
\newcommand{\cE}{{\mathcal E}}
\newcommand{\cK}{{\mathcal K}}
\newcommand{\cM}{{\mathcal M}}
\newcommand{\cS}{{\mathcal S}}
\newcommand{\cV}{{\mathcal V}}
\newcommand{\cX}{{\mathcal X}}
\newcommand{\dom}{\operatorname{dom}}
\newcommand{\im}{\operatorname{im}}
\newcommand{\Inv}{\operatorname{Inv}}
\newcommand{\Exit}{\operatorname{Mo}}
\newcommand{\Bd}{\operatorname{Bd}}
\newcommand{\bd}{\operatorname{bd}}
\newcommand{\Cl}{\operatorname{Cl}}
\newcommand{\cl}{\operatorname{cl}}
\newcommand{\Crit}{\operatorname{Crit}}
\newcommand{\Tail}{\operatorname{Tail}}
\newcommand{\Head}{\operatorname{Head}}
\def\cse#1{\mbox{$\langle#1\rangle_\epsilon$}}
\def\card#1{\# #1}
\newcommand{\bcap}{\mathbin{\bar{\cap}}}
\newcommand{\btimes}{\mathbin{\bar{\times}}}
\title{Creating Semiflows on Simplicial Complexes from Combinatorial Vector Fields}
\author{Marian Mrozek}
\address{Marian Mrozek, Division of Computational Mathematics,
  Institute of Computer Science and Computational Mathematics,
  Faculty of Mathematics and Computer Science,
  Jagiellonian University, ul.~St. \L{}ojasiewicza 6, 30-348~Kra\-k\'ow, Poland.
}
\email{marian.mrozek@uj.edu.pl}
\author{Thomas Wanner}
\address{Thomas Wanner, Department of Mathematical Sciences,
  George Mason University,
  Fairfax, Virginia 22030, USA.
}
\email{twanner@gmu.edu}
\thanks{The research of M.M.\ was partially supported by the Polish National
        Science Center under Ma\-estro Grant No.~2014/14/A/ST1/00453 
        and under Opus Grant No. 2019/35/B/ST1/00874. 
        T.W.\ was partially supported by NSF grants DMS-1114923 and DMS-1407087, and by
        the Simons Foundation under Award~581334. Both authors gratefully
        acknowledge the support of the Hausdorff Center for Mathematics in
        Bonn for providing an excellent environment to work together during
        the 2017 Special Hausdorff Program on Applied and Computational
        Algebraic Topology.}
\subjclass[2010]{Primary
37B30, 
37C10, %
37B35, 
37E15 %
; Secondary
57M99, %
57Q05, %
57Q15 %
}
\keywords{Combinatorial vector field, simplicial complex,
discrete Morse theory, continuous-time semiflow, Conley theory,
Morse decomposition, Conley-Morse graph, isolated invariant set,
isolating block.}
\date{\today}
\begin{document}
\newtheorem{definition}{Definition}[section]
\newtheorem{theorem}[definition]{Theorem}
\newtheorem{proposition}[definition]{Proposition}
\newtheorem{corollary}[definition]{Corollary}
\newtheorem{lemma}[definition]{Lemma}
\newtheorem{remark}[definition]{Remark}
\newtheorem{conjecture}[definition]{Conjecture}
\newtheorem{problem}[definition]{Problem}
%
\begin{abstract}
Combinatorial vector fields on simplicial complexes introduced by
Robin Forman constitute a combinatorial analogue of classical flows.
They have found numerous and varied applications in recent
years. Yet, their formal relationship to classical dynamical systems has been
less clear. In this paper we prove that for every combinatorial
vector field on a finite simplicial complex~${\mathcal X}$ one can construct
a semiflow on the underlying polytope~$X$
which exhibits the same dynamics. The equivalence of the
dynamical behavior is established in the sense of Conley-Morse
graphs and uses a tiling of the topological space~$X$ which makes it
possible to directly construct isolating blocks for all involved
isolated invariant sets based purely on the combinatorial information.
\end{abstract}
\maketitle
%

%
\setcounter{tocdepth}{1}
\vspace*{-3mm}
\tableofcontents
\vspace*{-3mm}

\newpage
\section{Introduction}
\label{sec1}
%
%
%
%
Combinatorial vector fields on CW complexes were introduced in 1998 by
R.\ Forman~\cite{forman:98a} as a tool in the construction 
of a discrete analogue of classical Morse theory. 
Originally needed only in the gradient setting of Morse theory, 
they were further studied as an analogue of a flow 
in \cite{forman:98b} where Forman presented a combinatorial counterpart
of Conley's result ~\cite{conley:78a} on the decomposition 
of a flow into chain recurrent and gradient dynamics. 
Forman's study of the combinatorial analogues of the concepts in dynamics 
covered several further directions \cite{forman:98c, forman:02a,forman:02b,forman:02c,forman:03}.
At the outset, these results seem to have been loosely motivated
by the corresponding classical results. Since then his results 
have been used successfully in
their own right in a number of applications, such as visualization
and computer graphics~\cite{Comic:etal:2011, defloriani:etal:2016,
delgadoFriedrichs:etal:2015, lewiner:etal:04a, sahner:etal:2008},
networks and sensor networks analysis \cite{Dey:etal:2017,
kannan:etal:2019, zhang:goupil:2018,Weber:atal:2017}, homology
computation~\cite{harker:etal:14a, mischaikow:nanda:13a},
astrophysics~\cite{Makarenko:etal:2016, sousbie:2011},
neuroscience~\cite{Dias:etal:2015}, algebra~\cite{joellenbeck:welker:2009}
and computational geometry~\cite{bauer:edelsbrunner:17a}.

A fundamental question arises whether these combinatorial analogues 
may be tied in some formal way to their classical counterparts.
More specifically, there are two interesting, mutually inverse, questions:
\begin{enumerate}
  \item {\bf Flow modeling:} Given a flow on a smooth manifold $M$, can one model
  its dynamics by a combinatorial vector field on a triangulation of $M$ or approximate it 
  in some sense by a sequence of triangulations of $M$ and combinatorial vector fields?
  \item {\bf Flow reconstruction:}  Given a combinatorial vector field on a CW complex,
  does it model the dynamics of a classical flow or semiflow on the polytope of the complex?
\end{enumerate}
Questions of this type are of
inherent interest, as it seems natural to use a combinatorial vector
field or one of its generalizations such as generalized Morse matching \cite{Freij:2009} or
combinatorial multivector field \cite{mrozek:17a,Lipinski:atal:2020},  
as a discretization tool for the rigorous study of differential
equations, see for example~\cite{mischaikow:etal:16a, mrozek:17a},
as well as a  tool to investigate dynamical systems known only from
finite samples~\cite{AlBrMe2015, DJKKLM2017, MiMrReSz1999,
morita:inatsu:kokubu:2019}.

Surprisingly, so far there are few answers to such questions. 
Regarding the flow modeling question Gallais~\cite{Gallais:2010} 
proved in the gradient situation of Morse theory 
that given a smooth manifold $M$ with a Morse function $F:M\to\R$
there is a triangulation of $M$ and a gradient combinatorial vector field
on this triangulation whose critical cells are in one-to-one correspondence 
with critical points of $F$. This was strenghtened by Benedetti~\cite{benedetti:16a}
who proved that the result applies to $r$th barycentric subdivision of 
every PL triangulation with a suitably chosen integer $r$ (see \cite[Section 7.5]{Knudson:2015}
for an overview of these results). 
The  general, non-gradient case seems to be significantly more challenging
and, to our best knowledge, remains untouched.
The problem here is the diversity and complexity of general dynamics for which
a correspondence to finite dynamics is not sufficient and must be replaced
by an approximation scheme. Moreover, as indicated in \cite{mrozek:17a},
a more general concept of combinatorial vector field may be needed.

In this paper we address the equally significant flow reconstruction question. 
The question is particularly important in the dynamical systems known only from finite samples.
We prove that for a given combinatorial vector field~$\cV$ on a simplicial complex~$\cX$ 
there exists a continuous-time dynamical system $\phi$ on the underlying polytope~$X$
such that for every Morse decomposition of the combinatorial vector field $\cV$
there is a Morse decomposition of $\phi$ with the same Conley-Morse graph. 
This result requires some ideas of asymptotic dynamics
excluded from the original Forman's study of combinatorial dynamics
and added to the theory in~\cite{kaczynski:etal:16a,mrozek:17a,batko:etal:20a}, 
in particular the concepts of isolated invariant sets, Morse decompositions and 
their topological invariants: Conley index and Conley-Morse graph.

As we already pointed out,  to achieve useful results on a formal correspondence between 
combinatorial and classical dynamics, combinatorial vector fields are too specific and
a more general concept is needed. 
Combinatorial multivector fields introduced in \cite{mrozek:17a} and 
further generalized and studied in \cite{Lipinski:atal:2020} seem to be a remedy.
For instance, unlike combinatorial vector fields, they may be used to model
such dynamical phenomena as heteroclinic connections or chaotic invariant sets. 
Thus, they constitute a natural candidate to construct an approximation scheme 
for classical dynamics. The results of this paper formulated 
and proved for combinatorial vector fields, directly apply to combinatorial
multivector fields, because every combinatorial multivector field may be subdivided into 
a combinatorial vector field in a way which preserves Morse decomposition. Thus, 
the flow constructed for the combinatorial vector field shares the Conley-Morse graph 
with the combinatorial multivector field. An interesting observation here is that 
the there is no unique way to subdivide which is related to the known phenomenon
of non-uniqueness of connection matrices in classical dynamics.
This will be discussed in \cite{mrozek:wanner:2020}.

The present paper uses some constructions from our earlier work~\cite{kaczynski:etal:16a,batko:etal:20a}  
where we attempt to address the same question but we only manage to provide answers
in fundamentally less satisfactory terms of multivalued dynamical systems with discrete time instead of classical flows. 
Moreover, the results of the present paper may lead towards a clue how to answer the modeling question.
This is because the present results are based on a specific cellular 
tiling of the polytope associated with the combinatorial vector field which, 
on one hand, via transversality conditions on the boundary of the tiles, 
provides a  direct tool to construct isolating blocks for all involved
isolated invariant sets based purely on the combinatorial information,
but, on the other hand, suggests how to construct a combinatorial multivector field modeling
a given differential equation just from the transversality conditions. 
Research in this direction is in progress.

The remainder of this paper is organized as follows. 
Section~\ref{sec:main} provides the description of the main results of the paper
together with examples.
In Section~\ref{sec2} we collect necessary background material
on topology, on simplicial complexes and their representations,
on semiflows and the classical Conley index, as well as on
combinatorial vector fields. Section~\ref{sec3} then demonstrates
that the underlying polytope~$X$ of a given simplicial complex~$\cX$
can be subdivided in a canonical way into tiles, based on the
concept of barycentric coordinates. These tiles form the cell
decomposition used in Theorems~\ref{thm:intro1}--\ref{thm:intro3} and
are the basic building blocks of our semiflow construction.
In addition, we introduce the notion of an admissible semiflow
on~$X$ for a combinatorial vector field~$\cV$, which has to satisfy
certain transversality conditions on the tile boundaries, as well as
the condition of strong admissibility which additionally puts
restrictions on the flow behavior in arrow tiles. After that,
in Section~\ref{sec4} we recall the concepts of isolated invariant
sets and Morse decompositions for a combinatorial vector field~$\cV$,
and we show that any semiflow on~$X$ which is admissible has the same
isolated invariant sets and Conley indexes, while every strongly admissible
semiflow has the same Conley-Morse graph as the combinatorial
vector field~$\cV$, thereby establishing Theorems~\ref{thm:intro1}
and~\ref{thm:intro2}. The construction of a specific strongly
admissible semiflow is the subject of Section~\ref{sec5}, and
this finally implies Theorem~\ref{thm:intro3}. The semiflow
construction follows the design decisions made earlier in this
section.
%
%
\section{Main Results}
\label{sec:main}
We recall that a {\em combinatorial vector field\/}~$\cV$
on a {\em simplicial complex\/}~$\cX$ may be interpreted as
a certain partition of~$\cX$ into subsets of cardinality at
most two (see Sections~\ref{sec:abs-simp-comp} and~\ref{sec:cvf}
for precise definitions). Singletons in~$\cV$ are interpreted
as {\em critical cells}. Doubletons (sets of cardinality two)
are required to consist of an $n$-dimensional simplex~$\sigma\in\cX$
and one of its $(n-1)$-dimensional faces $\tau\subset\sigma$.
A doubleton $\{\tau,\sigma\}\in\cV$ is  interpreted as a
{\em vector\/} or {\em arrow\/} with tail~$\tau$ and head~$\sigma$,
denoted $\tau\to\sigma$.

As an example consider  the two-dimensional simplicial complex
\begin{displaymath}
  \cX := \left\{ A, \; B, \; C, \; D, \; E, \; F, \;
    AB, \; AD, \; BC, \; BD, \; CD, \; DE, \; DF, \;
    ABD, \; BCD  \right\} ,
\end{displaymath}
where for the sake of readability we write $A = \{ A \}$,
$AB = \{ A,B \}$, $ABD = \{ A,B,D \}$, etc.
As a sample combinatorial vector field in $\cX$ we take
\begin{displaymath}
  \cV := \left\{ \{F\}, \{BD\},\{ABD\},
                      \{A,AD\} ,\{B,AB\} ,\{BC,BCD\},
                      \{C,CD\} ,\{D,DF\} ,\{E,DE\}
                      \right\} .
\end{displaymath}
Thus, the critical cells of~$\cV$ are the vertex~$F$, the edge~$BD$ and
the triangle~$ABD$. The arrows are $A \to AD$, $B\to AB$, $BC\to BCD$,
$C\to CD$, $D\to DF$, and $E \to DE$. The simplicial complex~$\cX$ is
visualized in the left image of Figure~\ref{fig:designexample} and the
combinatorial vector field~$\cV$ is visualized in the middle of this figure.
We note that each of the fifteen simplices of~$\cX$ appears exactly once,
either in an arrow or as a critical cell.
\begin{figure}[tb]
  \centering
  \includegraphics[width=0.3\textwidth]{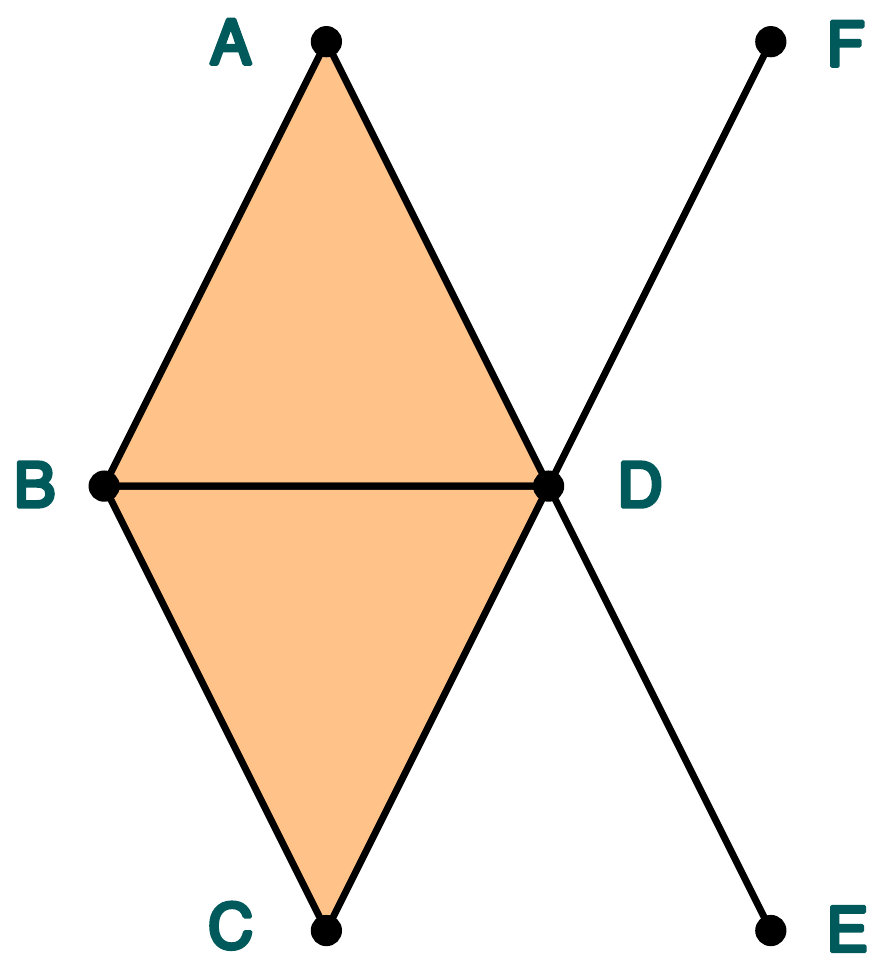}
  \hspace*{0.5cm}
  \includegraphics[width=0.3\textwidth]{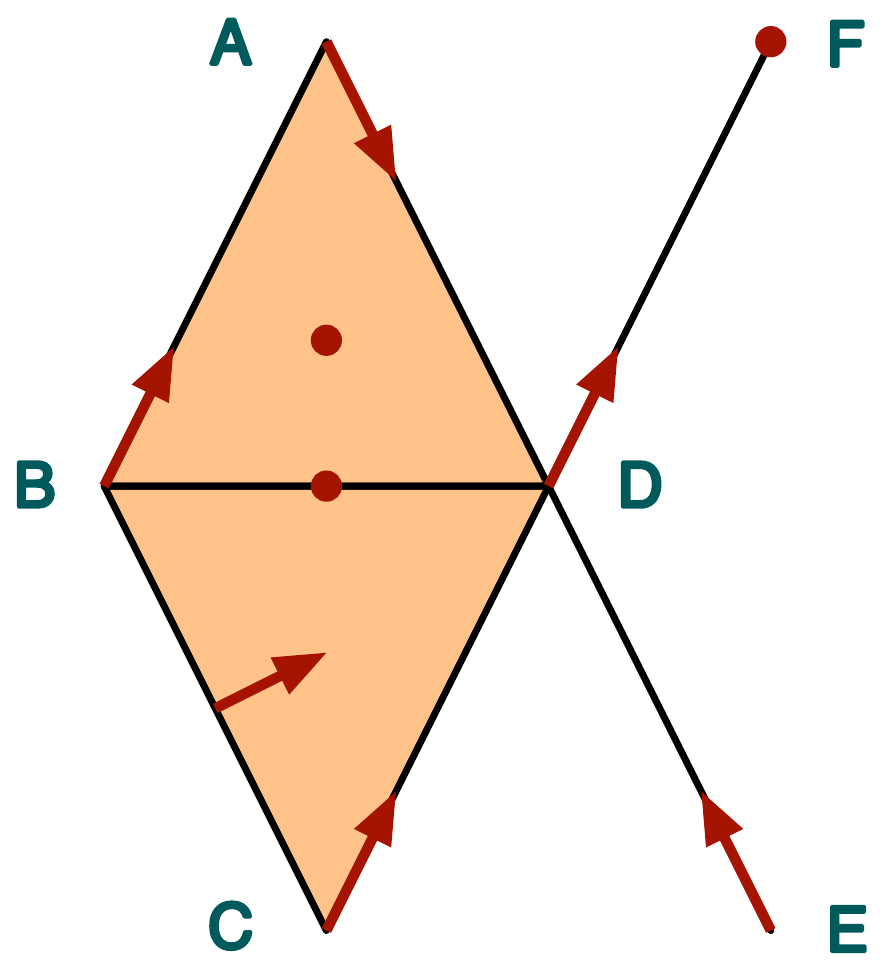}
  \hspace*{0.5cm}
  \includegraphics[width=0.3\textwidth]{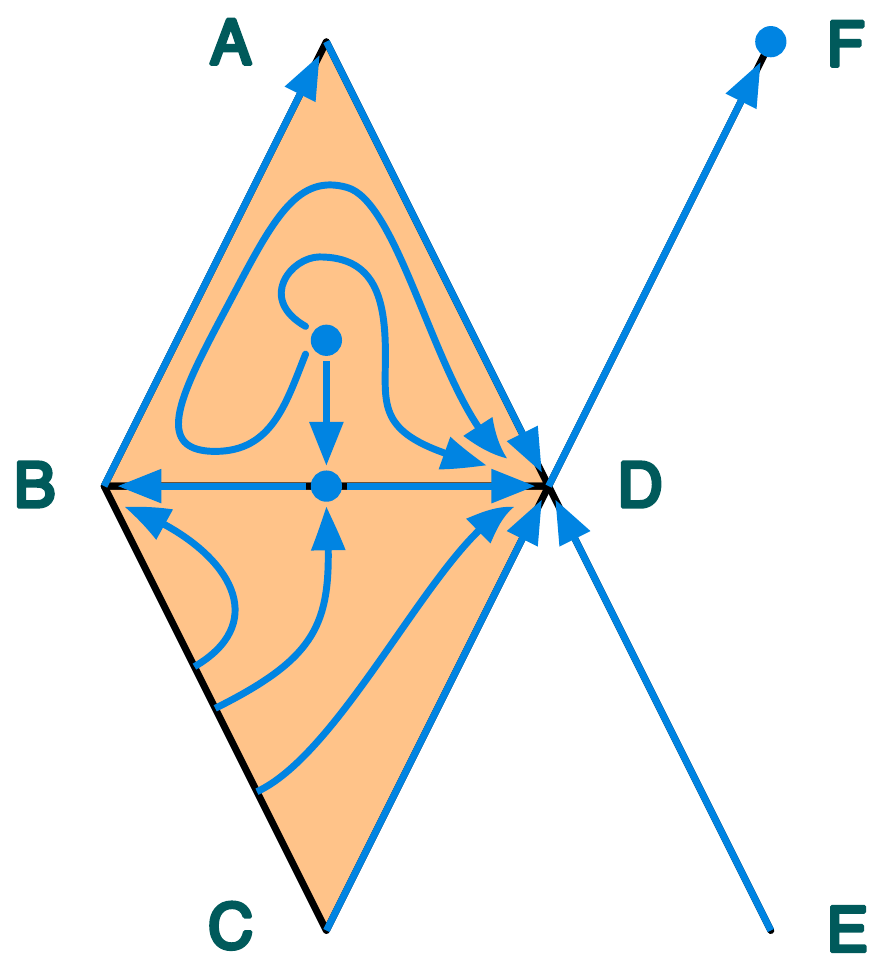}
  \caption{Sample simplicial complex~$\cX$, together with a
           combinatorial vector field~$\cV$ and the induced semiflow.
           The left panel shows a two-dimensional simplicial complex
           consisting of six vertices, seven edges, and two triangles.
           The panel in the middle depicts a combinatorial vector field
           on~$\cX$. Critical cells are indicated as red dots,
           arrows of~$\cV$ are marked in red. Finally, the right-most panel
           sketches an induced semiflow on the geometric representation~$X$ of $\cX$.}
  \label{fig:designexample}
\end{figure}

It has already been pointed out in~\cite{kaczynski:etal:16a}
that for most combinatorial vector fields~$\cV$ on a simplicial
complex~$\cX$ one can intuitively draw a continuous-time
dynamical system on the underlying polytope~$X$ of $\cX$
which mimics the behavior of~$\cV$. For our example, selected
solutions of such a dynamical system are shown in the right-most
panel of Figure~\ref{fig:designexample}. Notice that the three
critical cells of~$\cV$ give rise to three equilibrium solutions.
The Morse index of these stationary states is given by the dimension
of the underlying simplex, which is due to the intuition that
on a critical simplex the flow should move towards its boundary.
Thus, in Figure~\ref{fig:designexample} there is an unstable
equilibrium of index two, which has a local two-dimensional
unstable manifold, as well as an index one equilibrium with a
local one-dimensional unstable manifold. Between these two
stationary states, there exists a heteroclinic solution. Finally,
there is a stable equilibrium at the vertex labelled~$F$, and
almost all solutions of the system converge to this stable equilibrium
in forward time --- except for the two unstable equilibria, the
points on their heteroclinic connection, and the points on a unique
solution which starts on the edge~BC and converges to the index one
equilibrium.

While the construction of a continuous-time dynamical system
is fairly straightforward for small simplicial complexes,
higher-dimensional examples quickly become difficult. It is
therefore necessary to develop a general construction technique
which leads to an easily analyzable dynamical system. While
this is the main subject of the current paper, the example of
Figure~\ref{fig:designexample} demonstrates that there are a
number of {\em design decisions\/} that have to be made first.
\begin{itemize}
\item[(D1)] In a perfect world, we would like to be able to define
a continuous-time dynamical system on the underlying polytope~$X$
of the simplicial complex~$\cX$
through a smooth differential equation.
This should clearly be possible in neighborhoods of the
three equilibrium solutions shown in Figure~\ref{fig:designexample}.
Therefore, in general, our first design goal is to {\em define the
continuous-time dynamical system via smooth differential equations
whenever feasible\/}.
\item[(D2)] A closer look at our example shows that there
generally are solutions of the dynamical system which do not
exist for all negative time. Along the edge~$BC$ or at the
vertex~$E$, the combinatorial vector field points into the
relative interior of the simplex, and therefore the flow should
enter with positive velocity. In other words, our goal has to be
to {\em define a continuous semiflow in which all solutions exist
for all $t \ge 0$, but not necessarily in backward time\/}.
\item[(D3)] Since our  space $X$ is the underlying polytope of
a simplicial complex, the local dimension of~$X$ can change. One
such point is vertex~$D$ in Figure~\ref{fig:designexample}, and
due to the choice of~$\cV$, solutions of the sought-after semiflow
reaching this vertex from the edge~$DE$ or the two triangles
should flow through~$D$ with positive speed and enter the
edge~$DF$. This implies that our goal has to be to {\em allow
for solutions that can merge in finite forward time, and can do
this with jumps in velocity\/}.
\end{itemize}
Based on the above three design decisions, the goal of this
paper is the construction of a continuous semiflow on the
topological space~$X$ which is piece-wise smooth. While our
construction is related in spirit to Filippov systems, see
for example~\cite{dibernardo:etal:08a, filippov:88a}, we
cannot directly apply Filippov's theory in our setting. This
will be described in more detail later on.

In order to construct a semiflow with the above properties
on the underlying polytope $X$ of an arbitrary simplicial complex~$\cX$
and an arbitrary combinatorial vector field~$\cV$ on $\cX$, we
proceed in two steps. As a first step, the space~$X$ is subdivided
into cells, each of which uniquely corresponds to an arrow or a
critical cell of~$\cV$. For the example introduced in
Figure~\ref{fig:designexample}, the associated cell decomposition
is shown in the left panel of Figure~\ref{fig:designcellcomp}.
Notice that most of these cells intersect a number of different
simplices of~$\cX$. We then call a semiflow {\em admissible
for the combinatorial vector field~$\cV$\/} (see
Definition~\ref{def:admissibleflow}), if it is transverse
to the cell boundaries in a certain way. For our example, these
transversality directions are indicated in the right panel of
Figure~\ref{fig:designcellcomp}. While the detailed definitions of
the cell decomposition and the notion of admissibility will be given
later in this paper, they lead to the following first result, whose
precise form is presented in Theorem~\ref{thm:indexequiv}.
\begin{figure}[tb]
  \centering
  \includegraphics[width=0.4\textwidth]{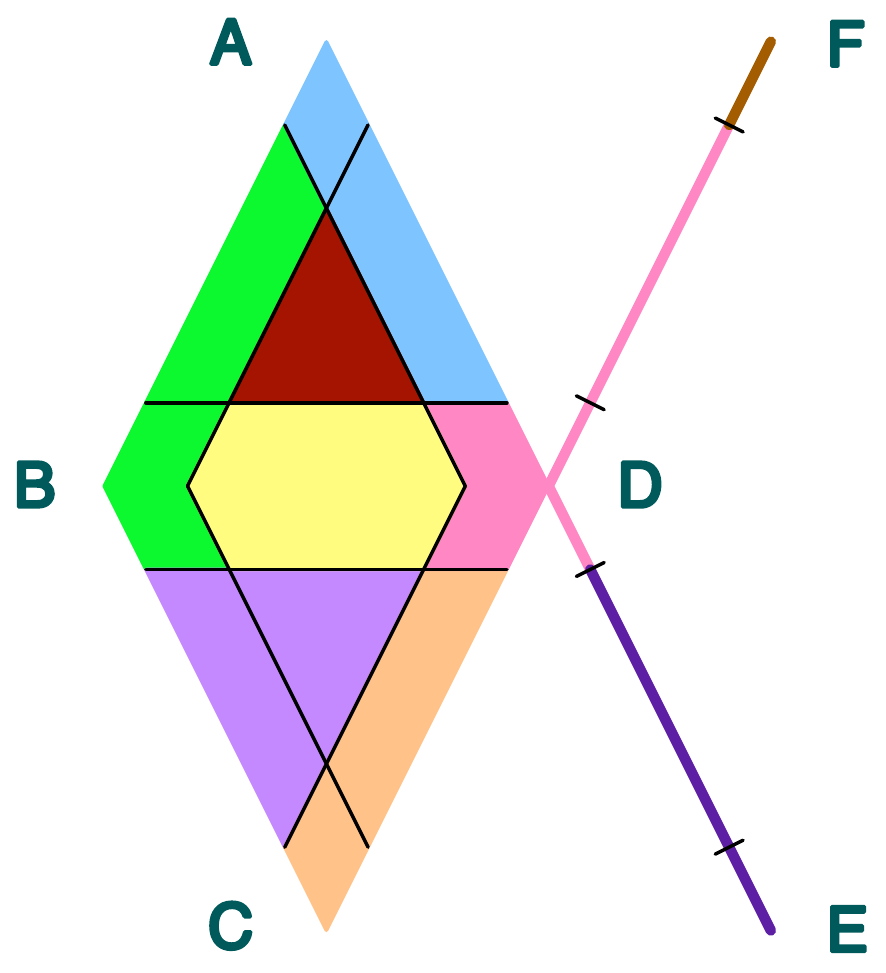}
  \hspace*{2.0cm}
  \includegraphics[width=0.4\textwidth]{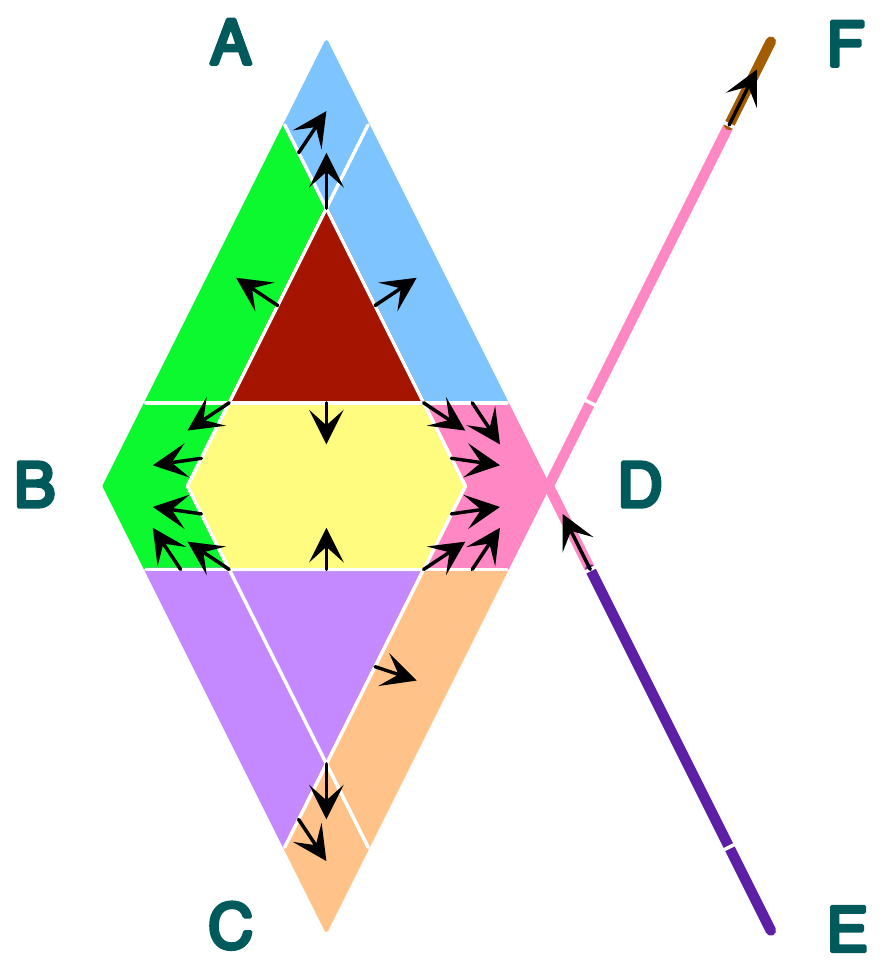}
  \caption{Cell decomposition and flow behavior across cell boundaries
           for the example of Figure~\ref{fig:designexample}. The left
           image shows the nine different cells of~$X$ associated with
           the six arrows and three critical cells of~$\cV$, highlighted
           in different colors. The panel on the right indicates the
           flow behavior across cell boundaries which is induced by~$\cV$,
           and which leads to the notion of admissible semiflow.}
  \label{fig:designcellcomp}
\end{figure}

\begin{theorem}[Admissible Semiflows Inherit Isolated Invariant
                Sets and Conley Indices]
\label{thm:intro1}
Let~$\cX$ denote a simplicial complex, and let~$X$ be
the underlying polytope of $\cX$. Furthermore, let~$\cV$
denote a combinatorial vector field on~$\cX$ in the sense of
Definition~\ref{def:combvfield}. Then there exist
a cell decomposition of~$X$ and $\cV$-induced flow directions
across the boundaries of the cell decomposition such that the
following holds. If~$\phi : \R_0^+ \times X \to X$ denotes a
continuous dynamical system whose flow is transverse to the
boundaries of the cell decomposition and moves in the prescribed
directions, then for every isolated invariant set of the
combinatorial vector field~$\cV$ there exists a corresponding
isolated invariant set for the semiflow~$\phi$ which has the
same Conley index.
\end{theorem}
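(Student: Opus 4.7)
The plan is to leverage the cell decomposition of $X$ into tiles $T_V$ constructed in Section~\ref{sec3}, where $V$ ranges over $\cV$, so each tile corresponds uniquely to an arrow or a critical cell. The admissibility hypothesis prescribes, combinatorially, how $\phi$ crosses every codimension-one face between two tiles: along each such face the flow direction is forced by the $\cV$-induced directions, and in particular is never tangential.

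First I would fix a combinatorial isolated invariant set $S \subseteq \cX$ in the sense of Section~\ref{sec4}. Since $\cV$ partitions $\cX$ and $S$ is $\cV$-compatible, the set $S$ decomposes as a disjoint union of atoms of $\cV$, so I can define its topological realization
\[
  N(S) \; := \; \bigcup_{V \in \cV,\, V \subseteq S} T_V \; \subseteq \; X.
\]
The first task is to verify that $N(S)$ is an isolating block for $\phi$ in the classical sense. Every $x \in \bd N(S)$ lies on a face shared with some tile $T_W$ with $W \not\subseteq S$; by admissibility $\phi$ crosses this face immediately in the prescribed direction, either entering or exiting $N(S)$. Consequently, the exit set $N(S)^-$ is closed, no trajectory inside $N(S)$ can touch $\bd N(S)$ tangentially, and the maximal invariant set $\Inv(N(S),\phi)$ is isolated with $N(S)$ as an isolating block.

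The core of the argument is then to identify the Conley index of $\Inv(N(S),\phi)$ with the combinatorial Conley index of $S$. By the classical isolating block theory recalled in Section~\ref{sec2}, the former equals the pointed homotopy type of $N(S)/N(S)^-$. The latter is defined in Section~\ref{sec4} via the closure of $S$ modulo its combinatorial exit set $\Exit(S)$. My plan is to produce a strong deformation retraction of the topological pair $(N(S), N(S)^-)$ onto a combinatorially natural subpair by using the barycentric parametrization of each arrow tile to slide interior points along their prescribed $\cV$-induced directions, and then to check that this retraction descends to a pointed homotopy equivalence of quotients. Naturality of the Conley index under pointed homotopy equivalence then finishes the proof.

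The main obstacle is precisely this retraction step. Arrow tiles straddle two simplices of different dimensions, so $N(S)$ is not literally the underlying polytope of the closure of $S$; moreover, tiles $T_V$ whose atom $V$ only partially interacts with $S$ can contribute faces to $N(S)^-$ that must be collapsed consistently with $\Exit(S)$. Ensuring that the sliding homotopy glues continuously across tile interfaces of varying dimensions, and that it separates entry from exit portions of $\bd N(S)$ in a way compatible with the partition induced by the arrows of $\cV$, will require a careful case analysis driven by the face poset of $\cX$ together with the head/tail structure of $\cV$. Once this retraction is established, however, the Conley index identity follows formally from homotopy invariance.
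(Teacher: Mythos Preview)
Your construction of the isolating block $N(S)$ as a union of flow tiles and the verification that admissibility forces every boundary point to be a strict ingress, egress, or bounce-off point matches the paper's Proposition~\ref{prop:isolatingblocks} exactly, though the paper carries out the case analysis more carefully via the pair $\sigma^\epsilon_{\min}(x),\sigma^\epsilon_{\max}(x)$ and uses the combinatorial closedness of $\Exit\cS$ to rule out internal tangencies and show $B^-$ is closed.

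Where your plan diverges, and where it has a genuine gap, is the index computation. You propose a strong deformation retraction of $(N(S),N(S)^-)$ onto ``a combinatorially natural subpair'' that you then want to identify with $(|\Cl\cS|,|\Exit\cS|)$. But $(|\Cl\cS|,|\Exit\cS|)$ is \emph{not} a subpair of $(N(S),N(S)^-)$: the set $N(S)=\bigcup_{\sigma\in\cS}\cl\cse{\sigma}$ consists of $\epsilon$-thickened pieces sitting inside the cofaces of simplices of~$\cS$, whereas $|\Cl\cS|$ is the union of the full closed simplices themselves. Neither contains the other, so there is no candidate subpair onto which to retract, and the ``sliding along $\cV$-directions'' heuristic does not even make sense on critical tiles, where $\cV$ prescribes no direction.

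The paper resolves this not by retraction but in two algebraic steps. First it passes from $(B,B^-)=(P_1,P_2)$ to the enlarged pair $(Q_1,Q_2)=(N_\epsilon(\Cl\cS),N_\epsilon(\Exit\cS))$ by strong excision (Lemma~\ref{lem:indexequiv1}); the point is that $P_1\setminus P_2=Q_1\setminus Q_2$. Second, it constructs an explicit continuous surjection $\psi_\epsilon:(Q_1,Q_2)\to(|\Cl\cS|,|\Exit\cS|)$, essentially the map that sends barycentric coordinates below~$\epsilon$ to zero and rescales the rest, and checks that its fibers are contractible, so the relative Vietoris--Begle theorem gives the homology isomorphism (Lemma~\ref{lem:indexequiv2}). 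This avoids entirely the gluing-across-tiles problem you flagged, at the cost of working with homology rather than homotopy type.
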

While prescribing flow directions across the cell decomposition
boundaries is sufficient for establishing the equivalence of
isolated invariant sets and their Conley indices, this condition
is not enough to carry over Morse decompositions, i.e., the global
structure of the dynamics. For this we need to introduce the
notion of {\em strong admissibility\/} (see Definition~\ref{def:admissibleflow}),
which in addition limits
the semiflow behavior on cells associated with arrows of~$\cV$.
This leads to our second result, whose precise form is presented
later in Theorem~\ref{thm:morsedecompequiv}.
\begin{theorem}[Strongly Admissible Semiflows Exhibit the Same Dynamics]
\label{thm:intro2}
Let~$\cX$ denote a simplicial complex, and let~$X$ be
the underlying polytope of $\cX$. Furthermore, let~$\cV$
denote a combinatorial vector field on~$\cX$ in the sense of
Definition~\ref{def:combvfield}. Then there exists a cell
decomposition of~$X$ such that the following holds.
If~$\phi : \R_0^+ \times X \to X$ denotes a strongly admissible
continuous dynamical system (in the sense made precise in
Definition~\ref{def:admissibleflow}), then for every Morse
decomposition of the combinatorial vector field~$\cV$ there
exists a Morse decomposition for~$\phi$ which has the same
Conley-Morse graph.
\end{theorem}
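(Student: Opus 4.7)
The plan is to bootstrap from Theorem~\ref{thm:intro1}, which already supplies the vertices of the Conley-Morse graph together with their Conley-index labels, and then use the extra content of strong admissibility to match the edges, that is, the connection structure between Morse sets. Concretely, given a Morse decomposition $(M_p)_{p \in P}$ of the combinatorial vector field~$\cV$ with partial order~$\le$ on~$P$, I apply Theorem~\ref{thm:intro1} to each~$M_p$ separately to obtain an isolated invariant set~$S_p$ of the semiflow~$\phi$ with $h(S_p) = h(M_p)$, where the isolating block for~$S_p$ is the union of those cells of the tile decomposition that correspond to arrows and critical cells of~$M_p$. Pairwise disjointness of the $S_p$ follows from disjointness of the associated unions of cells in the tile decomposition, since the $M_p$ are disjoint in~$\cX$.

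The key step is to upgrade this collection of isolated invariant sets into a Morse decomposition of~$\phi$ with the correct partial order. For this I use the full strength of strong admissibility: inside every arrow tile associated with an arrow $\tau \to \sigma$, the semiflow not only crosses the prescribed boundary faces in the prescribed directions (as in the admissibility notion used for Theorem~\ref{thm:intro1}), but is furthermore forced to move monotonically from the tail-region to the head-region, so that each trajectory spends only a uniformly bounded finite time in that tile, with no possibility of recurrence or equilibria inside it. Thus every full solution $\gamma \colon \R \to X$ of~$\phi$ that is not contained in any~$S_p$ traverses a sequence of arrow tiles, and by strong admissibility this sequence, read as a walk in the combinatorial flow graph of~$\cV$, is an admissible combinatorial path that leaves one Morse set and reaches another.

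From this the partial-order preservation follows: a connecting orbit of~$\phi$ from $S_q$ to $S_p$ projects, via the tile decomposition, to a combinatorial path from~$M_q$ to~$M_p$ in the arrow-graph of~$\cV$ that avoids all other Morse sets, and hence $q > p$ in the Morse-decomposition order. Conversely, any such combinatorial path from~$M_q$ to~$M_p$ gives a chain of arrow tiles whose head/tail faces match consistently; by compactness and by the uniform transversality guaranteed by strong admissibility, this chain carries at least one full solution of~$\phi$ between the corresponding isolating blocks. Hence the directed graphs recording connections coincide, and together with the equality $h(S_p) = h(M_p)$ the Conley-Morse graphs of $(M_p)_{p \in P}$ and $(S_p)_{p \in P}$ are identical.

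The hard part is the second step, namely showing that every solution of~$\phi$ not lying in some~$S_p$ has a well-defined forward $\omega$-limit in some Morse set, and that every full solution has a backward $\alpha$-limit in a strictly larger Morse set. Because~$\phi$ is only a semiflow, backward extensions need not be unique, and a priori trajectories could accumulate on the lower-dimensional skeleton where the tile decomposition becomes singular (design decision (D3)); ruling out such pathologies is exactly the role of the monotonicity clause inside arrow tiles encoded by strong admissibility in Definition~\ref{def:admissibleflow}. Once this finite-traversal-time property is established, the combinatorial arrow graph modulo the Morse sets is finite and acyclic (by the definition of a combinatorial Morse decomposition), so the forward and backward limits are forced into Morse sets, which completes the proof.
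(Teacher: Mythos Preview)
Your forward direction---tracking a full $\phi$-solution through the flow tiles, reading off a sequence of simplices, and recognizing this as a solution of the combinatorial multivalued map~$\Pi_\cV$---is exactly the paper's approach. The paper makes this precise by discretizing the times at which the solution crosses tile boundaries, recording the simplex~$\sigma_k^-$ in each tile, and invoking the \emph{arrowhead extension} of~\cite{kaczynski:etal:16a} to turn the resulting sequence into a bona fide solution of~$\cV$. Strong admissibility enters exactly where you say: if the tile sequence terminates (forward or backward), the terminal tile must be critical, so the combinatorial solution can be padded to a full solution~$\Gamma_{\mathrm{full}}$ whose $\alpha$- and $\omega$-limits land in combinatorial Morse sets. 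This forces $\alpha(\gamma)\subset M_p$ and $\omega(\gamma)\subset M_q$ with $p\ge q$, and $\gamma(\R)\subset M_p$ when $p=q$.

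Your converse step, however, is a genuine gap. You assert that a combinatorial connection from~$\cM_q$ to~$\cM_p$ yields a full $\phi$-solution ``by compactness and by the uniform transversality guaranteed by strong admissibility.'' Strong admissibility does not construct solutions; it only constrains them. Knowing that a chain of arrow tiles has consistently matching entrance/exit faces does not produce a full orbit threading all of them---that would require a Wa\.zewski-type or connection-matrix argument that is nowhere supplied by Definition~\ref{def:admissibleflow}. The paper simply does not prove this direction. What the paper actually establishes is that the \emph{same} poset $(\mathbb{P},\ge)$ given with the combinatorial Morse decomposition is an admissible order for the family~$\{M_p\}$ as a Morse decomposition of~$\phi$; since the Poincar\'e polynomials agree by Theorem~\ref{thm:intro1}, the Conley--Morse graph (poset plus index labels) is the same by construction. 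You have over-promised by aiming for equality of the connection-induced minimal orders, and your sketch of that stronger claim does not go through.
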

While we are aware that many of the terms used in the formulation
of these two theorems have not yet been introduced, they will be in
the course of this paper. At the moment, these results only serve
to show that under certain transversality and strong admissibility
conditions which are induced by~$\cV$, and with respect to a cell
decomposition of~$X$ which is induced by~$\cX$, continuous semiflows
exhibit the same dynamics as the combinatorial vector field~$\cV$.
Verifying that one can actually construct such dynamical
systems~$\phi$ is the subject of our following final result.
Its precise form can be found in Theorem~\ref{thm:finalsemiflow}.
\begin{theorem}[Existence of Strongly Admissible Semiflows]
\label{thm:intro3}
Let~$\cX$ denote a simplicial complex, and let~$X$ be
the underlying polytope of $\cX$. Furthermore, let~$\cV$
denote a combinatorial vector field on~$\cX$ in the sense of
Definition~\ref{def:combvfield}. Then one can explicitly
construct a continuous semiflow~$\phi$ which satisfies all the
assumptions of Theorems~\ref{thm:intro1} and~\ref{thm:intro2},
and which conforms to our design decisions~(D1) through~(D3).
\end{theorem}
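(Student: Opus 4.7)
The plan is to build $\phi$ piecewise over the canonical cell decomposition of $X$ introduced in Section~\ref{sec3}, where each tile $T$ corresponds bijectively to a critical cell or to an arrow of~$\cV$. On each $T$ I define a smooth vector field $v_T$ tuned to its combinatorial type, integrate it to obtain a local flow, and then glue the pieces along the shared faces so that the concatenation meets the transversality prescriptions used in Theorems~\ref{thm:intro1} and~\ref{thm:intro2} while respecting the design goals (D1)--(D3).

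For a critical tile $T_\sigma$ with $\dim\sigma=k$, I use the barycentric parameters defining the tile to introduce coordinates in which the barycenter $\hat\sigma$ sits at the origin, the directions along $\inte\sigma$ span a $k$-dimensional ``unstable'' subspace, and the directions toward the cofaces of $\sigma$ span the complementary ``stable'' subspace. I then place a smooth hyperbolic equilibrium of Morse index $k$ at $\hat\sigma$, cut off near $\partial T_\sigma$ so that its boundary trace agrees with the $\cV$-prescribed admissible direction on every face. For an arrow tile $T_{\tau\to\sigma}$, the same barycentric apparatus exhibits $T_{\tau\to\sigma}$ as a prism-like region with a distinguished ``entrance'' face on the $\tau$-side and ``exit'' face on the $\sigma$-side; I define $v_{T_{\tau\to\sigma}}$ to be a gradient-like, equilibrium-free field that pushes every point across $T_{\tau\to\sigma}$ from entrance to exit in uniformly bounded time, crossing each internal face transversally in the direction required by admissibility. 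Explicit piecewise-affine or smooth bump-function formulas in the tile coordinates produce such $v_T$.

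The gluing is the technical heart of the argument. On each face shared by exactly two tiles, the local fields are built so that their boundary traces point across the face in the common admissible direction, with tangential components matching; at higher-codimension junctions several inflowing strata may coalesce into a single outflowing stratum (as at vertex~$D$ in Figure~\ref{fig:designexample}), which is precisely the controlled merging permitted by~(D3). Integrating the resulting globally continuous, piecewise-smooth vector field by concatenating tile-local smooth semiflows yields a map $\phi:\R_0^+\times X\to X$; the uniform bound on arrow-tile crossing times together with the hyperbolic structure near critical tiles gives forward completeness~(D2), and inward-pointing directions along the outer boundary of $X$ keep trajectories inside. The main obstacle I expect is the combination of (i) verifying \emph{joint} continuity of $\phi$ at merge points, which requires showing that forward orbits depend continuously on initial data even where several incoming tiles collapse into one, and (ii) establishing \emph{strong} admissibility, which demands both the absence of internal chain-recurrence inside every arrow tile and a uniform upper bound on the tile's traversal time. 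Both reduce to arranging uniform lower bounds on the transversal speeds of the $v_T$ across tile faces; once these are in place, Theorems~\ref{thm:intro1} and~\ref{thm:intro2} apply to the constructed $\phi$ and complete the proof.
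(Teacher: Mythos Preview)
Your high-level strategy---piecewise construction over the flow tiles followed by gluing---matches the paper's, but there is a genuine gap in the proposed implementation. You write that the local fields will have ``tangential components matching'' across shared faces, producing a ``globally continuous, piecewise-smooth vector field.'' A globally continuous vector field that is locally Lipschitz (as smooth hyperbolic models and smooth bump functions would be) generates a semiflow whose solutions are unique in \emph{backward} time as well; hence distinct forward orbits can never merge, and a point in a two-dimensional simplex can only approach a one-dimensional face asymptotically. This is incompatible with design decision~(D3): at a vertex like~$D$ in Figure~\ref{fig:designexample}, solutions arriving from the two-dimensional cofaces must reach the edge~$DF$ in finite time and continue along it with a genuine velocity jump. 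Your proposal confines merging to ``higher-codimension junctions'' of the tiling, but the dimension drop in~$X$ itself happens \emph{inside} flow tiles (an $\epsilon$-cell of a vertex reaches into all its cofaces), so smooth tile-local fields cannot produce it.

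The paper confronts exactly this obstruction by abandoning smoothness in a controlled way. For each tile it defines an explicit vector field~$f^\omega$ on~$\R^d$ whose components for vertices $v\notin\omega^+$ are governed by the non-Lipschitz law $\dot x_v=-\sqrt[3]{\epsilon^2 x_v}$, which drives those coordinates to zero in \emph{finite} time while still admitting a unique forward solution (Lemma~\ref{lem:ivpdecoupled}). A further discontinuous factor~$\eta^\omega$ handles the ``running start'' at arrow tails. Because~$f^\omega$ is neither continuous nor Lipschitz, existence and forward uniqueness of~$\phi^\omega$ require a bespoke argument (Proposition~\ref{prop:semiflowexsimplex}) that decouples the non-Lipschitz components and treats the rest as a nonautonomous Lipschitz system. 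The tile-to-tile gluing then does \emph{not} match velocities; it relies instead on uniform negative bounds on the transversal components near tile boundaries (Proposition~\ref{prop:vectorfieldbound}) and on continuity of exit times (Lemma~\ref{lem:exittime}) to prove joint continuity of the concatenated map (Lemmas~\ref{lem:localflowint}, \ref{lem:localflowbnd}). Your reduction of both continuity and strong admissibility to ``uniform lower bounds on transversal speeds'' captures part of this, but misses the essential use of non-Lipschitz dynamics to realise~(D3) and the attendant work needed to recover a well-defined forward semiflow.
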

Combined, the above three theorems show that every combinatorial
vector field~$\cV$ on an abstract simplicial complex~$\cX$ gives
rise to a continuous semiflow on the underlying polytope~$X$
of $\cX$ which exhibits the same dynamics in the sense of
Conley theory. For the example in Figure~\ref{fig:designexample}
the semiflow constructed in the above result is shown in
Figure~\ref{fig:designfinalsemiflow}.
\begin{figure}[tb]
  \centering
  \includegraphics[width=0.6\textwidth]{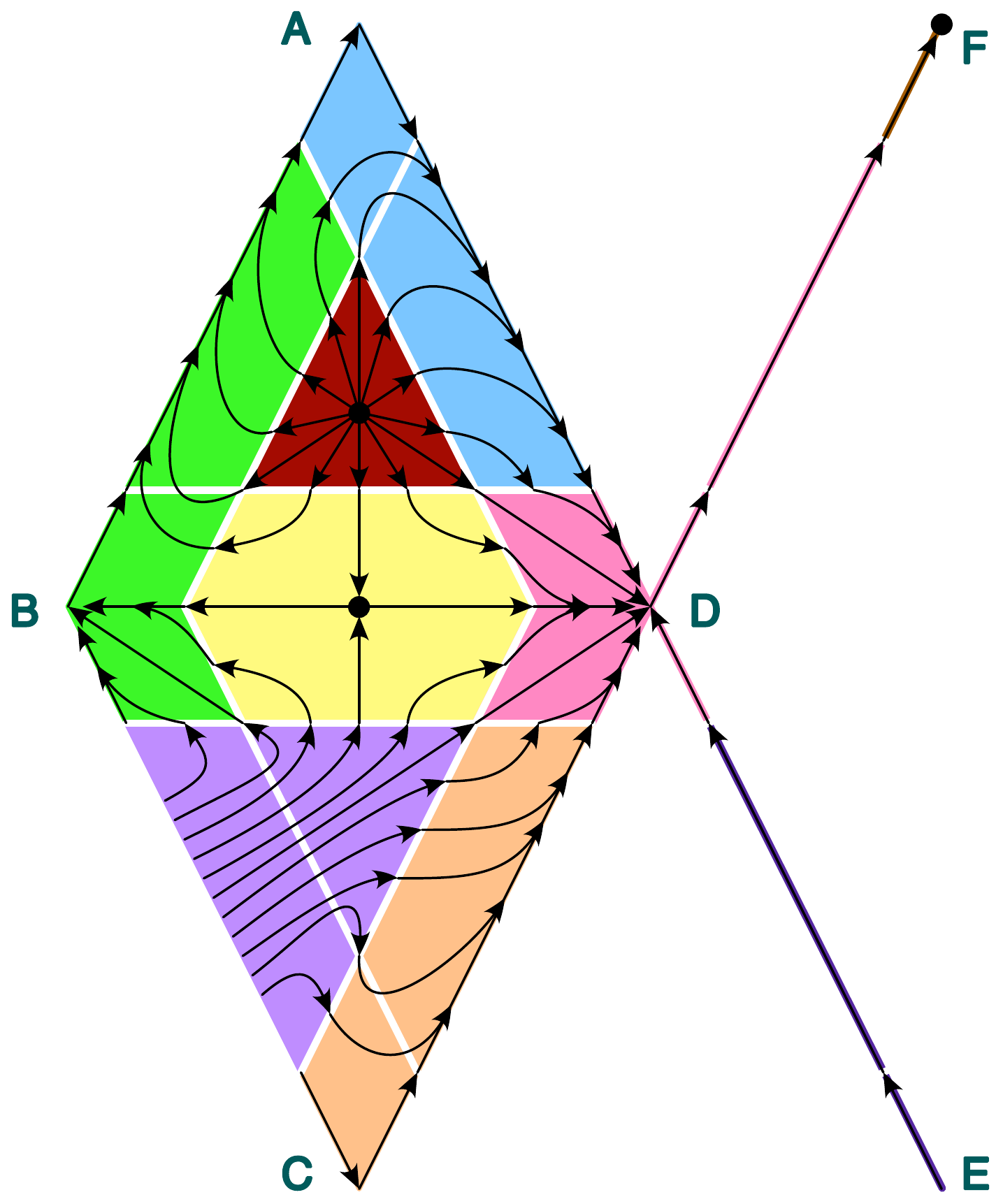}
  \caption{Sketch of a strongly admissible semiflow for the example
           introduced in Figure~\ref{fig:designexample}, as constructed
           in the proof of Theorem~\ref{thm:intro3}.}
  \label{fig:designfinalsemiflow}
\end{figure}

In the special case when the combinatorial vector field is gradient
in the sense of Forman~\cite{forman:98b} one can conclude even more. 
In this case every strongly admissible semiflow, in particular the semiflow
in Theorem~\ref{thm:intro3}, is strongly gradient-like in the sense
of Conley~\cite[Section~II.6.3]{conley:78a}. Moreover, since the combinatorial 
vector field and the strongly admissible semiflow share the same Conley-Morse
graph, the critical cells of the combinatorial vector field of dimension~$n$
are in one-to-one correspondence with the rest points of the semiflow with Morse
index~$n$. We conjecture that this correspondence extends to connection matrices
and Morse complexes, and even inside Morse sets one can see some correspondence
of recurrent, in particular periodic, behavior. This is currently under investigation.
\section{Preliminaries}
\label{sec2}
\subsection{Sets, Maps, and Topology}
\label{sec:sets-and-maps}
We denote the sets of real numbers, strictly negative real numbers,
strictly positive real numbers, non-positive real numbers,
non-negative real numbers, and integers,
respectively, by~$\R$, $\R^-$, $\R^+$, $\R_0^-$, $\R_0^+$, and~$\Z$.
Given a finite set~$X$, we write~$\card X$
for the number of elements of~$X$. We say that a set~$A$ is a {\em doubleton}
if $\card{A}=2$. By a $\Z$-interval we mean a subset~$I$ of~$\Z$ such that
$x,z\in I$ and $x\leq y\leq z$ imply $y\in I$.

We write $\gamma:X\pto Y$ for a partial map from~$X$ to~$Y$, that is, a map
defined on a subset $\dom \gamma\subset X$, called the {\em domain} of~$\gamma$,
and such that the set of values of~$\gamma$, denoted $\im \gamma$, is contained
in~$Y$. For functions $\gamma:I\to Y$ with $I = \dom \gamma\subset \Z$ we use
the sequence-type notation~$\gamma_n$ to denote the value~$\gamma(n)$.

Given a topological space~$X$ and $A\subset X$ we write~$\inte_XA$, $\cl_XA$,
and~$\bd_XA$, respectively, for the interior, the closure, and the boundary
of~$A$ in~$X$. We drop the subscript in this notation if the space $X$ is clear from the context.
By a topological pair we mean a pair $(X,A)$ of topological spaces such that $A\subset X$.
Given a topological pair~$(A,B)$ we write~$A/B$ for the quotient space
with quotient topology and we denote by~$[B]$ the point in~$A/B$ resulting
from collapsing~$B$.

\subsection{Abstract Simplicial Complexes}
\label{sec:abs-simp-comp}
The terminology, notation and conventions we use with respect to simplicial complexes
are based on \cite{hilton:wylie:60a,munkres:84a}. We summarize here the main ideas.

By an {\em abstract  simplicial complex\/}~$\cX$ we mean a finite collection of nonempty, finite
sets such that if $\sigma \in \cX$, then every nonempty subset of $\sigma$
also belongs to~$\cX$. We refer to the elements of~$\cX$ as the {\em simplices\/}
of the simplicial complex.
For every simplex~$\sigma \in \cX$ we define
its {\em dimension\/} as $\dim\sigma := \card{\sigma} - 1$.
We refer to the $0$-, $1$-, and~$2$-dimensional simplices  as
the {\em vertices\/}, {\em edges\/}, and {\em triangles\/}, respectively.
The union of the simplices in $\cX$ is called the {\em vertex set\/} of $\cX$ and denoted~$\cX_0$.
Formally speaking,  a  vertex $v\in\cX_0$ is different from a vertex~$\{v\}$ considered
as a zero-dimensional simplex. But, it will always be clear from the context what we mean by a vertex.
For a simplex~$\sigma \in \cX$, any nonempty subset~$\tau \subset \sigma$
is called a {\em face of~$\sigma$\/}, and in this case~$\sigma$
is referred to as a {\em coface of~$\tau$\/}. The face~$\tau$
is called a {\em proper face\/} if $\tau \neq \sigma$. Furthermore, a
face~$\tau$ of a simplex~$\sigma$ is called a {\em facet
of~$\sigma$\/}, if $\dim\tau = \dim\sigma - 1$. Finally, the {\em dimension\/}
of the simplicial complex is the maximum of the dimensions of all
simplices in~$\cX$, and it is denoted by~$\dim\cX$.
Two abstract simplicial complexes~$\cX$ and~$\cX'$ are called {\em isomorphic}
if there is a bijection $f:\cX_0\to\cX'_0$ such that $\sigma\in\cX$
if and only if $f(\sigma)\in\cX'$.

\subsection{Geometric Simplicial Complexes}
\label{sec:geom-simp-comp}
A {\em geometric $n$-simplex\/} $\sigma$ in $\R^N$ is the convex hull of~$n+1$ affinely
independent points $v_0, v_1, \ldots,v_n \in \R^N$, that is, points such that
the~$n$ vectors~$v_i - v_0$ for $i = 1,\ldots,n$ are linearly independent.
A {\em standard $n$-simplex} is a geometric $n$-simplex $\Delta^n\subset\R^{n+1}$
spanned by all {\em versors}  of $\R^{n+1}$ where the $i$th versor in $\R^{n+1}$
is a vector in $\R^{n+1}$ whose $i$th coordinate is one and all other coordinates are zero.
We  use the abbreviated notation~$\sigma = \langle v_0,v_1, \ldots, v_n \rangle$ to
indicate that $\sigma$ is the geometric simplex spanned by the points~$v_0, v_1,\ldots, v_n$.
The {\em vertex set\/} of $\sigma$ is the set $\setof{v_0, v_1, \ldots, v_n}$.
The elements of this set are  the {\em vertices} of $\sigma$. The number~$n$
is the {\em dimension\/} of~$\sigma$. A {\em face\/} of~$\sigma$ is a geometric
simplex whose vertices constitute a subset of $\setof{v_0,v_1, \ldots, v_n}$.
The concepts of {\em proper face\/}, {\em coface\/}, and {\em facet\/} in the
geometric setting are defined analogously to the abstract case.

We now turn our attention to the representation of points in geometric simplices.
Every point $x \in \sigma = \langle v_0, v_1, \ldots, v_n \rangle$ has a unique
representation of the form
\begin{equation} \label{def:barycentriccoord}
  x = \sum_{i=0}^n t_{v_i}(x) v_i ,
  \quad\text{where}\quad
  \sum_{i=0}^n t_{v_i}(x) = 1
  \quad\text{and}\quad
  t_{v_i}(x) \geq 0 .
\end{equation}
The number~$t_{v_i}(x)$ is called the {\em barycentric coordinate\/}
of~$x$ with respect to the vertex~$v_i$. While this definition
introduces barycentric coordinates via functions~$t_{v_i}$,
we sometimes also make use of the abbreviated notation
\begin{equation} \label{def:barycentriccoord2}
  x_{v_i} := t_{v_i}(x)
  \quad\text{for all}\quad
  x \in \sigma = \langle v_0, v_1, \ldots, v_n \rangle
  \quad\text{and}\quad
  i = 0,\ldots,n ,
\end{equation}
in which we express the barycentric coordinates as actual coordinates
of~$x$. Given a geometric $n$-simplex $\sigma = \langle v_0, v_1, \ldots, v_n \rangle$,
the associated {\em cell\/}~$\stackrel{\circ}{\sigma}$ consists of all points
in~$\sigma$ whose barycentric coordinates are all strictly positive.
Note that for every geometric simplex $\sigma$ we have
\begin{equation}
\label{eq:sigma-via-cells}
    \sigma=\bigcup\setof{\stackrel{\circ}{\tau}\;\mid \text{ $\tau$ a face of $\sigma$}}.
\end{equation}

A {\em geometric simplicial complex\/} in $\R^N$ consists of a finite collection~$\cK$
of geometric simplices in $\R^N$ such that every face of a simplex in~$\cK$ is in~$\cK$,
and the intersection of two simplices in~$\cK$ is their common face.
The {\em underlying polytope\/} or briefly {\em polytope\/} of a geometric simplicial
complex~$\cK$ is the union of all simplices in~$\cK$ considered
as a topological space with the topology inherited from $\R^N$.
The {\em vertex scheme\/} of a geometric simplicial complex~$\cK$
is an abstract simplicial complex whose abstract simplices are
the vertex sets of the geometric simplices in~$\cK$.

\subsection{Subcomplexes and Combinatorial Closures}
\label{sec:subcomplexes}
A {\em subcomplex} of a simplicial complex~$\cX$, abstract or geometric,
is a collection of simplexes in $\cX$ which itself is a simplicial complex.
The {\em combinatorial closure\/} of a subset $\cS$ of a simplicial complex $\cX$
is the set of faces of all dimensions of simplices in~$\cS$.
It is denoted by~$\Cl \cS$.
We say that a collection of simplices~$\cS$ in $\cX$ is {\em combinatorially closed\/},
if $\Cl\cS = \cS$. Hence, the combinatorial closure $\Cl\cS$
is the smallest subcomplex of~$\cX$ which contains~$\cS$,
and the set~$\cS$ is combinatorially closed if and only if it is
a subcomplex of~$\cX$.
For a simplex~$\sigma \in \cX$
we denote the set of all proper faces of~$\sigma$ by~$\Bd\sigma$ and
call it the {\em combinatorial boundary\/} of the simplex.
For any vertex~$v \in \cX_0$,
the {\em star of~$v$\/} consists of all simplices of~$\cX$ which
contain~$v$ as a vertex.

A reader familiar with finite topological spaces will immediately notice
that the phrases ``combinatorial closure'' and ``combinatorially closed''
may be replaced respectively by ``closure'' and ``closed'' with respect
to the Alexandrov topology \cite{alexandrov:37a}
induced on a finite simplicial complex~$\cX$, abstract or geometric,
considered as a poset of simplexes ordered by inclusion \cite[Definition~1.4.10]{barmak:11a}.
Similarly, the star of a vertex is the smallest open set in the Alexandrov topology containing the vertex.
However, we notice that the combinatorial boundary of a simplex need not be its topological boundary
in the Alexandrov topology.

\subsection{Geometric Realizations}
\label{sec:geom-real}
A geometric simplicial complex~$\cK$ is a {\em geometric realization\/}
of an abstract simplicial complex~$\cX$ if the vertex scheme of~$\cK$
is isomorphic to~$\cX$.
One fundamental property of abstract simplicial complexes is the following
theorem, see for example~\cite[Proposition~1.9.3 and Theorem~1.9.5]{hilton:wylie:60a},
\cite[Proposition~1.5.4]{matousek:03a}, or~\cite[Theorem~3.1]{munkres:84a}.
\begin{theorem} \label{thm:simplicial-homology}
Every abstract simplicial complex admits a geometric realization
and any two of its geometric realizations are piecewise linear isomorphic.
In particular, their underlying polytopes are homeomorphic.
\qed
\end{theorem}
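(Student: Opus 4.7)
The plan is to prove the two assertions separately: first exhibit a canonical geometric realization, then show any two realizations admit a piecewise linear identification.

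\textbf{Existence.} Let the vertex set of $\cX$ be $\cX_0 = \{v_1,\dots,v_N\}$, and let $e_1,\dots,e_N$ denote the standard versors in $\R^N$, which are affinely independent. For each abstract simplex $\sigma = \{v_{i_0},\dots,v_{i_n}\} \in \cX$, define the geometric simplex $|\sigma| := \langle e_{i_0},\dots,e_{i_n}\rangle$, and set $\cK := \{\,|\sigma|\mid \sigma\in\cX\,\}$. I would verify the three defining properties of a geometric simplicial complex: (i) each $|\sigma|$ is a well-formed geometric simplex because any subset of affinely independent points is affinely independent; (ii) $\cK$ is closed under faces, because $\cX$ is closed under taking nonempty subsets and a face of $|\sigma|$ is exactly $|\tau|$ for some $\tau\subset\sigma$; and (iii) $|\sigma|\cap|\tau| = |\sigma\cap\tau|$ whenever $\sigma\cap\tau\neq\emptyset$, and equals $\emptyset$ otherwise. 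Assertion (iii) is the only point that requires care: it follows from the uniqueness of barycentric coordinates with respect to the affinely independent set $\{e_1,\dots,e_N\}$, since any point in $|\sigma|\cap|\tau|$ has coefficients supported in both $\sigma$ and $\tau$ when expanded in the $e_i$, hence in $\sigma\cap\tau$. Finally, the vertex scheme of $\cK$ is isomorphic to $\cX$ via $v_i\mapsto e_i$, so $\cK$ is a geometric realization.

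\textbf{Uniqueness up to PL isomorphism.} Suppose $\cK$ and $\cK'$ are two geometric realizations of $\cX$, with vertex schemes related to $\cX$ by isomorphisms $f:\cX_0\to\cK_0$ and $f':\cX_0\to\cK'_0$. Set $\phi := f'\circ f^{-1}: \cK_0 \to \cK'_0$. For each simplex $\sigma = \langle v_0,\dots,v_n\rangle\in\cK$, define $h_\sigma:\sigma\to|\sigma|'$ by
\begin{equation*}
  h_\sigma(x) \;:=\; \sum_{i=0}^n t_{v_i}(x)\,\phi(v_i),
\end{equation*}
where $|\sigma|'\in\cK'$ is the simplex with vertex set $\phi(\{v_0,\dots,v_n\})$; this is well-defined because $\phi$ carries vertex sets of simplices in $\cK$ bijectively to vertex sets of simplices in $\cK'$. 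The key consistency step is to check that whenever $\sigma,\sigma'\in\cK$ share a common face $\tau=\sigma\cap\sigma'$, the maps $h_\sigma$ and $h_{\sigma'}$ agree on $\tau$: this holds because for $x\in\tau$ the barycentric coordinates with respect to vertices of $\sigma$ outside $\tau$ vanish, by uniqueness of the expansion in affinely independent vertices, and similarly for $\sigma'$. Thus the pieces $h_\sigma$ glue to a single map $h:|\cK|\to|\cK'|$, where $|\cdot|$ denotes the underlying polytope.

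\textbf{Regularity of the glued map and conclusion.} Each $h_\sigma$ is affine on $\sigma$, so $h$ is continuous on each closed simplex; continuity on $|\cK|$ then follows from the gluing lemma applied to the finite closed cover $\cK$. The same construction performed with $\phi^{-1}$ yields a map $g:|\cK'|\to|\cK|$, which is the two-sided inverse of $h$ simplex by simplex, because on each $\sigma$ the composition $g\circ h_\sigma$ is the unique affine map fixing the vertices of $\sigma$, namely the identity. Both $h$ and $g$ are piecewise linear by construction, so they constitute a PL isomorphism, and the underlying polytopes are homeomorphic.

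\textbf{Main obstacle.} The one subtle point throughout is the interaction between barycentric coordinates and intersections of simplices: ensuring that $|\sigma|\cap|\tau|=|\sigma\cap\tau|$ in the existence part, and that the affine pieces $h_\sigma$ agree on common faces in the uniqueness part. Both rely on the same fact, namely that in an affinely independent set the barycentric expansion of a point is unique, so coordinates supported outside a face must vanish on that face. Once this is invoked carefully, the remaining steps are formal.
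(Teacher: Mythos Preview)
Your proof is correct and follows the standard textbook argument. Note, however, that the paper does not actually prove this theorem: it is stated with a \qed\ and cited from the literature (Hilton--Wylie, Matou\v{s}ek, Munkres). Your existence construction via versors in $\R^N$ coincides with what the paper calls the \emph{standard geometric realization} in Section~\ref{sec:geom-real}, and your uniqueness argument via the affine extension of the vertex bijection is exactly the classical proof found in those references.
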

Theorem~ \ref{thm:simplicial-homology} states that in topological terms
the geometric realization of an abstract simplicial complex is unique.
Two geometric realizations of the same abstract simplicial complex may differ
geometrically but the underlying polytopes are homeomorphic.
In particular, by the {\em underlying polytope} of an abstract simplicial complex $\cX$
we mean the underlying polytope of any geometric realization of $\cX$.
Note that the underlying polytope of an abstract simplicial complex is unique up to a homeomorphism.

Among the many geometric realizations of an abstract simplicial complex $\cX$
there is one, typically used in the proofs of existence, constructed as follows.
We set $d:=\card\cX_0$, identify the vertices $v_1,v_2,\ldots v_d$ of $\cX_0$ in a fixed order
with versors  $e_1,e_2,\ldots e_d$ of $\R^d$ and take as the geometric realization
of $\cX$ the subcomplex of the standard $(d-1)$-simplex in $\R^d$ consisting of faces
$\langle e_{i_0},e_{i_1}, \ldots, e_{i_n} \rangle$ such that $\{ v_{i_0},v_{i_1}, \ldots, v_{i_n}\}$
is a simplex in $\cX$. We refer to this geometric realization of $\cX$ as the {\em standard} geometric realization.
For convenience, in this paper we typically work with the standard geometric realization.
We emphasize, however, that the results of the paper are purely topological and apply to every geometric realization.

Depending on the context, a simplex~$\sigma$ of~$\cX$
is interpreted as either an  abstract simplex $\setof{v_0,\ldots,v_n}$,
or the corresponding geometric simplex $\langle v_0,\ldots,v_n \rangle\subset \R^d$.
For example, if we write $\sigma\subset \cX_0$, we always mean an abstract
simplex. Whenever for an arbitrary point $x \in \R^d$ we write $x \in \sigma$,
then~$\sigma$ is interpreted as the geometric simplex. If~$v$
is a vertex, writing $v \in \sigma$ makes equal sense in both cases.

Given the underlying polytope $X$ of $\cX$ the barycentric coordinates
introduced earlier can be extended to a well-defined continuous
functions $t_v : X \to [0,1]$ which assigns to each
point~$x \in X$ its barycentric coordinate with respect
to the vertex~$v$, whenever~$x$ belongs to a simplex in the
star of~$v$, and zero otherwise.

With every subset $\cS \subset \cX$
we associate a subset $|\cS|$ of the underlying polytope $X$
given by
\begin{displaymath}
  |\cS| \; := \;
  \bigcup_{\sigma \in \cS} \stackrel{\circ}{\sigma} \;\; \subset \;\;
  X \; = \; |\cX| .
\end{displaymath}
Note that in the case when~$\cS$ is combinatorially closed,
that is, $\cS$ is a subcomplex of $\cX$, we get from \eqref{eq:sigma-via-cells} that
\begin{equation}
\label{eq:geom-rep-subcomplex}
|\cS|=  \bigcup_{\sigma \in \cS}\sigma.
\end{equation}
In particular, if $\cS$ is a subcomplex of $\cX$ then $|\cS|$ is the underlying polytope of $\cS$.

As an example consider~$\cS= \{\{ v \}\} \subset \cX$ consisting of a singleton of a vertex in~$\cX_0$.
Then~$|\cS|= \{ v \}$
is the associated singleton in the underlying polytope $X$.
But, if we have~$\cS = \{\{ v,w \}\} \subset \cX$, then~$|\cS|$ is
the line segment between the points~$v$ and~$w$, but not including
either of the endpoints. Moreover, one can easily see that the
following holds.
\begin{lemma} \label{lem:comb:closed}
Let~$\cX$ denote a simplicial complex, and let~$X = |\cX|$ be the underlying polytope.
Then a subset~$\cS \subset \cX$ is
combinatorially closed if and only if the set~$|\cS|
\subset |\cX|$ is closed.
\qed
\end{lemma}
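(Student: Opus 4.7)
The plan is to establish the two implications separately, and both should follow directly from the cell decomposition of the polytope together with properties of geometric simplicial complexes.

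For the forward direction, suppose $\cS$ is combinatorially closed, i.e., $\cS$ is a subcomplex of $\cX$. Then by \eqref{eq:geom-rep-subcomplex} we may rewrite $|\cS| = \bigcup_{\sigma \in \cS} \sigma$, where each geometric simplex $\sigma$ is a compact (in particular closed) subset of $\R^d$. Since $\cX$ is finite, so is $\cS$, and hence $|\cS|$ is a finite union of closed sets, which is closed in $X$. This part requires no further work.

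For the backward direction, I would argue by contrapositive. Assume $\cS$ is not combinatorially closed, so there exist $\sigma \in \cS$ and a proper face $\tau \subset \sigma$ with $\tau \notin \cS$. Pick any point $x \in \stackrel{\circ}{\tau}$. First I would observe that $x$ lies in the closure of $\stackrel{\circ}{\sigma}$: by taking barycentric combinations of $x$ with the barycenter of $\sigma$ one obtains a sequence in $\stackrel{\circ}{\sigma}\subset |\cS|$ converging to $x$. Next, the key point is that the cells $\{\stackrel{\circ}{\rho}\}_{\rho \in \cX}$ form a \emph{partition} of $X$: this is the content of the defining condition that the intersection of any two simplices is their common face, combined with \eqref{eq:sigma-via-cells}. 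Therefore $x$ belongs to $\stackrel{\circ}{\tau}$ and to no other cell, and since $\tau \notin \cS$ we conclude that $x \notin |\cS|$. Thus $|\cS|$ contains a sequence converging to a point outside $|\cS|$, so $|\cS|$ is not closed.

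There is no serious obstacle here; the only point that deserves care is the disjointness of the open cells $\stackrel{\circ}{\rho}$ for distinct simplices $\rho$, which is a standard consequence of the definition of a geometric simplicial complex but needs to be invoked explicitly so that membership of $x$ in $|\cS|$ forces $\tau$ itself, rather than some other simplex, to lie in $\cS$.
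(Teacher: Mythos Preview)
Your argument is correct and complete. The paper itself omits the proof entirely (the lemma is stated with a \qed and the preceding sentence ``one can easily see that the following holds''), so there is nothing to compare against; your forward direction via~\eqref{eq:geom-rep-subcomplex} and your contrapositive for the reverse direction using the partition of~$X$ into open cells is exactly the standard justification one would expect here.
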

This simple lemma will be useful for constructing isolating blocks
for admissible semiflows, as it allows us to translate properties
for isolated invariant sets in the combinatorial setting directly
to the classical dynamical systems framework.
\subsection{Representable Sets and Convex Partitions}
\label{sec:cw-complexes}
The construction we present in this paper relies on the class of triangulable topological spaces
to guarantee some properties of homology modules as  explained in Section~\ref{sec:homology}.
We recall that a  topological space is {\em triangulable\/} if it is homeomorphic to the polytope
of a geometric simplicial complex, and a topological pair~$(X,A)$ is {\em triangulable} if~$X$
is homeomorphic to the polytope of a geometric simplicial complex~$\cK$ and~$A$ is homeomorphic
to the polytope of a subcomplex of~$\cK$. To ensure that certain sets and pairs in our
construction are triangulable we need some definitions and results
presented in this section.

Let $C\subset\R^N$ be a convex set.  We recall that the {\em dimension} of~$C$
is the dimension of the affine hull of~$C$ (see \cite[Section~1]{rockafellar:03a})
and~$C$ is {\em relatively open} (see \cite[Section~6]{rockafellar:03a})
if it is open in its affine hull.
A simple argument based on \cite[Theorem~6.1]{rockafellar:03a}
proves the following proposition.
\begin{proposition}
\label{prop:rel-open-convex}
Assume $C\subset\R^N$ is a relatively open, convex set of dimension $n$.
Then the topological pair $(\cl C, \cl C\setminus C)$
is homeomorphic to the pair $(B^n,S^{n-1})$ where $B^n$ denotes the closed unit ball in $\R^n$
and $S^{n-1}$ denotes the boundary of $B^n$.
\qed
\end{proposition}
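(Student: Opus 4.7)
The plan is to reduce the general setting to a bounded, open, convex subset of $\R^n$ and then construct an explicit radial homeomorphism onto $B^n$ based at an interior point. This requires $C$ to be bounded, which is tacitly assumed: in every application of the proposition later in the paper the set $C$ sits inside a compact simplex, hence is automatically bounded.

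First I would pass from $\R^N$ to $\R^n$ using that the affine hull $H$ of $C$ has dimension $n$. Any affine bijection $\Phi : H \to \R^n$ is a homeomorphism which preserves convexity, relative openness, closure, and relative boundary. After applying $\Phi$ and a translation I may therefore assume that $C$ is an open, bounded, convex subset of $\R^n$ with $0 \in C$.

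Next I would introduce the gauge of $\cl C$ from the origin. For $v \in S^{n-1}$ set
\[
   r(v) \; := \; \sup\{\, t \ge 0 \,:\, tv \in \cl C \,\}.
\]
Boundedness of $\cl C$ forces $r(v) < \infty$, while $0 \in \inte C$ gives $r(v) > 0$. Using \cite[Theorem~6.1]{rockafellar:03a} (the line segment principle), I would verify that $tv \in C$ for $0 \le t < r(v)$, and that $r(v)\,v$ is the unique point at which the ray $\{tv : t \ge 0\}$ meets $\cl C \setminus C$. For continuity of $r : S^{n-1} \to \R^+$, upper semicontinuity follows from closedness and boundedness of $\cl C$; for lower semicontinuity I would fix $v_0 \in S^{n-1}$ and $\epsilon > 0$, choose $t_0 \in (r(v_0)-\epsilon, r(v_0))$ so that $t_0 v_0 \in C$, and then use openness of $C$ to obtain a neighborhood $U$ of $v_0$ on which $t_0 v \in C$, whence $r(v) > t_0 > r(v_0) - \epsilon$ throughout $U$.

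With $r$ continuous, the proposition is then delivered by the radial rescaling $h : \cl C \to B^n$ defined by $h(0) := 0$ and $h(x) := x / r(x/|x|)$ for $x \ne 0$. By construction $h$ restricts to bijections $C \to \inte B^n$ and $\cl C \setminus C \to S^{n-1}$, and it is continuous by the previous step together with the bound $|h(x)| \le |x|/\min_{S^{n-1}} r$ near the origin. Since $\cl C$ is compact and $B^n$ is Hausdorff, the continuous bijection $h$ is a homeomorphism of pairs. I expect the main obstacle to be the continuity of the radial function $r$, because the relative boundary of $C$ may be geometrically irregular; it is precisely Rockafellar's line segment principle that makes the lower semicontinuity argument go through and ties the whole construction together.
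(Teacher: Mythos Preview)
Your argument is correct and follows exactly the approach the paper points to: the reference to \cite[Theorem~6.1]{rockafellar:03a} is precisely the line segment principle you invoke to show that each ray from~$0$ meets $\cl C\setminus C$ in a single point and to establish lower semicontinuity of the radial function~$r$. You also correctly flag that boundedness of~$C$ is a tacit hypothesis without which the statement fails (e.g.\ $C=\R^n$), and that in the paper's applications $C$ always lies in a compact simplex.
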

By a {\em partition\/}~$\cC$ of a set~$X$ we mean a finite family of pairwise disjoint,
non-empty subsets of~$X$ such that $\bigcup\cC=X$.
Given a partition~$\cC$ of~$X$ we say that a subset $A\subset X$ is {\em $\cC$-representable\/}
if $A=\bigcup\cC'$ for a $\cC'\subset\cC$.
We say that a partition~$\cC$ of a compact set~$X$ is {\em topologically closed\/} if~$\cl C$
is $\cC$-representable for every $C\in\cC$.
The following proposition is straightforward.
\begin{proposition}
\label{prop:representable-sets}
Let~$\cC$ be a partition of a set~$X$. Then the union, intersection, and set
difference of two $\cC$-representable sets is $\cC$-representable. Moreover,
if~$X$ is compact and~$\cC$ is topologically closed, then the closure, interior,
and boundary of a  $\cC$-representable set is $\cC$-representable.
\qed
\end{proposition}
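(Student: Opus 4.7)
The plan is to handle the three Boolean operations first using only the partition property, and then reduce closure, interior, and boundary to those Boolean operations together with the topological-closedness hypothesis.

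For the first half, suppose $A=\bigcup\cA$ and $B=\bigcup\cB$ with $\cA,\cB\subset\cC$. The union is immediate from $A\cup B=\bigcup(\cA\cup\cB)$. For the intersection, I would use disjointness of the members of the partition: if $x\in A\cap B$ then $x$ lies in some $C_1\in\cA$ and some $C_2\in\cB$, but pairwise disjointness forces $C_1=C_2\in\cA\cap\cB$, so $A\cap B=\bigcup(\cA\cap\cB)$. The same disjointness argument yields $A\setminus B=\bigcup(\cA\setminus\cB)$, since a point of $A$ not in $B$ must lie in a member of $\cA$ that is not among the members of $\cB$ (else that shared member would be contained in both $A$ and $B$). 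Each of these three statements is $\cC$-representable because the listed collections are subfamilies of $\cC$.

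For the second half, assume $X$ is compact and $\cC$ is topologically closed, and let $A=\bigcup\cA$ with $\cA\subset\cC$. Since $\cC$ is finite, so is $\cA$, and closure commutes with finite unions, giving $\cl A=\bigcup_{C\in\cA}\cl C$. By the topological-closedness assumption each $\cl C$ is $\cC$-representable, so $\cl A$ is a finite union of $\cC$-representable sets and hence $\cC$-representable by the first half of the proposition. For the interior, the complement $X\setminus A$ is $\cC$-representable (it equals $\bigcup(\cC\setminus\cA)$), so $\cl(X\setminus A)$ is $\cC$-representable by what was just proved, and then $\inte A=X\setminus\cl(X\setminus A)$ is $\cC$-representable by the set-difference clause. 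Finally, $\bd A=\cl A\setminus\inte A$ is $\cC$-representable for the same reason.

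There is no real obstacle here; the only delicate moment is the intersection/difference step, where it is essential that the members of a partition are pairwise disjoint, so that a single point determines the unique cell of $\cC$ it lies in. The compactness of $X$ is not used directly in the argument above, but it is consistent with the standing convention that $\cC$ is finite and with the role this proposition plays in later sections, where closure, interior, and boundary are taken in the ambient compact polytope.
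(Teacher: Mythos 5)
Your argument is correct. The paper presents this proposition without proof, treating it as routine, and the reasoning you give is exactly the natural one: pairwise disjointness of the partition cells uniquely determines the cell containing a given point, which makes intersection and difference behave like Boolean operations on index sets; closure commutes with finite unions; and the interior and boundary reduce to closure and set difference via $\inte A = X\setminus\cl(X\setminus A)$ and $\bd A = \cl A\setminus\inte A$. Your remark that compactness of~$X$ is never actually invoked is also accurate --- the finiteness of~$\cC$ is built into the paper's definition of partition, and that is what makes the closure-of-a-union step work; compactness is carried in the hypotheses only because it holds in all the paper's applications, not because the present argument requires it.
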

By a {\em convex partition\/} of a compact set $X\subset\R^N$ we mean a
topologically closed partition~$\cC$ of~$X$ such that every $C\in\cC$ is a
relatively open, convex set. As an easy consequence of
Proposition~\ref{prop:rel-open-convex} we obtain the following theorem
whose triangulability part follows from \cite[Theorem~2.1,
Theorem~1.7]{lundell:weingram:69a}.
\begin{theorem} \label{thm:convex-partition}
Assume $X\subset\R^N$ admits a convex partition~$\cC$.
Then~$X$ is a regular CW complex whose closed cells are the closures of the elements of~$\cC$.
Moreover, every closed, $\cC$-representable subset of~$X$ is also a regular CW complex and,
in consequence, it is triangulable. The same applies to pairs $(A,B)$ of closed $\cC$-representable subsets of~$X$.
\qed
\end{theorem}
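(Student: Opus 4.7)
The plan is to exhibit, for each $C\in\cC$ of dimension $n$, the closure $\cl C$ as a closed $n$-cell with characteristic map supplied by Proposition~\ref{prop:rel-open-convex}, and then to check the axioms of a regular CW complex. The homeomorphism of pairs $(\cl C,\cl C\setminus C)\cong(B^n,S^{n-1})$ provided by that proposition is exactly what is required: its inverse is a characteristic map $\Phi_C\colon B^n\to\cl C$ which is a homeomorphism of pairs, mapping the open ball homeomorphically onto $C$. Since $\cC$ is finite, closure-finiteness and the weak topology condition are automatic, so the only non-trivial axiom to verify is that $\cl C\setminus C$ is contained in a union of cells of strictly smaller dimension.

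To establish this dimension monotonicity, I would first note that $\cl C$ is $\cC$-representable by topological closedness of the partition, and hence so is $\cl C\setminus C$ by Proposition~\ref{prop:representable-sets}. Thus $\cl C\setminus C=\bigcup \cC'$ for some $\cC'\subset\cC$, each $C'\in\cC'$ being a relatively open convex set contained in $\cl C$ and disjoint from $C$. Writing $A=\operatorname{aff}(C)$, we have $\cl C\subset A$, so $\operatorname{aff}(C')\subset A$ and therefore $\dim C'\le n$. If equality held, the affine hulls of $C$ and $C'$ would coincide, and $C'$, being relatively open in $A$, would contain a subset open in $A$; but $C$ is the relative interior of its own closure, so $\cl C\setminus C$ has empty interior in $A$, a contradiction. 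Hence $\dim C'<n$, as desired. Regularity is built into the construction, since the characteristic maps are homeomorphisms onto the closed cells.

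For the second assertion, let $Y\subset X$ be closed and $\cC$-representable, so $Y=\bigcup\cC_Y$ for some $\cC_Y\subset\cC$. For each $C\in\cC_Y$, the set $\cl C$ is $\cC$-representable and contained in $\cl Y=Y$, which itself is a disjoint union of cells from $\cC_Y$; consequently every cell appearing in the decomposition of $\cl C$ already belongs to $\cC_Y$. Thus $Y$ is a subcomplex of $X$ and inherits the structure of a regular CW complex. The triangulability of $Y$, and of closed CW pairs $(A,B)$ of $\cC$-representable subsets, then follows from the cited results in \cite[Theorems~1.7 and~2.1]{lundell:weingram:69a}.

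The principal obstacle is the dimension comparison between $C$ and the cells appearing in its frontier; once that is secured by the relative-interior argument sketched above, the remainder is essentially bookkeeping against the definitions of regular CW complex and subcomplex.
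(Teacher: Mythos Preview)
Your proposal is correct and follows exactly the route the paper indicates: the paper treats the theorem as an immediate consequence of Proposition~\ref{prop:rel-open-convex} together with \cite[Theorems~1.7 and~2.1]{lundell:weingram:69a}, and you have simply (and correctly) supplied the details the paper omits, in particular the dimension-drop argument for cells in $\cl C\setminus C$ and the verification that closed $\cC$-representable subsets are subcomplexes.
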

\subsection{The Family of $\cD^d_\epsilon$-Representable Sets}
\label{sec:d-eps-rep}
We will now introduce a class of representable sets needed in this paper.
We first need an auxiliary proposition whose elementary proof is left to the reader.
\begin{proposition}
\label{prop:convex-partitions}
Convex partitions have the following properties:
\begin{itemize}
   \item[(i)] If $\cC$ and $\cC'$ are convex partitions respectively of  compact sets $X\subset\R^N$  and $X'\subset\R^{N'}$
   then
\[
   \cC\btimes\cC':=\setof{C\times C'\mid C\in\cC,\,C'\in\cC'}
\]
   is a convex partition of $X\times X'\subset\R^{N+N'}$.
   \item[(ii)] If $\cC$ is a convex partition of a compact set $X\subset\R^N$  and $A\subset\R^N$
   is a convex set such that $X\cap A$ is compact, then
\[
   \cC\bcap A:=\setof{C\cap A\mid C\in\cC,\, C\cap A\neq\emptyset}
\]
   is a convex partition of $X\cap A$.
   \item[(iii)] If $\cC$ is a convex partition of a compact set $X\subset\R^N$
   and $Y\subset X$ is a $\cC$-representable, closed set, then
\[
   \cC_Y:=\setof{C\in\cC\mid C\subset Y}
\]
   is a convex partition of $Y$.
   \qed
\end{itemize}
\end{proposition}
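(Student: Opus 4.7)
The plan is to verify, for each of the three parts separately, the defining conditions of a convex partition: that the proposed family is a finite collection of pairwise disjoint, non-empty, relatively open, convex sets whose union equals the claimed compact host set, and that it is topologically closed. Each verification reduces to elementary set-theoretic manipulation combined with standard facts from convex geometry about affine hulls, relative interiors, and product/intersection constructions.

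For part~(i), I would first observe that the product $C\times C'$ of two relatively open convex sets is itself convex, and its affine hull equals $\mathrm{aff}(C)\times\mathrm{aff}(C')$, from which relative openness is immediate. Pairwise disjointness and covering of $X\times X'$ follow directly from the corresponding properties of $\cC$ and $\cC'$. For topological closedness I would use $\cl(C\times C')=\cl C\times \cl C'$ together with the $\cC$-representability of $\cl C$ and the $\cC'$-representability of $\cl C'$ to write the closure as a union of products of cells, i.e.\ a $\cC\btimes\cC'$-representable set.

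For part~(iii), the sub-family $\cC_Y$ inherits pairwise disjointness, non-emptiness, relative openness and convexity trivially from $\cC$. The covering $Y=\bigcup\cC_Y$ follows from the representability hypothesis: writing $Y=\bigcup_{C\in\cC''}C$ for some $\cC''\subset\cC$, the disjointness of $\cC$ forces $\cC''=\cC_Y$. Topological closedness of $\cC_Y$ follows because for $C\in\cC_Y$ the closure $\cl C$ is $\cC$-representable and is contained in the closed set $Y$, so every cell $C'\in\cC$ appearing in $\cl C$ satisfies $C'\subset Y$ and therefore belongs to $\cC_Y$.

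Part~(ii) is the delicate one, and I expect the relative-openness of $C\cap A$ in its own affine hull to be the main obstacle. Convexity, disjointness of the collection, non-emptiness by construction, and covering of $X\cap A$ are immediate. The subtle point is that the intersection of a relatively open convex set with an arbitrary convex set need not be relatively open in general; however, under the hypotheses envisaged later in the paper (where $A$ will be, for instance, an affine subspace, an open or closed half-space, or more generally a relatively open convex set), one can argue that $\mathrm{aff}(C\cap A)=\mathrm{aff}(C)\cap\mathrm{aff}(A)$ and that the relative interior commutes with the intersection in the relevant sense, so $C\cap A$ is indeed relatively open in its own affine hull. For topological closedness one uses $\cl(C\cap A)\subset \cl C\cap \cl A$ combined with compactness of $X\cap A$ and the $\cC$-representability of $\cl C$ to express $\cl(C\cap A)$ as a union of sets of the form $C'\cap A$ with $C'\in\cC$, which is the desired $\cC\bcap A$-representability.
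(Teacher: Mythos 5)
Parts~(i) and~(iii) of your verification are correct and amount to the expected elementary arguments; the paper leaves the proof to the reader, so there is nothing to compare against there.

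Part~(ii) contains the gap you suspected, and your proposed repair does not close it. Under the stated hypotheses the intersection of a relatively open convex cell with a convex $A$ really can fail to be relatively open: take $X=[0,1]^2$ with the nine-cell convex partition consisting of the open square, the four open edges, and the four vertices, and take $A=[1/2,1]\times[0,1]$. Then $A$ is convex, $X\cap A$ is compact, and yet $(0,1)^2\cap A=[1/2,1)\times(0,1)$ is not open in its affine hull $\R^2$, so $\cC\bcap A$ is not a convex partition. Replacing $A$ by the closed half-plane $\{x : x_1\le 1/2\}$ gives the same failure, so the ``open or closed half-space'' clause in your patch is also incorrect; and the instance the paper actually needs, $A=\Delta^d$, is a compact simplex --- neither an affine subspace nor relatively open --- so it is not covered by your list either. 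What makes the paper's application sound is a structural compatibility: each cell $C$ of $\cC^{d}_\epsilon$ is a product of intervals drawn from $\{\{0\},(0,\epsilon),\{\epsilon\},(\epsilon,1),\{1\}\}$, so $C$ determines, coordinate by coordinate, whether $x_v$ vanishes or is strictly positive; hence $C\cap\Delta^d$ lies entirely in the relative interior of a single face of $\Delta^d$, and is therefore the intersection of two relatively open convex sets with a common point, which is relatively open by the standard commutation of relative interiors with intersections (Rockafellar, Theorem~6.5). A correct statement of~(ii) needs an additional hypothesis of this kind (for instance that $A$ be relatively open, or that the partition be compatible with the faces of $A$), or else the conclusion must be verified directly for the concrete $A$ at hand; as written, the blanket claim is false, and your sketch leaves this essential step unresolved.
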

Given an $\epsilon\in(0,1)$ one easily verifies that
\[
   \cC^1_\epsilon:=\{\{0\},(0,\epsilon),\{\epsilon\},(\epsilon,1),\{1\}\}
\]
is a convex partition of $[0,1]\subset\R$. Using Proposition~\ref{prop:convex-partitions}(i)
and proceeding recursively, we construct a convex partition $\cC^d_\epsilon$ of $[0,1]^d$
from a convex  partition $\cC^{d-1}_\epsilon$ of $[0,1]^{d-1}$
by setting $\cC^d_\epsilon:=\cC^{d-1}_\epsilon\btimes\cC^1_\epsilon$.
Thus, by Proposition~\ref{prop:convex-partitions}(ii)
we also have a convex partition $\cD^d_\epsilon:=\cC^d_\epsilon\bcap\Delta^d$
of the standard $d$-simplex $\Delta^d\subset\R^{d+1}$.

We have the following corollary of Proposition~\ref{prop:representable-sets}
and Theorem~ \ref{thm:convex-partition}.
\begin{corollary}
\label{cor:eps-d-representable-sets}
The family of $\cD^d_\epsilon$-representable sets has the following properties.
\begin{itemize}
  \item[(i)] The family is closed under the set-theoretic operations of union,
  intersection, and difference,
  as well as under the topological operations of closure, interior, and boundary.
  \item[(ii)] Every pair of sets in this family is triangulable.
  \qed
\end{itemize}
\end{corollary}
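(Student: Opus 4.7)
The plan is to show that both statements are direct consequences of the machinery developed in the previous two subsections, once one observes that $\cD^d_\epsilon$ qualifies as a convex partition of the compact set $\Delta^d$.

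For part (i), I would first verify that $\cD^d_\epsilon$ is a convex partition of $\Delta^d$. This is already essentially established by the construction in this subsection: starting from the obvious convex partition $\cC^1_\epsilon$ of $[0,1]$, one iteratively applies Proposition~\ref{prop:convex-partitions}(i) to obtain the product partition $\cC^d_\epsilon$ of $[0,1]^d$, and then Proposition~\ref{prop:convex-partitions}(ii) with $A = \Delta^d$ (a convex set with $\Delta^d \cap [0,1]^d = \Delta^d$ compact) to obtain $\cD^d_\epsilon = \cC^d_\epsilon \bcap \Delta^d$. By definition every convex partition is topologically closed, so Proposition~\ref{prop:representable-sets} applies in its full strength: the first half yields closure of the family of $\cD^d_\epsilon$-representable sets under union, intersection, and set difference, while the second half, using compactness of $\Delta^d$ and topological closedness of the partition, yields closure under the operations of closure, interior, and boundary.

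For part (ii), I would invoke Theorem~\ref{thm:convex-partition} applied to the convex partition $\cD^d_\epsilon$ of $\Delta^d$. This theorem immediately gives that $\Delta^d$ carries a regular CW structure whose closed cells are precisely the closures of the elements of $\cD^d_\epsilon$, and that every pair $(A,B)$ of closed $\cD^d_\epsilon$-representable subsets is triangulable. For a pair of representable sets that is not a priori closed, part~(i) ensures that their closures, as well as any set-theoretic residues such as $\cl A \setminus A$, are again closed and $\cD^d_\epsilon$-representable, so the triangulability claim reduces to the closed case already covered by the theorem.

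There is no real obstacle to overcome here; the proof amounts to checking that the specific partition $\cD^d_\epsilon$ fits the hypotheses of Proposition~\ref{prop:representable-sets} and Theorem~\ref{thm:convex-partition}. The only point requiring a brief comment is the verification that the iterative construction via Proposition~\ref{prop:convex-partitions} does produce a convex partition in the sense of Section~\ref{sec:cw-complexes}, but this is immediate from clauses~(i) and~(ii) of that proposition applied in sequence.
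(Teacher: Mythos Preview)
Your proposal is correct and matches the paper's approach exactly: the paper states the corollary with a \qed and simply cites Proposition~\ref{prop:representable-sets} and Theorem~\ref{thm:convex-partition}, after having constructed $\cD^d_\epsilon$ as a convex partition via Proposition~\ref{prop:convex-partitions}. Your only addition is the remark about reducing non-closed pairs to closed ones in part~(ii); the paper does not address this (and in fact only ever applies~(ii) to closed representable pairs), so that extra sentence is unnecessary, and the reduction you sketch would require a bit more care to make rigorous.
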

\subsection{Homology}
\label{sec:homology}
Given an abstract simplicial complex $\cX$ and its subcomplex $\cA$
we denote the simplicial homology of the simplicial pair $(\cX,\cA)$
by ~$H_*(\cA,\cB)$ (see \cite[Section~5]{munkres:84a}).
For a compact metric topological space $X$ and its closed subsets
$B\subset A\subset X$, unless explicitly specified otherwise,
by~$H_*(A,B)$ we mean the singular homology of the pair $(A,B)$
(see~\cite[Section~29]{munkres:84a}).
We reduce this notation to~$H_*(A,b)$ if $B=\{b\}$ is a singleton.
We note that for triangulable pairs singular homology
is isomorphic to Steenrod homology~\cite{massey:78a, steenrod:40a, steenrod:41a},
because they both satisfy the Eilenberg-Steenrod axioms
(see~\cite[Theorem 10.1.c]{eilenberg:steenrod:52a}).
Since Steenrod homology satisfies the so-called strong
excision property~\cite[Axiom 8, Theorem 5]{milnor:95a},
we get the following theorem for singular homology of triangulable pairs.
\begin{theorem}[Strong Excision for Triangulable Pairs]
\label{thm:relative-homeo}
If $f:(X,A)\to (Y,B)$ is a relative homeomorphism of compact, metric, triangulable pairs,
that is, a continuous map $f:X\to Y$  which carries~$A$ into~$B$ and $X\setminus A$
homeomorphically onto $Y\setminus B$, then $f_*:H(X,A)\to H(Y,B)$ is an isomorphism.
\qed
\end{theorem}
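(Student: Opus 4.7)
The plan is to reduce Theorem~\ref{thm:relative-homeo} to the corresponding strong excision property for Steenrod homology and transport the conclusion back to singular homology via a canonical natural isomorphism on triangulable pairs. This mirrors the argument sketched in the paragraph preceding the statement, and requires essentially no new input beyond the cited classical results.

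First I would invoke the Eilenberg--Steenrod uniqueness theorem~\cite[Theorem~10.1.c]{eilenberg:steenrod:52a}. Both singular homology and Steenrod homology satisfy the seven Eilenberg--Steenrod axioms and agree on a point, so they are naturally isomorphic on the category of compact metric triangulable pairs. Concretely, for every such pair $(X,A)$ there is an isomorphism $\eta_{(X,A)} : H_*(X,A) \to H^{\mathrm{St}}_*(X,A)$, and these isomorphisms form a natural transformation with respect to arbitrary continuous maps of pairs.

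Next I would apply Milnor's strong excision axiom for Steenrod homology~\cite[Axiom~8, Theorem~5]{milnor:95a}. Since $f:(X,A)\to(Y,B)$ is a relative homeomorphism of compact metric pairs, this axiom (which holds without any triangulability hypothesis on the Steenrod side) gives directly that the induced map $f^{\mathrm{St}}_* : H^{\mathrm{St}}_*(X,A) \to H^{\mathrm{St}}_*(Y,B)$ is an isomorphism. Combined with the naturality of $\eta$ with respect to $f$, one obtains the commutative square
\[
\begin{array}{ccc}
H_*(X,A) & \xrightarrow{\ f_* \ } & H_*(Y,B) \\[2pt]
\eta_{(X,A)}\,\bigg\downarrow & & \bigg\downarrow\,\eta_{(Y,B)} \\[2pt]
H^{\mathrm{St}}_*(X,A) & \xrightarrow{\ f^{\mathrm{St}}_* \ } & H^{\mathrm{St}}_*(Y,B)
\end{array}
\]
in which the two vertical arrows and the bottom horizontal arrow are isomorphisms. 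The commutativity then forces $f_*$ to be an isomorphism as well.

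The argument is essentially a citation chain, and the only point that requires any care is verifying that the comparison isomorphism $\eta$ is natural with respect to \emph{arbitrary} continuous maps between triangulable compact metric pairs, rather than merely simplicial or cellular ones. This is a standard consequence of the axiomatic characterization: the natural transformation between any two Eilenberg--Steenrod theories is defined on the underlying topological category of pairs, so naturality holds for every continuous map of pairs. Beyond that formal check, no real obstacle arises.
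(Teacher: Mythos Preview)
Your proposal is correct and follows exactly the approach the paper itself indicates in the paragraph preceding the theorem: use the Eilenberg--Steenrod uniqueness theorem to identify singular and Steenrod homology naturally on triangulable pairs, apply Milnor's strong excision for Steenrod homology, and transport the isomorphism back via the naturality square. The paper treats this as a citation result (hence the \qed with no proof), and your write-up simply makes the implicit argument explicit.
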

In particular, we have $H_*(A,B) \cong H_*(A/B,[B])$, a property useful in the
definition of the Conley index.

We also recall the following theorem which is a straightforward consequence
of the Vietoris-Begle Theorem~\cite{begle:50a, begle:55a} for Steenrod
homology~\cite{anh:84a,volovikov:anh:84a}(see also~\cite{dydak:86a, ferry:18a}),
the Five Lemma, the exactness, and homotopy axioms for homology
and the isomorphism between Steenrod and singular homology for
triangulable pairs.
\begin{theorem}[Relative Vietoris-Begle Theorem for Singular Homology]
\label{thm:vietoris-begle}
Assume that the map $f:(X,A)\to (Y,B)$ is a continuous surjection of
compact, metric, triangulable pairs, satisfying $f^{-1}(B)=A$ and such
that $f^{-1}(y)$ is contractible for every $y\in Y$. Then the induced
map $f_*:H(X,A)\to H(Y,B)$ is an isomorphism.
\qed
\end{theorem}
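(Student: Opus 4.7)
The plan is to reduce the relative case to the absolute Vietoris-Begle theorem for Steenrod homology, transfer to the pair via the Five Lemma, and finally pass to singular homology using the triangulability hypothesis. The authors have essentially spelled out the ingredients in the paragraph preceding the statement, so the work consists mainly in checking the hypotheses at each reduction step.

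First, I would verify that the restriction $f|_A : A \to B$ inherits all the hypotheses of the theorem. Both $A$ and $B$ are compact, metric, and triangulable (since the pairs $(X,A)$ and $(Y,B)$ are triangulable). Surjectivity of $f|_A$ follows from the surjectivity of $f$ together with the assumption $f^{-1}(B) = A$: given $b \in B$, any preimage of $b$ under $f$ lies in $f^{-1}(B) = A$. For the fibers, the identity $(f|_A)^{-1}(b) = f^{-1}(b) \cap A = f^{-1}(b)$ for every $b \in B$ shows that they remain contractible.

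Next I would apply the absolute Vietoris-Begle theorem for Steenrod homology (the cited result of Begle, together with its extensions by Anh and Volovikov) to both $f : X \to Y$ and $f|_A : A \to B$, yielding isomorphisms on Steenrod homology $f_* : H^{\mathrm{St}}_*(X) \to H^{\mathrm{St}}_*(Y)$ and $(f|_A)_* : H^{\mathrm{St}}_*(A) \to H^{\mathrm{St}}_*(B)$. Because $f$ is continuous and carries $A$ into $B$, it induces a chain map between the long exact Steenrod-homology sequences of the pairs $(X,A)$ and $(Y,B)$, and naturality makes the resulting ladder commute. The Five Lemma then forces $f_* : H^{\mathrm{St}}_*(X,A) \to H^{\mathrm{St}}_*(Y,B)$ to be an isomorphism as well.

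Finally, since both pairs are triangulable, singular and Steenrod homologies agree on them by the paragraph preceding Theorem~\ref{thm:relative-homeo} (both theories satisfy the Eilenberg-Steenrod axioms on the category of triangulable pairs, so the uniqueness theorem \cite[Theorem~10.1.c]{eilenberg:steenrod:52a} gives a natural isomorphism). Transferring the Steenrod-homology isomorphism through this natural identification yields the desired conclusion for singular homology. The only genuinely non-routine point is making sure that the natural isomorphism between Steenrod and singular homology is compatible with $f_*$, which follows automatically from the naturality provided by the Eilenberg-Steenrod uniqueness theorem; beyond that, the argument is a clean concatenation of cited results.
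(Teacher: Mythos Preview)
Your proposal is correct and follows exactly the route the paper indicates in the paragraph preceding the theorem: apply the absolute Vietoris--Begle theorem for Steenrod homology to both $f$ and $f|_A$, pass to the pair via the Five Lemma and the long exact sequences, and then identify Steenrod with singular homology via the triangulability hypothesis. The paper gives no further details beyond naming these ingredients, and you have assembled them correctly; the only item you did not name explicitly is the homotopy axiom, which is what converts ``contractible fibers'' into ``acyclic fibers'' so that the cited Vietoris--Begle theorem applies.
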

To ensure triangulability, in this paper we apply singular homology only
to pairs of closed, $\cD^d_\epsilon$-representable sets. Since by
Theorem~\ref{thm:convex-partition} such pairs are triangulable,
the singular and Steenrod homology of such pairs, as well as the
simplicial homology of any triangulation of such a pair, are isomorphic.
This lets us easily switch between singular, Steenrod, and simplicial
homology according to the context and our needs.

We note that a pair of abstract simplicial complexes $(\cX,\cA)$
may also be considered as a pair of finite topological spaces
with Alexandrov topology \cite{alexandrov:37a} induced by the face poset.
Therefore, singular homology of the pair $(\cX,\cA)$ is well-defined.
Also here there is no ambiguity, because, by McCord's Theorem \cite{mccord:66},
the singular homology of the pair $(\cX,\cA)$
is isomorphic to its simplicial homology.
\subsection{Semiflows}
\label{sec:sflows}
Our main tool for establishing the connections between combinatorial
and classical dynamics is based on Conley theory.
Hence, we now recall some basic facts of  this theory,
see for example~\cite{rybakowski:87a} for more details.

Consider a semiflow on a compact
metric space~$X$, i.e., a continuous map $\phi : \R_0^+\times X \to X$
satisfying $\phi(0,x)=x$ for all $x\in X$ and $\phi(s,\phi(t,x))=\phi(s+t,x)$
for all $x\in X$ and $s,t\in\R_0^+$. Let $I\subset\R$ be an interval.
We say that a map $\gamma:I\to X$ is a {\em solution} of $\phi$ if
for any $t\in I$ and any $s\in\R_0^+$ such that $s+t\in I$ we have
$\phi(s,\gamma(t))=\gamma(s+t)$. A solution~$\gamma$ is a {\em full
solution\/} if~$I=\R$ is satisfied.
The {\em $\alpha$- and $\omega$-limit sets} of a full solution $\gamma$ are given respectively by
\begin{displaymath}
  \alpha(\gamma) := \bigcap_{\tau \le 0} \cl\gamma((-\infty,\tau])
  \quad\mbox{ and }\quad
  \omega(\gamma) := \bigcap_{\tau \ge 0} \cl\gamma([\tau,\infty)).
\end{displaymath}

We say that a solution $\gamma:I\to X$
is a solution {\em through $x\in X$} if $0\in I$ and $\gamma(0)=x$.
Given an arbitrary subset~$N$ of~$X$ we define
\begin{eqnarray*}
  \Inv^+(N,\phi) & := & \left\{ x \in N \; \mid \;
    \phi(\R_0^+, x) \subset N \right\} , \\[1ex]
  \Inv^-(N,\phi) & := & \left\{ x \in N \; \mid \;
    \mbox{there exists a solution $\gamma : \R_0^- \to N$ of $\phi$
    through~$x$ in~$N$} \right\} , \\[1ex]
  \Inv(N,\phi) & := & \left\{ x \in N \; \mid \;
    \mbox{there exists a solution $\gamma : \R \to N$ of $\phi$
    through~$x$ in~$N$} \right\} .
\end{eqnarray*}
If the considered semiflow~$\phi$ is clear from context, we simplify this
notation to~$\Inv^- N$, $\Inv^+ N$, and~$\Inv N$, respectively.
One can easily see that $\Inv N = \Inv^+N \cap \Inv^-N$.
We say that a subset~$S \subset X$ is {\em invariant\/},
if we have $\Inv S = S$.

Of fundamental importance for Conley theory is the notion
of isolation. A closed invariant set~$S$ is called an
{\em isolated invariant set\/} if there exists a closed
neighborhood~$N$ of~$S$ such that
\begin{displaymath}
  \Inv N \; = \;
  S \; \subset \;
  \inte N .
\end{displaymath}
In this case, the set~$N$ is called an
{\em isolating neighborhood\/}. Isolating neighborhoods
play an important role in Conley theory, since they allow
one to make assertions about~$S$ by studying the dynamics
of~$\phi$ close to the boundary of~$N$. One approach is
centered around specific isolating neighborhoods called
isolating blocks. To define this notion, consider a closed
subset~$B \subset X$ and let~$x \in \bd B$ be an
arbitrary boundary point. Then~$x$ is called a
\begin{itemize}
\item {\em strict egress point\/}, if for every solution
$\gamma : [\delta_1,\delta_2] \to X$ through~$x$ with
$\delta_1 \le 0 < \delta_2$ there exists a neighborhood~$J$
of~$0$ in~$[\delta_1,\delta_2]$ with $\gamma(t) \not\in B$
for all $t \in J \cap \R^+$, as well as $\gamma(t) \in
\inte B$ for all $t \in J \cap \R^-$,
\item {\em strict ingress point\/}, if for every solution
$\gamma : [\delta_1,\delta_2] \to X$ through~$x$ with
$\delta_1 \le 0 < \delta_2$ there is a neighborhood~$J$
of~$0$ in~$[\delta_1,\delta_2]$ with $\gamma(t) \in
\inte B$ for all $t \in J \cap \R^+$ and
$\gamma(t) \not\in B$ for all $t \in J \cap \R^-$,
\item {\em bounce-off point\/}, if for every solution
$\gamma : [\delta_1,\delta_2] \to X$ through~$x$ with
$\delta_1 \le 0 < \delta_2$ there exists a neighborhood~$J$
of~$0$ in~$[\delta_1,\delta_2]$ with $\gamma(t) \not\in B$
for all $t \in J \setminus \{ 0 \}$,
\end{itemize}
where again~$\R^{\pm}$ denotes the set of all strictly
positive/negative real numbers. The set of all strict
egress, strict ingress, and bounce-off points of~$B$ are
denoted by~$B^e$, $B^i$, and~$B^b$, respectively.
We define the {\em exit set} of $B$ by
\begin{equation} \label{eq:exit-set}
   B^- := B^e \cup B^b.
\end{equation}
Then the closed set~$B \subset X$ is called an {\em isolating
block\/} if we have
\begin{equation} \label{def:classicalisoblock1}
  \bd B = B^e \cup B^i \cup B^b
\end{equation}
and
\begin{equation} \label{def:classicalisoblock2}
  \mbox{ the exit set } B^- \;\mbox{ is closed in~$X$} .
\end{equation}
One can readily see that every compact isolating
block~$B$ is an isolating neighborhood for the invariant
set~$S = \Inv B$, since no full solution in~$B$ can touch
the boundary~$\bd B$ due to the lack of internal
tangencies of solutions at the boundary of~$B$.

Knowledge of an isolating block often suffices to make
statements about the isolated invariant set~$S$, even
if~$S$ is unknown. For this, Conley \cite{conley:78a} defined the {\em homotopy
Conley index of~$S$\/} as the homotopy type of the pointed space
\begin{displaymath}
  h(S) \; := \; \left[B / B^-, \; [B^-] \right] .
\end{displaymath}
The Conley index is well-defined, because, given~$S$, an isolating
block such that $S=\Inv B$ always exists and the homotopy
type is independent of the choice of~$B$ up to homotopy equivalence.
It only depends on the underlying isolated invariant set~$S$.
In this paper we exclusively use the derived {\em homological Conley
index of~$S$\/} defined as the Steenrod homology of the pointed space
\begin{displaymath}
  CH_*(S) \; := \; H_*\left( B / B^-, \; [B^-] \right),
\end{displaymath}
where the Steenrod homology on the right-hand side may be replaced with singular homology
when the pair $(B,B^-)$ is triangulable.
Since we work with compact metric spaces,
the strong excision property of Steenrod homology enables us to rewrite
the Conley index as the relative homology
\begin{displaymath}
  CH_*(S) \; = \; H_*(B,B^-) .
\end{displaymath}
Note that for the computation of the Conley index only one isolating block
is necessary. The homological Conley index is a graded Abelian group, i.e.,
the notation~$CH_*(S)$ denotes a sequence~$( CH_k(S) )_{k=0}^{\infty}$ of Abelian
groups~$CH_k(S)$. The celebrated Wa\.zewski principle can then be stated
as follows: If at least one of the homology groups~$CH_k(S)$ is non-trivial,
then we necessarily have $S \neq \emptyset$.

While the Conley index allows one to study specific isolated invariant sets,
we are frequently interested in a finer decomposition of a given isolated
invariant set into smaller ones. For this, we need to introduce the notion
of Morse decomposition for semiflows.
Let~$S \subset X$ denote an isolated invariant set for the semiflow $\varphi$.
A {\em Morse decomposition\/} of~$S$ (see~\cite{mccord:mischaikow:92a})
is a collection $M = \{ M_p \mid p \in \P \}$
of mutually disjoint isolated invariant sets~$M_p \subset S$
and a strict partial order~$>$ on~$\P$ which satisfy the following property.
For every $x \in S$, we either have $x \in M_p$ for some~$p \in \P$, or there
exist $p,q \in \P$ with $p > q$ and a full solution~$\gamma$ through~$x$
such that the $\alpha$- and $\omega$-limit sets satisfy
\begin{displaymath}
  \alpha(\gamma)  \subset M_p
  \quad\mbox{ and }\quad
  \omega(\gamma)  \subset M_q .
\end{displaymath}
\subsection{Combinatorial Vector Fields}
\label{sec:cvf}
We first recall the original definition of combinatorial vector field by
Forman~\cite{forman:98a}.
\begin{definition}[Combinatorial Vector Field as a Map] \label{def:combvfield-forman}
A combinatorial vector field on a simplicial complex~$\cX$ is a map $\cV : \cX \to \cX \cup\{0\}$
such that
\begin{itemize}
\item[(i)]   if $\cV(\sigma)\neq 0$, then $\sigma$ is a facet of $\cV(\sigma)$,
\item[(ii)]  if $\tau\in\im \cV\setminus\{0\}$, then $\cV(\tau)=0$,
\item[(iii)] for  $\tau\in\cX$ the cardinality of  $\cV^{-1}(\tau)$ is at most one.
\end{itemize}
\end{definition}
In this paper we use the following, equivalent definition.
\begin{definition}[Combinatorial Vector Field as a Partition] \label{def:combvfield}
A {\em combinatorial vector field\/}~$\cV$ on a simplicial complex~$\cX$ is
a partition of~$\cX$ into singletons and doubletons such that each doubleton
consists of a simplex and one of its facets.
\end{definition}
The equivalence of the two definitions is established by the following proposition,
which is straightforward to verify.
\begin{proposition} \label{prop:combvfield}
If~$\cV$ is a combinatorial vector field in the sense of
Definition~\ref{def:combvfield-forman}, then
\begin{displaymath}
  \setof{\{\sigma,\cV(\sigma)\} \, \mid \, \cV(\sigma)\neq 0} \;\cup\;
  \setof{\{\sigma\}\,:\,\cV(\sigma)= 0,\;\cV^{-1}(\sigma) = \emptyset}
\end{displaymath}
is a combinatorial vector field in the sense of Definition~\ref{def:combvfield}.
If~$\cV$ is a combinatorial vector field in the sense of Definition~\ref{def:combvfield},
then
\begin{displaymath}
  \tau\mapsto
  \begin{cases}
    \sigma & \text{ if $\{\tau,\sigma\}\in\cV$ and $\tau$ is a facet of $\sigma$,}\\
    0 & \text{ otherwise,}
  \end{cases}
\end{displaymath}
is a combinatorial vector field in the sense of Definition~\ref{def:combvfield-forman}.
Moreover, the two constructions are mutually inverse.
\qed
\end{proposition}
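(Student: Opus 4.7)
The plan is to verify the three claims separately: that the first construction produces a partition in the sense of Definition~\ref{def:combvfield}, that the second construction produces a map satisfying conditions (i)--(iii) of Definition~\ref{def:combvfield-forman}, and that the two constructions are mutually inverse. All three parts reduce to straightforward case analyses, so I expect no real obstacle beyond careful bookkeeping; the key is to exploit conditions (ii) and (iii) of Forman's definition in order to rule out the bad overlaps.

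For the first direction (Forman $\Rightarrow$ partition), I would fix an arbitrary $\tau \in \cX$ and split into three cases: (a) $\cV(\tau) \neq 0$, (b) $\cV(\tau) = 0$ with $\tau \in \im\cV\setminus\{0\}$, and (c) $\cV(\tau) = 0$ with $\tau \notin \im\cV$. In case (a) the simplex $\tau$ appears as the tail of the doubleton $\{\tau,\cV(\tau)\}$; in case (b) uniqueness of the preimage from~(iii) gives a single doubleton $\{\sigma,\tau\}$ with $\cV(\sigma)=\tau$; and in case (c) the simplex $\tau$ appears as a singleton by construction. This shows every simplex is covered. For disjointness of the proposed parts, I would suppose two doubletons $\{\sigma_1,\cV(\sigma_1)\}$ and $\{\sigma_2,\cV(\sigma_2)\}$ share an element and eliminate the four sub-cases: $\sigma_1 = \sigma_2$ is trivial, $\cV(\sigma_1) = \cV(\sigma_2)$ forces $\sigma_1=\sigma_2$ by~(iii), and the mixed cases $\sigma_1 = \cV(\sigma_2)$ or $\cV(\sigma_1)=\sigma_2$ contradict~(ii). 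Disjointness of singletons and doubletons is immediate from the definition of the singleton part. Finally, condition~(i) immediately yields that each doubleton consists of a simplex and one of its facets.

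For the second direction (partition $\Rightarrow$ Forman), I first check well-definedness of the assignment $\tau\mapsto\sigma$: since $\cV$ partitions $\cX$, each $\tau$ lies in exactly one part, and if that part is a doubleton $\{\tau,\sigma\}$ the facet relation picks out a unique candidate because $\tau$ and $\sigma$ have different dimensions. Condition~(i) is built into the construction. For~(ii), if $\tau\in\im\cV\setminus\{0\}$ then there is a doubleton $\{\sigma,\tau\}\in\cV$ with $\sigma$ a facet of~$\tau$; any putative value $\cV(\tau)=\rho\neq 0$ would yield another doubleton $\{\tau,\rho\}\in\cV$ containing~$\tau$, and a dimension count $\dim\sigma=\dim\tau-1 \neq \dim\rho$ shows these two doubletons are distinct, contradicting the partition property. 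Condition~(iii) follows similarly: if $\cV(\sigma_1)=\cV(\sigma_2)=\tau$ then the parts $\{\sigma_1,\tau\}$ and $\{\sigma_2,\tau\}$ intersect non-trivially and hence coincide, forcing $\sigma_1=\sigma_2$.

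Finally, to show the two constructions are mutually inverse, I would start from a Forman map $\cV$, apply the first construction to obtain a partition $\cV'$, and then apply the second construction to $\cV'$; a direct case check along the three cases (a)--(c) above shows that the resulting map agrees with the original $\cV$ on every simplex. The symmetric round trip, starting from a partition and verifying that building the map and then re-extracting the partition recovers the original, is analogous: each doubleton $\{\tau,\sigma\}\in\cV$ with $\tau$ a facet of $\sigma$ is reproduced as $\{\tau,\cV'(\tau)\}$, and each singleton $\{\tau\}$ is reproduced because $\cV'(\tau)=0$ and $\tau\notin\im\cV'$. No analytic obstacles appear anywhere; the only subtlety is the dimension parity argument in the proof of~(ii), which is the one spot where the facet condition is genuinely used rather than merely recorded.
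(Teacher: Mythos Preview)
Your proposal is correct and carries out precisely the routine case analysis that the paper omits: the paper states the proposition with a \qed and no argument, regarding it as straightforward to verify, and your write-up supplies exactly those verifications.
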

Assume now that~$\cV$ is a fixed combinatorial vector field on~$\cX$.
We say that a simplex~$\sigma\in\cX$ is a {\em critical cell\/} if $\{\sigma\}\in\cV$.
A doubleton $\{\tau,\sigma\}\in \cV$ is an {\em arrow\/} of $\cV$. The
facet relation in an arrow of~$\cV$ lets us write an arrow in the form
$\tau\to\sigma$ meaning that $\{\tau,\sigma\}\in \cV$ is an arrow and~$\tau$
is a facet of~$\sigma$. If $\tau\to\sigma$ is an arrow of~$\cV$, then we
say that~$\tau$ is the {\em tail\/} of~$\sigma$ and~$\sigma$ is the {\em head\/}
of~$\tau$. We say that $\tau\in\cX$ is a {\em tail\/} if it is the tail of some
$\sigma\in\cX$. We say that $\sigma\in\cX$ is a {\em head\/} if it is the head
of some $\tau\in\cX$. We denote the set of critical cells, tails, and heads
of~$\cV$ by~$\Crit \cV$, $\Tail \cV$, and~$\Head\cV$, respectively. Note that
\begin{displaymath}
  \cX=\Crit\cV\cup\Tail\cV\cup\Head\cV ,
\end{displaymath}
and all of these three sets are mutually disjoint. Finally, for a simplex
$\sigma\in\cX$ we write
\begin{eqnarray}
\label{def:flowtiles1}
     \sigma^-&:=&\begin{cases}
             \tau & \text{ if $\tau$ is a facet of $\sigma$ and $\{\tau,\sigma\}\in\cV$,}\\
             \sigma        &  \text{ otherwise,}
            \end{cases}\\
   \sigma^+&:=&\begin{cases}
             \tau & \text{ if $\sigma$ is a facet of $\tau$ and $\{\tau,\sigma\}\in\cV$,}\\
             \sigma        &  \text{ otherwise.}
            \end{cases}
\end{eqnarray}
Thus, $\sigma$ is a critical cell if $\sigma^-=\sigma^+$, a tail if
$\sigma=\sigma^-\neq\sigma^+$, and a head if $\sigma=\sigma^+\neq\sigma^-$.
We extend this notation to an $\omega\in\cV$ by setting $\omega^-:=\omega^+:=\sigma$
if $\omega$ is a critical cell $\{\sigma\}$ and $\omega^-:=\tau$, $\omega^+:=\sigma$
if $\omega$ is an arrow $\tau\to\sigma$.
\section{Cell Decompositions and Admissible Semiflows}
\label{sec3}
In this section we lay the groundwork for the semiflow extension problem
which was outlined in the introduction. We begin with recalling the cell
decomposition which forms the foundation of our approach and which was
introduced and used in~\cite{batko:etal:20a, kaczynski:etal:16a}. In
addition, we define the notions of admissible and strongly admissible
semiflow.

In this and the following sections
we assume that~$\cX$ is a fixed simplicial complex
and $\cV$ is a fixed combinatorial vector field  on $\cX$.
Furthermore, we suppose that $X := |\cX|$ is the underlying polytope
of the standard geometric realization of $\cX$
and $\epsilon$ is a fixed  constant satisfying
\begin{equation} \label{eq:epsilon}
 0 < \epsilon < \frac{1}{1 + \dim\cX}.
\end{equation}
\subsection{A First Cell Decomposition of the Underlying Polytope}
\label{sec22}
The goal of this paper is the construction of a continuous-time
semiflow which mimics the behavior of the underlying combinatorial
vector field. For example, in the situation shown in
Figure~\ref{fig:designexample} we would like the critical triangle to
correspond to an unstable equilibrium of index two. As we intend to
use Conley theory to formalize the connection between the two
frameworks, it will be necessary to work with {\em isolated invariant
sets\/}, i.e., with invariant sets which in some sense can be separated
from the surrounding dynamics via neighborhoods. Another glance at the
rightmost image in Figure~\ref{fig:designexample} shows that this can
easily be done for the index two equilibrium at the center of the
triangle~$ABD$. However, the index one equilibrium on the edge~$BD$
is another matter. While in the picture one can clearly isolate this
stationary state via a small neighborhood, this neighborhood necessarily
has to cover parts of the adjacent two-dimensional simplices~$ABD$ and~$BCD$.
In other words, relying purely on the decomposition of the polytope~$X$
given by the simplices in~$\cX$ will not be enough to design an easily
implementable construction of isolating neighborhoods in the general
case.

This situation is similar to the one encountered in our previous
papers~\cite{batko:etal:20a, kaczynski:etal:16a}, and it was resolved
by the introduction of a new cell decomposition of the polytope~$X$.
While this decomposition is inherently connected to the simplices
in~$\cX$, it leads to cells which have nontrivial intersection with
all the cofaces of a given simplex. Since we will use the same cell
decomposition as the foundation for our semiflow completion problem,
we recall a few definitions and results from the above-cited papers.
\begin{figure}[tb]
  \centering
  \includegraphics[width=0.55\textwidth]{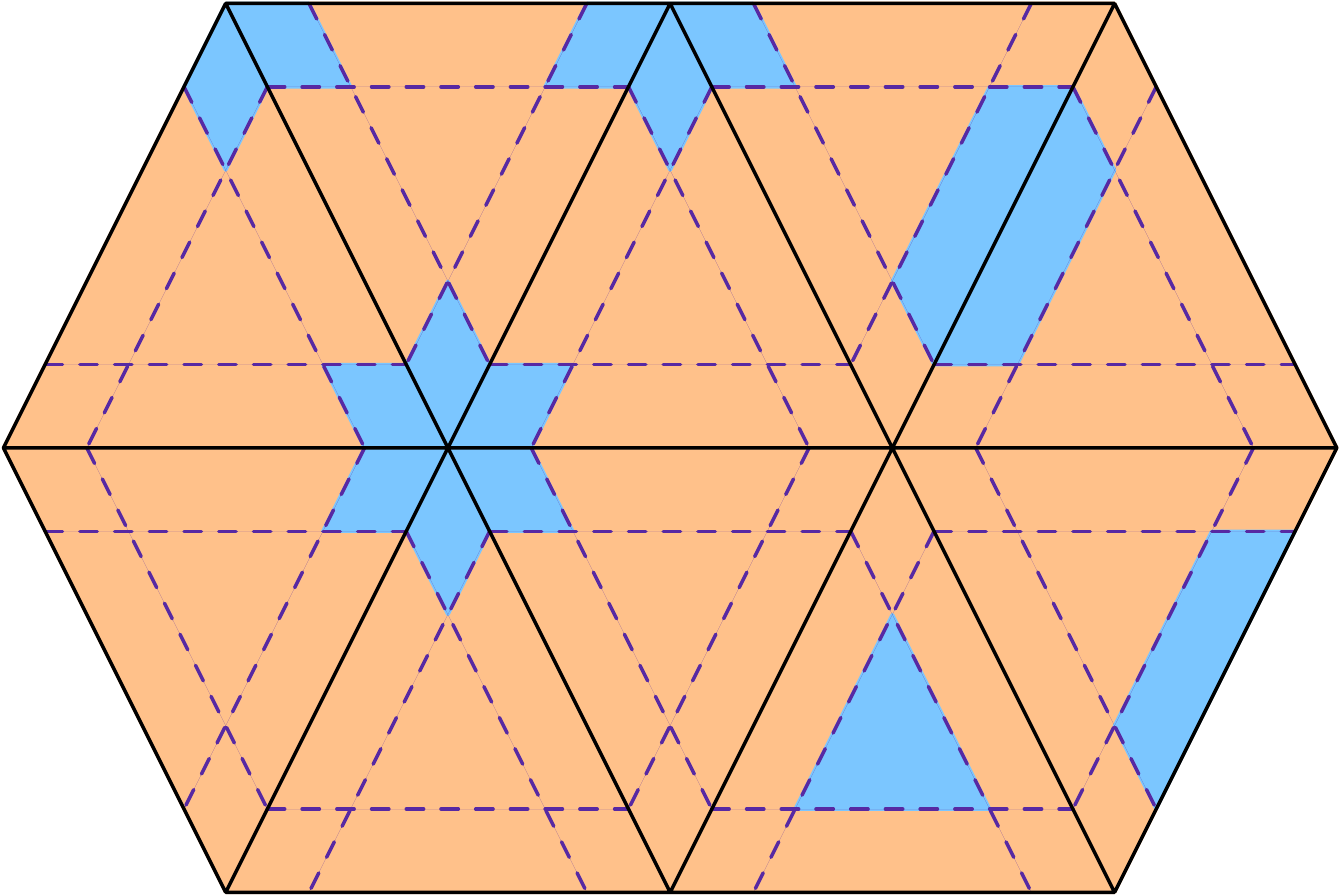}
  \hspace*{1.0cm}
  \includegraphics[width=0.34\textwidth]{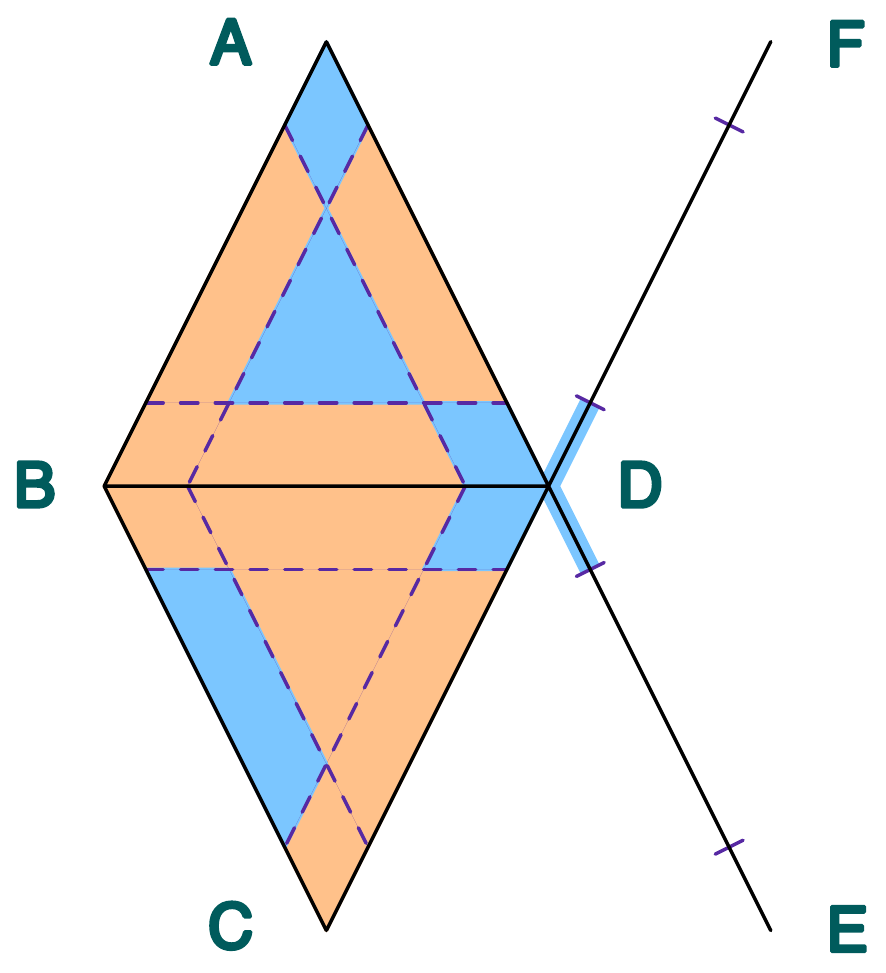}
  \caption{Sample $\epsilon$-cell decomposition boundaries for a
           simplicial complex~$\cX$, as introduced in
           Definition~\ref{def:epsiloncell}. The left panel shows
           a complex~$\cX$ which consists of ten vertices, nineteen
           edges, and ten triangles, and whose polytope~$X$ is
           homeomorphic to a closed disk. The dashed purple lines indicate
           the boundaries of $\epsilon$-cells, and the image shows
           six specific $\epsilon$-cells as blue polygons. The
           three cells in the left half of the diagram correspond
           to vertices in~$\cX$, while the two right-most cells are
           for edges. Finally, the blue triangle on the lower right
           is associated with a $2$-simplex. All of these cells are
           open subsets of~$X = |\cX|$. Similarly, the right panel
           shows the decomposition into $\epsilon$-cells for the
           complex~$\cX$ of Figure~\ref{fig:designexample}. Note in
           particular the $\epsilon$-cell corresponding to the
           vertex~$D$, which reaches into all cofaces, regardless
           of their dimension.}
  \label{fig:celldecomp}
\end{figure}
\begin{definition}[$\epsilon$-Cell Associated with a Simplex]
\label{def:epsiloncell}
For every simplex~$\sigma \in \cX$
its associated {\em $\epsilon$-cell\/} is defined as the set
\begin{displaymath}
  \cse{\sigma} \; := \;
  \left\{ x \in X \, \mid \,
    t_v(x) > \epsilon \;\mbox{ for all }\; v \in \sigma
    \quad\mbox{and}\quad
    t_v(x) < \epsilon \;\mbox{ for all }\; v \notin \sigma
    \right\} \; \subset \; X .
\end{displaymath}
\end{definition}
This definition is illustrated in Figure~\ref{fig:celldecomp}.
The left panel shows a simplicial complex~$\cX$ which consists
of ten vertices, nineteen edges, and ten triangles, and whose
polytope~$X$ is homeomorphic to a closed disk. The dashed purple lines
indicate the boundaries of $\epsilon$-cells, and they consist of
points $x \in X$ which have at least one barycentric coordinate
equal to~$\epsilon$. Six specific $\epsilon$-cells are shown as blue
polygons. The three cells in the left half of the figure correspond
to vertices in~$\cX$, while the two right-most cells are for edges.
Finally, the blue triangle on the lower right is associated with a
$2$-simplex. The right panel of the figure depicts sample
$\epsilon$-cells for the complex of Figure~\ref{fig:designexample}.

All but one of the $\epsilon$-cells in Figure~\ref{fig:designexample} are
homeomorphic to an open Euclidean ball. The cell showing that
in general $\epsilon$-cells need not be homeomorphic to open Euclidean balls
is the blue cell containing the vertex~$D$ in the right panel.
But, $\epsilon$-cells are finite unions of relatively open, convex sets
belonging to the family~$\cD^d_\epsilon$ defined in Section~\ref{sec:d-eps-rep}.
In other words, we have the following proposition.
\begin{proposition}[Representability of $\epsilon$-Cells]
\label{prop:cse-representable}
Every $\epsilon$-cell is $\cD^d_\epsilon$-representable.
\end{proposition}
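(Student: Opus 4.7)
The plan is to unpack both definitions in barycentric coordinates and observe that the conditions defining $\cse{\sigma}$ cut exactly along the hyperplanes used to build $\cD^d_\epsilon$. In the standard geometric realization, every point $x \in X$ is determined by its barycentric coordinates $(x_v)_{v \in \cX_0}$, and $X$ sits inside the standard simplex spanned by the versors of $\R^{\card{\cX_0}}$. By construction, a point's class in the product partition $\cC^d_\epsilon$ is specified coordinate-by-coordinate: for each vertex $v$ one records in which of the five one-dimensional classes $\{0\}$, $(0,\epsilon)$, $\{\epsilon\}$, $(\epsilon,1)$, $\{1\}$ the value $x_v$ lies. The same coordinate-wise description transfers to $\cD^d_\epsilon$ after intersection with the standard simplex.

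Next, the conditions defining $\cse{\sigma}$ in Definition~\ref{def:epsiloncell} are purely coordinate-wise: $x_v > \epsilon$ for $v \in \sigma$ and $x_v < \epsilon$ for $v \notin \sigma$. Since the only interior breakpoint of $\cC^1_\epsilon$ is $\epsilon$, neither strict inequality splits any $\cC^1_\epsilon$-class: the condition $x_v > \epsilon$ is realized as the union of the two classes $(\epsilon,1)$ and $\{1\}$, while $x_v < \epsilon$ is the union of $\{0\}$ and $(0,\epsilon)$. Because a $\cC^d_\epsilon$-class is by definition a product of $\cC^1_\epsilon$-classes, the set of points whose coordinate classes satisfy all of the above constraints simultaneously is itself a union of $\cC^d_\epsilon$-classes. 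Intersecting with the standard simplex, and noting that $X$ is a subcomplex and hence a union of closed faces defined by coordinate conditions of the form $x_v = 0$ (so $X$ is already $\cD^d_\epsilon$-representable), displays $\cse{\sigma}$ as a union of elements of $\cD^d_\epsilon$ contained in $X$.

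The only subtlety is the alignment carried out in the middle paragraph: one must verify that the strict inequalities $x_v > \epsilon$ and $x_v < \epsilon$ match the open/closed structure of the classes $\{0\}, (0,\epsilon), \{\epsilon\}, (\epsilon,1), \{1\}$ so that no class is merely partially captured. This is immediate from the explicit list, so there is no genuine obstacle; the proposition is in the end a direct bookkeeping consequence of the fact that both constructions employ the very same cutoff value $\epsilon$ in each barycentric coordinate.
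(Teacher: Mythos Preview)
Your argument is correct and follows essentially the same idea as the paper's proof: both exploit that the defining inequalities of $\cse{\sigma}$ are coordinate-wise and use only the threshold value~$\epsilon$, so they respect the partition~$\cC^d_\epsilon$. The only cosmetic difference is packaging: the paper decomposes $\cse{\sigma}$ as the disjoint union $\bigcup_{\tau\in\cX}\bigl(\cse{\sigma}\cap\stackrel{\circ}{\tau}\bigr)$ and identifies each nonempty piece directly as an element of~$\cD^d_\epsilon$, whereas you first show the coordinate conditions cut out a $\cC^d_\epsilon$-representable set in the ambient cube and then intersect with the representable set~$X$. Your route through the intersection with~$X$ corresponds precisely to the paper's union over~$\tau\in\cX$, since $X=\bigcup_{\tau\in\cX}\stackrel{\circ}{\tau}$.
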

\begin{proof}
Consider $\sigma,\tau\in\cX$ and an $x\in X$.
Then $x\in\cse{\sigma}\cap\stackrel{\circ}{\tau}$ if and only if
for every $v\in\cX_0$ we have $t_v(x)\in I^{\epsilon,\tau,\sigma}_v$ where
\[
    I^{\epsilon,\tau,\sigma}_v:=\begin{cases}
           \{0\} & \text{ for $v\not\in\tau,\;$ }\\
          (0,\epsilon) & \text{ for $v\in\tau\setminus\sigma,$ }\\
           (\epsilon,1]  & \text{ for $v\in\tau\cap\sigma$.}
        \end{cases}
\]
Hence,  $\cse{\sigma}\cap\stackrel{\circ}{\tau}\neq\emptyset$
implies $\cse{\sigma}\cap\stackrel{\circ}{\tau}\in \cD^d_\epsilon$.
Since we have
\[
   \cse{\sigma}=\bigcup_{\tau\in\cX} \cse{\sigma}\cap\stackrel{\circ}{\tau}
\]
for every $\sigma\in\cX$, the conclusion follows.
\end{proof}
\medskip

The following result states a number of elementary
properties of $\epsilon$-cells which were established
in~\cite[Lemma~4.5]{kaczynski:etal:16a}. In particular, it contains
an explicit characterization of the closures of
$\epsilon$-cells which will be crucial later on.
\begin{lemma}[Properties of $\epsilon$-Cells]
\label{lem:propepscells}
The $\epsilon$-cells introduced in Definition~\ref{def:epsiloncell}
for different simplices in~$\cX$ are disjoint. Moreover, for every
simplex $\sigma \in \cX$ the $\epsilon$-cell~$\cse{\sigma}$ is a
nonempty open subset of the topological space~$X$, and its topological
closure can be characterized as
\begin{displaymath}
  \cl\cse{\sigma} \; = \;
  \left\{ x \in X \, \mid \,
    t_v(x) \ge \epsilon \;\mbox{ for all }\; v \in \sigma
    \quad\mbox{and}\quad
    t_v(x) \le \epsilon \;\mbox{ for all }\; v \notin \sigma
    \right\} .
\end{displaymath}
\qed
\end{lemma}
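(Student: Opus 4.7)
The plan is to verify the three assertions in order, with the closure formula being the substantive one.

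First I would dispose of disjointness: if $\sigma \neq \sigma'$, pick $v$ in the symmetric difference, say $v \in \sigma \setminus \sigma'$; then no point $x$ can simultaneously satisfy $t_v(x) > \epsilon$ (required by $\cse{\sigma}$) and $t_v(x) < \epsilon$ (required by $\cse{\sigma'}$). Openness is equally immediate since $\cse{\sigma}$ is cut out of $X$ by finitely many strict inequalities in the continuous functions $t_v$. For non-emptiness, I would produce the barycenter $b \in X$ of $\sigma$, whose barycentric coordinates are $t_v(b) = 1/\card\sigma$ for $v \in \sigma$ and $0$ otherwise; the constraint~\eqref{eq:epsilon} gives $1/\card\sigma \ge 1/(1+\dim\cX) > \epsilon$, so $b \in \cse{\sigma}$.

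The main task is the closure formula. Let $C_\sigma$ denote the right-hand side. Because $C_\sigma$ is cut out of $X$ by finitely many non-strict inequalities on continuous functions, it is closed and obviously contains $\cse{\sigma}$, so $\cl\cse{\sigma} \subseteq C_\sigma$. The content of the claim is the reverse inclusion, which I would establish constructively: given $x \in C_\sigma$, approximate it by $x_n := (1 - 1/n)\, x + (1/n)\, b$ with $b$ the barycenter above, and verify $x_n \in \cse{\sigma}$. Provided the computation
\[
  t_v(x_n) = (1 - 1/n)\, t_v(x) + (1/n)\, t_v(b)
\]
is legitimate, one gets $t_v(x_n) > \epsilon$ whenever $v \in \sigma$ (using $t_v(x) \ge \epsilon$ together with $t_v(b) = 1/\card\sigma > \epsilon$) and $t_v(x_n) < \epsilon$ whenever $v \notin \sigma$ (using $t_v(b) = 0$ and $t_v(x) \le \epsilon$), so $x_n \in \cse{\sigma}$ and $x_n \to x$ yields $x \in \cl\cse{\sigma}$.

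The one step that needs care is justifying that affine formula, since naively taking convex combinations of points in $X$ need not remain in $X$ and $t_v$ is only piecewise linear globally. To resolve this I would pick any closed simplex $\tau \in \cX$ containing $x$ and note that, because $t_v(x) \ge \epsilon > 0$ for every $v \in \sigma$ and $t_v$ vanishes outside the star of $v$, each such $v$ is a vertex of $\tau$; hence $\sigma$ is a face of $\tau$, so $b \in \sigma \subseteq \tau$, and the entire segment from $x$ to $b$ remains inside the convex geometric simplex $\tau$ on which barycentric coordinates are genuinely affine. This confinement to a common ambient simplex is the only mild obstacle; the remaining verifications are bookkeeping with the strict and non-strict inequalities defining $\cse{\sigma}$ and $C_\sigma$.
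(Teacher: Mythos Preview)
Your argument is correct and complete. The paper does not actually prove this lemma; it is stated with a \qed\ and attributed to \cite[Lemma~4.5]{kaczynski:etal:16a}, so there is no in-paper proof to compare against. Your approach---barycenter for non-emptiness, and convex interpolation toward the barycenter inside a common ambient simplex for the closure formula---is the natural one and presumably matches the cited reference; the care you take to confine the interpolation to a single geometric simplex~$\tau$ (by observing that $\sigma$ must be a face of~$\sigma^0(x)$) correctly handles the only genuine subtlety.
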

It is clear from this result, see also Figure~\ref{fig:celldecomp},
that the $\epsilon$-cells provide a decomposition of a certain subset
of~$X$, but not of the whole polytope. However, by considering the
closures of $\epsilon$-cells one can easily show that
\begin{equation} \label{eq:celldecomp}
  X \; = \; |\cX| \; = \;
  \bigcup_{\sigma \in \cX} \cl\cse{\sigma} ,
\end{equation}
which is a cell decomposition of the polytope~$X$ into closed
cells which intersect at most on their boundaries. This cell
decomposition forms the backbone for our semiflow construction,
and it requires us to have a comprehensive understanding and
characterization of how the closures of $\epsilon$-cells
intersect, and which underlying simplices~$\sigma$ lead to
intersections. We therefore recall both the following definition
and the simple result from~\cite[Lemma~4.3]{kaczynski:etal:16a}.
\begin{definition}[$\epsilon$-Characteristic Simplices]
\label{def:charactcells}
Let~$x \in X$ be an arbitrary point in the polytope~$X$. Then the
{\em minimal and maximal $\epsilon$-characteristic simplices of~$x$\/}
are defined by
\begin{eqnarray}
  \sigma^\epsilon_{\min}(x) & := &
    \left\{ v \in \cX_0 \, \mid \, t_v(x) > \epsilon \right\}
    \quad\mbox{ and }
    \label{def:charactcells1} \\[1ex]
  \sigma^\epsilon_{\max}(x) & := &
    \left\{ v \in \cX_0 \, \mid \, t_v(x) \ge \epsilon \right\} ,
    \label{def:charactcells2}
\end{eqnarray}
respectively, and the {\em set of $\epsilon$-characteristic
simplices\/} is defined as
\begin{equation} \label{def:charactcells3}
  \cX^\epsilon(x) \; := \;
  \left\{ \sigma \in \cX \, \mid \,
    t_v(x) \ge \epsilon \;\mbox{ for all }\; v \in \sigma
    \quad\mbox{ and }\quad
    t_v(x) \le \epsilon \;\mbox{ for all }\; v \not\in \sigma
    \right\} .
\end{equation}
We also set
\begin{equation}
\label{eq:sigma-0}
   \sigma^0(x):=\setof{v\in\cX_0\,\mid\, t_v(x)>0}.
\end{equation}
\end{definition}
We note that $\sigma^0(x)$ is the smallest simplex $\sigma\in X$ which
satisfies $x\in\sigma$, and it is also the unique simplex $\sigma\in\cX$
such that $x\in\;\stackrel{\circ}{\sigma}$. The following result follows
from~\cite[Lemma~4.5]{kaczynski:etal:16a}.
\begin{lemma}[Upper Semi-Continuity of the Set of
              $\epsilon$-Characteristic Simplices ]
\label{lem:xepssemicontinuity}
Consider the set of $\epsilon$-characteristic simplices introduced
in Definition~\ref{def:charactcells}. Then for all $x \in X$
we have~$\cX^\epsilon(x) \neq \emptyset$. Moreover, there exists a
neighborhood~$U$ of the point~$x$ such that the inclusion
$\cX^\epsilon(y) \subset \cX^\epsilon(x)$ is satisfied for all
$y \in U$. This may be rephrased by saying that
the mapping $x \mapsto \cX^\epsilon(x)$ is strongly upper
semi-continuous, see~\cite[Definition~3.3]{barmak:etal:20a}.
\qed
\end{lemma}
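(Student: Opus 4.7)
The plan is to prove both parts directly from the definitions, exploiting the bound $\epsilon < 1/(1+\dim\cX)$ for nonemptiness and the continuity of barycentric coordinates together with the finiteness of $\cX_0$ for the semi-continuity.

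For nonemptiness, I would show that $\sigma^\epsilon_{\max}(x)$ itself lies in $\cX^\epsilon(x)$. First, since the barycentric coordinates are nonnegative and sum to $1$, and since only vertices of the unique simplex $\sigma^0(x) \in \cX$ with $x \in \stackrel{\circ}{\sigma^0(x)}$ have positive barycentric coordinates, at least one vertex $v \in \sigma^0(x)$ must satisfy
\[
 t_v(x) \;\ge\; \frac{1}{\card{\sigma^0(x)}} \;\ge\; \frac{1}{1+\dim\cX} \;>\; \epsilon.
\]
This shows $\sigma^\epsilon_{\max}(x) \neq \emptyset$. Moreover, every $v \in \sigma^\epsilon_{\max}(x)$ satisfies $t_v(x) \ge \epsilon > 0$, hence $\sigma^\epsilon_{\max}(x) \subset \sigma^0(x)$. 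Since simplicial complexes are closed under taking nonempty subsets, $\sigma^\epsilon_{\max}(x) \in \cX$. The defining conditions in \eqref{def:charactcells3} then follow immediately: vertices in $\sigma^\epsilon_{\max}(x)$ satisfy $t_v(x) \ge \epsilon$ by construction, while vertices $v \notin \sigma^\epsilon_{\max}(x)$ satisfy $t_v(x) < \epsilon$, hence certainly $t_v(x) \le \epsilon$.

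For the upper semi-continuity statement, I would exploit that $\cX_0$ is finite and each $t_v : X \to [0,1]$ is continuous. For every $v \in \cX_0$ with $t_v(x) > \epsilon$, continuity provides a neighborhood $U_v$ of $x$ on which $t_v > \epsilon$; for every $v$ with $t_v(x) < \epsilon$, continuity provides a neighborhood $U_v$ on which $t_v < \epsilon$; for $v$ with $t_v(x) = \epsilon$, set $U_v := X$. Let $U := \bigcap_{v \in \cX_0} U_v$, which is a neighborhood of $x$ as a finite intersection. Now fix $y \in U$ and any $\sigma \in \cX^\epsilon(y)$. I need to verify $\sigma \in \cX^\epsilon(x)$. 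For $v \in \sigma$ we have $t_v(y) \ge \epsilon$; if $t_v(x)$ were strictly less than $\epsilon$ then $y \in U_v$ would force $t_v(y) < \epsilon$, a contradiction, so $t_v(x) \ge \epsilon$. Symmetrically, for $v \notin \sigma$ we have $t_v(y) \le \epsilon$, and if $t_v(x) > \epsilon$ then $y \in U_v$ would force $t_v(y) > \epsilon$, a contradiction, so $t_v(x) \le \epsilon$. Thus $\sigma \in \cX^\epsilon(x)$, proving $\cX^\epsilon(y) \subset \cX^\epsilon(x)$ on $U$.

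The only step requiring genuine care is the verification that $\sigma^\epsilon_{\max}(x)$ is actually a simplex of $\cX$, which is precisely where the assumed upper bound $\epsilon < 1/(1 + \dim\cX)$ is used; everything else is a routine application of continuity and the finiteness of the vertex set. I do not anticipate any other real obstacles, as the definitions of $\cX^\epsilon$ translate transparently into open/closed conditions on individual barycentric coordinates, and the argument separates neatly across vertices.
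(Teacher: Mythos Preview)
Your proof is correct. Note that the paper does not actually supply its own proof of this lemma: it is stated with a \qed and attributed to \cite[Lemma~4.5]{kaczynski:etal:16a}, so there is nothing in the paper to compare against beyond the citation. Your direct argument from the definitions---using the bound $\epsilon < 1/(1+\dim\cX)$ to ensure $\sigma^\epsilon_{\max}(x)$ is nonempty (hence a face of $\sigma^0(x)$ and thus a simplex of~$\cX$), and then using continuity of the finitely many barycentric-coordinate functions to freeze the strict inequalities on a neighborhood---is exactly the natural elementary proof and would serve as a self-contained replacement for the citation.
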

This result allows us to close the circle and reveal the
connection between $\epsilon$-characteristic simplices and
the notion of $\epsilon$-cells introduced in
Definition~\ref{def:epsiloncell}. To see this, notice that
the strong upper semicontinuity of~$\cX^\epsilon(x)$ with
respect to~$x$ implies that if at a given point~$x \in X$
the set~$\cX^\epsilon(x)$ consists of exactly one simplex~$\sigma$,
then we have to have~$\cX^\epsilon(y) = \{ \sigma \}$ for all
points~$y$ in an open neighborhood of~$x$. In other words, the
subset of~$X$ which consists of points with exactly one
$\epsilon$-characteristic simplex is open. In fact, the
following result based on~\cite{kaczynski:etal:16a} shows that
its connected components are precisely the
$\epsilon$-cells~$\cse{\sigma}$.
\begin{lemma}[Alternative Characterization of $\epsilon$-Cells
              and their Closure]
\label{lem:altcharepscell}
For every simplex~$\sigma \in \cX$
the $\epsilon$-cell from Definition~\ref{def:epsiloncell} can
be characterized as
\begin{equation} \label{lem:altcharepscell1}
  \cse{\sigma} \; = \;
  \left\{ x \in X \, : \;
    \cX^\epsilon(x) = \{ \sigma \} \right\} .
\end{equation}
In addition, the following three statements are pairwise equivalent:
\begin{itemize}
\item[(a)] The simplex~$\sigma$ belongs to~$\cX^\epsilon(x)$,
\item[(b)] the inclusions $\sigma^\epsilon_{\min}(x) \subset \sigma
\subset \sigma^\epsilon_{\max}(x)$ hold,
\item[(c)] the point~$x$ belongs to~$\cl\cse{\sigma}$.
\end{itemize}
In other words, one can characterize the closure of an $\epsilon$-cell
via $\epsilon$-characteristic simplices.
\qed
\end{lemma}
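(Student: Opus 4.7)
The proof splits cleanly into two tasks: first, establish the three-way equivalence, which is almost immediate from the definitions; second, prove the harder identity \eqref{lem:altcharepscell1}, where the subtlety is showing that uniqueness of the $\epsilon$-characteristic simplex forces the strict barycentric inequalities defining $\cse{\sigma}$.

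For the equivalence (a)$\iff$(b)$\iff$(c), I would unpack the definitions side by side. The condition $\sigma\in\cX^\epsilon(x)$ from \eqref{def:charactcells3} is literally the conjunction ``$t_v(x)\ge\epsilon$ for $v\in\sigma$'' and ``$t_v(x)\le\epsilon$ for $v\notin\sigma$,'' which by \eqref{def:charactcells1} and \eqref{def:charactcells2} is the same as $\sigma^\epsilon_{\min}(x)\subset\sigma\subset\sigma^\epsilon_{\max}(x)$; that gives (a)$\iff$(b). Comparing the same conjunction against the closure formula in Lemma~\ref{lem:propepscells} yields (a)$\iff$(c) at once.

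For \eqref{lem:altcharepscell1}, the inclusion $\cse{\sigma}\subset\{x:\cX^\epsilon(x)=\{\sigma\}\}$ is straightforward: if $x\in\cse{\sigma}$ then the strict inequalities $t_v(x)>\epsilon$ on $\sigma$ and $t_v(x)<\epsilon$ off $\sigma$ give $\sigma^\epsilon_{\min}(x)=\sigma=\sigma^\epsilon_{\max}(x)$, so by (b) the only simplex in $\cX^\epsilon(x)$ is $\sigma$ itself. The nontrivial direction is the converse, and this is where I would spend the work. Assume $\cX^\epsilon(x)=\{\sigma\}$; I must rule out $t_v(x)=\epsilon$ for any vertex. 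The plan is to argue by contradiction: from any vertex realizing equality, construct a second simplex $\sigma'\in\cX^\epsilon(x)$ distinct from $\sigma$, violating uniqueness.

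The construction has two cases. If $t_v(x)=\epsilon$ for some $v\notin\sigma$, set $\sigma':=\sigma\cup\{v\}$; since $\sigma\subset\sigma^\epsilon_{\max}(x)\subset\sigma^0(x)$ and $v\in\sigma^0(x)$ (because $t_v(x)>0$), $\sigma'$ is a face of $\sigma^0(x)$ and hence lies in $\cX$, and one checks $\sigma^\epsilon_{\min}(x)\subset\sigma\subset\sigma'\subset\sigma^\epsilon_{\max}(x)$. If $t_v(x)=\epsilon$ for some $v\in\sigma$, set $\sigma':=\sigma\setminus\{v\}$; when $\sigma'$ is nonempty, it lies in $\cX$, and since $v\notin\sigma^\epsilon_{\min}(x)$ one still has $\sigma^\epsilon_{\min}(x)\subset\sigma'\subset\sigma\subset\sigma^\epsilon_{\max}(x)$, contradicting uniqueness. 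The delicate corner is $\sigma=\{v\}$ with $t_v(x)=\epsilon$: here removing $v$ gives the empty set, so the previous trick fails. This is the main obstacle, and it is resolved by the constraint \eqref{eq:epsilon} on $\epsilon$. Namely, the barycentric coordinates on $\sigma^0(x)$ sum to $1$, and $|\sigma^0(x)|\le 1+\dim\cX$, so by pigeonhole some vertex $u$ of $\sigma^0(x)$ satisfies $t_u(x)\ge 1/(1+\dim\cX)>\epsilon$, forcing $\sigma^\epsilon_{\min}(x)\ne\emptyset$. Since $\sigma^\epsilon_{\min}(x)\subset\sigma=\{v\}$ and is nonempty, $\sigma^\epsilon_{\min}(x)=\{v\}$, i.e.\ $t_v(x)>\epsilon$, contradicting the assumed equality. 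Collecting the three cases yields strict inequalities at every vertex and hence $x\in\cse{\sigma}$, completing the proof.
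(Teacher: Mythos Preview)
Your proof is correct and complete. The paper itself does not prove this lemma in the text; it marks the statement with \qed\ and remarks that the first part follows easily from the definitions while the second part is established in~\cite[Corollary~4.6]{kaczynski:etal:16a}. Your argument supplies a fully self-contained verification, and in particular your handling of the corner case $\sigma=\{v\}$ with $t_v(x)=\epsilon$ via the pigeonhole bound coming from~\eqref{eq:epsilon} is exactly the right way to exploit the standing hypothesis $\epsilon<1/(1+\dim\cX)$, which the paper records but does not unpack here.

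One minor stylistic remark: what you call the ``harder'' direction (the identity~\eqref{lem:altcharepscell1}) is what the paper calls the ``first part'' and declares easy, while the equivalence (a)--(c) is deferred to the reference. In practice both are short once Lemma~\ref{lem:propepscells} is available, and your ordering---doing the equivalence first and then using (a)$\iff$(b) inside the proof of~\eqref{lem:altcharepscell1}---is arguably cleaner, since it lets you phrase the contradiction argument entirely in terms of the inclusions $\sigma^\epsilon_{\min}(x)\subset\sigma'\subset\sigma^\epsilon_{\max}(x)$ rather than rechecking the raw inequalities each time.
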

While the first part of the lemma follows easily from the
definitions, the proof of the second part can be found
in~\cite[Corollary~4.6]{kaczynski:etal:16a}.
\subsection{Flow Tiles and Strongly Admissible Semiflows}
\label{sec23}
The decomposition of the polytope~$X$ into the closures
of $\epsilon$-cells as in~(\ref{eq:celldecomp}) is only a
first step in the derivation of a cell decomposition which
can be used in our setting. So far, this decomposition
depends only on the underlying simplicial complex~$\cX$. Yet,
with respect to the fixed combinatorial vector field~$\cV$
on~$\cX$, we will use a slightly coarser decomposition,
which is defined as follows.
\begin{definition}[Flow Tiling for a Combinatorial Vector Field]
\label{def:flowtiles}
The {\em flow tiling associated with~$\cV$\/} is defined as the collection~$\cC$
of compact subsets of~$X$ called {\em flow tiles\/}, which in turn are given by
\begin{equation} \label{def:flowtiles2}
  C_\omega := \cl\cse{\omega^-} \cup \cl\cse{\omega^+}
  \qquad\mbox{ for all }\qquad
  \omega \in \cV ,
\end{equation}
where the $\epsilon$-cells  $\cse{\omega^-}$, $\cse{\omega^+}$
are defined in Definition~\ref{def:epsiloncell}.
\end{definition}
Since either $\omega^-=\omega^+$ or $\omega^-\to\omega^+$ is an arrow, there are two types of flow tiles:
\begin{itemize}
\item For every critical cell~$\sigma \in \cX$ of~$\cV$ the associated flow tile is
$C_{\{\sigma\}}=\cl\cse{\sigma}$.
\item For every arrow~$\tau\to\sigma$ of ~$\cV$
the associated flow tile is ~$C_{\{\tau,\sigma\}}=\cl\cse{\tau} \cup \cl\cse{\sigma}$.
\end{itemize}
We distinguish between these
two types by calling them {\em critical flow tiles\/} and
{\em arrow flow tiles\/}, respectively.
Thus, the flow tiling is obtained from the cell decomposition
in~(\ref{eq:celldecomp}) by simply combining the closures
of $\epsilon$-cells of arrows. This is illustrated in the
left image of Figure~\ref{fig:designcellcomp},
where each flow tile is marked with a different color.
Compare also with the right panel of Figure~\ref{fig:celldecomp}.

We are finally in a position to complete the first
step outlined in the introduction. In the next definition,
we introduce the concept of an admissible semiflow on the
polytope~$X$ for a combinatorial vector field~$\cV$.
\begin{definition}[Admissible and Strongly Admissible Semiflows]
\label{def:admissibleflow}
Consider the
flow tiling~$\cC$ associated with~$\cV$ from
Definition~\ref{def:flowtiles}.
A continuous semiflow $\phi : \R_0^+ \times X \to X$
on the polytope~$X$ is called an {\em admissible semiflow
for~$\cV$\/}, if for every~$x \in X$ which is
contained in at least two flow tiles from~$\cC$, and for
every solution $\gamma : [t_-,t_+] \to X$ of the
semiflow~$\phi$ through~$x$ with $t_- \le 0 < t_+$ there
exists an open neighborhood~$U$\/of~$t = 0$ in~$[t_-,t_+]$
(in the subspace topology) such that
\begin{equation} \label{def:admissibleflow1}
  \begin{array}{ccl}
    \DS \gamma(t) \in \cse{\sigma^\epsilon_{\max}(x)} &
      \mbox{ for all } & t \in U \cap \R^- ,
      \mbox{ and } \\[1.5ex]
    \DS \gamma(t) \in \cse{\sigma^\epsilon_{\min}(x)} &
      \mbox{ for all } & t \in U \cap \R^+ ,
  \end{array}
\end{equation}
where~$\R^{\pm}$ denotes the set of all strictly
positive/negative real numbers. The semiflow is called
a {\em strongly admissible semiflow for~$\cV$\/} if in
addition every forward solution which originates in an
arrow flow tile exits the tile in finite forward time, and
every solution through a point in an arrow flow tile which
exists for all negative times exits the flow tile in
finite backward time.
\end{definition}
While the above definition of admissibility might be
strange at first sight, we can easily illuminate it using
Lemma~\ref{lem:altcharepscell}. Since a point~$x$ lies on
the boundary of at least two flow tiles only if it lies
on the boundaries of at least two $\epsilon$-cells,
this lemma shows that there are at least two simplices
in the set~$\cX^\epsilon(x)$. Moreover, the simplices in this
set are in one-to-one correspondence with the $\epsilon$-cells
$\cse{\sigma}$ which have~$x$ on their boundary. Thus, the
condition in~(\ref{def:admissibleflow1}) requires that the solution
through~$x$ has to come from the $\epsilon$-cell associated with
the largest simplex~$\sigma_1$ in~$\cX^\epsilon(x)$ and has
to move into the $\epsilon$-cell for the smallest
simplex~$\sigma_2$ in~$\cX^\epsilon(x)$. According to
Lemma~\ref{lem:altcharepscell}{\em (b)\/} the
simplex~$\sigma_2$ is a face of~$\sigma_1$, and this means
that an admissible semiflow, when crossing the boundary between tiles,
always flows towards the boundary
of a simplex. Notice also that in general solutions of the
semiflow~$\phi$ are allowed to merge in finite time, and
therefore the condition~(\ref{def:admissibleflow1}) has to
be satisfied for every solution~$\gamma$ which passes through
the point~$x$. In this sense, solutions through flow tile
boundaries exhibit well-defined exit and entrance behavior.

For the combinatorial vector field in
Figure~\ref{fig:designexample} the corresponding
flow tiles and flow directions along the boundaries
between flow tiles are shown in Figure~\ref{fig:designcellcomp}.
Notice, in particular, that the notion of admissibility from
Definition~\ref{def:admissibleflow} does not prescribe any
flow directions in the interior of flow tiles --- not even
on the boundary between the two $\epsilon$-cells which comprise
an arrow flow tile. Only under the assumption of strong
admissibility do we impose restrictions on the semiflow in the
interior of arrow flow tiles.

The main results of the next section show that any admissible
semiflow in the sense of Definition~\ref{def:admissibleflow} exhibits
the same isolated invariant sets as the combinatorial vector
field~$\cV$, while every strongly admissible semiflow exhibits
the same global dynamics in terms of Morse decompositions and
Conley-Morse graphs.

If we take another look at the example from Figure~\ref{fig:designexample}
and the associated flow tiling in Figure~\ref{fig:designcellcomp},
then one can easily see that every critical flow tile~$C \in \cC$
is an isolating block, and the associated Conley index is the
one for an equilibrium whose Morse index is the dimension~$n$ of the
underlying critical simplex. Thus, the Wa\.zewski principle
implies that {\em any admissible semiflow\/}~$\phi$ has a
nontrivial isolated invariant set in~$C$ which on the level
of the Conley index acts like an index~$n$ equilibrium. Note
that in order to verify the properties of an isolating block,
we can easily use the admissibility conditions from
Definition~\ref{def:admissibleflow}. In the next section
we show that the flow tiles associated with~$\cV$ can be used
in a straightforward way to construct isolating blocks for
more complicated isolated invariant sets.
\section{Isolated Invariant Sets and Conley-Morse Graphs}
\label{sec4}
In this section we show that for every combinatorial vector
field~$\cV$ on a simplicial complex~$\cX$ and any associated
strongly admissible semiflow~$\phi$ on~$X = |\cX|$, their dynamics
is equivalent in the sense of Conley theory. For this, we first
recall the combinatorial notions of isolated invariant sets and
Morse decompositions in Section~\ref{sec41}, based on our
results in~\cite{batko:etal:20a, kaczynski:etal:16a}. This
is followed in Section~\ref{sec42} by the explicit construction
of isolating blocks for admissible semiflows~$\phi$ on~$X$ from
the combinatorial information, as well as as the verification in
Section~\ref{sec43} that the associated homological Conley indices
are isomorphic. Finally, in Section~\ref{sec44} we
demonstrate that under the assumption of strong admissibility
every Morse decomposition of~$\cV$ gives rise to a Morse
decomposition for~$\phi$ with isomorphic Conley-Morse graphs.
\subsection{Morse Decompositions for Combinatorial Vector Fields}
\label{sec41}
We begin by recalling the qualitative dynamical theory for
combinatorial vector fields which has been developed
in~\cite{batko:etal:20a, kaczynski:etal:16a}. In its original
form, a combinatorial vector field~$\cV$ on a simplicial
complex~$\cX$ does not create a dynamical system. However,
based on the intuition that we laid out in the previous
sections, one can easily associate with~$\cV$ a suitable
multivalued discrete-time dynamical system on~$\cX$, which
exhibits the following behavior.
\begin{itemize}
\item Critical cells allow for both fixed points
and for flow towards the combinatorial boundary of the
simplex.
\item Arrow tails, i.e., simplices $\sigma\in\Tail\cV$
lead to flow towards the simplex~$\sigma^+ = \cV(\sigma)$.
\item Arrow heads, i.e., simplices $\sigma\in\Head\cV$
always lead to flow towards the boundary of~$\sigma$,
but not towards the face~$\sigma^- = \cV^{-1}(\sigma)$.
\end{itemize}
This behavior can be formalized with the introduction of a
multivalued map $\Pi_\cV : \cX \multimap \cX$ defined by
\begin{equation} \label{def:multimap:pi}
  \Pi_\cV(\sigma) :=
  \begin{cases}
    \Cl\sigma &
      \mbox{ if $\;\;\sigma \in \Crit\cV$} , \\
    \{\cV(\sigma)\} &
      \mbox{ if $\;\;\sigma \in \Tail\cV$} , \\
    \Bd\sigma \setminus\{ \cV^{-1}(\sigma) \} &
      \mbox{ if $\;\;\sigma \in \Head\cV$} .
    \end{cases}%
\end{equation}
Iteration of the multivalued map~$\Pi_\cV$ defines a discrete-time
dynamical system on the simplicial complex~$\cX$ in the usual
way. More precisely, a {\em solution~$\rho$\/}
of the combinatorial vector field~$\cV$ is a partial map~$\rho : \Z \pto \cX$,
where~$\dom\rho$ is a $\Z$-interval, such that
\begin{displaymath}
  \rho_{k+1} \in \Pi_\cV\left( \rho_k \right)
  \qquad\mbox{for all}\qquad
  k, k+1 \in \dom\rho .
\end{displaymath}
As in the classical case, a {\em solution through $\sigma \in \cX$\/} is a solution
such that $\rho_0 = \sigma$ and a {\em full solution} is a solution satisfying $\dom\rho=\Z$.

In the qualitative theory of dynamical systems, solutions themselves
are not the primary target. Rather one concentrates on specific
collections of solutions, which comprise invariant sets.
Borrowing directly from the classical setting, we call a set
$\cS \subset \cX$ an {\em invariant set\/} for the associated
multivalued flow map~$\Pi_\cV$, if for each simplex $\sigma \in \cS$
there exists a full solution~$\rho : \Z \to \cX$ through~$\sigma$
which lies completely in the set~$\cS$. We would like to point out
that in general, there are many solutions of~$\Pi_\cV$ which pass
through a given simplex~$\sigma$, and while some of them might be
full solutions, not all of them have to be. For the notion of
invariance, however, all that matters is the existence of (at
least) one full solution through~$\sigma$ which stays in~$\cS$.
For examples of invariant sets, we refer the reader to the
discussion in~\cite{kaczynski:etal:16a}.

One of the crucial insights of Conley~\cite{conley:78a} is the
observation that general invariant sets are difficult to study.
While in the classical dynamical systems case this is due to their
sensitivity to perturbations, it was pointed out in~\cite{kaczynski:etal:16a}
that even in the combinatorial setting invariance alone is too weak
a concept. This leads to the following definition.
\begin{definition}[Isolated Invariant Set]
\label{def:isolatedinvset}
Let~$\cS \subset \cX$ denote an invariant set
for the multivalued map~$\Pi_\cV$ defined in~(\ref{def:multimap:pi}).
Define the {\em exit set\/} or
{\em mouth\/} of~$\cS$ by
\begin{displaymath}
  \Exit\cS := \Cl\cS \setminus \cS .
\end{displaymath}
Then the invariant set~$\cS$ is called an {\em isolated invariant
set\/}, if the following two conditions are satisfied:
\begin{itemize}
\item[(a)] The mouth of~$\cS$ is combinatorially closed in
the simplicial complex~$\cX$, i.e., for every simplex $\sigma
\in \Exit\cS$ the mouth contains all faces of~$\sigma$.
\item[(b)] There exists no solution $\rho : [-1,1] \cap \Z
\to \cX$ of~$\Pi_\cV$ such that $\rho_{-1},\,\rho_{1} \in \cS$ and $\rho_{0} \in \Exit\cS$.
\end{itemize}
If~$\cS$ is an isolated invariant set, then the combinatorial
closure~$\Cl\cS$ is called an {\em isolating block\/} for~$\cS$.
\end{definition}
The above definition is inspired by the classical notion
of isolating block as introduced in~\cite{conley:78a,
rybakowski:87a}, see also our discussion in
Section~\ref{sec:sflows} concerning isolating blocks.
Considering $\Cl\cS$ as the combinatorial counterpart of the isolating
block for $\cS$, we see that condition~{\it (a)\/} directly corresponds
to condition~\eqref{def:classicalisoblock2} and condition~{\it (b)\/}
is a combinatorial version of the exclusion of internal flow
tangencies implied by condition~\eqref{def:classicalisoblock1}
of the definition of an isolating block.
One can easily see that there are combinatorial
vector fields with invariant sets which are not isolated.
Such examples can be found in~\cite{kaczynski:etal:16a},
and they demonstrate that the two conditions in
Definition~\ref{def:isolatedinvset} are in fact independent.
\begin{figure}[tb]
  \centering
  \includegraphics[width=0.48\textwidth]{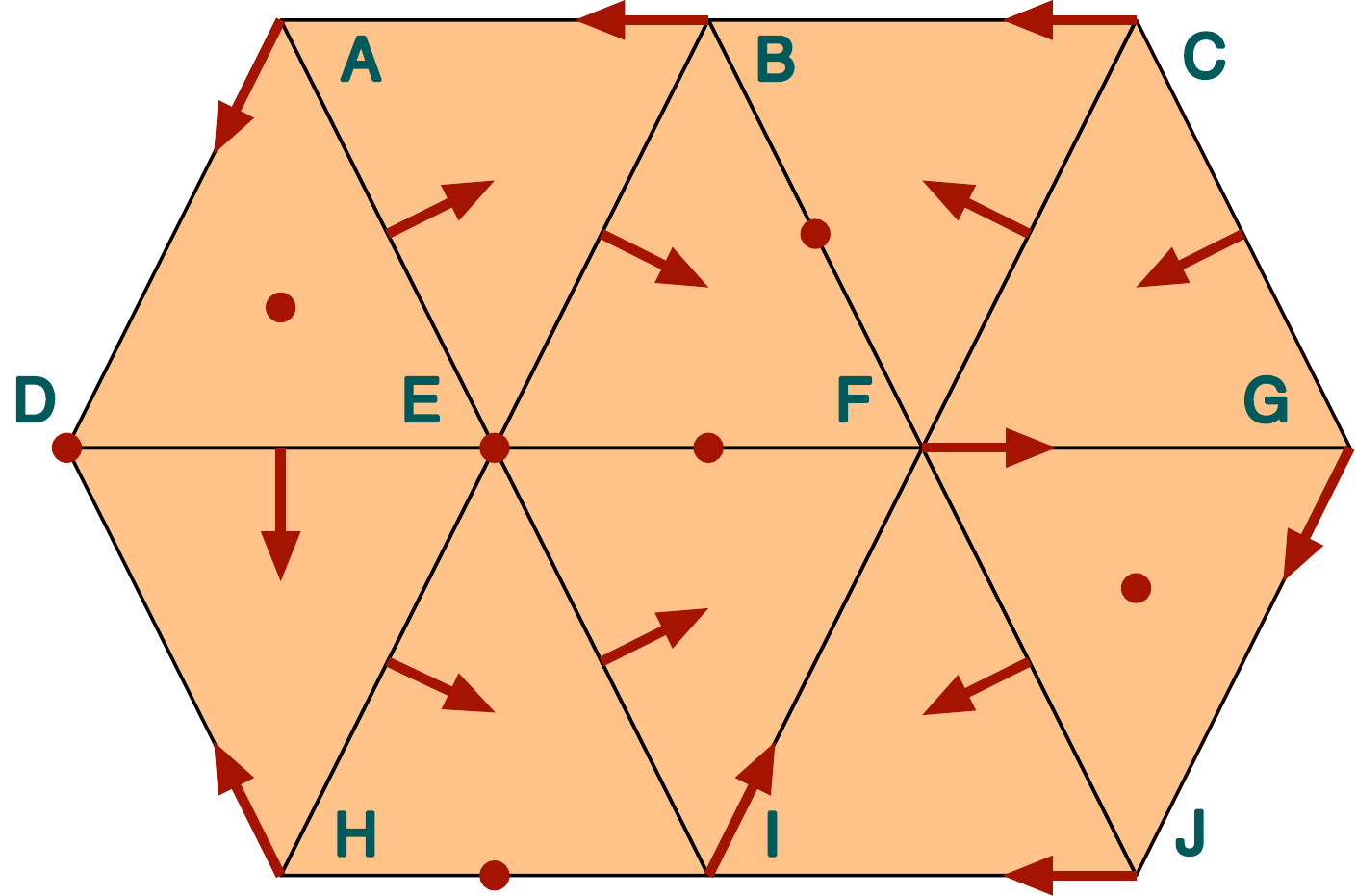}
  \hspace*{0.3cm}
  \includegraphics[width=0.48\textwidth]{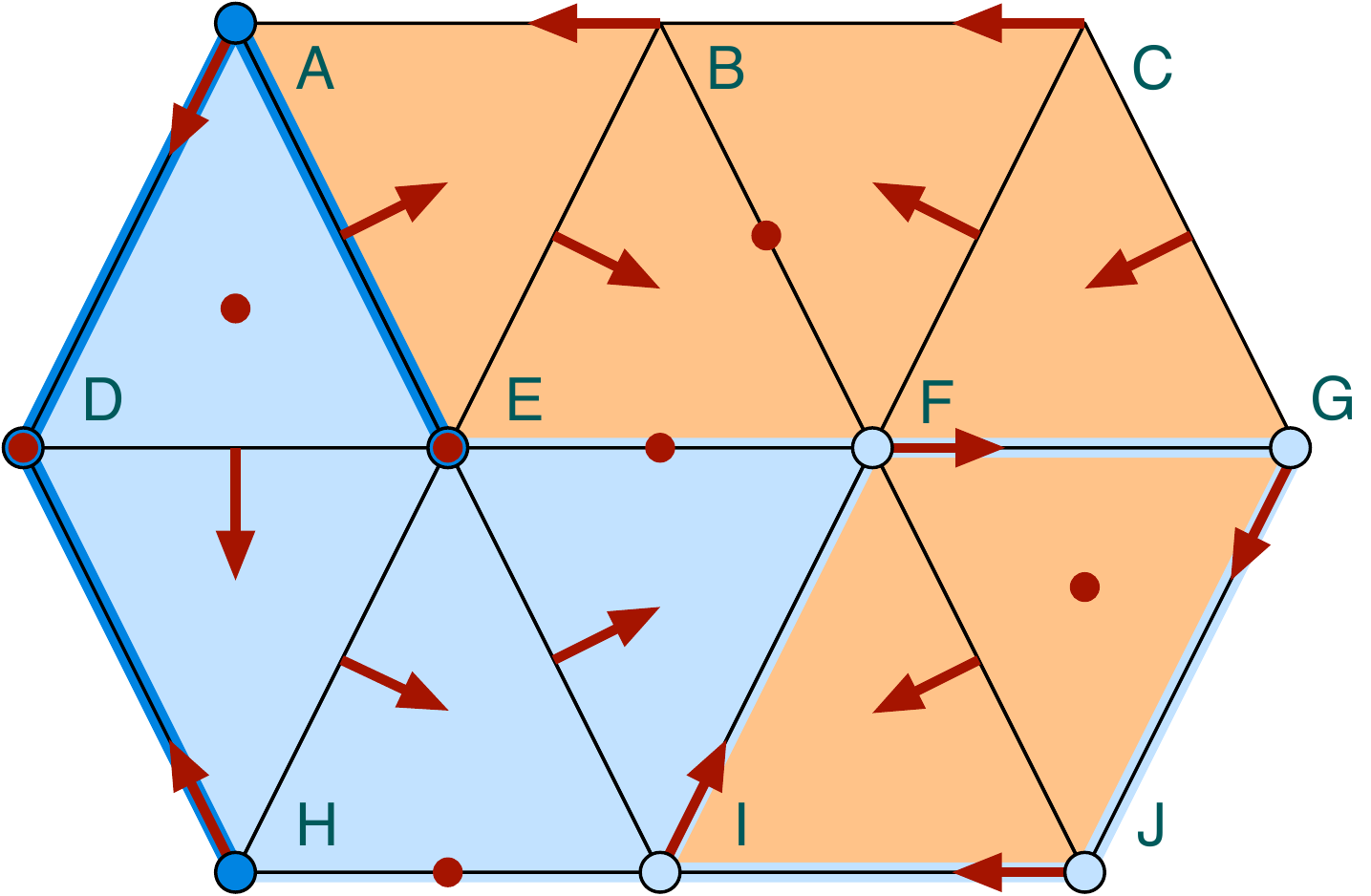}
  \caption{Sample combinatorial vector field with an isolated invariant set.
           The left figure shows a simplicial complex~$\cX$ which
           triangulates a hexagon, together with a combinatorial vector field.
           Critical cells are indicated by red dots, vectors of the vector
           field are shown as red arrows. The right image depicts a sample
           isolated invariant set for this combinatorial vector field. The
           simplices which belong to the isolated invariant set~$\cS$
           are indicated in light blue, and are given by four vertices,
           nine edges, and four triangles. Its mouth~$\Exit\cS$ is
           shown in dark blue, and it consists of four vertices and three
           edges.}
  \label{fig:mainexample}
\end{figure}

From a practical perspective, the above definition of isolated
invariant set is not optimal. While the combinatorial closedness
of the mouth of~$\cS$ can easily be verified in the simplicial
complex~$\cX$, the verification of~{\it (b)\/} necessitates the
use of the multivalued map~$\Pi_\cV$. However, it was shown
in~\cite{kaczynski:etal:16a} that this condition can be reformulated
using the given combinatorial vector field~$\cV$, and this leads
to the following result (see \cite[Proposition~3.7]{kaczynski:etal:16a}).
\begin{lemma}[Characterization of Isolated Invariant Sets]
\label{lem:isolatedinvset}
Let~$\cS \subset \cX$ denote an invariant set
for the multivalued map~$\Pi_\cV$ defined in~(\ref{def:multimap:pi}).
Then~$\cS$ is an isolated invariant set if and only if the
mouth~$\Exit\cS$ is combinatorially closed, and every arrow
of~$\cV$ either lies completely in~$\cS$ or completely outside
of~$\cS$.
\qed
\end{lemma}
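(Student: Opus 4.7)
The plan is to establish the equivalence by verifying each direction of the biconditional, exploiting the three-case structure of $\Pi_\cV$ according to whether a simplex is critical, a tail, or a head (cf.~equation~(\ref{def:multimap:pi})). Both directions take invariance of $\cS$ as given and examine what additional constraint the extra hypothesis imposes on arrows and on solutions of length three.

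For the implication that assumes condition~(a) together with the arrow condition and derives condition~(b), I would argue by contradiction. Suppose there is a solution $\rho : \{-1,0,1\} \to \cX$ with $\rho_{-1},\rho_1 \in \cS$ and $\rho_0 \in \Exit\cS$. Then I would split on the type of $\rho_0$. If $\rho_0$ is critical, then $\rho_1 \in \Pi_\cV(\rho_0) = \Cl\rho_0$ is a face of $\rho_0 \in \Exit\cS$; combinatorial closedness of $\Exit\cS$ then forces $\rho_1 \in \Exit\cS$, contradicting $\rho_1 \in \cS$. If $\rho_0$ is a tail belonging to an arrow $\rho_0 \to \sigma'$, then $\rho_1 = \sigma'$, and the arrow condition applied to the arrow $\{\rho_0,\sigma'\}$, whose tail $\rho_0$ lies outside~$\cS$, forces $\sigma' \notin \cS$, contradiction. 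If $\rho_0$ is a head, then $\rho_1 \in \Bd\rho_0$ is a proper face of $\rho_0 \in \Exit\cS$, and again combinatorial closedness yields the contradiction.

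For the converse, assume (a) and (b) and take any arrow $\tau \to \sigma$ of $\cV$; one must rule out that exactly one of $\tau,\sigma$ lies in $\cS$. The case $\tau \in \cS$ and $\sigma \notin \cS$ is disposed of by invariance alone: any full solution through $\tau$ is forced by $\Pi_\cV(\tau) = \{\sigma\}$ to step to $\sigma$, which must then lie in $\cS$. The main work is the asymmetric case $\sigma \in \cS$, $\tau \notin \cS$, where $\tau \in \Cl\sigma \setminus \cS \subset \Exit\cS$. Here I would use invariance of $\cS$ to pick a predecessor $\mu \in \cS$ of $\sigma$ along a full solution through $\sigma$, i.e. some $\mu \in \cS$ with $\sigma \in \Pi_\cV(\mu)$, and then argue that the triple $\rho = (\mu,\tau,\sigma)$ is a solution of $\Pi_\cV$ witnessing the failure of~(b).

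The delicate step, and the main technical obstacle, is verifying $\tau \in \Pi_\cV(\mu)$. The assumption $\tau \notin \cS$ rules out $\mu = \tau$, and inspection of~(\ref{def:multimap:pi}) then forces $\mu$ to be either critical or a head possessing $\sigma$ as a proper face. In both situations $\tau$ is a proper face of $\mu$ via the chain $\tau \subsetneq \sigma \subsetneq \mu$, so $\tau \in \Cl\mu$ and $\tau \in \Bd\mu$, respectively. The only remaining concern is that in the head case $\mu$ might come from an arrow $\tau \to \mu$, which would make $\tau = \cV^{-1}(\mu)$ and remove $\tau$ from $\Pi_\cV(\mu)$; but that is impossible because $\tau$ is already the tail of the arrow $\tau \to \sigma$, and Forman's axioms in Definition~\ref{def:combvfield-forman} forbid a simplex from being the tail of two distinct arrows. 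This structural use of the combinatorial vector-field axioms is the one place where the argument goes beyond mechanical case analysis; once it is in place, the rest follows from the direct verifications sketched above.
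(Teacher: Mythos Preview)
Your argument is correct. The paper does not actually give a proof of this lemma; it simply cites \cite[Proposition~3.7]{kaczynski:etal:16a} and closes with \qed, so there is nothing in the present paper to compare your approach against. Your case analysis on the type of the simplex (critical, tail, head) is the natural way to unwind Definition~\ref{def:isolatedinvset} against~(\ref{def:multimap:pi}), and the ``delicate step'' is handled properly.

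Two minor remarks on the exposition. First, when you write ``inspection of~(\ref{def:multimap:pi}) then forces $\mu$ to be either critical or a head,'' you are implicitly ruling out the tail case: if $\mu$ were a tail with $\sigma = \cV(\mu)$, then $\{\mu,\sigma\}\in\cV$, and since $\sigma$ already occurs in the arrow $\{\tau,\sigma\}$ the partition property forces $\mu = \tau$, which you have excluded. It would help to say this explicitly. Second, your final concern --- that in the head case one might have $\tau = \cV^{-1}(\mu)$ --- can be dispatched more simply by dimension: since $\tau \subsetneq \sigma \subsetneq \mu$, the simplex $\tau$ has codimension at least two in $\mu$ and so cannot be a facet of $\mu$, hence cannot be $\cV^{-1}(\mu)$. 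Your appeal to the uniqueness of tails is valid but unnecessary here.
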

The lemma is illustrated in Figure~\ref{fig:mainexample}.
While the left image shows a simplicial complex~$\cX$ which
triangulates a hexagon, together with a combinatorial vector
field~$\cV$, the right panel depicts a sample isolated
invariant set for~$\cV$ in light blue. One can verify
that its mouth is given by the simplices shown in dark
blue, and that the assumptions of Lemma~\ref{lem:isolatedinvset}
are satisfied.

We would like to point out that in contrast to the classical
case, an isolating block in the combinatorial setting does
not determine the associated isolated invariant set. To see
this, take another look at Figure~\ref{fig:mainexample}. Both
sets
\begin{displaymath}
  \cS_1 = \{ EF \}
  \qquad\mbox{ and }\qquad
  \cS_2 = \{ EF, E \}
\end{displaymath}
are isolated invariant sets for~$\cV$ according to
Lemma~\ref{lem:isolatedinvset}, and in both cases we obtain
the same isolating block $\Cl\cS_1 = \Cl\cS_2 = \{ EF, E, F \}$.
Nevertheless, it is still possible to distinguish between the
two isolated invariant sets, and this leads to the notion of
Conley index.
\begin{definition}[Conley Index and Poincar\'e Polynomial]
Let~$\cS \subset \cX$ denote an isolated invariant set
for the multivalued map~$\Pi_\cV$ defined in~(\ref{def:multimap:pi}).
Then the {\em Conley index of~$\cS$\/} is defined as the relative
homology
\begin{displaymath}
  CH_*(\cS) := H_*\left( \Cl\cS, \Exit\cS \right) .
\end{displaymath}
Moreover, the associated {\em Poincar\'e polynomial of~$\cS$\/}
is given by
\begin{displaymath}
  p_{\cS}(t) := \sum_{k=0}^\infty \beta_k(\cS) t^k ,
  \quad\mbox{where}\quad
  \beta_k(\cS) = \mathrm{rank}\, CH_k(\cS) .
\end{displaymath}
\end{definition}
Notice that in the above definition, both sets~$\Cl\cS$
and~$\Exit\cS$ are combinatorially closed, which in the latter
case is due to the fact that~$\cS$ is an isolated invariant set.
This implies that both sets are simplicial subcomplexes of~$\cX$,
and the relative homology in the definition is therefore just
standard simplicial homology.

Returning to our above example, it is now possible to distinguish
between the isolated invariant sets~$\cS_1$ and~$\cS_2$. One can
easily see that on the one hand we have $p_{\cS_1}(t) = t$, and on
the other hand one obtains $p_{\cS_2}(t) = 0$. While the first
Poincar\'e polynomial corresponds in the classical theory to an
equilibrium of index one, the second one corresponds to the index
of an empty set. This is in accordance with our intuition, since
the second case mimics the case of an attractor-repeller pair with
connecting solution, which can disappear through a saddle-node
bifurcation upon perturbation. In the combinatorial setting, the 
index one equilibrium is the open edge~$EF$, and the stable one is
the vertex~$E$; since flow towards the boundary is implicitly assumed
in Forman's setting, there is an automatic connecting flow from~$EF$
to~$E$.

To close this subsection, we now recall how the global dynamics of
a combinatorial vector field can be decomposed. For this, we first
need to introduce some notation. Consider a solution $\rho:
[a,\infty) \cap \Z \to \cX$, which is defined on a $\Z$-interval
which is unbounded to the right. We define the {\em $\omega$-limit
set of~$\rho$\/} as the set
\begin{displaymath}
  \omega(\rho) := \bigcap_{n \ge a} \left\{ \rho_k \; \mid \;
    k \ge n \right\} .
\end{displaymath}
Similarly, for a solution $\rho: (-\infty,a] \cap \Z \to \cX$
of~$\Pi_\cV$ defined on a $\Z$-interval which is unbounded
to the left, we define the {\em $\alpha$-limit set of~$\rho$\/}
via
\begin{displaymath}
  \alpha(\rho) := \bigcap_{n \le a} \left\{ \rho_k \; \mid \;
    k \le n \right\} .
\end{displaymath}
Since the underlying simplicial complex is assumed to be finite,
one can easily see that both the $\alpha$- and the $\omega$-limit
set of a solution are nonempty, whenever they are defined.
\begin{figure}[tb]
  \centering
  \includegraphics[width=0.99\textwidth]{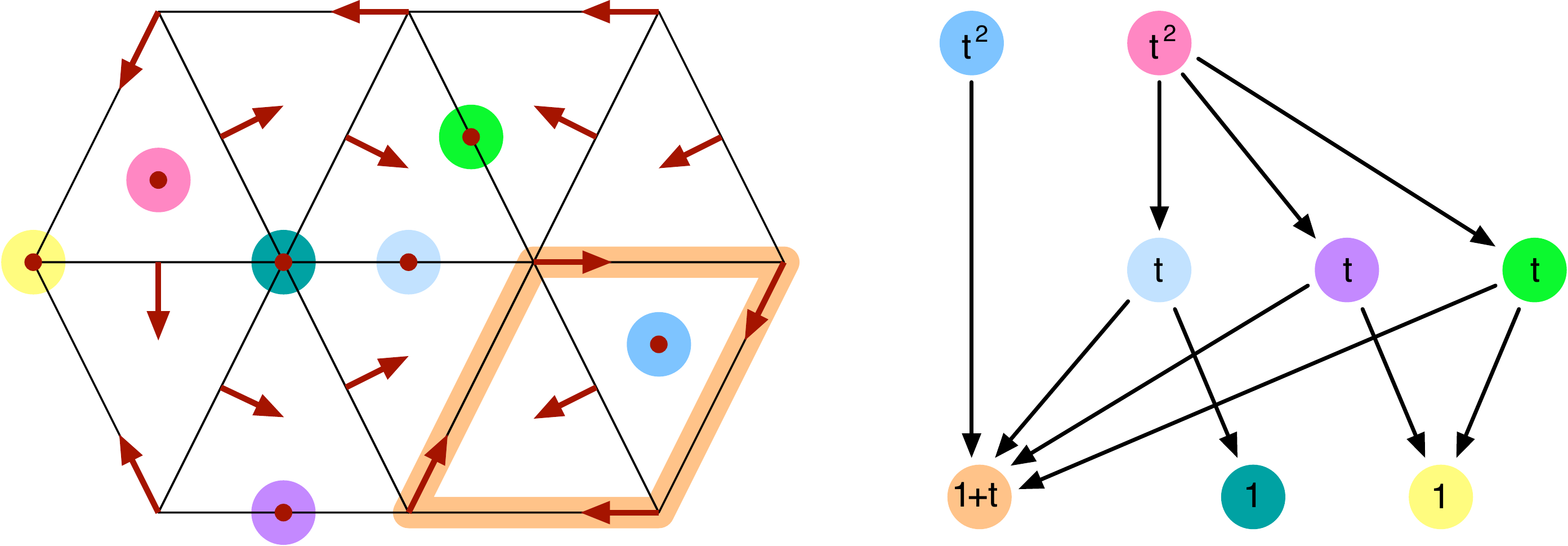}
  \caption{Morse decomposition for the example shown in the left
           panel of Figure~\ref{fig:mainexample}. For this example,
           one can find eight minimal Morse sets, which are
           indicated in the left image in different colors.
           The right image shows the associated Morse graph.}
  \label{fig:morsedecomp}
\end{figure}

After these preparations, we can finally present the notions
of Morse decomposition and Conley-Morse graph which were
introduced in~\cite{batko:etal:20a}.
\begin{definition}[Morse Decomposition and Conley-Morse Graph]
\label{def:morsedecomp}
Let~$\mbbP$ be a poset. The family $\cM = \{\cM_p \, | \, p \in \mbbP\}$
of disjoint, non-empty isolated invariant subsets of~$\cX$ is called a {\em Morse
decomposition of~$\cX$\/ with respect to a combinatorial vector field $\cV$},
if the following three statements hold:
\begin{itemize}
\item[(a)] If~$\rho$ is a solution such that its $\alpha$-limit
set is defined, then $\alpha(\rho) \subset \cM_p$ for some
$p \in \mbbP$. Analogously, this statement also has to hold
for $\omega$-limit sets.
\item[(b)] For every full solution~$\rho$ of the map~$\Pi_\cV$
we have both $\alpha(\rho) \subset \cM_{p}$ and $\omega(\rho)
\subset \cM_{q}$ for some indices $p \ge q$, that is $\rho$
is a {\em connection} from $\cM_{p}$ to $\cM_{q}$.
\item[(c)] If in~(b) we have $p = q$, then the given
full solution~$\rho$ has to satisfy $\im\rho \subset \cM_p$.
\end{itemize}
Finally, the associated {\em Conley-Morse graph\/} is the partial
order induced on~$\cM$ by the existence of connections, and it is
represented as a directed graph labelled with the Conley indices
of the isolated invariant sets in~$\cM$ in terms of their
Poincar\'e polynomials.
\end{definition}
Given a combinatorial vector field~$\cV$ on a simplicial
complex~$\cX$, the strongly connected components of the
multivalued flow map $\Pi_\cV : \cX \multimap \cX$ considered
as a digraph form the unique finest Morse decomposition of~$\cV$
(see~\cite[Theorem 4.1]{DJKKLM2017}). As mentioned earlier,
the Poincar\'e polynomials of the Morse sets can be determined
via simplicial homology. For the example shown in the left panel
of Figure~\ref{fig:mainexample} this procedure leads to the
Morse decomposition depicted in Figure~\ref{fig:morsedecomp}.
\subsection{Construction of Isolating Blocks for Admissible Semiflows}
\label{sec42}
After the preparations of the last section, we can now easily
construct isolating blocks for an admissible semiflow~$\phi$
based on the combinatorial information encoded by~$\cV$.
Let~$\cS \subset \cX$ be an isolated invariant set
for the multivalued map~$\Pi_\cV$ defined in~(\ref{def:multimap:pi}).
Consider the set
\begin{equation} \label{thm:isolatingblocks1}
  B := \bigcup_{\sigma \in \cS} \cl\cse{\sigma}.
\end{equation}
The following lemma is a special case of \cite[Lemma 5.5]{kaczynski:etal:16a}.
\begin{lemma}
If $x \in \bd B \subset X$ then
\begin{equation} \label{thm:isolatingblocks2}
  \cX^\epsilon(x) \cap \cS \neq \emptyset
  \qquad\mbox{ and }\qquad
  \cX^\epsilon(x) \cap \left( \cX \setminus \cS \right)
    \neq \emptyset .
\end{equation}
\qed
\end{lemma}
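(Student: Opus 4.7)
The plan is to prove the two conclusions separately, using the characterization of $\epsilon$-cell closures via $\epsilon$-characteristic simplices (Lemma~\ref{lem:altcharepscell}) for the first and the strong upper semi-continuity of $x \mapsto \cX^\epsilon(x)$ (Lemma~\ref{lem:xepssemicontinuity}) for the second.

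For the first conclusion, I would first observe that $B$, being a finite union of closed sets $\cl\cse{\sigma}$, is itself closed in $X$, so $\bd B \subset B$. Hence any $x \in \bd B$ lies in $\cl\cse{\sigma}$ for some $\sigma \in \cS$. By the equivalence (a)$\Leftrightarrow$(c) in Lemma~\ref{lem:altcharepscell}, this gives $\sigma \in \cX^\epsilon(x)$, so $\sigma \in \cX^\epsilon(x) \cap \cS$, which is therefore nonempty.

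For the second conclusion, I would exploit that $x$ lies on the boundary of $B$ but not in its interior, so every neighborhood of $x$ meets the complement $X \setminus B$. Take a sequence $y_n \to x$ with $y_n \notin B$. By Lemma~\ref{lem:xepssemicontinuity}, for $n$ sufficiently large we have $\cX^\epsilon(y_n) \subset \cX^\epsilon(x)$, and the same lemma guarantees that $\cX^\epsilon(y_n)$ is nonempty. Pick any $\tau \in \cX^\epsilon(y_n)$; by Lemma~\ref{lem:altcharepscell}(a)$\Leftrightarrow$(c), we have $y_n \in \cl\cse{\tau}$. If $\tau$ belonged to $\cS$, then by definition of $B$ we would have $y_n \in B$, contradicting the choice of $y_n$. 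Therefore $\tau \in \cX^\epsilon(x) \setminus \cS$, which establishes $\cX^\epsilon(x) \cap (\cX \setminus \cS) \neq \emptyset$.

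Both arguments are short and use only tools already developed in the paper; I do not expect any serious obstacle. The only subtlety worth double-checking is the direction of the inclusion in the upper semi-continuity statement --- one needs $\cX^\epsilon(y) \subset \cX^\epsilon(x)$ for $y$ near $x$ (not the reverse), which is exactly what Lemma~\ref{lem:xepssemicontinuity} provides, and this is what lets the simplices witnessing $y_n \notin B$ be pulled back into $\cX^\epsilon(x)$.
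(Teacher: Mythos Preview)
Your proof is correct. The paper does not actually give its own proof of this lemma --- it merely cites it as a special case of \cite[Lemma~5.5]{kaczynski:etal:16a} and closes with a \qed --- so there is no in-paper argument to compare against; your direct approach via Lemma~\ref{lem:altcharepscell} and Lemma~\ref{lem:xepssemicontinuity} is exactly the natural way to do it with the tools already on hand.
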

We then have the following central result.
\begin{proposition}[Isolating Block Construction]
\label{prop:isolatingblocks}
Let~$\cS \subset \cX$ be an isolated invariant set for the
multivalued map~$\Pi_\cV$ defined in~(\ref{def:multimap:pi})
and let~$\phi : \R_0^+ \times X \to X$ denote an arbitrary
admissible semiflow for~$\cV$ in the sense of
Definition~\ref{def:admissibleflow}. Then the set~$B$ given
by~\eqref{thm:isolatingblocks1} is  an isolating block for~$\phi$.
\end{proposition}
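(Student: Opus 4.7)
The plan is to verify directly the two defining conditions~\eqref{def:classicalisoblock1} and~\eqref{def:classicalisoblock2} of an isolating block. Since $B$ is already closed as a finite union of closed $\epsilon$-cells, the task reduces to (a) classifying every $x \in \bd B$ as a strict ingress, strict egress, or bounce-off point, and (b) verifying that the exit set $B^-$ is closed in $X$.

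For (a), I would combine the lemma stated immediately before this proposition with the admissibility condition. That lemma shows $\cX^\epsilon(x) \cap \cS \neq \emptyset$ and $\cX^\epsilon(x) \cap (\cX \setminus \cS) \neq \emptyset$ for $x \in \bd B$. A short argument using Lemma~\ref{lem:isolatedinvset} then rules out the possibility that all elements of $\cX^\epsilon(x)$ come from a single $\omega \in \cV$: otherwise $\omega$ would have to be an arrow with $\cX^\epsilon(x) \subset \{\omega^-,\omega^+\}$, and the arrow-wise dichotomy of Lemma~\ref{lem:isolatedinvset} would force both simplices into $\cS$ or outside $\cS$, contradicting the split. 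Hence $x$ lies in at least two flow tiles and the admissibility hypothesis of Definition~\ref{def:admissibleflow} applies: every solution $\gamma$ through $x$ satisfies $\gamma(t) \in \cse{\sigma^\epsilon_{\max}(x)}$ for small $t<0$ and $\gamma(t) \in \cse{\sigma^\epsilon_{\min}(x)}$ for small $t>0$. By Lemma~\ref{lem:altcharepscell}, the open cell $\cse{\sigma}$ lies in $\inte B$ when $\sigma \in \cS$ and is disjoint from $B$ when $\sigma \notin \cS$, so the type of $x$ is dictated by the four possible combinations of memberships of $\sigma^\epsilon_{\min}(x)$ and $\sigma^\epsilon_{\max}(x)$ in $\cS$.

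Three of these combinations give, respectively, strict ingress, strict egress, and bounce-off. The remaining combination, in which both $\sigma^\epsilon_{\min}(x)$ and $\sigma^\epsilon_{\max}(x)$ belong to $\cS$, I would exclude using combinatorial closedness of $\Exit\cS$: any witness $\tau \in \cX^\epsilon(x) \setminus \cS$ is a face of $\sigma^\epsilon_{\max}(x) \in \cS$, so $\tau \in \Cl\cS \setminus \cS = \Exit\cS$, and then the subface $\sigma^\epsilon_{\min}(x) \subset \tau$ would also lie in $\Exit\cS$, contradicting $\sigma^\epsilon_{\min}(x) \in \cS$. For part~(b) I would establish the explicit identity
\begin{equation*}
  B^- \;=\; B \cap \bigcup_{\sigma \in \Exit\cS} \cl\cse{\sigma},
\end{equation*}
which is closed as a finite intersection of closed sets. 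The inclusion $\subseteq$ follows because bounce-off and egress points satisfy $\sigma^\epsilon_{\min}(x) \notin \cS$, while as above $\sigma^\epsilon_{\min}(x) \in \Cl\cS$ and hence $\sigma^\epsilon_{\min}(x) \in \Exit\cS$, together with $x \in \cl\cse{\sigma^\epsilon_{\min}(x)}$. For the reverse inclusion, if $x \in B$ with $x \in \cl\cse{\sigma}$ for some $\sigma \in \Exit\cS$, then $\sigma \in \cX^\epsilon(x) \setminus \cS$, and combinatorial closedness of $\Exit\cS$ pulls $\sigma^\epsilon_{\min}(x) \subset \sigma$ into $\Exit\cS$, placing $x$ in the bounce-off or egress case of (a).

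The main obstacle is the case analysis in (a), and specifically the exclusion of the case in which both extremal simplices belong to $\cS$. It is precisely there that both combinatorial features of Definition~\ref{def:isolatedinvset} are needed in tandem: the arrow-wise dichotomy of Lemma~\ref{lem:isolatedinvset} to secure the admissibility hypothesis, and the combinatorial closedness of $\Exit\cS$ to forbid internal $\cS$-boundary points. Once this is disposed of, admissibility prescribes the flow directions across every tile boundary and the rest of the argument is routine manipulation of the $\epsilon$-cell description of $B$ and $B^-$.
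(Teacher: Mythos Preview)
Your case analysis in part~(a) matches the paper's proof essentially verbatim: the same four cases on the membership of $\sigma^\epsilon_{\min}(x)$ and $\sigma^\epsilon_{\max}(x)$ in~$\cS$, with case~(i) excluded via the combinatorial closedness of~$\Exit\cS$ exactly as you describe.

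For part~(b) you take a slightly different route. The paper argues that $B^i = \bd B \setminus B^-$ is open in~$\bd B$ by invoking the upper semicontinuity of $x \mapsto \cX^\epsilon(x)$ (Lemma~\ref{lem:xepssemicontinuity}): if $x \in B^i$ then case~(ii) gives $\cX^\epsilon(x) \cap \Exit\cS = \emptyset$, and semicontinuity propagates this to nearby boundary points, forcing them into case~(ii) as well. Your approach instead proves the explicit identity $B^- = B \cap N_\epsilon(\Exit\cS)$ and reads off closedness directly. This is a bit more elementary---it avoids the semicontinuity lemma---and in fact anticipates the identification $B^- = P_2$ that the paper establishes separately in Lemma~\ref{lem:indexequiv1}. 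One small point: in your $\supseteq$ direction you should note that such an $x$ actually lies in $\bd B$ (and in at least two flow tiles) before invoking the classification from~(a); this follows because $x \in \cl\cse{\sigma}$ with $\sigma \notin \cS$ makes $x$ a limit of points in $\cse{\sigma} \subset X \setminus B$, while $x \in B$ gives a simplex of $\cS$ in $\cX^\epsilon(x)$, and the arrow dichotomy then separates the two into distinct flow tiles.
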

\begin{proof}
Since the set~$B$ is a finite union of compact sets it is
clearly compact.
Furthermore, according to Lemma~\ref{lem:isolatedinvset}
and Definition~\ref{def:flowtiles} the set~$B$ is in fact a union
of flow tiles, since arrows of~$\cV$ either lie completely inside
or completely outside of~$\cS$.
Suppose now that $x \in \bd B$ lies on the boundary of
at least two different flow tiles, which immediately implies that~$x$
has to be in the closure of at least two different $\epsilon$-cells.
In combination with Lemma~\ref{lem:altcharepscell} this in turn yields
$\sigma^\epsilon_{\min}(x) \neq \sigma^\epsilon_{\max}(x)$. From the same
lemma, it follows that the set~$\cX^\epsilon(x)$ consists of all
simplices~$\sigma \in \cX$ which satisfy the inclusions
$\sigma^\epsilon_{\min}(x) \subset \sigma \subset \sigma^\epsilon_{\max}(x)$,
and $\sigma \in \cX^\epsilon(x)$ if and only if $x \in \cl\cse{\sigma}$.

Finally, let~$\gamma : [t_-,t_+] \to X$ denote an arbitrary solution
of the semiflow~$\phi$ through the point~$x$ with~$t_- \le 0 < t_+$.
Then, according to Definition~\ref{def:admissibleflow}, there exists
an open neighborhood~$U$\/of~$0$ in~$[t_-,t_+]$ such that we
have the inclusions
\begin{equation} \label{thm:isolatingblocks3}
  \gamma(t) \in \cse{\sigma^\epsilon_{\max}(x)}
    \;\;\mbox{ for }\;\;
    t \in U \cap \R^-
  \quad\;\mbox{ and }\quad\;
  \gamma(t) \in \cse{\sigma^\epsilon_{\min}(x)}
    \;\;\mbox{ for }\;\;
    t \in U \cap \R^+ ,
\end{equation}
i.e., the solution flows from the $\epsilon$-cell
associated with~$\sigma^\epsilon_{\max}(x)$ to the one
for~$\sigma^\epsilon_{\min}(x)$.

Since each of the two characteristic simplices has to be an
element of~$\cS$ or not, we now distinguish four cases.
\begin{itemize}
\item[{\bf (i)}]   $\sigma^\epsilon_{\max}(x) \in \cS$ and
             $\sigma^\epsilon_{\min}(x) \in \cS$:
Due to~(\ref{thm:isolatingblocks2}), there has to be a simplex
$\tau \in \cX^\epsilon(x)$ which satisfies $\tau \notin \cS$.
Together with Lemma~\ref{lem:altcharepscell} this implies that
the simplex~$\tau$ is a face of~$\sigma^\epsilon_{\max}(x)$,
and therefore we have $\tau \in \Exit\cS$. Since
$\sigma^\epsilon_{\min}(x) \subset \tau$ and the mouth
is combinatorially closed, this in turn furnishes
$\sigma^\epsilon_{\min}(x) \in \Exit\cS$, which is a
contradiction. Thus, this case is impossible.
\item[{\bf (ii)}]  $\sigma^\epsilon_{\max}(x) \notin \cS$ and
             $\sigma^\epsilon_{\min}(x) \in \cS$:
The definition of the set~$B$ and~(\ref{thm:isolatingblocks3})
show that in this case, the point~$x$ has to be a strict
ingress point, see Section~\ref{sec23}. Moreover, an argument
similar to the one in the previous case implies that
$\cX^\epsilon(x) \cap \Exit\cS = \emptyset$.
\item[{\bf (iii)}] $\sigma^\epsilon_{\max}(x) \in \cS$ and
             $\sigma^\epsilon_{\min}(x) \notin \cS$:
One can easily see that these assumptions, together with
\eqref{thm:isolatingblocks3}  imply that~$x$ is a strict
egress point. In addition, since~$\sigma^\epsilon_{\min}(x)$
is a face of~$\sigma^\epsilon_{\max}(x)$, it has to be in
the mouth of~$\cS$.
\item[{\bf (iv)}]  $\sigma^\epsilon_{\max}(x) \notin \cS$ and
             $\sigma^\epsilon_{\min}(x) \notin \cS$:
In this final case, the inclusions in~(\ref{thm:isolatingblocks3})
imply that~$x$ is a bounce-off point. Furthermore, the first
inequality in~(\ref{thm:isolatingblocks2}) and
Lemma~\ref{lem:altcharepscell} show that~$\sigma^\epsilon_{\min}(x)$
has to be in the mouth of~$\cS$.
\end{itemize}
The conclusions from these four cases are collected in
Table~\ref{tab:isoblock}.
\begin{table}[tb]
  \centering
  \begin{tabular}{|c|c|c||c|c|}
    \hline
    \multicolumn{3}{ |c||}{Case} & \multicolumn{2}{ |c|}{Implied}\\
   \cline{1-3}
       & $\sigma^\epsilon_{\max}(x)$ &      $\sigma^\epsilon_{\min}(x)$ &
         \multicolumn{2}{ |c|}{Properties} \\
     \hline\hline
    {\bf (i)} & $\in\cS$ & $\in\cS$ &   \multicolumn{2}{ |c|}{Case not possible} \\
    \hline
    {\bf (ii)}  & $\notin\cS$ & $\in\cS$ & $x \in B^i$ &
      $\cX^\epsilon(x) \cap \Exit\cS = \emptyset$ \\
    \hline
    {\bf (iii)} & $\in\cS$ & $\notin\cS$ & $x\in B^e$ &
      $\sigma^\epsilon_{\min}(x)\in \Exit\cS$  \\
     \hline
    {\bf (iv)}  & $\notin\cS$ & $\notin\cS$ & $x\in B^b$ &
      $\sigma^\epsilon_{\min}(x)\in \Exit\cS$  \\
     \hline
  \end{tabular}
  \vspace*{0.5cm}
  \caption{Classification of boundary points of the set~$B$ defined
           in~(\ref{thm:isolatingblocks1}) in terms of the location
           of the simplices~$\sigma^\epsilon_{\max}(x)$
           and~$\sigma^\epsilon_{\min}(x)$ with respect to~$\cS$.
           For a point~$x \in \bd B$ which lies on two different flow
           tiles, the collection~$\cX^\epsilon(x) \subset \cX$ defined
           in~(\ref{def:charactcells3}) has to contain at least two
           simplices, and it is given by all simplices~$\sigma$ which
           satisfy $\sigma^\epsilon_{\min}(x) \subset \sigma \subset
           \sigma^\epsilon_{\max}(x)$.}
  \label{tab:isoblock}
\end{table}

After these preparations the proof of the proposition can readily
be completed. We have to prove that $B$ satisfies properties
\eqref{def:classicalisoblock1} and \eqref{def:classicalisoblock2}.
The four cases above show that the semiflow~$\phi$ does not form
any internal tangencies with~$\bd B$. Therefore, by the admissibility
of~$\phi$ we see that \eqref{def:classicalisoblock1} holds. Moreover,
directly from the definition of strict egress, strict ingress and
bounce-off points we get $B^i \cap (B^e \cup B^b) = \emptyset$.
It follows that $x\in B^i$ is possible only in case~${\bf (ii)}$.

Now let~$x \in \bd B \setminus B^-$ be arbitrary. According
to~$B^- = B^e \cup B^b$ and the last comment, this implies~$x \in B^i$.
By Lemma~\ref{lem:xepssemicontinuity} there exists a neighborhood~$U$
of~$x$ in~$X$ such that the inclusion $\cX^\epsilon(y) \subset
\cX^\epsilon(x)$ holds for all $y \in U$. Moreover, since $x\in B^i$
is possible only in case~{\bf (ii)}, we obtain $\cX^\epsilon(x) \cap
\Exit\cS = \emptyset$. Therefore, we have
\begin{equation}
\label{eq:cX-epsilon-y}
  \cX^\epsilon(y) \cap \Exit\cS = \emptyset
  \quad\mbox{ for all }\quad
  y \in U \cap \bd B .
\end{equation}
Another glance at Table~\ref{tab:isoblock} then shows that
every point~$y \in U \cap \bd B$ has to be in~$B^i$, since
otherwise~$\sigma^\epsilon_{\min}(y) \in \Exit\cS$, which
contradicts~\eqref{eq:cX-epsilon-y}. This finally implies
that~$\bd B \setminus B^- = B^i$ is open in~$\bd B$,
i.e., the set~$B^-$ is closed. Thus, also \eqref{def:classicalisoblock2}
is satisfied. Therefore, $B$ is indeed an isolating block.
\end{proof}
\begin{figure}[tb]
  \centering
  \includegraphics[height=5.0cm]{isolatedinvset.pdf}
  \hspace*{0.2cm}
  \includegraphics[height=5.0cm]{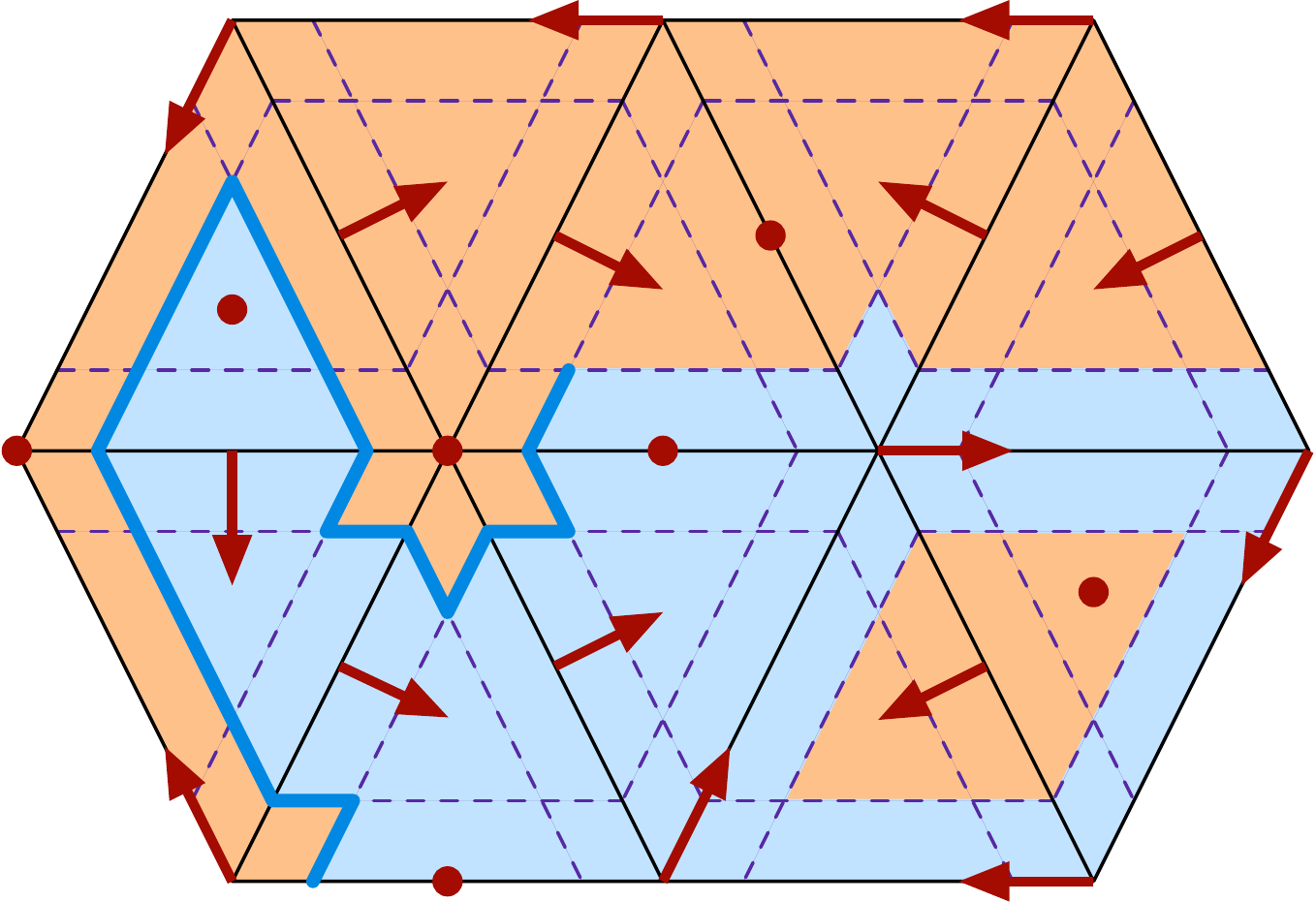}
  \caption{Sample construction of an isolating block. The left panel shows
           the isolated invariant set~$\cS$ from Figure~\ref{fig:mainexample}
           in light blue, while its mouth~$\Exit\cS$ is indicated in
           dark blue. The panel on the right depicts the associated
           isolating block~$B$ constructed in Proposition~\ref{prop:isolatingblocks}
           in light blue, with its mouth~$B^-$ shown in medium
           dark blue.}
  \label{fig:isoblock}
\end{figure}
\medskip

The proposition is illustrated in Figure~\ref{fig:isoblock}.
The left panel reproduces the isolated invariant set~$\cS$ from
Figure~\ref{fig:mainexample} in light blue, with its mouth~$\Exit\cS$
indicated in dark blue.
The panel on the right
depicts in light blue the associated isolating block~$B$ constructed in
Proposition~\ref{prop:isolatingblocks}. The
corresponding mouth~$B^-$ is shown in medium dark blue.
\subsection{Admissibility and the Equivalence of Conley Indices}
\label{sec43}
The results of the last section, particularly
Proposition~\ref{prop:isolatingblocks}, show that for every isolated
invariant set~$\cS$ in the combinatorial setting we can construct
an isolating block~$B$ which isolates an isolated invariant set
$S_\phi=\Inv(B,\phi)$ for every admissible semiflow $\phi$.
Therefore, it is natural to wonder whether we have
\begin{equation}
\label{eq:classical-vs-combinatorial-index}
  H_*(\Cl\cS, \Exit\cS) \cong H_*(B, B^-) ,
\end{equation}
i.e., whether the Conley index of the combinatorial isolated
invariant set~$\cS$ computed via simplicial homology is isomorphic
to the singular homology of the index pair~$(B,B^-)$, which in
turn is the Conley index of the isolated invariant set $S_\phi$.
In the remainder of this section, we will show
that \eqref{eq:classical-vs-combinatorial-index} holds.
\begin{figure}[tb]
  \centering
  \includegraphics[height=5.0cm]{isolatingblock.pdf}
  \hspace*{0.5cm}
  \includegraphics[height=5.0cm]{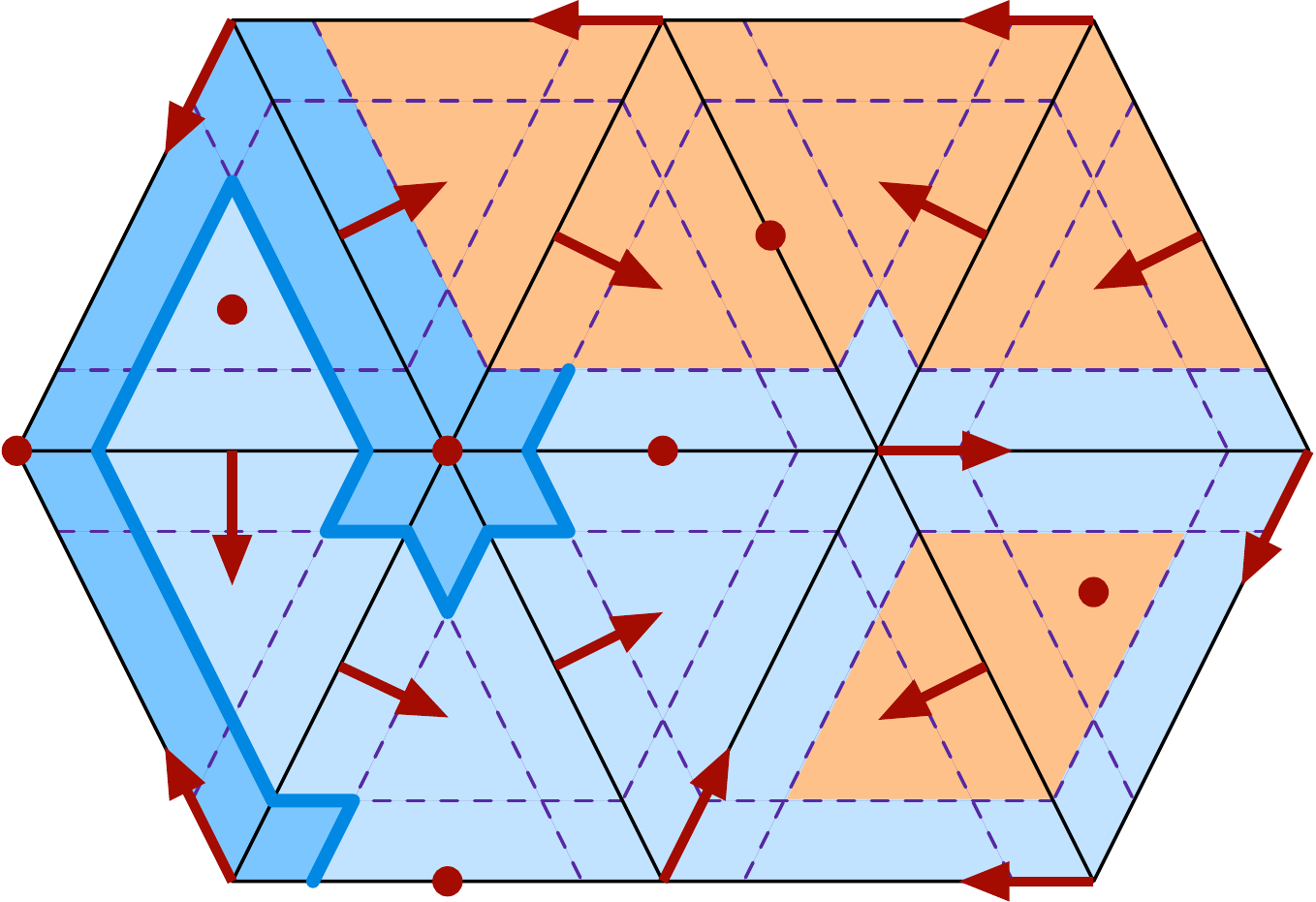}
  \caption{Auxiliary pairs for the Conley index equivalence proof.
           The left panel shows the pair~$(P_1,P_2)$ defined
           in~(\ref{conleyindexequiv2}) for the isolated invariant
           set~$\cS$ from Figure~\ref{fig:mainexample}. While~$P_2$
           is indicated in dark blue, the set~$P_1$ comprises all
           points colored in any shade of blue. In the right panel
           we illustrate the auxiliary pair~$(Q_1,Q_2)$ defined
           in~(\ref{conleyindexequiv3}). The set~$Q_2$ consists
           of points in medium and dark blue, while~$Q_1$ contains
           all points colored in any shade of blue. Notice that we
           clearly have~$(P_1,P_2) \subset (Q_1,Q_2)$ as pairs.}
  \label{fig:equivproof}
\end{figure}

In order to verify that the Conley indices in the combinatorial and
the classical settings are isomorphic, we adapt arguments from our
recent work~\cite{batko:etal:20a}. For any subset~$\cA \subset \cX$
of simplices and an~$\epsilon > 0$ we define the compact set
\begin{equation} \label{conleyindexequiv1}
  N_\epsilon(\cA) :=
  \bigcup_{\sigma \in \cA} \cl\cse{\sigma}
  \subset X .
\end{equation}
Now let~$\cS \subset \cX$ denote an isolated invariant set for the
combinatorial vector field~$\cV$. We use the topological pairs
in~$X$ given by
\begin{equation} \label{conleyindexequiv2}
  P_1 := N_\epsilon(\cS)
  \quad\mbox{ and }\quad
  P_2 := N_\epsilon(\Exit\cS) \cap \bd N_\epsilon(\cS) ,
\end{equation}
as well as
\begin{equation} \label{conleyindexequiv3}
  Q_1 := N_\epsilon(\Cl\cS)
  \quad\mbox{ and }\quad
  Q_2 := N_\epsilon(\Exit\cS) .
\end{equation}
They are illustrated in Figure~\ref{fig:equivproof}.
Using these two auxiliary pairs the Conley index equivalence will be
established in several steps. We begin with a simple lemma.
\begin{lemma} \label{lem:indexequiv1}
Let~$\cS \subset \cX$ denote an isolated invariant set for the multivalued
map~$\Pi_\cV$ defined in~(\ref{def:multimap:pi}), and consider the topological pair
$(B,B^-)$ given by~ \eqref{thm:isolatingblocks1} and~\eqref{eq:exit-set}
and the two topological pairs~$(P_1,P_2)$ and~$(Q_1,Q_2)$ as defined
in~(\ref{conleyindexequiv2}) and~(\ref{conleyindexequiv3}),
respectively. Then $(B,B^-)$, $(P_1, P_2)$, $(Q_1, Q_2)$ are pairs of
$\cD^d_\epsilon$-representable sets and we have
\begin{equation} \label{lem:indexequiv1a}
  H_*(B, B^-) = H_*(P_1, P_2) \cong H_*(Q_1, Q_2).
\end{equation}
\end{lemma}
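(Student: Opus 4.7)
The plan is to prove the lemma in three successive stages: first establish the $\cD^d_\epsilon$-representability of the three pairs, then verify the equality $H_*(B, B^-) = H_*(P_1, P_2)$ by showing $(B, B^-) = (P_1, P_2)$ as pairs of sets, and finally obtain the isomorphism $H_*(P_1, P_2) \cong H_*(Q_1, Q_2)$ via an excision argument. For the representability claim, I would note that each $\epsilon$-cell $\cse{\sigma}$ is $\cD^d_\epsilon$-representable by Proposition~\ref{prop:cse-representable}, and then invoke the closure properties in Corollary~\ref{cor:eps-d-representable-sets}(i): finite unions of closures of $\epsilon$-cells handle $B$, $P_1$, $Q_1$, $Q_2$, while an intersection with a boundary handles $P_2$, and $B^-$ inherits representability from $P_2$ once the two are identified.

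The equality $B = P_1$ is immediate since both are just $\bigcup_{\sigma \in \cS} \cl\cse{\sigma}$. For $B^- = P_2$ I would read off the outcome of the case analysis recorded in Table~\ref{tab:isoblock}. If $x \in B^- = B^e \cup B^b$, then $x$ falls into case~(iii) or~(iv) and each of those forces $\sigma^\epsilon_{\min}(x) \in \Exit\cS$, hence $x \in \cl\cse{\sigma^\epsilon_{\min}(x)} \subset N_\epsilon(\Exit\cS)$; combined with $x \in \bd B$ this gives $B^- \subset P_2$. Conversely, a point $x \in P_2$ satisfies $\cX^\epsilon(x) \cap \Exit\cS \neq \emptyset$, which rules out case~(ii) (where that intersection was shown to be empty) and therefore lands $x$ in case~(iii) or~(iv), i.e., $x \in B^-$.

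For the isomorphism, the key observation is the pair of set-theoretic identities
\begin{equation*}
  Q_1 = P_1 \cup Q_2 \qquad\text{and}\qquad P_1 \cap Q_2 = P_2.
\end{equation*}
The first follows from $\Cl\cS = \cS \cup \Exit\cS$. For the second, the inclusion $P_2 \subset P_1 \cap Q_2$ is immediate from the definitions, while for the reverse, if $x \in N_\epsilon(\cS) \cap N_\epsilon(\Exit\cS)$ then $\cX^\epsilon(x)$ meets both $\cS$ and $\Exit\cS \subset \cX \setminus \cS$, so $\cX^\epsilon(x) \not\subset \cS$; by the upper semi-continuity of $\cX^\epsilon$ (Lemma~\ref{lem:xepssemicontinuity}) together with Lemma~\ref{lem:altcharepscell}, this places $x$ on $\bd N_\epsilon(\cS)$, giving $x \in P_2$. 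These identities imply that the inclusion of pairs $(P_1, P_2) \hookrightarrow (Q_1, Q_2)$ descends to a continuous bijection $P_1/P_2 \to Q_1/Q_2$, which is a homeomorphism because it is a continuous bijection between compact Hausdorff spaces. Since all pairs involved are triangulable via Theorem~\ref{thm:convex-partition}, strong excision (Theorem~\ref{thm:relative-homeo}) yields $H_*(P_1, P_2) \cong H_*(Q_1, Q_2)$.

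The step I anticipate to be the main obstacle is the identification $B^- = P_2$, because the two sides encode the exit set from opposite perspectives: $B^-$ is the dynamical exit set built from strict egress and bounce-off points of an admissible semiflow, while $P_2$ is purely combinatorial. Bridging them requires the full case analysis of Proposition~\ref{prop:isolatingblocks}, and in particular the delicate observation that case~(ii) cannot contribute boundary points inside $N_\epsilon(\Exit\cS)$. The remaining work is largely formal bookkeeping together with a standard excision/quotient argument made rigorous by the triangulability coming from the $\cD^d_\epsilon$-representability.
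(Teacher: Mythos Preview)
Your proposal is correct and follows essentially the same approach as the paper: representability via Proposition~\ref{prop:cse-representable} and Corollary~\ref{cor:eps-d-representable-sets}(i), the identification $(B,B^-)=(P_1,P_2)$ via the case analysis of Table~\ref{tab:isoblock}, and the isomorphism via strong excision (Theorem~\ref{thm:relative-homeo}). The only difference is cosmetic: where the paper asserts $P_1\setminus P_2 = Q_1\setminus Q_2$ directly and applies Theorem~\ref{thm:relative-homeo}, you derive this from the pair of identities $Q_1=P_1\cup Q_2$ and $P_1\cap Q_2=P_2$ and pass through quotient spaces --- an unnecessary but harmless detour.
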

\begin{proof}
By Proposition \ref{prop:cse-representable}, the sets~$\cse{\sigma}$ are
$\cD^d_\epsilon$-representable. By Corollary~\ref{cor:eps-d-representable-sets}(i),
the closures~$\cl\cse{\sigma}$ are $\cD^d_\epsilon$-representable. Therefore,
again by Corollary~\ref{cor:eps-d-representable-sets}(i), the sets~$P_1$, $Q_1$,
and~$Q_2$ are $\cD^d_\epsilon$-representable as unions of $\cD^d_\epsilon$-representable
sets, and~$P_2$ is $\cD^d_\epsilon$-representable as the intersection of a
$\cD^d_\epsilon$-representable set with the boundary of another
$\cD^d_\epsilon$-representable set. Moreover, we clearly have $P_1 = B$,
and Proposition~\ref{prop:isolatingblocks}, in view of Table~\ref{tab:isoblock},
shows that also $B^- = B^e \cup B^b=P_2$. Hence, both~$B$ and~$B^-$ are
$\cD^d_\epsilon$-representable as well, and the first equality
in~\eqref{lem:indexequiv1a} holds trivially.

Finally, the definitions of~$(P_1, P_2)$ and~$(Q_1, Q_2)$
in~(\ref{conleyindexequiv2}) and~(\ref{conleyindexequiv3}), respectively,
immediately give $P_1 \subset Q_1$ and $P_2 \subset Q_2$, as well
as both $P_1 \setminus P_2 = Q_1 \setminus Q_2$. The result now follows from
an application of Theorem~\ref{thm:relative-homeo}, i.e., the strong excision
property of singular homology for representable pairs.
\end{proof}
\medskip

The following second step shows that the enlarged pair~$(Q_1,Q_2)$
has the same homology as the combinatorial isolated invariant
set~$\cS$. It makes use of the Vietoris-Begle theorem.
\begin{lemma} \label{lem:indexequiv2}
Let~$\cS \subset \cX$ denote an isolated invariant set for the multivalued
map~$\Pi_\cV$ defined in~(\ref{def:multimap:pi}), and consider the
topological pair~$(Q_1,Q_2)$ defined in~(\ref{conleyindexequiv3}).
Then we have
\begin{displaymath}
  H_*(Q_1, Q_2) \cong H_*(\Cl\cS, \Exit\cS).
\end{displaymath}
\end{lemma}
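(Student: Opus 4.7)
The plan is to apply the Vietoris--Begle Theorem~\ref{thm:vietoris-begle}. First, by Lemma~\ref{lem:comb:closed} the geometric realizations $|\Cl\cS|$ and $|\Exit\cS|$ are closed subpolytopes of $X$, and being polyhedra of simplicial subcomplexes, the pair $(|\Cl\cS|,|\Exit\cS|)$ is triangulable. Using the identification of simplicial with singular homology for triangulable pairs from Section~\ref{sec:homology}, we have $H_*(\Cl\cS,\Exit\cS)\cong H_*(|\Cl\cS|,|\Exit\cS|)$. Since $(Q_1,Q_2)$ is triangulable by Lemma~\ref{lem:indexequiv1}, it suffices to exhibit a continuous surjection $q:(Q_1,Q_2)\to(|\Cl\cS|,|\Exit\cS|)$ with $q^{-1}(|\Exit\cS|)=Q_2$ and contractible fibers.

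The map $q$ is constructed by a continuous ``truncation'' of barycentric coordinates. Let $\chi:[0,1]\to[0,1]$ be the cutoff $\chi(s):=\max(s-\epsilon,0)$, set $Z(x):=\sum_{v\in\cX_0}\chi(t_v(x))$, and define
\[
  t_v(q(x)) \;:=\; \chi(t_v(x))/Z(x) \qquad\text{for all }v\in\cX_0.
\]
Because $\epsilon<1/(1+\dim\cX)$ and the simplex $\sigma^0(x)$ has at most $1+\dim\cX$ vertices, at least one vertex $v\in\sigma^0(x)$ satisfies $t_v(x)>\epsilon$, so $Z(x)>0$ and $q:X\to X$ is continuous. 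By construction, the support of $q(x)$ is exactly $\sigma^\epsilon_{\min}(x)$, hence $q(x)\in\;\stackrel{\circ}{\sigma^\epsilon_{\min}(x)}$. For $x\in Q_1$, Lemma~\ref{lem:altcharepscell} gives some $\sigma\in\cX^\epsilon(x)\cap\Cl\cS$ with $\sigma^\epsilon_{\min}(x)\subset\sigma$, so combinatorial closedness of $\Cl\cS$ yields $\sigma^\epsilon_{\min}(x)\in\Cl\cS$ and thus $q(x)\in|\Cl\cS|$. Replacing $\Cl\cS$ by $\Exit\cS$ in this argument shows that $q(x)\in|\Exit\cS|$ iff $\sigma^\epsilon_{\min}(x)\in\Exit\cS$ iff $x\in Q_2$, establishing $q^{-1}(|\Exit\cS|)=Q_2$. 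For surjectivity, given $y\in\;\stackrel{\circ}{\tau}$ with $\tau\in\Cl\cS$, set $x_y$ to be the point with $t_v(x_y)=\epsilon+(1-|\tau|\epsilon)t_v(y)$ for $v\in\tau$ and $t_v(x_y)=0$ otherwise; then $x_y\in\tau\subset X$, $\sigma^\epsilon_{\min}(x_y)=\tau$, so $x_y\in\cl\cse{\tau}\subset Q_1$, and a direct calculation gives $q(x_y)=y$.

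The main obstacle is contractibility of fibers. I claim $q^{-1}(y)$ is star-shaped at $x_y$. Fix $y\in\;\stackrel{\circ}{\tau}$, $\tau\in\Cl\cS$. Any $x\in q^{-1}(y)$ must satisfy $\sigma^\epsilon_{\min}(x)=\tau$, and with $s:=\sum_{v\notin\tau}t_v(x)\in[0,1-|\tau|\epsilon)$ one checks $t_v(x)=\epsilon+(1-s-|\tau|\epsilon)t_v(y)$ for $v\in\tau$ and $0\leq t_v(x)\leq\epsilon$ for $v\notin\tau$. For $\lambda\in[0,1]$ consider $x_\lambda:=(1-\lambda)x_y+\lambda x$, taken as a convex combination in $\R^d$. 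The subtle point is that $X$ is not convex in $\R^d$, so one must verify $x_\lambda\in X$; this holds because $\sigma^0(x_\lambda)=\tau\cup\sigma^0(x)=\sigma^0(x)\in\cX$, using that $\tau\subset\sigma^0(x)$. A direct computation shows $x_\lambda$ has parameters $(\lambda s,\lambda\beta_v)$ in place of $(s,\beta_v)$, still satisfying all fiber constraints, with $\sigma^\epsilon_{\min}(x_\lambda)=\tau$ and $q(x_\lambda)=y$. Thus every fiber is star-shaped, hence contractible, and Theorem~\ref{thm:vietoris-begle} yields $H_*(Q_1,Q_2)\cong H_*(|\Cl\cS|,|\Exit\cS|)\cong H_*(\Cl\cS,\Exit\cS)$.
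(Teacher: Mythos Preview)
Your proof is correct and follows essentially the same approach as the paper. Your map~$q$ coincides with the paper's~$\psi_\epsilon$ (the normalizing factor~$1/(1-\epsilon)$ in the paper's cutoff~$\phi_\epsilon$ cancels in the quotient), and your star-shapedness argument is exactly the paper's explicit homotopy~$h_y(\theta,x)=z_{x,\theta}$ rewritten as a linear interpolation toward the center~$x_y$; the paper merely unfolds the coordinate formulas rather than invoking the phrase ``star-shaped.''
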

\begin{proof}
In order to prove the lemma it suffices to verify the assumptions of
Theorem~\ref{thm:vietoris-begle}. For this, we will construct a
map~$\psi_\epsilon : (Q_1,Q_2) \to (|\Cl\cS|, |\Exit\cS|)$ which is a
continuous surjection with~$\psi_\epsilon^{-1}(|\Exit\cS|) = Q_2$, and
which has contractible fibers. We note that, as we verified in
Lemma~\ref{lem:indexequiv1}, the sets~$Q_1$ and~$Q_2$ are
$\cD^d_\epsilon$-representable. Also the sets~$|\Cl\cS|$ and~$|\Exit\cS|$
are $\cD^d_\epsilon$-representable as unions of $\epsilon$-cells which
are representable by Proposition~\ref{prop:cse-representable}.
Therefore, by Corollary~\ref{cor:eps-d-representable-sets}(ii)
both pairs $(Q_1, Q_2)$ and~$(\Cl\cS, \Exit\cS)$ are triangulable.

Thus, we only need to construct the map $\psi_\epsilon$ and verify its properties.
We recall that  $\epsilon>0$ is a fixed constant satisfying~(\ref{eq:epsilon}).
The construction of the map~$\psi_\epsilon$ closely follows
a similar consideration in~\cite{batko:etal:20a}, and therefore
we only present the essential steps. To begin with, define a
function~$\phi_\epsilon : [0,1] \to [0,1]$ via
\begin{displaymath}
  \phi_\epsilon(t) \; = \;
  \left\{ \begin{array}{ccc}
    0 & \mbox{ for } & 0 \le t \le \epsilon , \\[1ex]
    \DS \frac{t - \epsilon}{1 - \epsilon} & \mbox{ for } &
      \epsilon \le t \le 1 .
  \end{array} \right.
\end{displaymath}
This continuous function maps the interval~$[0,\epsilon]$
to zero and the interval~$[\epsilon,1]$ homeomorphically
onto~$[0,1]$. In addition, consider the map
$\psi_\epsilon : X \to X$ defined as
\begin{displaymath}
  \psi_\epsilon(x) \; = \;
  \sum_{u \in \cX_0} \frac{\phi_\epsilon(t_u(x))}
    {\sum\limits_{v \in \cX_0} \phi_\epsilon(t_v(x))} \, u ,
\end{displaymath}
where~$\cX_0$ denotes the set of all vertices of the simplicial
complex~$\cX$. Notice that due to the assumed constraint~(\ref{eq:epsilon})
on~$\epsilon$ the sum in the denominator of the above formula is strictly
positive for all $x \in X$, and this readily implies that~$\psi_\epsilon$
is well-defined and continuous. In addition, one can easily
establish the following properties of~$\psi_\epsilon$:
\begin{itemize}
\item[(i)] For every simplex $\sigma \in \cX$ and any
$x \in \cl\cse{\sigma}$ we have $\psi_\epsilon(x) \in \sigma$. \\[1ex]
This statement follows from the fact that according to
Lemma~\ref{lem:propepscells} one has $t_u(x) \le \epsilon$
for all vertices $u \not\in \sigma$, and therefore
$\phi_\epsilon(t_u(x)) = 0$.
\item[(ii)] For every simplex $\sigma \in \cX$ and any $y \in\;
\stackrel{\circ}{\sigma}$ we have $\psi_\epsilon^{-1}(y) \subset
\cl\cse{\sigma}$.  \\[1ex]
To see this, note that if~$\psi_\epsilon(x) = y$, then the definition
of~$\psi_\epsilon$ implies $\phi_\epsilon(t_u(x)) = 0$
for all vertices $u \not\in \sigma$, and therefore $t_u(x) \le \epsilon$
for all $u \not\in \sigma$. On the other hand, for all vertices
$u \in \sigma$ we have  $\phi_\epsilon(t_u(x)) > 0$, i.e.,
one obtains $t_u(x) > \epsilon$. The statement now follows from
Lemma~\ref{lem:propepscells}.
\item[(iii)] For every simplex $\sigma \in \cX$, any
$x \in \cl\cse{\sigma}$, and every vertex $u \in \sigma$ we have
\begin{displaymath}
  t_u(\psi_\epsilon(x)) =
  \frac{t_u(x) - \epsilon}{1 - \epsilon (1+\dim\sigma) -
    \sum\limits_{w \not\in \sigma} t_w(x)} ,
\end{displaymath}
while $t_u(\psi_\epsilon(x)) = 0$ for all $u \not\in \sigma$. \\[1ex]
For this identity, one uses the fact that $t_v(x) \ge \epsilon$
for all vertices $v \in \sigma$, and therefore the definitions
of~$\phi_\epsilon$ and~$\psi_\epsilon$ imply
\begin{equation} \label{lem:indexequiv2a}
  t_v(\psi_\epsilon(x)) =
  \frac{\phi_\epsilon(t_v(x))}{\sum\limits_{w \in \cX_0}
    \phi_\epsilon(t_w(x))} =
  \frac{t_v(x) - \epsilon}{(1-\epsilon) \sum\limits_{w \in \cX_0}
    \phi_\epsilon(t_w(x))}
  \quad\mbox{for all}\quad
  v \in \sigma,
\end{equation}
which then immediately leads to
\begin{equation}
\label{eq:sum-tvx-eq-1}
t_v(x) = \epsilon + t_v(\psi_\epsilon(x))(1-\epsilon)\sum_{w \in \cX_0}
\phi_\epsilon(t_w(x)).
\end{equation}
Since by (i) we have $\sum_{v \in \sigma} t_v(\psi_\epsilon(x))=1$,
summing both sides of \eqref{eq:sum-tvx-eq-1} for all $v\in\sigma$ we obtain
\[
   \sum_{v \in \sigma}t_v(x)=\epsilon (1+\dim\sigma)+(1-\epsilon)
      \sum_{w \in \cX_0} \phi_\epsilon(t_w(x)).
\]
Substituting $\sum_{v \in \cX_0} t_v(x) = 1 - \sum_{w \not\in \sigma} t_w(x)$
and rearranging we obtain
\[
   (1-\epsilon)\sum_{w \in \cX_0} \phi_\epsilon(t_w(x))=
      1-\epsilon (1+\dim\sigma)-\sum_{w\not\in\sigma}t_w(x).
\]
Combined with~(\ref{lem:indexequiv2a}) this implies the claimed formula.
\item[(iv)] For all $\sigma \in \cX$ we have $\sigma =
\psi_\epsilon(\sigma \cap \cl\cse{\sigma})$. \\[1ex]
In view of~(i) one only has to show that the left-hand side is
contained in the image on the right-hand side.
Since the right-hand side is clearly compact
and $\sigma=\cl\stackrel{\circ}{\sigma}$, it suffices to prove
that $\stackrel{\circ}{\sigma}$ is contained in the right-hand side.
Let $y \in \;\stackrel{\circ}{\sigma}$
and define $x := \sum_{v \in \sigma} (\epsilon + t_v(y) (1 -
\epsilon(1+\dim\sigma))) v \in \sigma$. Then~(iii) implies
that~$\psi_\epsilon(x) = y$, and the statement follows
with~(ii).
\item[(v)] For any simplex $\sigma \in \cX$ and arbitrary
$y \in\; \stackrel{\circ}{\sigma}$ we have $\psi_\epsilon(x) = y$
if and only if
\begin{displaymath}
  t_v(x) =
  \epsilon + t_v(y) \left(1 - \epsilon (1+\dim\sigma) -
    \sum_{w \not\in \sigma} t_w(x) \right)
  \quad\mbox{ for all }\quad
  v \in \sigma ,
\end{displaymath}
as well as $t_v(x) \le \epsilon$ for all $v \not\in \sigma$. \\[1ex]
This statement follows immediately from (ii) and the explicit formula
in~(iii).
  \item[(vi)] $\psi_\epsilon(Q_1) = |\Cl\cS|$ and $\psi_\epsilon(Q_2) = |\Exit\cS|$. \\[1ex]
Using (i) we get $\psi_\epsilon(Q_1) \subset  |\Cl\cS|$ and $\psi_\epsilon(Q_2) \subset |\Exit\cS|$.
The opposite inclusions follow from (iv)
  \item[(vii)] $\psi_\epsilon^{-1}(|\Exit\cS|) = Q_2$. \\[1ex]
  From (vi) we we get $Q_2\subset \psi_\epsilon^{-1}(\psi_\epsilon(Q_2))=\psi_\epsilon^{-1}(|\Exit\cS|)$.
  To see the opposite inclusion take an $x\in\psi_\epsilon^{-1}(|\Exit\cS|)$.
  Let $y:=\psi_\epsilon(x)$ and let $\sigma\in\Exit\cS$ be such that $y\in\; \stackrel{\circ}{\sigma}$.
  Then, by (ii), $x\in\psi_\epsilon^{-1}(y)\subset\cl\cse{\sigma}\subset Q_2$.
\end{itemize}
Property (vi) implies that the restriction of $\psi_\epsilon:X\to X$ to~$Q_1$ is
a well-defined, continuous map of pairs $\left. \psi_{\epsilon} \right|_{Q_1} :
(Q_1,Q_2) \to (|\Cl\cS|,|\Exit\cS|)$. Thus, in view of (vii), in order to apply
the Vietoris-Begle theorem to prove the lemma, we only have to show
that~$\psi_\epsilon$ has contractible fibers.

Thus, let $y\in X$ and let $x\in\psi_\epsilon^{-1}(y)$.
Set $\sigma:=\sigma^0(y)$, where $\sigma^0$ is given by~\eqref{eq:sigma-0}.
Then $y \in \stackrel{\circ}{\sigma}$.
For $\theta\in [0,1]$ and $v\in\cX_0$ set
\begin{equation}
\label{eq:s_x-theta-v}
   s_{x,\theta,v}:=
   \begin{cases}
      \epsilon + t_v(y)\left(1 - \epsilon (1+\dim\sigma) - \sum_{w \not\in \sigma} \theta t_w(x)\right)  & \text{ for $v\in\sigma,$}\\
      \theta t_v(x)                                                                                      & \text{ for $v\not\in\sigma.$}
   \end{cases}
\end{equation}
It follows from \eqref{eq:epsilon} that $s_{x,\theta,v}\geq 0$.
Moreover, one easily verifies that $\sum_{v\in\cX_0}s_{x,\theta,v}=1$.
Hence, $s_{x,\theta,v}\in [0,1]$ and we have a well-defined point $z_{x,\theta}:=\sum_{v\in\cX_0}s_{x,\theta,v}v\in X$,
because clearly $t_v(z_{x,\theta})>0$ implies $t_v(x)>0$, that is, $\sigma^0(z_{x,\theta})\subset\sigma^0(x)\subset X$.
Moreover, we easily get from (v) that $z_{x,\theta}\in \psi_\epsilon^{-1}(y)$. Thus, we have a well-defined homotopy
\begin{displaymath}
  h_y : [0,1] \times \psi_\epsilon^{-1}(y) \to \psi_\epsilon^{-1}(y).
\end{displaymath}
Again using~(v) one can easily see that~$h_y(1,x) = x$
for all $x \in \psi_\epsilon^{-1}(y)$. Finally, for every~$x$
in the fiber~$\psi_\epsilon^{-1}(y)$ formula \eqref{eq:s_x-theta-v} shows that
the point~$h_y(0,x)$ is independent of $x$, i.e., the map~$h_y(0,\cdot)$
is constant. This proves the contractibility
of~$\psi_\epsilon^{-1}(y)$ and the result follows.
\end{proof}
\medskip

After these preparations, we collect the results of this and the
last section in the following theorem, which is valid under the
weak notion of admissible flow.
\begin{theorem}[Isolated Invariant Sets and Conley Index Equivalence]
\label{thm:indexequiv}
Let~$\cS \subset \cX$ be an isolated invariant set for the multivalued
map~$\Pi_\cV$ defined in~(\ref{def:multimap:pi}) and
let~$\phi : \R_0^+ \times X \to X$ denote an arbitrary admissible
semiflow for~$\cV$ in the sense of Definition~\ref{def:admissibleflow}.
Then the set
\begin{displaymath}
  B := N_\epsilon(\cS) = \bigcup_{\sigma \in \cS} \cl\cse{\sigma}
\end{displaymath}
is an isolating block for~$\phi$ and we have
\begin{displaymath}
  H_*(\Cl\cS, \Exit\cS) \cong H_*(B, B^-) .
\end{displaymath}
In other words, every combinatorial isolated invariant set gives
rise to an isolated invariant set for~$\phi$ with the same Conley
index in the classical setting.
\end{theorem}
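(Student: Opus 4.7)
The proof is essentially an assembly of the technical lemmas already developed in Section~\ref{sec43}, together with Proposition~\ref{prop:isolatingblocks}. Here is how I would organize the argument.

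First, I would observe that the isolating-block half of the statement is immediate: Proposition~\ref{prop:isolatingblocks} already establishes that $B = \bigcup_{\sigma\in\cS}\cl\cse{\sigma}$ is a compact isolating block for any admissible semiflow $\phi$, so nothing further is needed here. In particular, this gives us the topological pair $(B,B^-)$ whose relative singular homology is, by definition, the classical homological Conley index of the isolated invariant set $\Inv(B,\phi)$ for $\phi$.

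For the homological equivalence, my plan is to introduce the two auxiliary pairs $(P_1,P_2)$ and $(Q_1,Q_2)$ of \eqref{conleyindexequiv2}--\eqref{conleyindexequiv3} and then chain the isomorphisms
\begin{displaymath}
  H_*(B,B^-) \;=\; H_*(P_1,P_2) \;\cong\; H_*(Q_1,Q_2) \;\cong\; H_*(\Cl\cS,\Exit\cS).
\end{displaymath}
The first equality is just the identification $P_1 = B$ and $P_2 = B^-$, which was already extracted from Proposition~\ref{prop:isolatingblocks} and Table~\ref{tab:isoblock} inside the proof of Lemma~\ref{lem:indexequiv1}. The middle isomorphism is exactly the content of Lemma~\ref{lem:indexequiv1}, obtained by the strong excision property for triangulable (equivalently, $\cD^d_\epsilon$-representable) pairs applied to the inclusion $(P_1,P_2)\hookrightarrow(Q_1,Q_2)$, which satisfies $P_1\setminus P_2 = Q_1\setminus Q_2$. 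The last isomorphism is exactly Lemma~\ref{lem:indexequiv2}, where the relative Vietoris--Begle theorem is applied to the map $\psi_\epsilon$ restricted to $(Q_1,Q_2)$, and its fibers are contracted via the straight-line homotopy constructed there.

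Since all three steps are already in place, there is no substantive obstacle here; the theorem is essentially a statement of the form \emph{``combine Proposition~\ref{prop:isolatingblocks} with Lemmas~\ref{lem:indexequiv1} and \ref{lem:indexequiv2}''}. The only thing I would take care to spell out is that both $(B,B^-)$ and $(\Cl\cS,\Exit\cS)$ qualify for singular homology under our conventions: the former because Lemma~\ref{lem:indexequiv1} records its $\cD^d_\epsilon$-representability and hence triangulability via Corollary~\ref{cor:eps-d-representable-sets} and Theorem~\ref{thm:convex-partition}, and the latter because $\Cl\cS$ and $\Exit\cS$ are simplicial subcomplexes of $\cX$, for which singular and simplicial homology agree. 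With these remarks, the conclusion follows by transitivity of isomorphism.
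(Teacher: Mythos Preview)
Your proposal is correct and matches the paper's own proof exactly: the paper simply states that the result follows immediately from Proposition~\ref{prop:isolatingblocks} and Lemmas~\ref{lem:indexequiv1} and~\ref{lem:indexequiv2}, which is precisely the chain of isomorphisms you spelled out.
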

\begin{proof}
The result follows immediately from Proposition~\ref{prop:isolatingblocks}
and Lemmas~\ref{lem:indexequiv1} and~\ref{lem:indexequiv2}.
\end{proof}
\subsection{Equivalence of Morse Decompositions and Conley-Morse Graphs}
\label{sec44}
In this final part of Section~\ref{sec4} we demonstrate that under the
assumption of strong admissibility, any semiflow~$\phi$ on~$X$ exhibits
the same global dynamics as the underlying combinatorial vector field.
More precisely, we have the following result.
\begin{theorem}[Morse Decomposition Equivalence]
\label{thm:morsedecompequiv}
Suppose we are given a Morse decomposition $\cM = \{\cM_p \, | \,
p \in \mbbP\}$ in the sense of Definition~\ref{def:morsedecomp}.
Let~$\phi : \R_0^+ \times X \to X$ denote an arbitrary strongly admissible
semiflow for~$\cV$ in the sense of Definition~\ref{def:admissibleflow}
and let
\begin{displaymath}
  M_p :=
    \Inv\left( \bigcup_{\sigma \in \cM_p} \cl\cse{\sigma} \right)
  \qquad\mbox{ for all }\qquad
  p \in \mbbP .
\end{displaymath}
Then the collection $M = \{ M_p \, | \, p \in \mbbP\}$ is a Morse
decomposition for the semiflow~$\phi$.  Moreover, its Conley-Morse graph is
isomorphic to the Conley-Morse graph of~$\cM$.
\end{theorem}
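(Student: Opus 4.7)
The plan is to verify each requirement of a Morse decomposition of the maximal invariant set $S:=\Inv(X,\phi)$ and then match the two Conley--Morse graphs. First, I apply Theorem~\ref{thm:indexequiv} to each $\cM_p$ individually: because every Morse set is itself a combinatorial isolated invariant set, the set $B_p := N_\epsilon(\cM_p)$ is an isolating block for $\phi$, so $M_p = \Inv(B_p,\phi)$ is an isolated invariant set whose homological Conley index is isomorphic to $H_*(\Cl\cM_p,\Exit\cM_p)$. Mutual disjointness of the $M_p$'s follows from admissibility: if $x\in M_p\cap M_q$ with $p\neq q$ and $x$ lies in the interior of a single $\epsilon$-cell $\cse{\sigma}$, then $\sigma\in\cM_p\cap\cM_q=\emptyset$ is impossible; otherwise $x$ is a boundary point between several $\epsilon$-cells, and the direction prescribed by~(\ref{def:admissibleflow1}) forces the forward orbit of $x$ into $\cse{\sigma^\epsilon_{\min}(x)}$, so that $\sigma^\epsilon_{\min}(x)$ would have to belong to both $\cM_p$ and $\cM_q$, again a contradiction.

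The core of the argument is the decomposition property. Fix $x\in S\setminus\bigcup_p M_p$ and a full solution $\gamma:\R\to X$ through $x$. I build a \emph{combinatorial shadow} $\rho:\Z\to\cX$ of $\gamma$ as follows. Strong admissibility guarantees that $\gamma$ spends only finite time in each arrow flow tile, so both forward and backward in time $\gamma$ traverses an infinite sequence of flow tiles. Each traversal of the critical tile $C_{\{\sigma\}}$ is recorded as a step at $\sigma$, consistent with $\Pi_\cV(\sigma)=\Cl\sigma$, and each traversal of the arrow tile $C_{\tau\to\sigma}$ is recorded as the subsequence $\tau,\sigma,\tau'$ where $\tau'$ is the facet of $\sigma$ distinct from $\tau$ into which $\gamma$ exits; this matches precisely a step of $\Pi_\cV$ on tails and heads in~(\ref{def:multimap:pi}). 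The bridge between the continuous and combinatorial pictures is Lemma~\ref{lem:altcharepscell}: at every boundary crossing of $\gamma$ the involved simplices are read off from $\sigma^\epsilon_{\min}$ and $\sigma^\epsilon_{\max}$ at the crossing point, and every simplex that appears cofinally in $\rho$ witnesses a point in $\omega(\gamma)$, which yields the containment $\omega(\gamma)\subset\bigcup_{\sigma\in\omega(\rho)}\cl\cse{\sigma}$ and analogously for $\alpha$. Applying condition~(b) of Definition~\ref{def:morsedecomp} to $\rho$ produces indices $p\geq q$ with $\alpha(\rho)\subset\cM_p$ and $\omega(\rho)\subset\cM_q$, hence $\alpha(\gamma)\subset B_p$, which combined with invariance forces $\alpha(\gamma)\subset\Inv(B_p,\phi)=M_p$, and analogously $\omega(\gamma)\subset M_q$. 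The strict inequality $p>q$ holds because $p=q$ together with condition~(c) of Definition~\ref{def:morsedecomp} would give $\im\rho\subset\cM_p$, and hence $\gamma(\R)\subset B_p$, contradicting $x\notin M_p$.

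With the Morse decomposition in hand, the graph isomorphism reduces to showing that a connecting orbit from $M_p$ to $M_q$ in $\phi$ exists if and only if a combinatorial connection from $\cM_p$ to $\cM_q$ exists in $\cV$. The forward direction is immediate from the same shadow construction. For the reverse direction, any combinatorial connection $\rho$ picks out a finite sequence of $\epsilon$-cells whose closures form a compact connected corridor from $B_p$ to $B_q$ on which admissibility prescribes the flow direction across every interior boundary, and a standard Wa\.zewski-type argument applied to this corridor produces a continuous orbit travelling from $M_p$ to $M_q$. The main obstacle is the shadow construction of paragraph two: at points where many $\epsilon$-cells and flow tiles meet simultaneously one must make a consistent combinatorial choice that both respects $\Pi_\cV$ and faithfully tracks the limit-set behaviour of $\gamma$, and one must rule out pathological rapid oscillations between adjacent tiles --- this is exactly the scenario excluded by strong, rather than mere, admissibility, and it is what ultimately upgrades the isolated-invariant-set equivalence of Theorem~\ref{thm:intro1} to the full Morse-theoretic equivalence claimed here.
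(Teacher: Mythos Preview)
Your overall strategy---pass from a full solution $\gamma$ of $\phi$ to a combinatorial ``shadow'' solution of $\Pi_\cV$ and then invoke the combinatorial Morse decomposition---is exactly the paper's approach. Two points need repair, however.

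First, the assertion that ``both forward and backward in time $\gamma$ traverses an infinite sequence of flow tiles'' is false: strong admissibility forces exit only from \emph{arrow} tiles, so $\gamma$ may remain in a single critical tile for all $t\ge t_0$ (or all $t\le t_0$). The paper handles this case explicitly: the index set $J$ of tile-traversals may be bounded on either side, and in that case strong admissibility forces the terminal tile to be critical, so one pads the resulting finite sequence by repeating that critical simplex indefinitely to obtain a full combinatorial solution $\Gamma_{\mathrm{full}}:\Z\to\cX$. Without this step your $\rho$ need not be a full solution and Definition~\ref{def:morsedecomp}(b) does not apply. Relatedly, your recipe for arrow tiles (``record $\tau,\sigma,\tau'$ where $\tau'$ is the facet of $\sigma$ into which $\gamma$ exits'') is too narrow: upon exit one has $\sigma^\epsilon_{\min}(\gamma(\bar t))$ equal to an arbitrary proper face of $\sigma=\sigma_k^+$, not necessarily a facet, and the next recorded simplex is $\sigma^\epsilon_{\min}(\gamma(\bar t))^-$. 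The paper uses the arrowhead extension of~\cite[Definition~5.2]{kaczynski:etal:16a} to turn this into a genuine $\Pi_\cV$-solution.

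Second, your reverse direction for the Conley--Morse graph---``a standard Wa\.zewski-type argument applied to this corridor produces a continuous orbit''---is not an argument, and in fact the paper's proof does \emph{not} establish this direction either. The paper proves only that every classical connection $M_p\to M_q$ shadows a combinatorial connection $\cM_p\to\cM_q$, together with equality of the Poincar\'e-polynomial labels; it never constructs a classical connecting orbit from a combinatorial one. For a merely strongly admissible semiflow, where the interior dynamics on critical tiles is unconstrained, there is no evident mechanism forcing such an orbit to exist, and a Wa\.zewski argument on a corridor of tiles does not supply one (the corridor need not be an isolating block, and exit-set topology alone does not produce connections between prescribed invariant sets). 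You should either drop this claim and state, as the paper effectively does, that the classical connection graph embeds in the combinatorial one with matching labels, or supply a genuine construction---which would go beyond what the paper proves.
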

\begin{proof}
In view of~(\ref{conleyindexequiv1}) we have $M_p =
\Inv(N_\epsilon(\cM_p))$ for all $p \in \mbbP$. According to
Theorem~\ref{thm:indexequiv} the sets~$N_\epsilon(\cM_p)$ are
isolating blocks for~$\phi$.
Since the sets $\cM_p$ are pairwise disjoint, the sets~$N_\epsilon(\cM_p)$ can only intersect along
their boundaries. This immediately implies that the sets~$M_p$
are disjoint isolated invariant sets for~$\phi$. Moreover,
Theorem~\ref{thm:indexequiv} also shows that the Poincar\'e
polynomials of~$M_p$ and~$\cM_p$ agree.

It remains to show that the $\alpha$- and $\omega$-limit sets
of any solution for~$\phi$ (as long as they are defined) are contained
in Morse sets with the correct order relationship, and that if both
of these limit sets are contained in the same Morse set, then the
whole solution is contained in the Morse set. In the following,
we only consider the case of full solutions for~$\phi$, as the case
of forward solutions can be treated completely analogously.

Assume therefore that $\gamma : \R \to X$ denotes an arbitrary
full solution of~$\phi$. Due to the compactness of~$X$, both
its $\alpha$- and its $\omega$-limit sets exist, are nonempty,
and invariant. Let~$\cC$ denote the set of flow tiles
associated with the combinatorial vector field~$\cV$, as defined
in Definition~\ref{def:flowtiles}. Then we can define a multivalued
map $\eta : \R \multimap \cC$ via
\begin{displaymath}
  \eta(t) := \left\{ C \in \cC \; \mid \;
    \gamma(t) \in C \right\}
  \qquad\mbox{ for all }\qquad
  t \in \R .
\end{displaymath}
Then the following hold:
\begin{itemize}
\item Due to our definition of admissibility, if~$\eta(t_0)$
contains more than one flow tile, then there exists a
$\delta > 0$ such that~$\gamma(t)$ is single-valued for
all $t \in (t_0 - \delta, t_0 + \delta) \setminus \{ t_0 \}$.
\item According to the continuity of~$\phi$ and the
characterization of the boundaries of $\epsilon$-cells given
in Lemma~\ref{lem:propepscells}, if~$\eta(t_0)$ contains
exactly one flow tile, then there exists a $\delta > 0$
such that~$\gamma(t)$ is single-valued for all $t \in
(t_0 - \delta, t_0 + \delta)$.
\item Combined, these two facts show that the times at
which~$\eta(t)$ contains more than one flow tile do not
have any accumulation points. Thus, there exists a
$\Z$-interval~$I \subset \Z$ and a strictly increasing
sequence~$(t_k)_{k \in I} \subset \R$ without accumulation
points such that~$\eta$ is multivalued on
$R = \{ t_k \; \mid\; k \in I \}$ and single-valued on $\R \setminus R$.
\end{itemize}
Based on the above observations we can now choose a
$\Z$-interval~$J \subset \Z$ whose cardinality equals the number
of connected components of~$\R \setminus R$, which is bounded from
below if~$\R^- \setminus R$ has a component of infinite
length, and bounded from above if~$\R^+ \setminus R$ has a
component of infinite length. Furthermore, for every $k \in J$
we can choose precisely one point~$t_k^*$ in each of the connected
components of~$\R \setminus R$ such that $t_k^* < t_j^*$ whenever
$k < j$. In other words, as~$k$ increases through~$J$ the points~$t_k^*$
hit every connected component of~$\R \setminus R$ precisely once, with
respect to the standard linear increasing order. Then~$\eta(t_k^*)$
contains exactly one flow tile associated with a simplex $\sigma_k\in\cX$.
Without loss of generality we may assume that~$\sigma_k=\sigma_k^-$.
Now define a function~$\Gamma : J \to \cX$ via
$
  \Gamma(k) := \sigma_k=\sigma_k^-.
$
Thus, as~$k$ ranges through~$J$ in increasing order, the
sequence~$\Gamma(k)$ visits the lowest-dimensional simplices
associated with the flow tiles which are traversed by~$\gamma$,
in the correct order, and with every arrow flow tile being represented
by the arrow tail simplex. In particular, we have $\Gamma(k) \neq
\Gamma(k+1)$ for all arguments $k, k+1 \in J$.

If $k,k+1\in J$, then the solution $\gamma$ exits $\cl\cse{\sigma_k}$
at a time $\bar{t}\in(t_k^*,t_{k+1}^*)$.
Then $\gamma(\bar{t})\in\cl\cse{\sigma_k}$ and from Lemma~\ref{lem:altcharepscell}
we get $\sigma^\epsilon_{\min}(\gamma(\bar{t}))\subset\sigma_k$.
By \eqref{def:admissibleflow1}   we know that $\gamma$ enters
$\cse{\sigma^\epsilon_{\min}(\gamma(\bar{t}))}$
when crossing $\bar{t}$. Therefore, $\sigma_{k+1}=\sigma^\epsilon_{\min}(\gamma(\bar{t}))^-$
and
$$
\Gamma(k+1)=\sigma_{k+1}=\sigma^\epsilon_{\min}(\gamma(\bar{t}))^-\subset \sigma^\epsilon_{\min}(\gamma(\bar{t}))\subset
\sigma_k\subset\sigma_k^+=\Gamma(k)^+.
$$
Thus, we proved that for all $k, k+1 \in J$ the simplex~$\Gamma(k+1)$ is a face
of~$\Gamma(k)^+$.
Moreover, if~$\eta(t_k^*)$ contains an arrow flow
tile, then one necessarily has $\Gamma(k+1) \neq \Gamma(k)^-$.
This immediately implies that the arrowhead extension of~$\Gamma$
as defined in~\cite[Definition~5.2]{kaczynski:etal:16a} is a
solution of the combinatorial vector field~$\cV$. In addition, the
following two implications hold:
\begin{eqnarray*}
  k^+ := \sup J <\infty & \Rightarrow & \mbox{$\Gamma(k^+)$
    is a critical simplex} , \\
  k^- := \inf J >-\infty & \Rightarrow & \mbox{$\Gamma(k^-)$
    is a critical simplex} .
\end{eqnarray*}
We only verify the first implication, as the second one can be
established analogously. If the implication were false, then
one would have $\gamma(t) \in C$ for all $t \ge t_{k^+}^*$
for some arrow flow tile~$C \in \cC$. This, however, contradicts
our assumption of strong admissibility of~$\phi$ (see
Definition~\ref{def:admissibleflow}).

Based on this discussion, the arrowhead extension of the
mapping~$\Gamma$ can be extended to a full solution
$\Gamma_{\mathrm{full}} : \Z \to \cX$ of~$\cV$. In the cases
$k^+ <\infty$ or $k^- > - \infty$ one only has to pad the respective
end with infinite repetitions of the critical simplex~$\Gamma(k^+)$
or~$\Gamma(k^-)$, respectively. Moreover, the following holds.
\begin{itemize}
\item Due to the definition of Morse decomposition for~$\cV$
there exist Morse sets~$\cM_p$ and~$\cM_q$ with $p \ge q$ such
that $\alpha(\Gamma_{\mathrm{full}}) \subset \cM_p$ and
$\omega(\Gamma_{\mathrm{full}}) \subset \cM_q$. In addition,
if $p = q$ then one has $\Gamma_{\mathrm{full}}(\Z) \subset \cM_p$.
\item According to our construction, the classical solution~$\gamma$
traverses the flow tiles associated with~$\Gamma_{\mathrm{full}}(\Z)$
as~$t$ ranges through~$\R$. This implies $\alpha(\gamma) \subset
N_\epsilon(\cM_p)$ and $\omega(\gamma) \subset N_\epsilon(\cM_q)$.
Since both limit sets are invariant sets, we finally obtain
$\alpha(\gamma) \subset M_p$ and $\omega(\gamma) \subset M_q$.
Moreover, if~$p = q$ then $\gamma(\R) \subset
N_\epsilon(\Gamma_{\mathrm{full}}(\Z)) \subset N_\epsilon(\cM_p)$.
As a full solution, we therefore have $\gamma(\R) \subset
\Inv(N_\epsilon(\cM_p)) = M_p$.
\end{itemize}
This completes the proof of the theorem.
\end{proof}
\medskip

The above theorem is the precise version of Theorem~\ref{thm:intro2}
from the introduction. Notice that strong admissibility is essential,
since without it we would not have been able to conclude that in the
case~$k^+ <\infty$ (respectively $k^- > - \infty$) the simplex~$\Gamma(k^+)$ (respectively~$\Gamma(k^-)$) is critical. This
in turn was necessary for the construction of the full combinatorial
solution~$\Gamma_{\mathrm{full}}$.
\section{Existence of Strongly Admissible Semiflows}
\label{sec5}
In this section we show that for any combinatorial vector
field~$\cV$ on a simplicial complex~$\cX$ one can explicitly construct
a strongly admissible semiflow~$\phi$ on the polytope $X = |\cX|$. This
construction is based on the three design principles outlined in the
introduction and is divided into four parts. In Section~\ref{sec51}
we present the precise definition of the vector field which generates
the semiflow. This definition relies on a family of vector fields~$f^\omega$
which are indexed by the simplices $\omega \in \cX$, each of which describes
the semiflow in its associated flow tile. As this vector field definition is
somewhat involved, we spend the remainder of this section describing the main
features of the semiflow and its geometry on different flow tiles. The
next three sections are devoted to showing that the fields~$f^\omega$ do
indeed generate a continuous, strongly admissible semiflow on~$X$. In Section~\ref{sec52}
we establish that each vector field~$f^\omega$ generates a continuous
semiflow, and discuss some of its elementary properties. This is followed
in Section~\ref{sec53} with a detailed study of the induced dynamics on
a flow tile, in particular the behavior near the boundary of the flow
tile. In this section we also derive crucial results towards strong
admissibility. Finally, Section~\ref{sec54} combines the previous
results to generate a continuous semiflow on~$X$ via gluing the semiflows
generated by the vector fields~$f^\omega$.
\subsection{Vector Field Definition and Basic Geometry}
\label{sec51}
In view of the three design principles which are described in the
introduction, our goal is the construction of a semiflow on
the underlying polytope~$X$ of an abstract simplicial complex~$\cX$.
On the one hand, we would like this semiflow to be generated by a differential
equation, while on the other hand the resulting semiflow generally
will have to have velocity jumps. In addition, our definition has
to be flexible enough to accommodate different tilings based on
different combinatorial vector fields~$\cV$ on~$\cX$.

With these considerations in mind, we have adopted the following
framework for the construction of a strongly admissible semiflow.
First of all, as in the preceding sections, it is convenient to assume that
the geometric realization of the abstract simplicial complex~$\cX$
is the standard geometric realization (see Section~\ref{sec:geom-real}).
Recall that the standard geometric realization of $\cX$ is a subcomplex
of the standard $d$-simplex in $\R^d$ where $d := \card{\cX_0}$
denotes the number of vertices in~$\cX$. It is based on an identification
of vertices in $\cX_0$ with versors of $\R^d$ which lets us write
the coordinates of a vector $x\in\R^d$ in the form $x_v$ where $v\in\cX_0$.
In the standard geometric realization of $\cX$ a simplex $\sigma \in \cX$
is represented as a geometric simplex  consisting
of all points~$x \in \R^d$ with $\sum_{u \in \sigma} x_u = 1$, as
well as $x_u \ge 0$ for all $u \in \sigma$ and~$x_u = 0$ for all
$u \not\in \sigma$. Notice that the notation for the components
of~$x$ is an extension of the barycentric coordinate notation
introduced earlier.

The standard geometric realization has the advantage that we can
extend the range of every barycentric coordinate associated with
the simplicial complex to take values in the reals, and this in
turn will allow us to define a vector field on all of~$\R^d$ in
such a way that modifications of standard results give the
existence of the semiflow, and that this semiflow leaves
the underlying polytope of the simplicial complex invariant.
We would like to point out, however, that once we have constructed
a strongly admissible semiflow on the underlying polytope~$X$
of the standard geometric realization,
one can easily map it onto underlying polytopes of
other geometric realizations of the
given abstract simplicial complex.

Having settled on the geometric realization, we now turn
our attention to the underlying principles for the definition of
the {\em vector field\/}:
\begin{itemize}
\item Our vector field will be defined piece-wise. In fact, for
every simplex $\omega \in \cX$ there will be a bounded and
measurable vector field~$f^\omega : \R^d \to \R^d$ which induces
a semiflow through the ordinary differential equation
$\dot{x} = f^\omega(x)$, interpreted in the Carath\'eodory sense.
\item The vector fields~$f^\omega$ only depend on the underlying
flow tiles associated with a combinatorial vector field~$\cV$
on~$\cX$, i.e., we have $f^\omega = f^{\omega^+} = f^{\omega^-}$
for all $\omega \in \cX$, where again we use the notation introduced
in~(\ref{def:flowtiles1}). In fact, we are only interested in the
behavior of~$f^\omega$ on the flow tile $C_\omega = \cl\cse{\omega^-}
\cup \cl\cse{\omega^+}$ defined in~(\ref{def:flowtiles2}).
\item The final strongly admissible semiflow is defined in the
spirit of a Filippov system on~$X$ through the vector field
family~$\{ f^\omega \}_{\omega \in \cX}$ over the flow tiles
in~$\cC$, and its solutions cross the boundaries of the flow tiles
transversally. Unfortunately, however, standard results on Filippov
systems do not apply in our situation, and we have to construct the
semiflow differently.
\end{itemize}
We recall that as in the earlier sections, $\epsilon>0$ denotes a small
positive parameter satisfying~\eqref{eq:epsilon}. So far it was used in
the construction of the $\epsilon$-cells in Section~\ref{sec22} and flow
tiles in Section~\ref{sec23}. It will also be used in this section to
construct some auxiliary functions needed to present the precise
definition of the vector fields~$f^\omega$. Starting with
Proposition~\ref{prop:vectorfieldbound}, apart from assumption~\eqref{eq:epsilon}
we will also require that~$\epsilon$ satisfies
\begin{equation} \label{prop:vectorfieldbound1}
  0 < \epsilon < \frac{1}{6d}
\end{equation}
where $d=\card\cX_0$.
We begin by defining  two auxiliary scalar functions~$g, h : \R \to \R$ via
\begin{equation} \label{def:vectorfield1}
  g(s) := \sqrt[3]{\epsilon^2 s}
  \qquad\mbox{ and }\qquad
  h(s) := \left\{
  \begin{array}{ccc}
    \DS 1 & \mbox{ if } &
      \DS \left| s - \epsilon \right| \ge \frac{\epsilon}{2}
      , \\[2ex]
    \DS \left( 1 + \frac{\epsilon}{2} \right)
      \frac{2}{\epsilon} \left| s - \epsilon \right|
      - \frac{\epsilon}{2} & \mbox{ if } &
      \DS \left| s - \epsilon \right| \le \frac{\epsilon}{2}
      .
  \end{array}
  \right.
\end{equation}
These two functions are shown in blue in the left and right panels
of Figure~\ref{fig:ghfunction}, respectively. Notice that the function~$g$
is continuous, but not differentiable at~$s = 0$, while the function~$h$
is Lipschitz continuous on~$\R$. In fact, the function~$h$ is of constant
value~$1$ everywhere except in a small neighborhood around~$s = \epsilon$,
where it drops to the slightly negative value~$-\epsilon / 2$.
\begin{figure}[tb]
  \centering
  \includegraphics[height=0.20\textheight]{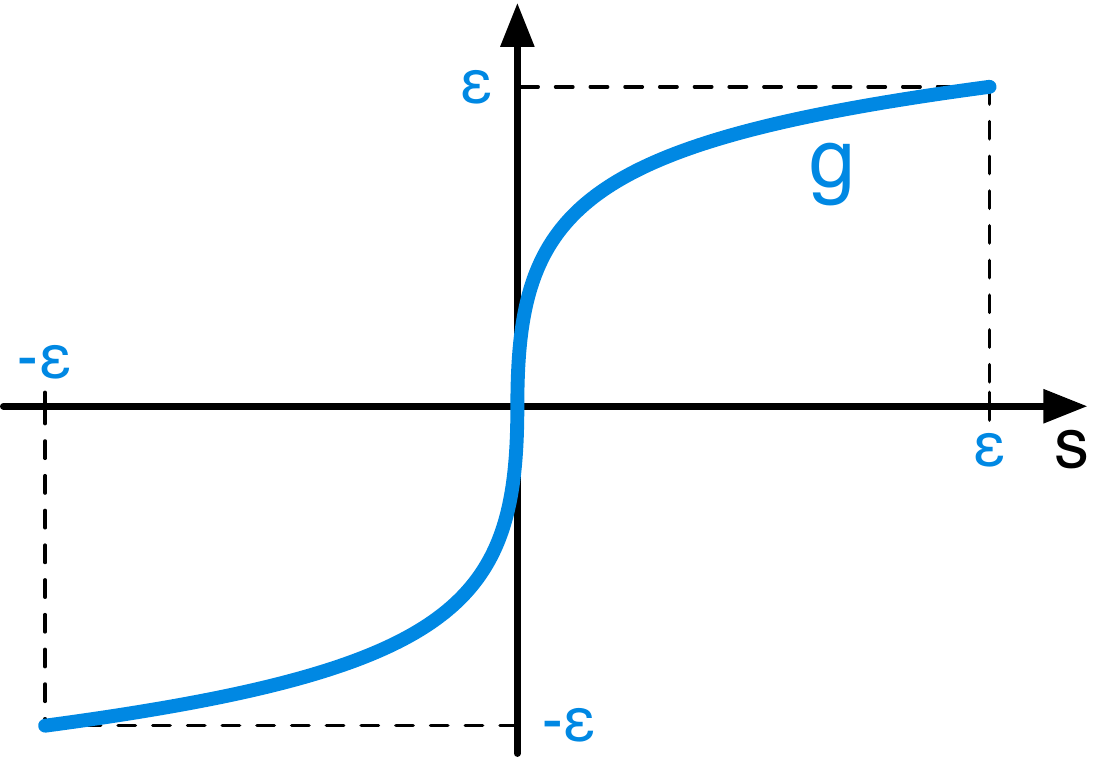}
  \hspace*{0.75cm}
  \includegraphics[height=0.20\textheight]{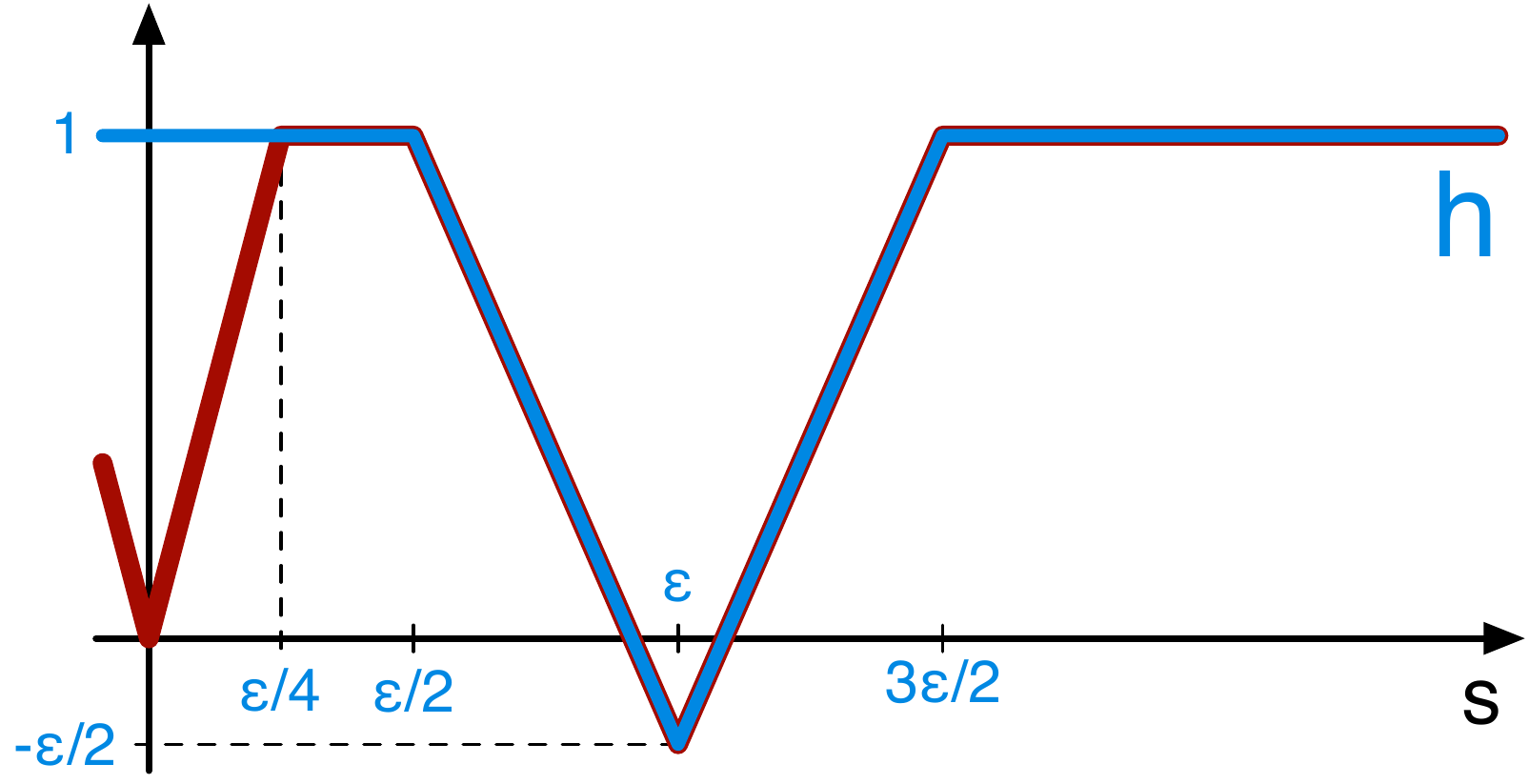}
  \caption{Two auxiliary functions for the definition of the vector
           fields~$f^\omega$. The left and right panels depict in
           blue the functions~$g$ and~$h$, respectively, which are
           defined in~(\ref{def:vectorfield1}). The red function in
           the right panel shows the product~$\eta^\omega(s,x) h(s)$
           if one has $\sum_{u \not\in \omega^+} x_u^2 > 0$,
           which occurs in the second part of the definition
           of~$f^\omega$ in~(\ref{def:vectorfield4}).}
  \label{fig:ghfunction}
\end{figure}

In contrast to~$g$ and~$h$, which only depend on the small
parameter~$\epsilon$, the next two auxiliary functions also
depend on the underlying simplex~$\omega \in \cX$. The
function~$\theta^\omega : \R^d \to \R$ is given by the
definition
\begin{equation} \label{def:vectorfield2}
  \theta^\omega(x) :=
  \min \left( \{ \epsilon \} \cup \left\{ x_u - \epsilon \; \mid \;
    u \in \omega^- \right\} \right) .
\end{equation}
Notice that for any point~$x$ in the underlying polytope $X$
of~$\cX$, we have $\theta^\omega(x) \in [-\epsilon, \epsilon]$,
and the identity $\theta^\omega(x) = \epsilon$ is satisfied if and
only if~$x_u \ge 2\epsilon$ for all $u \in \omega^-$. The final
auxiliary function $\eta^\omega : \R \times \R^d \to \R$ is
defined as
\begin{equation} \label{def:vectorfield3}
  \eta^\omega(s,x) := \left\{
  \begin{array}{ccc}
    \DS 1 & \mbox{ if } & \DS \sum_{u \not\in \omega^+} x_u^2 = 0
      \;\;\mbox{ or }\;\; |s| > \frac{\epsilon}{4} , \\[4ex]
    \DS \frac{4|s|}{\epsilon} & \mbox{ if } & \DS \sum_{u \not\in \omega^+}
      x_u^2 > 0 \;\;\mbox{ and }\;\; |s| \le \frac{\epsilon}{4} .
  \end{array}
  \right.
\end{equation}
For the case $\sum_{u \not\in \omega^+} x_u^2 > 0$ the
product~$\eta^\omega(s,x) h(s)$ is shown in red
in the right panel of Figure~\ref{fig:ghfunction}.

After these preparations, for every $\omega \in \cX$  the vector
field $f^\omega : \R^d \to \R^d$ is defined componentwise in
the form
\begin{equation} \label{def:vectorfield4}
  f^\omega_v(x) := \left\{
  \begin{array}{rcl}
    \DS -g(x_v)
      & \mbox{ for } & v \not\in \omega^+ , \\[2ex]
    \DS \eta^\omega(x_v,x) \left( h(x_v) + \theta^\omega(x)
      - \sum_{u \not\in \omega^+} x_u \right)
      & \mbox{ for } & v \in \omega^+ \setminus \omega^- , \\[5ex]
    \DS x_v - \frac{1}{\card{\omega^-}} \left( \sum_{u \in \omega^-} x_u
      + \sum_{u \not\in \omega^-} f^\omega_u(x) \right)
      & \mbox{ for } & v \in \omega^- .
  \end{array}
  \right.
\end{equation}
At first glance, the vector field definition given in
formulas~(\ref{def:vectorfield1}) through~(\ref{def:vectorfield4})
is clearly overwhelming. Thus, before we establish that the so-defined
vector fields do indeed generate a strongly admissible semiflow~$\phi$
on the underlying polytope~$X \subset \R^d$ of $\cX$, we pause for a brief
description of the main features of the induced semiflow on the two
types of flow tiles.

\smallskip\noindent
{\bf (I) The induced semiflow on critical flow tiles.}
To begin with, suppose that $\omega \in \cX$ is a critical cell for the
combinatorial vector field~$\cV$, and let $C_\omega = \cl\cse{\omega}
\subset X$ denote the associated flow tile. Then the induced
semiflow~$\phi$ on~$C_\omega$ is given by the solution of the
ordinary differential equation
\begin{displaymath}
  \begin{array}{rclcl}
    \DS \dot{x}_v & = & \DS x_v - \frac{1}{1+\dim\omega}
      \left( \sum_{u \in \omega} x_u - \sum_{u \not\in \omega}
      g(x_u) \right) &
      \mbox{ for } & \DS v \in \omega , \\[5ex]
    \DS \dot{x}_v & = & \DS -g(x_v) &
      \mbox{ for } & \DS v \not\in \omega .
  \end{array}
\end{displaymath}
First consider the intersection~$C_\omega \cap \omega$. Then
the semiflow further reduces to the linear differential equation
\begin{displaymath}
  \dot{x}_v = x_v - \frac{1}{1+\dim\omega}
  \qquad\mbox{ for all }\qquad
  v \in \omega
  \quad\mbox{ and }\quad
  x \in C_\omega \cap \omega .
\end{displaymath}
This differential equation has a unique equilibrium at the barycenter
with barycentric coordinates $x_v := 1 / (1+\dim\omega)$ for all
$v \in \omega$. This equilibrium is
unstable with index~$\dim\omega$, and the induced flow is towards the
boundary~$\bd C_\omega \cap \omega$. See also the illustration in
Figure~\ref{fig:semiflowc}.
\begin{figure}[tb]
  \centering
  \raisebox{3.0cm}{\parbox{6.3cm}{
    \includegraphics[width=0.40\textwidth]{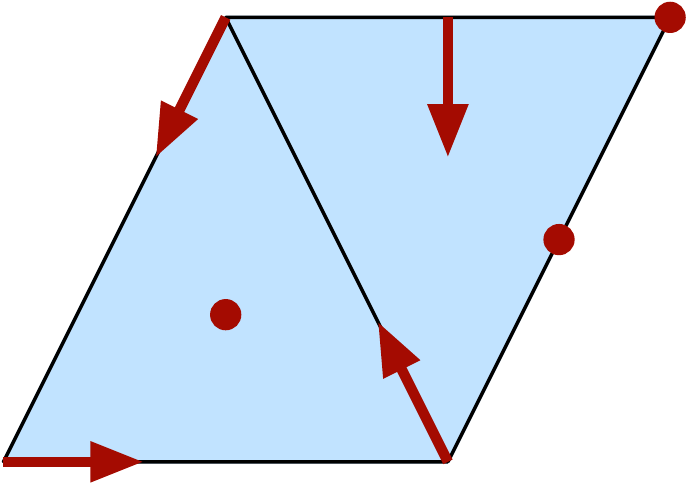}}}
  \includegraphics[width=0.55\textwidth]{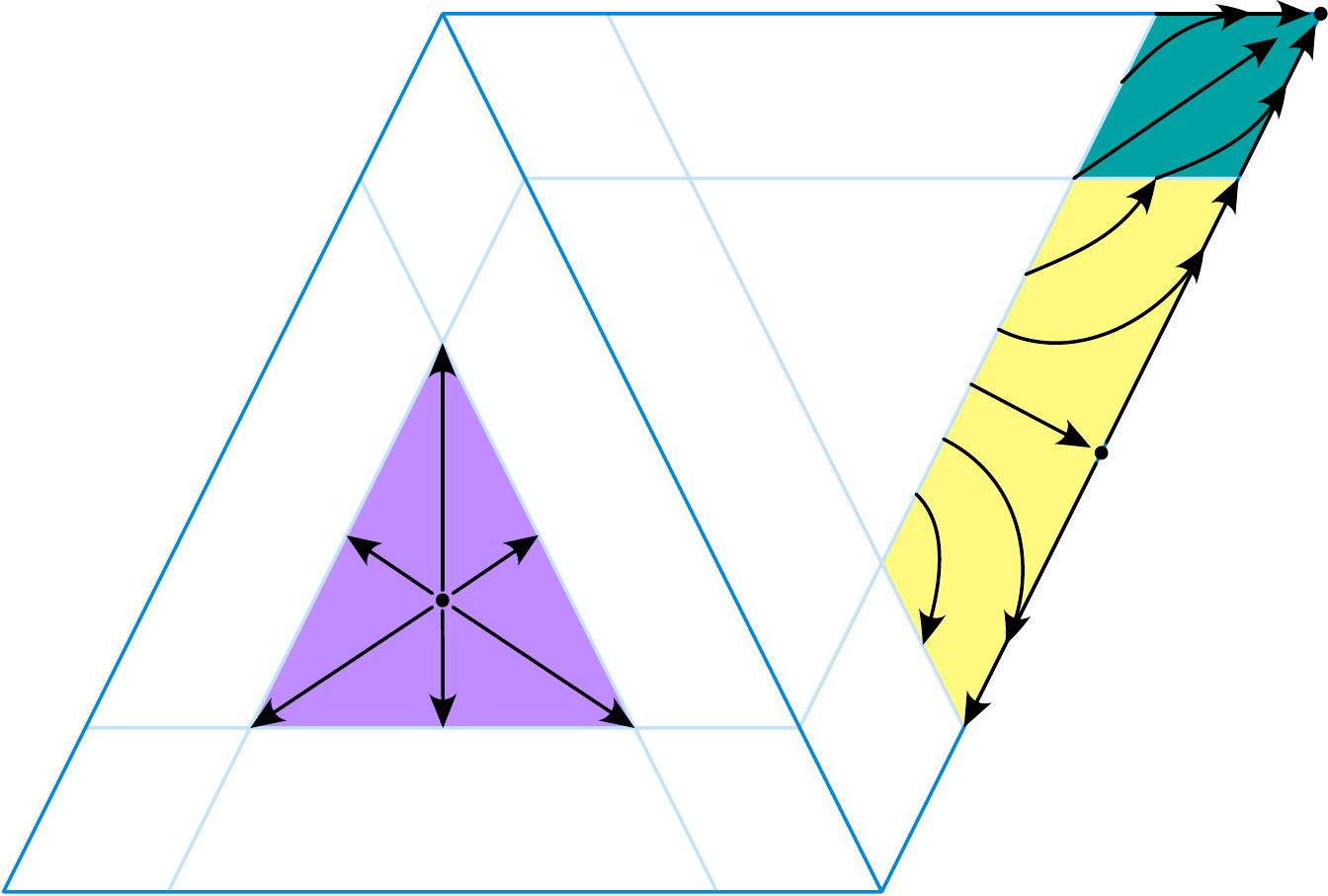}
  \caption{Semiflow induced by~(\ref{def:vectorfield4}) on
           critical flow tiles. For the combinatorial vector
           field~$\cV$ shown in the left panel, the panel on
           the right depicts the flow tiles for the three
           critical cells, together with the strongly admissible
           semiflow induced by~(\ref{def:vectorfield4}) on each
           of these tiles. Note that in all three cases the flow
           moves towards the boundary on the intersection of the
           flow tile and the associated simplex. Solutions which merge
           in finite forward time can be observed in the tiles
           associated with the critical cells of Morse index one and zero.}
  \label{fig:semiflowc}
\end{figure}

Consider now solutions which originate at points~$x \in C_\omega
\setminus \omega$. In this situation, there are vertices
$v \not\in \omega$ with $x_v \neq 0$, and the behavior of
these coordinates is determined by the completely decoupled
scalar ordinary differential equations
\begin{displaymath}
  \dot{x}_v = -\sqrt[3]{\epsilon^2 x_v}
  \qquad\mbox{ for all }\qquad
  v \not\in \omega
  \quad\mbox{ and }\quad
  x \in C_\omega \setminus \omega .
\end{displaymath}
One can easily see that these coordinates decay towards zero ---
and they will in fact reach zero in finite forward time and stay
there from then on. Thus, solutions originating in $C_\omega \setminus
\omega$ are never constant, and they can enter~$C_\omega \cap \omega$
in finite forward time, unless of course they exit the flow tile
before that happens. This implies that the semiflow on~$C_\omega
\setminus \omega$ is attracted towards~$C_\omega \cap \omega$
and roughly follows the flow behavior on~$C_\omega \cap \omega$,
leading to the qualitative
semiflow behavior shown in Figure~\ref{fig:semiflowc}. Notice that
every critical flow tile contains exactly one equilibrium, whose
index is given by~$\dim\omega$. We would like to explicitly point
out, however, that solutions of the semiflow~$\phi$ can reach
lower-dimensional faces of a simplex in finite forward time.

\smallskip\noindent
{\bf (II) The induced semiflow on arrow flow tiles.}
Now suppose that the simplex $\omega \in \cX$ is part of an arrow
for the combinatorial vector field~$\cV$, i.e., we have $ \omega^+ \neq \omega^-$,
and the associated flow tile is given by
$C_\omega = \cl\cse{\omega^-} \cup \cl\cse{\omega^+} \subset X$. Then
the induced semiflow~$\phi$ on~$C_\omega$ is given by the solution
of the ordinary differential equation
\begin{displaymath}
  \begin{array}{rclcl}
    \DS \dot{x}_{v^+} & = & \DS \eta^\omega(x_{v^+},x)
      \left( h(x_{v^+}) + \theta^\omega(x) -
      \sum_{u \not\in \omega^+} x_u \right)
      & \mbox{ for } & \{ v^+ \} = \omega^+ \setminus \omega^-
      , \\[5ex]
    \DS \dot{x}_v & = & \DS x_v - \frac{1}{\card{\omega^-}}
      \left( \sum_{u \in \omega^-} x_u
      + \sum_{u \not\in \omega^-} f^\omega_u(x) \right)
      & \mbox{ for } & v \in \omega^- , \\[5ex]
    \DS \dot{x}_v & = & \DS -g(x_v)
      & \mbox{ for } & v \not\in \omega^+ .
  \end{array}
\end{displaymath}
The last equation describes the evolution of the vector
components~$x_v$ for $v \not\in \omega^+$, and as in the
previous case, these equations are completely decoupled from
the other components. These differential equations have a unique
equilibrium at zero, and they attract nonzero values in finite
forward time. Similarly, the second equation, which describes
the evolution of the vector components~$x_v$ for $v \in \omega^-$
has a form similar to the one in the critical flow tile case, and
it generally leads to flow towards the boundary.
\begin{figure}[tb]
  \centering
  \raisebox{3.0cm}{\parbox{6.3cm}{
    \includegraphics[width=0.40\textwidth]{semiflowV.pdf}}}
  \includegraphics[width=0.55\textwidth]{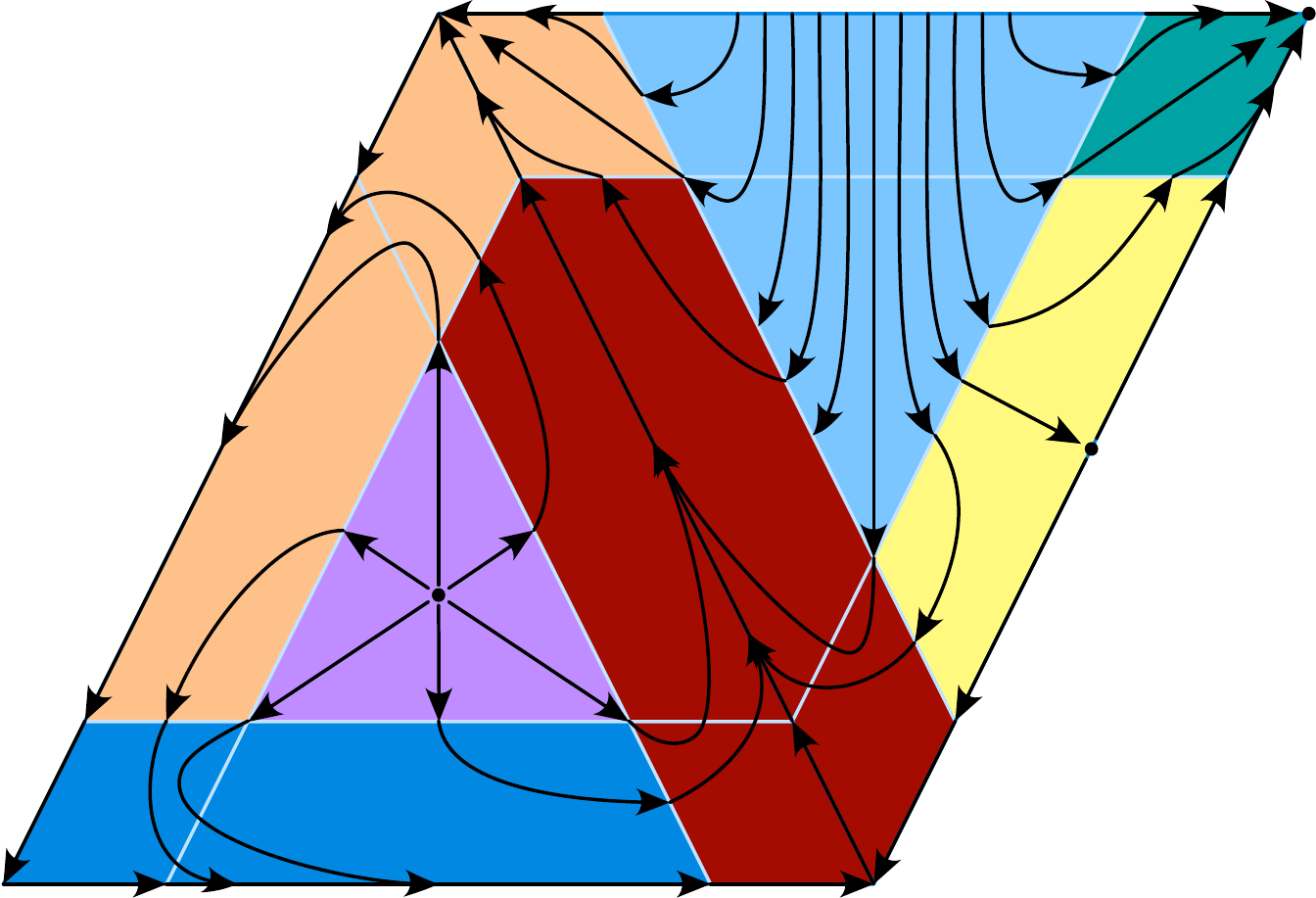}
  \caption{Complete strongly admissible semiflow induced
           by~(\ref{def:vectorfield4}). For the combinatorial vector
           field~$\cV$ shown in the left panel, the panel on
           the right illustrates the strongly admissible semiflow
           induced by~(\ref{def:vectorfield4}) on each of the flow
           tiles. Solutions generally exhibit merging in finite forward
           time, and the semiflow lowers the dimension of the
           (smallest) simplex which contains the solution. This can
           clearly be seen along the four outer edges of the simplicial
           complex. Moreover, significant discontinuous velocity changes
           can be observed in arrow flow tiles along the simplex~$\omega^-$,
           see for example the top left vertex, or the two vertices of
           the lowermost edge.}
  \label{fig:semiflowa}
\end{figure}

In view of these observations, we focus mainly on the first equation,
which describes the flow of the vector component~$x_{v^+}$
where $v^+$ stands for the unique vertex in~$\omega^+ \setminus \omega^-$.
Also, assume for the moment that the location~$x \in C_\omega \subset X$ of
the solution satisfies $\sum_{u \not\in \omega^+} x_u^2 = 0$, i.e.,
we have $x \in C_\omega \cap \omega^+$. In this case, the identity
$\eta^\omega(x_{v^+},x) = 1$ holds, and the equation
for~$\dot{x}_{v^+}$ reduces to the first two terms in
the parentheses. These terms have different responsibilities:
\begin{itemize}
\item The term~$h(x_{v^+})$ provides the general profile for the
velocity~$\dot{x}_{v^+}$, which usually satisfies~$\dot{x}_{v^+} > 0$.
The only exception is a small neighborhood around~$x_{v^+} = \epsilon$,
where the velocity is negative. Note, however, that this can be changed
easily by adding a constant larger than~$\epsilon / 2$ to the first
term, which leads to~$\dot{x}_{v^+} > 0$ even if one has~$x_{v^+}
\approx \epsilon$.
\item Introducing vertical shifts of the function~$h$ is the
responsibility of the second term. As we have mentioned,
this term satisfies $\theta^\omega(x) \in [-\epsilon, \epsilon]$,
and the identity $\theta^\omega(x) = \epsilon$ is satisfied if and
only if~$x_u \ge 2\epsilon$ for all $u \in \omega^-$. Thus, as long
as~$x$ is sufficiently far away from the boundary of the flow tile,
the velocity~$\dot{x}_{v^+}$ is strictly positive. Close to the
boundary of~$C_\omega$ and for~$x_{v^+} \approx \epsilon$ it becomes
negative, which is required for the admissibility of the semiflow.
\end{itemize}
This behavior is illustrated in Figure~\ref{fig:semiflowa} in
the flow tile associated with the vertical arrow whose base is
the top edge of the simplicial complex.

We now turn our attention to points $x \in C_\omega \setminus
\omega^+$. As long as one also has~$x_{v^+} > \epsilon / 4$,
the flow description from above still applies. For this one only
has to realize that in this case we have $\eta(x_{v^+},x)=1$ and
the new shift term $\theta^\omega(x) - \sum_{u \not\in \omega^+} x_u$
is strictly smaller than~$\epsilon / 2$ as long as $x_u \approx \epsilon$
for some vertex~$u \neq v^+$. In other words, close to the
boundary of~$C_\omega$ the flow satisfies the admissibility
condition.

The semiflow behavior changes, however, at points $x \in C_\omega
\setminus \omega^+$ with $x_{v^+} = 0$. At such points, the prefactor
$\eta^\omega(x_{v^+},x)$ is zero, i.e., it keeps the $v^+$-component
of~$x$ fixed at zero until the solution has entered the set~$C_\omega
\cap \omega^+$. This leads to the ``running start'' of solutions
originating at points with~$x_{v^+} = 0$, i.e., to the significant
velocity changes which can be observed in all vertices except at the
top right vertex in Figure~\ref{fig:semiflowa}. Notice further that
the above definition of the flow on arrow flow tiles rules out
any equilibria in~$C_\omega$. In fact, we will see in the next
section that the largest invariant subset of arrow flow tiles
is the empty set. Finally, we would like to note that solutions
through a point~$x \in X$ with $0 < x_u \le \epsilon$ for some
vertex $u \not\in \omega^+$ will always lead to the
identity~$x_u = 0$ in finite forward time, even if
the solution leaves~$C_\omega$ before this happens.
\subsection{Semiflows Induced by the Individual Vector Fields}
\label{sec52}
We now turn our attention to showing that the vector fields defined
in the previous section do in fact generate a continuous, strongly
admissible semiflow on the underlying polytope~$X \subset \R^d$
of the simplicial complex~$\cX$. As a first step, in the present section
we let~$\omega \in \cX$ denote an arbitrary but fixed simplex, and
we show that for every $\xi \in \R^d$ the initial value problem
\begin{equation} \label{def:ivpfomega1}
  \dot{x} = f^\omega(x)
  \qquad\mbox{ with }\qquad
  x(0) = \xi
\end{equation}
has a unique solution~$\phi^\omega(\cdot,\xi) : \R_0^+ \to \R^d$,
which depends continuously on the initial condition~$\xi$.
In this context, a function~$\nu : [0,T) \to \R^d$ is called a
{\em solution of the initial value problem~(\ref{def:ivpfomega1})\/}
if~$\nu$ is continuous and satisfies the integral identity
\begin{equation} \label{def:ivpfomega2}
  \nu(t) = \xi + \int_0^t f^\omega(\nu(\tau)) \, d\tau
  \qquad\mbox{ for all }\qquad
  0 \le t < T .
\end{equation}
We would like to point out that due to the fact that~$f^\omega$ is
not everywhere continuous, we cannot directly apply standard
existence and uniqueness results. We begin with a simple lemma
which allows us to solve part of the system~(\ref{def:ivpfomega1}).
\begin{lemma}[Solution of the Decoupled Components]
\label{lem:ivpdecoupled}
Consider the scalar function~$g$ defined in~(\ref{def:vectorfield1}).
Then for every $\zeta \in \R$ the initial value problem
\begin{equation} \label{lem:ivpdecoupled1}
  \dot{s} = -g(s)
  \qquad\mbox{ with }\qquad
  s(0) = \zeta
\end{equation}
has the unique forward solution
\begin{equation} \label{lem:ivpdecoupled2}
  \psi(t,\zeta) \; := \;
  \left\{ \begin{array}{ccl}
    \DS \mathrm{sgn} \, \zeta \left( \zeta^{2/3} - \frac{2}{3}
      \epsilon^{2/3} t \right)^{3/2} & \mbox{ for } &
      \DS 0 \le t \le \frac{3}{2} \left( \frac{\zeta}{\epsilon}
      \right)^{2/3} , \\[3ex]
    \DS 0 & \mbox{ for } & \DS t \ge \frac{3}{2}
      \left( \frac{\zeta}{\epsilon} \right)^{2/3}
      .
  \end{array} \right.
\end{equation}
Furthermore, the mapping $\psi : \R_0^+ \times \R \to \R$
is a continuous semiflow.
\end{lemma}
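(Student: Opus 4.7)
The plan is to verify existence of the candidate solution~\eqref{lem:ivpdecoupled2} by direct substitution, handle uniqueness by combining Picard--Lindel\"of away from the non-Lipschitz point $s=0$ with a trapping argument at $s=0$, and then read off the semiflow property and continuity from the closed-form expression.

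For existence I would substitute the function $\psi(\cdot,\zeta)$ from~\eqref{lem:ivpdecoupled2} into the Carath\'eodory integral equation corresponding to~\eqref{lem:ivpdecoupled1}. The case $\zeta=0$ is immediate. For $\zeta>0$, on the open interval $0<t<\tfrac{3}{2}(\zeta/\epsilon)^{2/3}$ one differentiates $\psi(t,\zeta)=(\zeta^{2/3}-\tfrac{2}{3}\epsilon^{2/3}t)^{3/2}$ to obtain $\dot\psi=-\epsilon^{2/3}\psi^{1/3}=-g(\psi)$, while for $t$ past the vanishing time both sides of the ODE equal $0$; continuity at the gluing time then promotes these two classical pieces to a global solution of the integral equation. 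The case $\zeta<0$ is symmetric via the $\operatorname{sgn}\zeta$ factor.

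The main obstacle is forward uniqueness, since $g$ is not Lipschitz at $s=0$ and Peano-type ambiguity must be ruled out by hand. Away from $s=0$, $g$ is smooth and Picard--Lindel\"of gives local uniqueness, so two forward solutions sharing an initial condition $\zeta\ne 0$ coincide until the first time they hit $0$; the explicit integration above pins this hitting time down to $t^{*}=\tfrac{3}{2}(\zeta/\epsilon)^{2/3}$. It therefore suffices to show that the only forward solution $\nu:[0,T)\to\R$ with $\nu(0)=0$ is the identically zero function, and here the stabilizing sign structure of $-g$, which always points toward $s=0$, does the work. Concretely, if $\nu(t_1)>0$ at some $t_1>0$, setting $t_0:=\sup\{t\in[0,t_1]:\nu(t)=0\}<t_1$ gives $\nu>0$ on $(t_0,t_1]$ and hence
\[
\nu(t_1) \;=\; \nu(t_0)+\int_{t_0}^{t_1}-g(\nu(\tau))\,d\tau \;<\; 0,
\]
a contradiction; the case $\nu(t_1)<0$ is symmetric. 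Combined with the Picard--Lindel\"of regime this yields global forward uniqueness.

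The semiflow identities $\psi(0,\zeta)=\zeta$ and $\psi(s+t,\zeta)=\psi(t,\psi(s,\zeta))$ then follow at once from uniqueness, since both sides of the second identity, viewed as functions of $t$, solve the same initial value problem. Continuity of $\psi$ on $\R_0^+\times\R$ is read off from~\eqref{lem:ivpdecoupled2}: the formula is jointly continuous on each of its two pieces, the pieces match with common value $0$ at the gluing time $\tfrac{3}{2}(\zeta/\epsilon)^{2/3}$, and joint continuity at points with $\zeta=0$ follows from the monotonicity bound $|\psi(t,\zeta)|\le|\zeta|$ that the explicit formula makes evident.
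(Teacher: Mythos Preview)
Your proposal is correct and follows essentially the same approach as the paper: both verify the explicit formula by substitution, observe that Picard--Lindel\"of handles uniqueness away from $s=0$, and then rule out nontrivial forward solutions from $0$ by the same sign-based integral trapping argument. Your version is slightly more explicit about deducing the semiflow identity from uniqueness and about joint continuity, which the paper leaves implicit.
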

\begin{proof}
One can easily verify that the formula in~(\ref{lem:ivpdecoupled2})
is a differentiable solution of the one-dimensional initial value
problem~(\ref{lem:ivpdecoupled1}) which is defined on~$\R_0^+$.
Furthermore, since the right-hand side of this problem is continuously
differentiable on~$\R \setminus \{ 0 \}$, nonuniqueness can only
occur once the solution hits zero.

Suppose therefore that~$\nu : [t_0, T) \to \R$ is a solution of
$\dot{s} = g(s)$ with $\nu(t_0) = 0$. The result follows if we
can show that this implies $\nu(t) = 0$ for all $t \in [t_0,T)$.
For this, assume that there exists a $t_1\in(t_0,T)$ such that
$\nu(t_1)\neq 0$. We first consider the case $\nu(t_1) > 0$. Then
the supremum $t_* := \sup\{ 0 \le t \le t_1 \; \mid \; \nu(t) \le 0 \}$
exists and satisfies $t_* \in [t_0,t_1)$ and $\nu(t_*) = 0$,
as well as $\nu(t) > 0$ for all $t \in (t_*,t_1]$. This implies
\begin{displaymath}
  \nu(t_1) =
  \nu(t_*) - \int_{t_*}^{t_1} \underbrace{g(\nu(\tau))}_{\ge 0}
    \, d\tau \le
  0 + \int_{t_*}^{t_1} 0 \, d\tau = 0 ,
\end{displaymath}
a contradiction. The case $\nu(t_1) < 0$ can be treated
analogously, which proves the lemma.
\end{proof}
\medskip

The scalar differential equation discussed in the above lemma
is precisely the one that describes the evolution of the
$v$-components of the solution of~(\ref{def:ivpfomega1}) for
all $v \not\in \omega^+$ with $f_v^\omega$ defined
by~(\ref{def:vectorfield4}). Notice further that the
discontinuities of the vector field~$f^\omega$ are restricted
to the subspace of~$\R^d$ in which all of these components
are zero. Thus, we consider the decomposition $\R^d =
Y_\omega \oplus Z_\omega$ with
\begin{eqnarray}
  Y_\omega & := & \left\{ x \in \R^d \; \mid \; x_v = 0
    \mbox{ for all } v \not\in \omega^+ \right\}
    \quad\mbox{ and} \label{def:ivpfomega3} \\[1.5ex]
  Z_\omega & := & \left\{ x \in \R^d \; \mid \; x_v = 0
    \mbox{ for all } v \in \omega^+ \right\} ,
    \nonumber
\end{eqnarray}
and we define
\begin{equation} \label{def:ivpfomega4}
  \tau^\omega(x) \; := \;
  \max\left\{ \frac{3}{2} \left( \frac{x_v}{\epsilon}
    \right)^{2/3} \; \mid \; v \not\in \omega^+ \right\} .
\end{equation}
We point out that $\tau^\omega(x) = 0$ if and
only if we have $x \in Y_\omega$.
Thus, in view of
Lemma~\ref{lem:ivpdecoupled} these definitions imply that
a solution $x(t)$ to the initial
value problem~(\ref{def:ivpfomega1}) satisfies
\begin{displaymath}
  x(t) \not\in Y_\omega \;\mbox{ for all }\;
    0 \le t < \tau^\omega(\xi)
  \qquad\mbox{ and }\qquad
  x(t) \in Y_\omega \;\mbox{ for all }\;
    t \ge \tau^\omega(\xi) .
\end{displaymath}
This simple observation lies at the heart of the proof of the
following central result of this section.
\begin{proposition}[Existence of the Simplex-Induced Semiflow]
\label{prop:semiflowexsimplex}
For each $\xi \in \R^d$ a unique solution $\phi^\omega(\cdot,\xi) :
\R_0^+ \to \R^d$ of the initial value problem~(\ref{def:ivpfomega1}) exists.
It is differentiable everywhere with the exception of at most one point in
time, and~$\phi^\omega(t,\xi)$ is contained in the subspace~$Y_\omega$
defined in~(\ref{def:ivpfomega3}) if and only if~$t \ge \tau^\omega(\xi)$,
as introduced in~(\ref{def:ivpfomega4}). In particular, the
map $\phi^\omega : \R_0^+ \times \R^d \to \R^d$ is a well-defined
continuous semiflow for every simplex $\omega \in \cX$.
\end{proposition}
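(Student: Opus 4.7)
The plan is to exploit the cascade structure of the system: the components $f_v^\omega$ defined in~\eqref{def:vectorfield4} depend, for $v\notin\omega^+$, only on $x_v$ and are fully decoupled from the remaining variables. By Lemma~\ref{lem:ivpdecoupled}, for each such $v$ the component evolves as $x_v(t)=\psi(t,\xi_v)$, reaches zero at time $\tfrac{3}{2}(\xi_v/\epsilon)^{2/3}$, and stays there. In particular, $x(t)\in Y_\omega$ if and only if $t\ge\tau^\omega(\xi)$. Substituting these explicit decoupled solutions into the equations for the components with $v\in\omega^+$ reduces~\eqref{def:ivpfomega1} to a non-autonomous system on the finite-dimensional subspace indexed by $\omega^+$, whose coefficients depend on~$t$ only through the continuous functions $\psi(t,\xi_v)$.

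I would then solve this reduced system in two phases. On $[0,\tau^\omega(\xi))$ the sum $\sum_{u\notin\omega^+}x_u(t)^2$ is strictly positive, so $\eta^\omega$ is given by its second defining branch whenever $|x_v|\le\epsilon/4$ and is a Lipschitz function of the remaining coordinates; together with the Lipschitz continuity of $h$ and $\theta^\omega$, this lets me apply Picard--Lindel\"of on every closed subinterval to obtain a unique $C^1$ solution on $[0,\tau^\omega(\xi))$. On $[\tau^\omega(\xi),\infty)$ one has $\sum_{u\notin\omega^+}x_u^2=0$, so $\eta^\omega\equiv 1$, the term $\sum_{u\notin\omega^+}x_u$ vanishes, and the remaining autonomous vector field is globally Lipschitz on $Y_\omega$, producing a unique $C^1$ solution that stays in $Y_\omega$. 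Matching the two pieces at $t=\tau^\omega(\xi)$ yields a continuous solution of~\eqref{def:ivpfomega1} on $\R_0^+$; it is differentiable everywhere except possibly at $t=\tau^\omega(\xi)$, where $\eta^\omega$ may jump from $4|x_v|/\epsilon<1$ to $1$. Uniqueness in each phase, combined with the fact established in Lemma~\ref{lem:ivpdecoupled} that decoupled components remain trapped at zero once they arrive there, yields uniqueness of the glued solution.

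Finally, the semiflow identity $\phi^\omega(s+t,\xi)=\phi^\omega(s,\phi^\omega(t,\xi))$ follows from uniqueness applied to the autonomous equation together with the evident relation $\tau^\omega(\phi^\omega(t,\xi))=\max\{\tau^\omega(\xi)-t,0\}$. Joint continuity of $\phi^\omega$ in $(t,\xi)$ reduces to continuity of $\psi$ (provided by Lemma~\ref{lem:ivpdecoupled}) and of $\tau^\omega$ as a maximum of continuous functions of $\xi$, together with continuous dependence on initial data in each Lipschitz phase via Gronwall's inequality. The main obstacle is joint continuity across the switching time $\tau^\omega(\xi)$, since a small perturbation of $\xi$ shifts both the switching instant and the starting point for Phase~2: I would combine the Gronwall bound for Phase~1 with equicontinuity of the Phase~1 solutions near the matching time, and then propagate the perturbation through the Lipschitz flow of Phase~2. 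Boundedness of $f^\omega$ is used to control the shift in switching time uniformly in the initial condition.
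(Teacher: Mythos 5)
Your overall strategy is the same as the paper's: exploit the cascade structure, solve the decoupled components $v\notin\omega^+$ explicitly via Lemma~\ref{lem:ivpdecoupled}, reduce the remaining components to a nonautonomous problem in $Y_\omega$, solve that on $[0,\tau^\omega(\xi))$ and on $[\tau^\omega(\xi),\infty)$, and glue at $t=\tau^\omega(\xi)$. Where you diverge is in handling joint continuity of $(t,\xi)\mapsto\phi^\omega(t,\xi)$. The paper sidesteps the matching issue by introducing the modified vector field $\bar{f}^\omega$ of~(\ref{def:vectorfield4-bar}) --- which replaces $\eta^\omega(x_{v^+},x)$ by $\min\{1,4|x_{v^+}|/\epsilon\}$, is continuous on all of $\R^d$, and agrees with $f^\omega$ off $Y_\omega$ --- then cites a general theorem for parameter-dependent nonautonomous problems to get a semiflow $\Pi=(\Xi,\Psi)$ defined and continuous on all of $\R_0^+\times\R^d$, and finally writes $\phi^\omega$ via the closed-form composition~(\ref{prop:semiflowexsimplex4}), so continuity is a composition of continuous maps. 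Your plan instead keeps the unmodified $f^\omega$ in Phase~1 and proposes a direct Gronwall-plus-equicontinuity argument across the switch. That is workable: one must track convergence of the nonautonomous right-hand sides (which follows from joint continuity of $\psi$ and continuity of $g$), convergence of the Phase-1 endpoints when the switching time itself moves, and continuity of the Phase-2 flow --- all of which you flag. What the paper's construction buys is that the continuity proof becomes one line; what yours buys is avoiding the auxiliary field $\bar{f}^\omega$ and the cited theorem at the cost of a longer hands-on estimate. Two small points to make explicit in a full write-up: (i) in Phase~1 the Lipschitz constant of the reduced field is uniform in $y$ (the nonlinearities $h$, $\theta^\omega$, and the truncated $\eta^\omega$ are all bounded and globally Lipschitz), so the Picard--Lindel\"of solution actually extends to the closed interval $[0,\tau^\omega(\xi)]$ by boundedness of the field, which is needed for the matching; and (ii) your uniqueness argument should also note that the full vector field is discontinuous only on $Y_\omega$, and the decoupled components are strictly decreasing until extinction and then frozen, so the solution crosses the discontinuity set exactly once --- this is what makes the phase decomposition exhaustive and the gluing unambiguous.
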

\begin{proof}
At first glance it seems impossible to apply standard existence and
uniqueness results for ordinary differential equations to the construction
of the semiflow~$\phi^\omega$. Note, however, that our discussion
leading up to the formulation of the proposition pointed out two
major points. On the one hand, the anticipated dynamics is divided into
two clear-cut regimes --- one outside the subspace~$Y_\omega$, and one
inside of it. On the other hand, the discontinuity of the vector field
and its accompanying velocity jumps happen only upon entering this
subspace. This allows us to construct the semiflow in three stages.

\smallskip\noindent
{\em (i) Semiflow inside the subspace~$Y_\omega$.\/}
We begin by considering only initial conditions~$\xi \in Y_\omega$.
On this subspace, the $v$-components~$f_v^\omega$ of the vector
field vanish for all~$v \not\in \omega^+$, and therefore we have
$f^\omega(x) \in Y_\omega$ for all $x \in Y_\omega$. This
immediately implies that the solution of the initial value
problem~(\ref{def:ivpfomega1}) satisfies the reduced differential
equation
\begin{displaymath}
  \begin{array}{rclcl}
    \DS \dot{x}_{v^+} & = & \DS h(x_{v^+}) + \theta^\omega(x)
      & \mbox{ for } & \{ v^+ \} = \omega^+ \setminus \omega^-
      , \\[2ex]
    \DS \dot{x}_v & = & \DS x_v - \frac{1}{\card{\omega^-}}
      \left( \sum_{u \in \omega^-} x_u
      + f^\omega_{v^+}(x) \right)
      & \mbox{ for } & v \in \omega^- .
  \end{array}
\end{displaymath}
The right-hand side of this system is clearly globally Lipschitz
continuous on~$Y_\omega$, since in view of~(\ref{def:vectorfield3})
the prefactor~$\eta^\omega(x_{v^+},x)$ reduces to~$1$. Thus,
standard existence and uniqueness results for ordinary differential
equations imply that the solutions of this system generate a
continuous flow $\Phi^\omega : \R \times Y_\omega \to Y_\omega$.

\smallskip\noindent
{\em (ii) Semiflow outside the subspace~$Y_\omega$.\/}
We now turn our attention to the semiflow outside~$Y_\omega$.
To study this case we need a new vector field~$\bar{f}^\omega:\R^d\to\R^d$
which is a slight modification of the vector field~\eqref{def:vectorfield4}.
It is given by the formula
\begin{equation} \label{def:vectorfield4-bar}
  \bar{f}^\omega_v(x) := \left\{
  \begin{array}{rcl}
    \DS -g(x_v)
      & \mbox{ for } & v \not\in \omega^+ , \\[2ex]
    \DS \min\left\{ 1, \, \frac{4}{\epsilon} \left| x_{v}
      \right|\right\} \left( h(x_v) + \theta^\omega(x) -
      \sum_{u \not\in \omega^+} x_u \right)
      & \mbox{ for } & v \in \omega^+ \setminus \omega^- , \\[5ex]
    \DS x_v - \frac{1}{\card{\omega^-}} \left( \sum_{u \in \omega^-} x_u
      + \sum_{u \not\in \omega^-} \bar{f}^\omega_u(x) \right)
      & \mbox{ for } & v \in \omega^- .
  \end{array}
  \right.
\end{equation}
Essentially, the only change is the first factor in the product
defining~$\bar{f}^\omega_v(x)$ for the unique vertex $v\in \omega^+
\setminus \omega^-$ plus the resulting modification in the second
sum of the formula defining~$\bar{f}^\omega_v(x)$ for $v\in \omega^-$.
In particular, $\bar{f}^\omega$ coincides with~$f^\omega$ on the
set~$\R^d \setminus Y_\omega$ and one easily verifies that
\begin{displaymath}
  \bar{f}^\omega(x) \; = \;
  \lim_{Y_\omega \not\ni y \to x} f^\omega(y).
\end{displaymath}
While the new vector field is continuous, it is still not Lipschitz
continuous due to the presence of the root function~$g$ in the
$v$-components for $v \not\in \omega^+$. Notice, however, that
these components are completely decoupled from the rest of the
vector field. Thus, the part of the initial value problem
\begin{equation} \label{prop:semiflowexsimplex1}
  \dot{x} = \bar{f}^\omega(x)
  \qquad\mbox{ with }\qquad
  x(0) = \xi
\end{equation}
which corresponds to $v$-components with $v \not\in \omega^+$
can be solved individually using Lemma~\ref{lem:ivpdecoupled}.
For the initial condition $\xi^z \in Z_\omega$ this leads to the semiflow
$\Psi : \R_0^+ \times Z_\omega \to Z_\omega$ given by
\begin{displaymath}
  \Psi_v\left( t, \xi^z \right) \; = \; \psi(t, \xi_v)
  \quad\mbox{ for all }\quad
  v \not\in \omega^+ ,
\end{displaymath}
where~$\psi$ is defined in~(\ref{lem:ivpdecoupled2}). Having
solved for the solution components in~$Z_\omega$, one can now
see that the initial value
problem~(\ref{prop:semiflowexsimplex1}) is equivalent
to solving the nonautonomous ordinary differential equation
problem
\begin{equation} \label{prop:semiflowexsimplex2}
  \dot{y} = \bar{f}^{\omega,y}(y + \Psi(t,\xi^z))
  \qquad\mbox{ with }\qquad
  y(0) = \xi^y ,
\end{equation}
where we decompose the vector field in the form $\bar{f}^\omega(x)
= \bar{f}^{\omega,y}(x) + \bar{f}^{\omega,z}(x) \in Y_\omega
\oplus Z_\omega$. Note that~(\ref{prop:semiflowexsimplex2}) is
an initial value problem which is only defined for $t \ge 0$,
because $\Psi(t,\xi^z)$ is defined only for $t\ge 0$.
Moreover, it depends both on the initial value~$\xi^y \in Y_\omega$
and on the parameter $\xi^z \in Z_\omega$. In addition, the
right-hand side of the nonautonomous differential equation is
continuous with respect to~$(t,y,\xi^z)$, as well as globally
Lipschitz continuous with respect to~$y$ for every fixed
combination of~$t$ and~$\xi^z$. For example, the
$v^+$-component of the right-hand side for $\{ v^+ \} = \omega^+
\setminus \omega^-$ is given explicitly by
\begin{equation} \label{prop:semiflowexsimplex3}
  \bar{f}^\omega_{v^+}(y + \Psi(t,\xi^z)) \; = \;
  \min\left\{ 1, \, \frac{4}{\epsilon} \left| y_{v^+} \right|
    \right\} \left( h(y_{v^+}) + \theta^\omega(y + \Psi(t,\xi^z)) -
    \sum_{u \not\in \omega^+} \psi(t,\xi_u) \right) ,
\end{equation}
and the nondifferentiable functions~$g$ appear only in
the $v$-components of the vector field for the vertices
$v \in \omega^-$ and are evaluated at the
functions~$\psi(t,\xi_v)$.

While recasting~(\ref{prop:semiflowexsimplex1}) in the
form~(\ref{prop:semiflowexsimplex2}) might seem a technicality
at first, the nonautonomous form of the new equation isolates
the non-Lipschitz part of the vector field~$\bar{f}^\omega$
in the $t$-dependent part. This approach only works because
we can solve for the $Z_\omega$-component of the solution
ahead of time and independently from the rest. Furthermore,
the nonautonomous parameter-dependent initial value
problem~(\ref{prop:semiflowexsimplex2}) satisfies all the
assumptions of~\cite[Theorem~2.4]{aulbach:wanner:96a}.
This implies the existence of a unique
solution~$\Xi(\cdot,\xi^y,\xi^z) : \R_0^+ \to Y_\omega$
of~(\ref{prop:semiflowexsimplex2}), and it also shows that the
map~$\Xi : \R_0^+ \times Y_\omega \times Z_\omega \to Y_\omega$
is continuous with respect to all variables. Finally, the
mapping~$\Pi:(t,\xi^y,\xi^z)\mapsto\left(\Xi(t,\xi^y,\xi^z),
\Psi(t,\xi^z)\right)$ satisfies~(\ref{prop:semiflowexsimplex1})
for the initial condition~$\xi = \xi^y + \xi^z$, and this
solution is continuously differentiable on~$\R_0^+$.
Moreover, since \eqref{prop:semiflowexsimplex1} is autonomous,
$\Pi$ is a semiflow on~$\R^d = Y_\omega \oplus Z_\omega \cong
Y_\omega \times Z_\omega$.

\smallskip\noindent
{\em (iii) Constructing the combined semiflow.\/}
While the solution~$\Xi$ constructed in the last part solves
the initial value problem~(\ref{prop:semiflowexsimplex1}) for
all times $t \ge 0$, this initial value problem is different
from the one we set out to solve. Based on the discussion
leading up to this proposition, we can however use it to find
the solution to~(\ref{def:ivpfomega1}). For this, recall the
definition of the time~$\tau^\omega(\xi)$ in~(\ref{def:ivpfomega4}).
Due to the specific form of~$f^\omega$, any solution
of~(\ref{def:ivpfomega1}) has to solve~(\ref{prop:semiflowexsimplex1})
on the interval~$[0,\tau^\omega(\xi)]$, and it has to satisfy the
autonomous differential equation studied in~{\em (i)\/} on the
interval~$[\tau^\omega(\xi),\infty)$. Thus, the unique forward
solution of~(\ref{def:ivpfomega1}) is given by the composition
\begin{equation} \label{prop:semiflowexsimplex4}
  \phi^\omega(t,\xi) \; = \;
  \Phi^\omega \left( \max\left\{ 0, t - \tau^\omega(\xi) \right\} ,
    \; \Xi\left( \min\left\{ t, \tau^\omega(\xi) \right\} ,
    \xi^y , \xi^z \right) \right) + \Psi (t, \xi^z) .
\end{equation}
According to the previous two parts of this proof, the mapping
$\phi^\omega : \R_0^+ \times \R^d \to \R^d$ is continuous. In
addition, the solution~$\phi^\omega(\cdot,\xi)$ is differentiable
everywhere except possibly at~$t = \tau^\omega(\xi)$, and it
satisfies~(\ref{def:ivpfomega2}). Since the vector field~$f^\omega$
is autonomous, this last fact together with the fact
that~$\phi^\omega(\cdot,\xi)$ is the unique forward solution
of~(\ref{def:ivpfomega1}) finally shows that~$\phi^\omega$ is
a continuous semiflow, which concludes the proof of the
proposition.
\end{proof}
\medskip

The above result shows that the vector field~$f^\omega$ generates
a continuous semiflow on~$\R^d$, despite its discontinuities. All
of its solutions are uniquely defined in forward time, but they can
merge in finite time.

For us, the semiflows~$\phi^\omega : \R_0^+ \times \R^d \to \R^d$
are just a first step towards the construction of a strongly admissible
semiflow on the underlying polytope~$X \subset \R^d$ of the given
simplicial complex~$\cX$, and with respect to the combinatorial vector
field~$\cV$. As a second step, we need to show that~$\phi^\omega$
leaves appropriate parts of~$X$ invariant. This
is the subject of the following corollary.
\begin{corollary}[Relative Forward Invariance of the Semiflow on Flow Tiles]
\label{cor:semiflowexsimplex}
Let $\omega \in \cX$ be an arbitrary simplex  and consider the
semiflow~$\phi^\omega$ on~$\R^d$ guaranteed by
Proposition~\ref{prop:semiflowexsimplex}.
Consider the flow tile associated with~$\omega$, that is
\begin{displaymath}
  C_\omega \; = \;
  \cl\cse{\omega^-} \cup \cl\cse{\omega^+}
  \; \subset \; X \; \subset \; \R^d
\end{displaymath}
as introduced in~(\ref{def:flowtiles2}). Then for every~$\xi \in C_\omega$
the solution~$\phi^\omega(\cdot,\xi)$ stays in~$C_\omega$ until
it reaches a point~$\xi^* \in C_\omega$ which satisfies
$\xi_v^* = \epsilon$ for at least one vertex $v \in \omega^-$.
Such a point necessarily lies on the boundary of the flow
tile~$C_\omega$.
\end{corollary}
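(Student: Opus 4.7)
The plan is to verify that $C_\omega$ is forward invariant under $\phi^\omega$ for as long as every coordinate $x_v$ with $v\in\omega^-$ stays strictly above $\epsilon$. Once this invariance is established, the first instant (if any) at which some such coordinate reaches $\epsilon$ yields the desired exit point $\xi^*$, and continuity of the flow together with Lemma~\ref{lem:propepscells} places $\xi^*$ on $\bd C_\omega$. By that same lemma, the membership $x(t)\in C_\omega=\cl\cse{\omega^-}\cup\cl\cse{\omega^+}$ is equivalent to three requirements: $x(t)\in X$; $x_v(t)\ge\epsilon$ for every $v\in\omega^-$; and $x_v(t)\le\epsilon$ for every $v\notin\omega^+$, with no restriction on $x_{v^+}$, where $v^+$ denotes the unique vertex of $\omega^+\setminus\omega^-$ when $\omega^+\ne\omega^-$.

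Two of these conditions are nearly automatic from the structure of \eqref{def:vectorfield4}. A direct computation shows $\sum_{v\in\cX_0}f^\omega_v(x)\equiv 0$: the $\omega^-$-components are designed precisely so that their sum cancels both $\sum_{u\in\omega^-}x_u$ and the sum of the remaining components of $f^\omega$. Hence $\sum_v x_v(t)\equiv 1$. Next, for every $v\notin\omega^+$ the scalar equation $\dot x_v=-g(x_v)$ is completely decoupled, so Lemma~\ref{lem:ivpdecoupled} gives $x_v(t)=\psi(t,\xi_v)\in[0,\xi_v]\subseteq[0,\epsilon]$ for all $t\ge 0$, which simultaneously preserves $x_v\ge 0$ and the upper bound $x_v\le\epsilon$, and also shows that these coordinates all reach $0$ at time $\tau^\omega(\xi)$ and stay there.

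The main obstacle is the simultaneous verification of $x_{v^+}\ge 0$ and of the fact that the support of $x(t)$ remains a simplex of $\cX$, because this is where the tailored cooperation of $h$, $\theta^\omega$ and $\eta^\omega$ must absorb the only non-Lipschitz and discontinuous behavior of $f^\omega$. For non-negativity, I would analyze $\dot x_{v^+}$ on the strip $0\le x_{v^+}\le\epsilon/2$: there $h(x_{v^+})=1$; the running hypothesis $x_u\ge\epsilon$ for $u\in\omega^-$ gives $\theta^\omega(x)\ge 0$; and combining $\sum_v x_v=1$ with $x_u\ge\epsilon$ on $\omega^-$ and (where needed at the boundary $x_{v^+}=0$) $x_{v^+}\ge 0$ yields $\sum_{u\notin\omega^+}x_u\le 1-|\omega^-|\epsilon$. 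The bracket in the middle case of \eqref{def:vectorfield4} is therefore at least $|\omega^-|\epsilon\ge\epsilon>0$, and since $\eta^\omega\ge 0$, one obtains $\dot x_{v^+}\ge 0$ throughout the strip; examining separately the two regimes $\eta^\omega(0,x)=0$ (when some outside coordinate is positive) and $\eta^\omega(0,x)=1$ (when all outside coordinates vanish) rules out $x_{v^+}$ crossing zero downward. For the support, the same factor $\eta^\omega(0,x)=0$ pins $x_{v^+}$ at $0$ as long as any coordinate outside $\omega^+$ is positive; only after time $\tau^\omega(\xi)$, when all such coordinates have reached $0$, can $x_{v^+}$ move upward, and any augmented support is then a subset of $\omega^+\in\cX$, so $x(t)\in X$ throughout. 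Assembling these three preserved conditions gives $x(t)\in C_\omega$ on the maximal time interval on which every $x_v$ with $v\in\omega^-$ exceeds $\epsilon$, and continuity at the endpoint produces the required $\xi^*\in\bd C_\omega$.
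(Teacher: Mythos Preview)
Your argument is correct and follows the same three-part structure as the paper's proof: the sum constraint via $\sum_v f^\omega_v \equiv 0$, the outside coordinates via Lemma~\ref{lem:ivpdecoupled}, and non-negativity of $x_{v^+}$. For the last part the paper argues more briefly, invoking the invariance of the hyperplane $H_\omega=\{x_{v^+}=0\}$ under the Lipschitz auxiliary system $\bar f^\omega$ from the proof of Proposition~\ref{prop:semiflowexsimplex}; uniqueness then prevents crossing, whereas you instead bound the bracket in~\eqref{def:vectorfield4} directly on the strip $0\le x_{v^+}\le\epsilon/2$. Both routes work. You are in fact more careful than the paper on one point: the paper asserts that its conditions (a)--(d) characterize $C_\omega$ as a subset of $\R^d$, but they only do so within $X$ (a point $x$ can satisfy (a)--(d) while having support that is not a simplex of~$\cX$). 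Your $\eta^\omega$-pinning argument---that $x_{v^+}$ stays at zero until all coordinates outside $\omega^+$ have vanished, after which the support lies in $\omega^+\in\cX$---is precisely what is needed to keep $x(t)\in X$; the paper's hyperplane-invariance step contains the same information but does not make this connection explicit.
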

\begin{proof}
The semiflow~$\phi^\omega$ constructed in
Proposition~\ref{prop:semiflowexsimplex} generates a semiflow
on the whole space~$\R^d$, and we have already seen that
the underlying polytope~$X$ is only a subset of~$\R^d$ of
measure zero. In view of Lemma~\ref{lem:propepscells}
and~\eqref{def:flowtiles2} we can characterize the flow
tile~$C_\omega$ as the set of vectors $x\in\R^d$
satisfying the following four conditions
\begin{itemize}
    \item[(a)] $\sum_{v \in \cX_0} x_v = 1$ ,
    \item[(b)] $0 \le x_v \le \epsilon$ for all $v \not\in \omega^+$,
    \item[(c)] $x_v \ge 0$ for $v \in \omega^+ \setminus \omega^-$,
    \item[(d)] $x_v \ge \epsilon$ for all $ v \in \omega^-$ .
\end{itemize}
Thus, in order to establish the corollary we only have to
show that a solution~$\phi^\omega(\cdot,\xi)$ originating at
$\xi \in C_\omega$ cannot exit this flow tile by violating
conditions~(a), (b), or~(c).
We verify this claim for each condition separately.

\smallskip\noindent
{\em (i) Along the solution, condition~(a) cannot be violated.\/}
One can easily see that for every point $x \in \R^d$ we have the
identity
\begin{equation}
\label{eq:sum-f-omega-v}
\begin{aligned}
  \sum_{v \in \cX_0} f^\omega_v(x) & =
    \sum_{v \not\in \omega^-} f^\omega_v(x) +
    \sum_{v \in \omega^-} \left( x_v - \frac{1}{\card{\omega^-}}
      \left( \sum_{u \in \omega^-} x_u + \sum_{u \not\in \omega^-}
      f^\omega_u(x) \right) \right) \\
  & =  \sum_{v \not\in \omega^-} f^\omega_v(x) +
    \sum_{v \in \omega^-} x_v -
      \left( \sum_{u \in \omega^-} x_u + \sum_{u \not\in \omega^-}
      f^\omega_u(x) \right) \; = \; 0 .
\end{aligned}
\end{equation}
Since every solution $\phi^\omega(\cdot,\xi)$ of \eqref{def:ivpfomega1}
satisfies the integral equality \eqref{def:ivpfomega2}, we obtain
from~\eqref{eq:sum-f-omega-v} that $\sum_{v \in \cX_0} \phi^\omega_v(t,\xi)$
is independent of~$t$ for $t\geq 0$. In particular, for any $\xi \in C_\omega$
the sum of the components of a solution  has to remain equal to one for all
times~$t \ge 0$.

\smallskip\noindent
{\em (ii) Along the solution, condition~(b) cannot be violated.\/}
In view of $\xi_v \in [0,\epsilon]$ for all $v \not\in \omega^+$,
the formulas~(\ref{prop:semiflowexsimplex4})
and~(\ref{lem:ivpdecoupled2}) immediately imply
that~$\phi^\omega_v(t,\xi) \in [0,\epsilon]$
for all $t \ge 0$ and $v \not\in \omega^+$.

\smallskip\noindent
{\em (iii) Along the solution, condition~(c) cannot be violated.\/}
Recall that $\omega^+ \setminus \omega^-$ is either empty or a singleton.
If it is empty, there is nothing to be verified.
Thus, assume that $v^+$ is the unique vertex in $\omega^+ \setminus \omega^-$.
According to the proof of Proposition~\ref{prop:semiflowexsimplex},
the system~(\ref{prop:semiflowexsimplex1}), which was solved in the
nonautonomous form~(\ref{prop:semiflowexsimplex2}), has the invariant
hyperplane
\begin{displaymath}
  H_\omega = \left\{ x \in \R^d \; \mid \; x_{v^+} = 0 \right\} .
\end{displaymath}
In other words, the
solution~$\Xi$ preserves the inequality $x_{v^+} \ge 0$.
Once the solution hits the set~$Y_\omega \cap C_\omega$, part~{\em (i)\/}
of the proof of Proposition~\ref{prop:semiflowexsimplex} shows
that~$\dot{x}_{v^+} > 0$, i.e., the $v^+$-component
of~$\phi^\omega(\cdot,\xi)$ becomes strictly positive. This
proves the result.
\end{proof}
\subsection{Dynamics of the Individual Semiflows on Flow Tiles}
\label{sec53}
In the last section we have constructed a semiflow~$\phi^\omega$
on~$\R^d$ for every simplex $\omega \in \cX$. We have also seen
that the associated flow tile $C_\omega = \cl\cse{\omega^-} \cup
\cl\cse{\omega^+}$ is forward invariant under this semiflow,
until a solution reaches a well-defined subset of its boundary
in~$X$. But how exactly this boundary is reached, what the vector
field~$f^\omega$ looks like on the boundary, and what other properties
forward solutions of~$\phi^\omega$ have in~$C_\omega$ has been left
unexplored. This gap is closed in the present section.

We begin our discussion with the behavior of~$\phi^\omega$ on the
flow tile~$C_\omega$ near its boundary. Recall that according to
Definition~\ref{def:charactcells} and Lemma~\ref{lem:altcharepscell}
a point~$x \in X$ lies on the boundary of at least two $\epsilon$-cells,
if there exists at least one vertex~$v \in \cX_0$ such that
$x_v = \epsilon$. In fact, the set of all vertices~$v$ for
which this identity is satisfied is given by the vertices in
$\sigma^\epsilon_{\max}(x) \setminus \sigma^\epsilon_{\min}(x)$.
In order to guarantee that~$x$ lies on the boundary of at least two
flow tiles, one needs the somewhat stronger condition that there exists
at least one vertex~$v \in \cX_0$ such that $x_v = \epsilon$ and $v \not\in
\omega^+ \setminus \omega^-$. With these observations in mind, we obtain
the following result in which, as everywhere in this section,
$d := \card{\cX_0}$ and $X \subset \R^d$  denotes
the underlying polytope of the standard geometric realization of~$\cX$.
\begin{proposition}[Vector Field Bounds near Flow Tile Boundaries]
\label{prop:vectorfieldbound}
Consider the vector fields~$f^\omega$
defined in~(\ref{def:vectorfield1}) through~(\ref{def:vectorfield4}),
the flow tiles~$C_\omega$ defined in~(\ref{def:flowtiles2}), and
suppose that \eqref{prop:vectorfieldbound1} is satisfied.
Then for every simplex $\omega \in \cX$ the following is true.
\begin{itemize}
\item[(a)] For all $v \in \omega^-$ and all $x \in C_\omega$ with
$|x_v - \epsilon| \le \epsilon$ we have $f^\omega_v(x) \le -1/(4d) < 0$.
\item[(b)] For all $v \not\in \omega^+$ and all $x \in C_\omega$
with $|x_v - \epsilon| \le \epsilon/2$ we have $f^\omega_v(x)
\le -\epsilon/2 < 0$.
\item[(c)] If $\omega^+ \neq \omega^-$ and $\{ v^+ \} = \omega^+
\setminus \omega^-$, then for all $x \in C_\omega$ with
$|x_{v^+} - \epsilon| \le \epsilon^2/(8+4\epsilon)$ and for which
there is a vertex $v \neq v^+$ with $|x_v - \epsilon| \le \epsilon/8$
we have $f^\omega_{v^+}(x) \le -\epsilon/8 < 0$.
\end{itemize}
\end{proposition}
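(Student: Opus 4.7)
The plan is to verify the three bounds by direct substitution into the formulas~\eqref{def:vectorfield1}--\eqref{def:vectorfield4} and a careful use of the tightened assumption~\eqref{prop:vectorfieldbound1} at the end. Part~(b) is immediate: for $v \notin \omega^+$ we have $f^\omega_v(x) = -g(x_v) = -\sqrt[3]{\epsilon^2 x_v}$, and the hypothesis $|x_v - \epsilon| \le \epsilon/2$ forces $x_v \ge \epsilon/2$, so $g(x_v) \ge \sqrt[3]{\epsilon^3/2} = \epsilon/\sqrt[3]{2} > \epsilon/2$.

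For part~(c), I would first observe that the hypothesis $|x_{v^+} - \epsilon| \le \epsilon^2/(8+4\epsilon)$ forces $x_{v^+} > 7\epsilon/8 > \epsilon/4$, so by~\eqref{def:vectorfield3} the prefactor $\eta^\omega(x_{v^+}, x)$ equals $1$ and hence $f^\omega_{v^+}(x) = h(x_{v^+}) + \theta^\omega(x) - \sum_{u \notin \omega^+} x_u$. Plugging the hypothesis into the piecewise-linear formula for $h$ near $\epsilon$ gives $h(x_{v^+}) \le -\epsilon/4$. I would then case-split on the auxiliary vertex $v$: if $v \in \omega^-$, the definition of $\theta^\omega$ yields $\theta^\omega(x) \le x_v - \epsilon \le \epsilon/8$, and dropping the nonnegative sum gives $f^\omega_{v^+}(x) \le -\epsilon/4 + \epsilon/8 = -\epsilon/8$; if $v \notin \omega^+$, then $\sum_{u \notin \omega^+} x_u \ge x_v \ge 7\epsilon/8$, and combining with the universal bound $\theta^\omega(x) \le \epsilon$ yields the same estimate.

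Part~(a) requires more bookkeeping. For $v \in \omega^-$ the formula gives $f^\omega_v(x) = x_v - A(x)$ with
\begin{displaymath}
  A(x) \;:=\; \frac{1}{\card{\omega^-}}\left(\sum_{u \in \omega^-} x_u + \sum_{u \notin \omega^-} f^\omega_u(x)\right),
\end{displaymath}
and the hypothesis together with $x \in C_\omega$ gives $x_v \in [\epsilon, 2\epsilon]$, so it suffices to show $A(x) \ge 2\epsilon + 1/(4d)$. Using $\sum_u x_u = 1$ and $f^\omega_u(x) = -g(x_u)$ for $u \notin \omega^+$, I would rewrite
\begin{displaymath}
  \card{\omega^-}\, A(x) \;=\; 1 + \bigl(f^\omega_{v^+}(x) - x_{v^+}\bigr) - \sum_{u \notin \omega^+}\bigl(x_u + g(x_u)\bigr),
\end{displaymath}
where the $v^+$-terms are absent in the critical case $\omega^- = \omega^+$. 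Since $x \in C_\omega$ yields $x_u \in [0,\epsilon]$ for $u \notin \omega^+$ and $\theta^\omega(x) \ge 0$ (because $x_u \ge \epsilon$ for all $u \in \omega^-$), the last sum is at most $2(d-\card{\omega^-})\epsilon$. To bound $f^\omega_{v^+}(x) - x_{v^+}$ from below I would split on whether $x$ lies in $\cl\cse{\omega^-}$ (where $x_{v^+} \le \epsilon$ and the uniform bounds on $h$, $\theta^\omega$, and $\eta^\omega \in [0,1]$ suffice) or in $\cl\cse{\omega^+}$ (where $\eta^\omega = 1$, so $f^\omega_{v^+}(x) - x_{v^+} = h(x_{v^+}) - x_{v^+} + \theta^\omega(x) - \sum_{u \notin \omega^+} x_u$). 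Collecting terms in each case yields $\card{\omega^-}\, A(x) \ge 1 - (3d - \card{\omega^-} + 1/2)\epsilon$, and \eqref{prop:vectorfieldbound1} then implies $A(x) \ge 2\epsilon + 1/(4d)$ after a one-line arithmetic check.

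The hard part will be the sub-case $x \in \cl\cse{\omega^+}$ of~(a) when $\omega^- \neq \omega^+$: here $x_{v^+}$ can approach $1 - \card{\omega^-}\epsilon$, and naively this would make $\sum_{u \in \omega^-} x_u$ uncomfortably small. The cancellation between $h(x_{v^+})$ and $-x_{v^+}$ is the key. I would split further on $x_{v^+} \ge 3\epsilon/2$ (where $h(x_{v^+}) = 1$, hence $h(x_{v^+}) - x_{v^+} \ge \card{\omega^-}\epsilon \ge 0$) and $x_{v^+} \in [\epsilon, 3\epsilon/2]$ (where the slope $2/\epsilon$ of $h$ on that interval, combined with $-x_{v^+}$, gives $h(x_{v^+}) - x_{v^+} \ge -3\epsilon/2$); in either regime, using $\theta^\omega \ge 0$ and the bound $\sum_{u \notin \omega^+} x_u \le (d - \card{\omega^-} - 1)\epsilon$, one concludes $f^\omega_{v^+}(x) - x_{v^+} \ge -(d - \card{\omega^-} + 1/2)\epsilon$, which feeds into the estimate above and completes the proof.
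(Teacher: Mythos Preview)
Your proposal is correct and follows the same overall strategy as the paper: direct estimation from the formulas, with~(b) immediate, (c) via the case split on whether the auxiliary vertex lies in~$\omega^-$ or outside~$\omega^+$, and~(a) via the rewriting $\sum_{u\in\omega^-} x_u = 1 - \sum_{u\notin\omega^-} x_u$ together with separate control of the $v^+$-term. The arithmetic you sketch for~(a) does close (the worst case is $m=d$, where both sides meet at~$3/4$ and the strict inequality $\epsilon<1/(6d)$ gives the margin).

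The one organizational difference worth noting is in the arrow case of~(a). You split on $x\in\cl\cse{\omega^-}$ versus $x\in\cl\cse{\omega^+}$ and, in the latter, further on whether $x_{v^+}\ge 3\epsilon/2$ or $x_{v^+}\in[\epsilon,3\epsilon/2]$, tracking the cancellation $h(x_{v^+})-x_{v^+}$ separately in each regime. The paper avoids this nested split by a single global observation: the quantity $\eta^\omega(x_{v^+},x)\,h(x_{v^+}) - (x_{v^+}-1)$ attains its minimum over $x_{v^+}\in[0,1]$ at $x_{v^+}=\epsilon$, where it equals $1-3\epsilon/2>3/4$. This handles all values of $x_{v^+}$ (and both possible shapes of the $\eta h$ graph) in one stroke, leading directly to $f_v^\omega(x)<1/(2m)-3/(4m)=-1/(4m)$. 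Your route is slightly more laborious but equally valid; the paper's is cleaner because it recognizes that the relevant scalar function has an easily located global minimum.
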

\begin{proof}
The proof of the proposition is divided into four separate parts.

\smallskip\noindent
{\em (i) Verification of~(a) for critical cells.\/}
Suppose first that the simplex~$\omega \in \cX$ is a critical
cell and let $v \in \omega$ be a fixed vertex. Furthermore,
let~$x \in C_\omega$ be given with $|x_v - \epsilon| \le \epsilon$.
Then Lemma~\ref{lem:propepscells} implies $\epsilon \le x_v \le
2\epsilon$, as well as $0 \le x_u \le \epsilon$ for all vertices
$u \not\in \omega$. The definition of the function~$g$ then further
yields $0 \le g(x_u) \le \epsilon$ for all $u \not\in \omega$.
Let $m := \#\omega$. We deduce from both $d = \card{\cX_0}$
and $\sum_{u \in \cX_0} x_u = 1$, in combination
with~(\ref{prop:vectorfieldbound1}), the estimate
\begin{eqnarray*}
  f_v^\omega(x) & = & x_v - \frac{1}{\card{\omega}} \left(
    1 - \sum_{u \not\in \omega} x_u + \sum_{u \not\in \omega}
    f_u^\omega(x) \right)
  \; \le \;
    2\epsilon - \frac{1}{m} +
    \frac{1}{m} \sum_{u \not\in \omega} x_u +
    \frac{1}{m} \sum_{u \not\in \omega} g(x_u) \\[2ex]
  & \le & 2\epsilon - \frac{1}{m} +
    \frac{(d-m)\epsilon}{m} +
    \frac{(d-m)\epsilon}{m}
  \; = \;
    -\frac{1}{m} + \frac{2d \epsilon}{m}
  \; < \; -\frac{2}{3m}
  \; \le \; -\frac{2}{3d} \;< \; -\frac{1}{4d} ,
\end{eqnarray*}
which proves~{\em (a)\/} for critical cells.

\smallskip\noindent
{\em (ii) Verification of~(a) for arrow cells.\/}
Suppose that the simplex~$\omega \in \cX$ is part of an arrow,
i.e., we have $\omega^+ \neq \omega^-$. Let $v \in \omega^-$ be a
fixed vertex, and let~$x \in C_\omega$ be given with $|x_v - \epsilon|
\le \epsilon$. Then Lemma~\ref{lem:propepscells} implies again
$\epsilon \le x_v \le 2\epsilon$, as well as $0 \le x_u \le \epsilon$
and $0 \le g(x_u) \le \epsilon$ for all vertices $u \not\in \omega^+$.
Setting $m := \#\omega^-$, we now deduce from both $d = \card{\cX_0}$
and $\sum_{u \in \cX_0} x_u = 1$, in combination
with~(\ref{prop:vectorfieldbound1}) and $\{ v^+ \} = \omega^+
\setminus \omega^-$, the estimate
\begin{eqnarray}
  f_v^\omega(x) & = & x_v - \frac{1}{m} \left(
    \sum_{u \in \omega^-} x_u + \sum_{u \not\in \omega^-}
    f^\omega_u(x) \right)
  \; \le \;
    2\epsilon + \frac{1}{m} \left( -1 +
    \sum_{u \not\in \omega^-} x_u - \sum_{u \not\in \omega^-}
    f^\omega_u(x) \right) \nonumber \\[1.5ex]
  & = &
    2\epsilon + \frac{1}{m} \left( x_{v^+} - f^\omega_{v^+}(x) - 1 +
    \sum_{u \not\in \omega^+} x_u - \sum_{u \not\in \omega^+}
    f^\omega_u(x) \right) \nonumber \\[1.5ex]
  & \le &
    2\epsilon + \frac{1}{m} \left( x_{v^+} - f^\omega_{v^+}(x) - 1
    \right) + \frac{2 (d-m-1) \epsilon}{m} \nonumber \\[1.5ex]
  & < &
    \frac{2d\epsilon}{m} + \frac{1}{m}
    \left( x_{v^+} - f^\omega_{v^+}(x) - 1 \right)
  \; < \;
    \frac{1}{3m} + \frac{1}{m}
    \left( x_{v^+} - f^\omega_{v^+}(x) - 1 \right) .
    \label{prop:vectorfieldbound2}
\end{eqnarray}
We now turn our attention to the term in parentheses
in~(\ref{prop:vectorfieldbound2}). Due to $x \in C_\omega$,
Lemma~\ref{lem:propepscells} implies $x_w \ge \epsilon$
for all $w \in \omega^-$, and the definition of~$\theta^\omega(x)$
in~(\ref{def:vectorfield2}) then yields $\theta^\omega(x) \ge 0$.
Furthermore, in view of~(\ref{def:vectorfield3}) we have $0 \le
\eta^\omega(x_{v^+},x) \le 1$. Now the second equation
in~(\ref{def:vectorfield4}) implies
\begin{eqnarray*}
  -f^\omega_{v^+}(x) & = &
    -\eta^\omega(x_{v^+},x) h(x_{v^+})
    -\eta^\omega(x_{v^+},x) \theta^\omega(x)
    +\eta^\omega(x_{v^+},x) \sum_{u \not\in \omega^+} x_u \\
  & \le & -\eta^\omega(x_{v^+},x) h(x_{v^+})
    + \sum_{u \not\in \omega^+} x_u
  \; \le \;
    -\eta^\omega(x_{v^+},x) h(x_{v^+}) + (d-m-1) \epsilon \\
  & < & \frac{1}{6} - \eta^\omega(x_{v^+},x) h(x_{v^+}) ,
\end{eqnarray*}
which together with~(\ref{prop:vectorfieldbound2}) gives
\begin{equation} \label{prop:vectorfieldbound3}
  f_v^\omega(x) \; < \;
  \frac{1}{2m} + \frac{1}{m}
    \left( x_{v^+} - \eta^\omega(x_{v^+},x) h(x_{v^+}) - 1
    \right) .
\end{equation}
A glance at the graph of $\eta^\omega(x_{v^+},x) h(x_{v^+})$ in
the right panel of Figure~\ref{fig:ghfunction}, which could either
be the blue or the red curve, readily shows that for~$0 \le x_{v^+}
\le 1$ the distance between~$\eta^\omega(x_{v^+},x) h(x_{v^+})$
and~$x_{v^+} - 1$ is minimal for $x_{v^+} = \epsilon$, and one
obtains
\begin{displaymath}
  \eta^\omega(x_{v^+},x) h(x_{v^+}) - \left( x_{v^+} - 1 \right)
  \; \ge \; 1 - \frac{3\epsilon}{2} \; > \;
  \frac{3}{4}
  \quad\mbox{ for all }\quad
  0 \le x_{v^+} \le 1 ,
\end{displaymath}
since we have $\epsilon < 1 / (6d) \le 1/6$. Together
with~(\ref{prop:vectorfieldbound3}) we finally get the
estimate
\begin{displaymath}
  f_v^\omega(x) \; < \;
  \frac{1}{2m} + \frac{1}{m}
    \underbrace{\left( x_{v^+} - \eta^\omega(x_{v^+},x) h(x_{v^+}) - 1
    \right)}_{< -3/4} \; < \;
  -\frac{1}{4m} \; \le \; -\frac{1}{4d} ,
\end{displaymath}
which completes the proof of {\em (a)\/} for arrow cells.

\smallskip\noindent
{\em (iii) Verification of~(b).\/}
For arbitrary $x \in C_\omega$ and $v \not\in \omega^+$ we have
$0 \le x_v \le \epsilon$, i.e., the assumption in~{\em (b)\/}
implies $\epsilon/2 \le x_v \le \epsilon$. Together
with~(\ref{def:vectorfield1}) and the first equation
in~(\ref{def:vectorfield4}) one then obtains
$f_v^\omega(x) = -g(x_v) \le -g(\epsilon/2) < -\epsilon/2$.

\smallskip\noindent
{\em (iv) Verification of~(c).\/}
Finally, suppose that~$\omega^- \neq \omega^+$ and $\{ v^+ \} =
\omega^+ \setminus \omega^-$. Let~$x \in C_\omega$ be arbitrary
with $|x_{v^+} - \epsilon| \le \epsilon^2/(8+4\epsilon)$. Then
the definitions of~$h$ and~$\eta^\omega$ in~(\ref{def:vectorfield1})
and~(\ref{def:vectorfield2}), respectively, imply both $h(x_{v^+})
\le -\epsilon/4$ and $\eta^\omega(x_{v^+},x) = 1$, where for the
latter identity we use the inequality $\epsilon^2/(8+4\epsilon)
< \epsilon/2$. According to~(\ref{def:vectorfield4}) these
statements imply
\begin{equation} \label{prop:vectorfieldbound4}
  f_{v^+}^\omega(x) \; = \;
  \eta^\omega(x_{v^+},x) \left( h(x_{v^+}) + \theta^\omega(x)
    -\sum_{u \not\in \omega^+} x_u \right) \; < \;
  -\frac{\epsilon}{4} + \theta^\omega(x)
    -\sum_{u \not\in \omega^+} x_u .
\end{equation}
Consider first the case when there exists a vertex $v \not\in \omega^+$
such that $|x_v - \epsilon| \le \epsilon/8$. Then
Lemma~\ref{lem:propepscells} yields $7\epsilon/8 \le x_v
\le \epsilon$, and~(\ref{prop:vectorfieldbound4}) implies
in combination with $\theta^\omega(x) \le \epsilon$ the
estimate
\begin{displaymath}
  f_{v^+}^\omega(x) \; < \;
  -\frac{\epsilon}{4} + \theta^\omega(x) - x_v \; \le \;
  -\frac{\epsilon}{8} ,
\end{displaymath}
which establishes~{\em (c)\/}. In the other case we
have $|x_u - \epsilon| > \epsilon/8$ for all $u \not\in
\omega^+$. Then, there has to be a vertex $v \in \omega^-$
for which $|x_v - \epsilon| \le \epsilon/8$. In view of
Lemma~\ref{lem:propepscells} this furnishes the inequalities
$\epsilon \le x_v \le 9\epsilon/8$, and therefore we have
$\theta^\omega(x) \le \epsilon/8$. Now~(\ref{prop:vectorfieldbound4})
implies
\begin{displaymath}
  f_{v^+}^\omega(x) \; < \;
  -\frac{\epsilon}{4} + \frac{\epsilon}{8}
    -\sum_{u \not\in \omega^+} x_u \; \le \;
  -\frac{\epsilon}{8} ,
\end{displaymath}
since $x_u \ge 0$ for all $u \not\in \omega^+$.
Thus, also in the other case property (c) holds.
This completes the proof of the proposition.
\end{proof}
\medskip

For later reference, we formulate an easy corollary of this result,
which describes the vector field behavior on the boundary of flow
tiles.
\begin{corollary}[Vector Field Direction along Flow Tile Boundaries]
\label{cor:vectorfieldbound}
Consider the vector fields~$f^\omega$
defined in~(\ref{def:vectorfield1}) through~(\ref{def:vectorfield4}),
the flow tiles~$C_\omega$ defined in~(\ref{def:flowtiles2}), and
suppose that~(\ref{prop:vectorfieldbound1}) holds. Then for every
simplex $\omega \in \cX$ and every $x \in C_\omega$ which lies on
the boundary of~$C_\omega$ in~$X$, the vector~$f^\omega(x)$ points into the interior
of the flow tile~$C_{\sigma^\epsilon_{\min}(x)}$, while~$-f^\omega(x)$
points into the interior of~$C_{\sigma^\epsilon_{\max}(x)}$, where
$\sigma^\epsilon_{\min}(x)$ is given by~(\ref{def:charactcells1})
and $\sigma^\epsilon_{\max}(x)$ by~(\ref{def:charactcells2}).
\end{corollary}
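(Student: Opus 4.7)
Write $\sigma_{\min} := \sigma^\epsilon_{\min}(x)$ and $\sigma_{\max} := \sigma^\epsilon_{\max}(x)$. By Definition~\ref{def:charactcells}, the vertices satisfying $x_v = \epsilon$ are exactly those in $\sigma_{\max} \setminus \sigma_{\min}$, while $x_v > \epsilon$ for $v \in \sigma_{\min}$ and $x_v < \epsilon$ for $v \notin \sigma_{\max}$. My plan is to reduce the geometric claim ``$f^\omega(x)$ points into $\inte C_{\sigma_{\min}}$'' to showing that $f^\omega_v(x) < 0$ for every $v \in \sigma_{\max} \setminus \sigma_{\min}$. Once this is granted, Proposition~\ref{prop:semiflowexsimplex} gives a Carath\'eodory solution $y(t) = \phi^\omega(t,x)$ which is continuous in $t$, so for small $t > 0$ the coordinates $y_v(t)$ remain strictly $>\epsilon$ for $v \in \sigma_{\min}$ and strictly $<\epsilon$ for $v \notin \sigma_{\max}$ by continuity; and the negative signs of $f^\omega_v(x)$ on the remaining boundary vertices propagate (via the right-derivative at $0$, using that $f^\omega$ is continuous at $x$ away from $Y_\omega$ and that trajectories stay inside $Y_\omega$ once they enter it) to give $y_v(t) < \epsilon$ there as well. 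This places $y(t)$ in $\cse{\sigma_{\min}} \subset \inte_X C_{\sigma_{\min}}$.

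To verify the sign condition $f^\omega_v(x) < 0$ for each $v \in \sigma_{\max} \setminus \sigma_{\min}$, I would do a case split on where $v$ sits relative to $\omega$. Since $x \in C_\omega$, Corollary~\ref{cor:semiflowexsimplex} and Lemma~\ref{lem:propepscells} give $x_v \ge \epsilon$ for $v \in \omega^-$ and $x_v \le \epsilon$ for $v \notin \omega^+$, so a boundary vertex $v$ lies in $\omega^- \cup (\omega^+ \setminus \omega^-) \cup (\cX_0 \setminus \omega^+)$. If $v \in \omega^-$, apply Proposition~\ref{prop:vectorfieldbound}(a) to conclude $f^\omega_v(x) \le -1/(4d) < 0$. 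If $v \notin \omega^+$, apply Proposition~\ref{prop:vectorfieldbound}(b) to conclude $f^\omega_v(x) \le -\epsilon/2 < 0$.

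The delicate case is the unique vertex $v^+$ of $\omega^+ \setminus \omega^-$ (when $\omega^+ \neq \omega^-$), because Proposition~\ref{prop:vectorfieldbound}(c) requires an \emph{auxiliary} vertex $v' \neq v^+$ with $|x_{v'}-\epsilon| \le \epsilon/8$, which we do not get from $v^+$ alone. The key observation, which I expect to be the main hurdle, is that if $v^+$ were the \emph{only} vertex with $x_{v^+} = \epsilon$ then $x$ would lie only on the interface $\cl\cse{\omega^-} \cap \cl\cse{\omega^+}$ internal to $C_\omega$, hence in $\inte_X C_\omega$, contradicting $x \in \bd_X C_\omega$. Therefore for boundary points of $C_\omega$ in $X$ there must exist a second vertex $v' \neq v^+$ in $\sigma_{\max} \setminus \sigma_{\min}$ with $x_{v'} = \epsilon$, and this $v'$ supplies the hypothesis for Proposition~\ref{prop:vectorfieldbound}(c), yielding $f^\omega_{v^+}(x) \le -\epsilon/8 < 0$.

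The symmetric assertion that $-f^\omega(x)$ points into $\inte_X C_{\sigma_{\max}}$ follows by the same mechanism applied in the reverse direction: the same strict negativity of $f^\omega_v(x)$ for $v \in \sigma_{\max} \setminus \sigma_{\min}$ means that moving from $x$ in direction $-f^\omega(x)$ raises each such coordinate strictly above $\epsilon$, while $x_v > \epsilon$ for $v \in \sigma_{\min}$ and $x_v < \epsilon$ for $v \notin \sigma_{\max}$ persist by continuity. Thus the perturbed point lies in $\cse{\sigma_{\max}} \subset \inte_X C_{\sigma_{\max}}$, which completes the proof sketch.
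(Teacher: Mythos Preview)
Your proposal is correct and follows essentially the same approach as the paper: identify $V_x = \sigma^\epsilon_{\max}(x)\setminus\sigma^\epsilon_{\min}(x)$ as the set of vertices with $x_v=\epsilon$, observe that if $v^+\in V_x$ then $V_x$ must contain a second vertex (else $x$ would lie only on the internal interface of the arrow tile), and then invoke Proposition~\ref{prop:vectorfieldbound}(a)--(c) to obtain $f^\omega_v(x)<0$ for all $v\in V_x$. The paper's proof is terser and stops at the sign condition, whereas you additionally spell out why the sign condition translates into the geometric ``points into the interior'' statement; this extra care is fine but not strictly needed at the level of detail the paper adopts.
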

\begin{proof}
Suppose that $x \in C_\omega$ lies on the boundary of two flow
tiles. Then there is at least one vertex $v \in \cX_0$ for which
$x_v = \epsilon$, and in fact all vertices for which this identity
holds are  the vertices in $V_x := \sigma^\epsilon_{\max}(x)
\setminus \sigma^\epsilon_{\min}(x)$. In addition, if $\omega^+ \neq
\omega^-$ and if~$V_x$ contains the unique vertex~$v^+$ in
$\omega^+ \setminus \omega^-$, then~$V_x$ has to contain at least
one more vertex $v \neq v^+$. Using Proposition~\ref{prop:vectorfieldbound}
one can then show that
\begin{displaymath}
  f_v^\omega(x) < 0
  \qquad\mbox{ for all }\qquad
  v \in V_x = \sigma^\epsilon_{\max}(x)
    \setminus \sigma^\epsilon_{\min}(x) ,
\end{displaymath}
which establishes the corollary.
\end{proof}
\medskip

Proposition~\ref{prop:vectorfieldbound} describes in detail the behavior
of solutions of~$\phi^\omega$ in the flow tile~$C_\omega$ near its boundary.
As we will see in the next section, this result is crucial both for the
definition of the final semiflow on~$X$, as well as for its admissibility.
In contrast, the next result will be used for establishing strong admissibility.
\begin{proposition}[Solution Exit from Arrow Flow Tiles]
\label{prop:arrowtileexit}
Consider the vector fields~$f^\omega$
defined in~(\ref{def:vectorfield1}) through~(\ref{def:vectorfield4}),
as well as the associated semiflow~$\phi^\omega : \R_0^+ \times
\R^d \to \R^d$ guaranteed by Proposition~\ref{prop:semiflowexsimplex},
and suppose that~(\ref{prop:vectorfieldbound1}) holds.
Let $\omega \in \cX$ be a simplex which is part of an arrow
of the combinatorial vector field~$\cV$, and let~$x \in C_\omega$ be
contained in the associated flow tile as defined in~(\ref{def:flowtiles2}).
Then the forward solution of~$\phi^\omega$ which originates in~$x$ exits the
tile~$C_\omega$ in finite forward time, and every solution through~$x$
which exists for all negative times exits the flow tile~$C_\omega$
in finite backward time.
\end{proposition}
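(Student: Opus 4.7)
The plan is to handle forward exit and backward exit separately. Forward exit is the core technical step, obtained by an explicit analysis of the reduced dynamics on the invariant subspace $Y_\omega\cap C_\omega$; backward exit will then follow formally from forward exit through a compactness argument on $\alpha$-limit sets.

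\emph{Forward exit.} Fix $\xi\in C_\omega$. By Lemma~\ref{lem:ivpdecoupled} and the construction in Proposition~\ref{prop:semiflowexsimplex}, at time $\tau^\omega(\xi)$ the orbit $\phi^\omega(\cdot,\xi)$ enters $Y_\omega$; if it has already left $C_\omega$ by then we are done, so we may assume otherwise. Let $\{v^+\}=\omega^+\setminus\omega^-$, set $m:=\card{\omega^-}$, and introduce the centred variables
\[
  y_v \;:=\; x_v-\bar{x}, \qquad \bar{x}\;:=\;\frac{1-x_{v^+}}{m}, \qquad v\in\omega^- .
\]
A direct computation using~\eqref{def:vectorfield4} and $\eta^\omega\equiv 1$ on $Y_\omega$ yields the key identity $\dot{y}_v=y_v$, so $y_v(t)=y_v(0)e^{t}$. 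Since $\sum_{v\in\omega^-}y_v\equiv 0$, two cases arise: either some $y_{v'}(0)<0$, in which case the exponential decrease of $y_{v'}$ together with the constraints $x_{v'}\geq\epsilon$ and $\bar{x}\leq 1/m$ forces $x_{v'}<\epsilon$ by time at most $\ln\!\bigl((1/m-\epsilon)/|y_{v'}(0)|\bigr)$; or $y_v\equiv 0$, in which case the reduced one-dimensional system $\dot{x}_{v^+}=h(x_{v^+})+\theta^\omega(x)$ with $\theta^\omega(x)=\min(\epsilon,\bar{x}-\epsilon)$ satisfies $\dot{x}_{v^+}\geq\epsilon/2$ throughout $Y_\omega\cap C_\omega$, by splitting into the subcases $\bar{x}\geq 2\epsilon$ (so $\theta^\omega=\epsilon$ and $h\geq-\epsilon/2$) and $\bar{x}<2\epsilon$ (where~\eqref{prop:vectorfieldbound1} gives $x_{v^+}>1-2m\epsilon>2/3>3\epsilon/2$, hence $h(x_{v^+})=1$). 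Thus $x_{v^+}$ grows at a positive rate until $\bar{x}$ reaches $\epsilon$, at which moment $x_v=\epsilon$ for all $v\in\omega^-$ and the orbit leaves $C_\omega$.

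\emph{Backward exit.} Suppose for contradiction that a solution $\gamma$ through a point of $C_\omega$ satisfies $\gamma(t)\in C_\omega$ for every $t\leq 0$. Since $C_\omega$ is compact, the $\alpha$-limit set $\alpha(\gamma)=\bigcap_{\tau\leq 0}\cl\gamma((-\infty,\tau])$ is non-empty and contained in $C_\omega$. For any $x^*\in\alpha(\gamma)$ pick $t_n\to -\infty$ with $\gamma(t_n)\to x^*$; for each fixed $s\geq 0$, continuity of $\phi^\omega$ together with the semiflow identity $\phi^\omega(s,\gamma(t_n))=\gamma(s+t_n)$ implies $\phi^\omega(s,x^*)\in\alpha(\gamma)\subset C_\omega$. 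Hence the entire forward orbit of $x^*$ lies in $C_\omega$, contradicting the forward exit just established.

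\emph{Main obstacle.} The principal technical point is the identity $\dot{y}_v=y_v$, which hinges on the precise affine form of the $v\in\omega^-$ component of~\eqref{def:vectorfield4}; this component was engineered exactly so that subtracting the common mean $\bar{x}$ produces exponential spreading. A secondary but essential check is that~\eqref{prop:vectorfieldbound1} is tight enough to keep $x_{v^+}$ well outside the V-shaped dip of $h$ around $\epsilon$ whenever $\bar{x}$ is near $\epsilon$, so that the uniform-case lower bound $\dot{x}_{v^+}\geq\epsilon/2$ is genuinely positive rather than merely non-negative.
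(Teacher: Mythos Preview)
Your proof is correct, and the route you take on $C_\omega\cap Y_\omega$ is genuinely different from the paper's. The paper argues by contradiction from the outset: if either the forward or the backward claim failed, compactness would yield a full solution $\gamma:\R\to C_\omega$; backward unboundedness of the decoupled $g$-components then forces $\gamma(\R)\subset C_\omega\cap\omega^+$, and the remaining analysis uses only the qualitative bounds of Proposition~\ref{prop:vectorfieldbound} --- it partitions $C_\omega\cap\omega^+$ into the slabs $F_v=\{x_v\le 2\epsilon\}$ and the core $F^*=\{x_w\ge 2\epsilon\text{ for all }w\in\omega^-\}$, and shows that orbits flow from $F^*$ into some $F_v$ and then out of $C_\omega$ at a definite rate. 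You instead exploit the \emph{exact} affine structure of the $\omega^-$-components: the change of variables $y_v=x_v-\bar x$ decouples the system to $\dot y_v=y_v$, which immediately gives exponential spreading and explicit exit-time bounds in the non-uniform case, while the uniform case reduces to a one-dimensional monotone problem for $x_{v^+}$. Your forward argument is sharper and more explicit; the paper's is more robust in that it would survive small perturbations of $f^\omega$ that preserve the sign estimates but destroy the linear identity. Your backward argument via forward invariance of $\alpha(\gamma)$ is equivalent in spirit to the paper's appeal to the existence of a full solution inside the limit set, just organised differently.
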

\begin{proof}
We argue by contradiction. Suppose that there exists a solution
of~$\phi^\omega$ which stays in the compact set~$C_\omega$ for all
$t \ge 0$ or for all $t \le 0$. Then its $\omega$- or its
$\alpha$-limit set has to be nonempty, and standard results
for semiflows imply that there exists a full solution
$\gamma : \R \to \R^d$ of~$\phi^\omega$ which stays in~$C_\omega$ for
all times, see for example~\cite[Proposition~2.6, p.~204]{diekmann:etal:95a}.
We will prove the following four statements:
\begin{itemize}
\item[{\em (i)\/}] For all $t \in \R$ we have
$\gamma(t) \in C_\omega \cap \omega^+$.
\item[{\em (ii)\/}] For every $v \in \omega^-$ the set
$F_v := \{ x \in C_\omega \cap \omega^+ \; \mid \; x_v \le 2\epsilon \}$
is positively invariant under~$\phi^\omega$ relative
to~$C_\omega \cap \omega^+$. Furthermore, every solution starting
in one of these sets will exit~$C_\omega$ in finite forward time.
\item[{\em (iii)\/}]
Every solution originating
in~$F^* := \{ x \in C_\omega \cap \omega^+ \; \mid \; x_w \ge 2\epsilon
  \;\mbox{ for all }\; w \in \omega^- \}$
has to enter a set~$F_v$
for some~$v \in \omega^-$ in finite forward time.
\item[{\em (iv)\/}]
Every solution originating in~$F^*$ has to
exit~$C_\omega$ in finite forward time.
\end{itemize}
\smallskip\noindent
{\em Proof of (i):\/} Let~$x \in \gamma(\R)$ be arbitrary and
suppose that there exists a vertex $u \not\in \omega^+$ such
that $x_u \neq 0$. Then according to~$x \in X$ we have $x_u > 0$,
and the definition of the vector field shows that~$\gamma_u$
solves the differential equation~$\dot{y} = -g(y)$ with the
positive initial value~$x_u$. Solutions of this initial value
problem are uniquely determined in backward time, and one can
easily see that they become unbounded as $t \to -\infty$, which
contradicts the fact that~$\gamma$ lies in the compact
set~$C_\omega$. Thus, we have to have~$x_u = 0$ for all
$u \not\in \omega^+$ and~{\em (i)\/} follows.

\smallskip\noindent
{\em Proof of (ii):\/} Let $v \in \omega^-$ be arbitrary but fixed.
Then Proposition~\ref{prop:vectorfieldbound}{\em (a)\/} immediately
implies
\begin{displaymath}
  f_v^\omega(x) \; \le \; -\frac{1}{4d} \; < \; 0
  \qquad\mbox{ for all }\qquad
  x \in F_v \subset C_\omega \cap \omega^+ .
\end{displaymath}
This leads to the following two
observations. On the one hand, since the semiflow~$\phi^\omega$
on~$C_\omega$ is generated by the vector field~$f^\omega$, it shows
that the $v$-component of any solution originating in~$F_v$
is decreasing, i.e., the solution stays in~$F_v$ for as long as it
stays in~$C_\omega \cap \omega^+$. On the other hand, since the
$v$-component has a velocity which is bounded away from zero, any
such solution has to reach the hyperplane $x_v = \epsilon$ in finite
forward time, at which point it will exit~$C_\omega$ due to
Proposition~\ref{prop:vectorfieldbound} --- unless of course the
solution exits earlier. This completes the proof of~{\em (ii)\/}.

\smallskip\noindent
{\em Proof of (iii):\/} Suppose that there exists a point
$x \in F^*$ with $f_{v^+}^\omega(x) \le \epsilon / 4$, where~$v^+$
denotes the unique vertex in~$\omega^+ \setminus \omega^-$. Then
the definition of~$f_{v^+}^\omega(x)$ in~(\ref{def:vectorfield4}),
together with the fact that for all $y \in C_\omega \cap \omega^+$
we have both $\eta^\omega(y_{v^+},y) = 1$ and $y_w = 0$ for all
$w \not\in \omega^+$, implies the estimate
\begin{displaymath}
  \frac{\epsilon}{4} \; \ge \;
  f_{v^+}^\omega(x) \; = \;
  h(x_{v^+}) + \theta^\omega(x) \; \ge \;
  -\frac{\epsilon}{2} + \theta^\omega(x) ,
  \quad\mbox{ and thus }\quad
  \theta^\omega(x) \; \le \; \frac{3\epsilon}{4} .
\end{displaymath}
According to~(\ref{def:vectorfield2}) this yields a vertex
$w \in \omega^-$ with $x_w - \epsilon \le 3\epsilon / 4 < \epsilon$,
i.e., one has to have the inequality $x_w \le 7\epsilon / 4 < 2\epsilon$.
This clearly contradicts our choice of $x \in F^*$.

In view of the last paragraph, we therefore have $f_{v^+}^\omega(x)
> \epsilon / 4$ for all $x \in F^*$. This in turn implies that any
solution of~$\phi^\omega$ which starts in~$F^*$ either has to reach one of the
sets~$F_v$ as desired, or its $v^+$-component has to reach a point
$y \in F^*$ with $y_{v^+} > 1 - 2\epsilon \cdot \card{\omega^-}$ in finite
forward time. Due to $y \in \omega^+$ this yields $\sum_{u \in \omega^-}
y_u = 1 - y_{v^+} < 2\epsilon \cdot \card{\omega^-}$, and therefore there has
to be a vertex $v \in \omega^-$ with $y_v < 2\epsilon$. This shows
that also in this case the solution enters a set~$F_v$. This completes
the proof of {\em (iii)\/}.

\smallskip\noindent
{\em Proof of (iv):\/} This is an immediate consequence of {\em~(ii)\/}
and {\em (iii)\/}.

Finally, since we clearly have $C_\omega \cap \omega^+ = F^* \cup
\bigcup_{v \in \omega^-} F_v$, we see that the statement~{\em (i)\/}
contradicts statements~{\em (ii)\/} and~{\em (iv)\/},
which in turn establishes the result.
\end{proof}
\medskip

To close this section we take a closer look at solutions of~$\phi^\omega$
which eventually exit the associated flow tile~$C_\omega$. While the
previous result demonstrates that every solution in an arrow flow tile
has to exit, every critical flow tile contains points~$x$ for which
the forward solution~$\phi^\omega(\R_0^+,x)$ is contained in~$C_\omega$.
Yet, as the following result shows, the set of initial conditions
which lead to an exit from the flow tile is always open in~$C_\omega$,
and the time it takes to exit~$C_\omega$ varies continuously with the
initial condition. This fact will be crucial in the next section.
\begin{lemma}[Continuity of the Exit Time from Flow Tiles]
\label{lem:exittime}
Consider the continuous
semiflows~$\phi^\omega:\R_0^+ \times \R^d \to \R^d$
guaranteed by Proposition~\ref{prop:semiflowexsimplex},
and suppose that~(\ref{prop:vectorfieldbound1}) holds.
For every simplex~$\omega \in \cX$ we define the exit time
\begin{equation} \label{lem:exittime1}
  T^\omega(x) \; := \;
  \inf\left\{ t > 0 \; \mid \; \phi^\omega(t,x) \not\in C_\omega
    \right\}
  \quad\mbox{ for all }\quad
  x \in C_\omega ,
\end{equation}
where~$C_\omega$ is defined in~(\ref{def:flowtiles2}) and the
infimum of the empty set is assumed to be~$+\infty$. In addition, we let
\begin{equation} \label{lem:exittime2}
  E_\omega \; := \;
  \left\{ x \in C_\omega \; \mid \; T^\omega(x) < +\infty \right\}
\end{equation}
denote the set of all initial conditions which lead to domain
exit from the flow tile~$C_\omega$. Then the set~$E_\omega$
is open in~$C_\omega$, and the
map $T^\omega : E_\omega \to \R_0^+$ is continuous. Finally,
$T^\omega(x) = 0$ if and only if there exists a
vertex $v \in \omega^-$ with $x_v = \epsilon$.
\end{lemma}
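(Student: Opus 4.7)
The plan is to prove the three claims in the order: (I) the characterization $T^\omega(x)=0$, (II) upper semi-continuity of $T^\omega$ (from which openness of $E_\omega$ is immediate), (III) lower semi-continuity. All three parts hinge on the uniform estimate in Proposition~\ref{prop:vectorfieldbound}(a), which provides the transversality needed to exit $C_\omega$ through the faces $\{x_v=\epsilon\}$ with $v\in\omega^-$; by contrast, the defining constraints of $C_\omega$ other than $x_v\ge\epsilon$ for $v\in\omega^-$ are preserved by $\phi^\omega$, as shown in parts (i)--(iii) of the proof of Corollary~\ref{cor:semiflowexsimplex}.

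For claim (I), suppose first that $x_v=\epsilon$ for some $v\in\omega^-$. Then Proposition~\ref{prop:vectorfieldbound}(a) gives $f^\omega_v(x)\le -1/(4d)<0$. A direct inspection of the explicit composition formula~\eqref{prop:semiflowexsimplex4} shows that $\phi^\omega(\cdot,x)$ is right-differentiable at $t=0$ with right derivative equal to $f^\omega(x)$, regardless of whether $x\in Y_\omega$ or not. Consequently $\phi^\omega_v(s,x)<\epsilon$ for all sufficiently small $s>0$, so $\phi^\omega(s,x)\notin C_\omega$ and $T^\omega(x)=0$. Conversely, if $x_v>\epsilon$ for every $v\in\omega^-$, then continuity of $\phi^\omega$ yields $\phi^\omega_v(s,x)>\epsilon$ for all $v\in\omega^-$ and all sufficiently small $s\ge 0$; combined with the invariance of conditions (a)--(c) in the proof of Corollary~\ref{cor:semiflowexsimplex}, one obtains $\phi^\omega(s,x)\in C_\omega$ on a nontrivial initial interval, hence $T^\omega(x)>0$.

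For (II), fix $x\in E_\omega$ and set $T:=T^\omega(x)<\infty$. By definition of the infimum, for every $\delta>0$ there exists $s\in(T,T+\delta)$ with $\phi^\omega(s,x)\notin C_\omega$. Since $C_\omega$ is closed in $\R^d$ and $\phi^\omega$ is continuous, the set $\{y\in\R^d\mid \phi^\omega(s,y)\notin C_\omega\}$ is open in $\R^d$. Choosing a neighborhood $U$ of $x$ contained in this set, we obtain $T^\omega(y)\le s<T+\delta$ for every $y\in U\cap C_\omega$. This simultaneously shows that $E_\omega$ is open in $C_\omega$ and that $T^\omega$ is upper semi-continuous at $x$.

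For (III), we may assume $T>0$. A key preliminary step is to show that $\phi^\omega_v(s,x)>\epsilon$ strictly for every $v\in\omega^-$ and every $s\in[0,T)$: if $\phi^\omega_v(s_0,x)=\epsilon$ for some such $s_0$, then applying claim~(I) to $x^*:=\phi^\omega(s_0,x)$ together with the semiflow identity $T^\omega(x^*)=T-s_0>0$ yields a contradiction. Consequently, for any $\delta\in(0,T)$, the continuous function $s\mapsto\min_{v\in\omega^-}\phi^\omega_v(s,x)$ is strictly greater than $\epsilon$ on the compact interval $[0,T-\delta]$ and therefore attains a minimum value $\epsilon+\rho$ with $\rho>0$. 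Uniform continuity of the map $(s,y)\mapsto\phi^\omega(s,y)$ on the compact set $[0,T-\delta]\times\{x\}$ yields a neighborhood $U'$ of $x$ in $C_\omega$ such that $|\phi^\omega_v(s,y)-\phi^\omega_v(s,x)|<\rho/2$ for all $y\in U'$, $v\in\omega^-$, and $s\in[0,T-\delta]$; combined again with the invariance of conditions (a)--(c) from Corollary~\ref{cor:semiflowexsimplex}, this forces $\phi^\omega(s,y)\in C_\omega$ throughout $[0,T-\delta]$, so $T^\omega(y)\ge T-\delta$. Together with the upper semi-continuity from (II), this establishes continuity of $T^\omega$ at $x$. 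The principal technical obstacle is the possible discontinuity of $f^\omega$ on $Y_\omega$ and the corresponding possible non-differentiability of $\phi^\omega(\cdot,x)$ at $t=\tau^\omega(x)$; both are neutralized by the fact that Proposition~\ref{prop:vectorfieldbound}(a) provides the same uniform bound on $f^\omega_v$ throughout $C_\omega$, without any regularity assumption on the evaluation point.
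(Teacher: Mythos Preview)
Your proof is correct and follows essentially the same approach as the paper's own proof: openness of $E_\omega$ and upper semi-continuity via the openness of $\{y:\phi^\omega(s,y)\notin C_\omega\}$, lower semi-continuity via the strict inequality $\phi^\omega_v(s,x)>\epsilon$ for $v\in\omega^-$ on $[0,T-\delta]$ combined with uniform continuity and Corollary~\ref{cor:semiflowexsimplex}, and the characterization of $T^\omega(x)=0$ via Proposition~\ref{prop:vectorfieldbound}(a). The only cosmetic difference is that the paper invokes Proposition~\ref{prop:vectorfieldbound}(a) directly to obtain the strict inequality on $[0,T-\delta]$, whereas you derive it from the characterization in~(I) together with the semiflow identity $T^\omega(\phi^\omega(s_0,x))=T-s_0$; both arguments are equivalent.
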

\begin{proof}
We first show that~$E_\omega$ is open in~$C_\omega$. For this,
let~$x \in E_\omega$ be arbitrary. Then there exists a time
$\tau > 0$ such that~$\phi^\omega(\tau,x) \not\in C_\omega$.
Since~$C_\omega$ is closed and~$\phi^\omega : \R_0^+ \times
\R^d \to \R^d$ is continuous, there exists an open
neighborhood~$U \subset \R^d$ of~$x$ such that for all points
$y \in U$ we have $\phi^\omega(\tau,y) \not\in C_\omega$.
This immediately implies $U \cap C_\omega \subset E_\omega$,
and establishes the openness of~$E_\omega$ in~$C_\omega$.
Moreover, the characterization of all~$x \in E_\omega$ which
satisfy $T^\omega(x) = 0$ follows directly from
Corollary~\ref{cor:semiflowexsimplex} and
Proposition~\ref{prop:vectorfieldbound}{\em (a)\/}.

Now let~$\delta > 0$ and $x \in E_\omega$ be arbitrary. Furthermore,
choose a time $\tau \in (T^\omega(x), T^\omega(x) + \delta)$ with
$\phi^\omega(\tau,x) \not\in C_\omega$. Then there exists an open
neighborhood~$U \subset \R^d$ of~$x$ such that for all points
$y \in U$ we have $\phi^\omega(\tau,y) \not\in C_\omega$.
This in turn implies $T^\omega(y) \le \tau < T^\omega(x) + \delta$
for all elements $y \in U \cap C_\omega$, and therefore $T^\omega :
E_\omega \to \R_0^+$ is upper semicontinuous.

Next we choose an $x \in E_\omega$ with $T^\omega(x) > 0$, and
we let $\delta\in(0,T^\omega(x)]$ be arbitrary. In view of
Proposition~\ref{prop:vectorfieldbound}{\em (a)\/} we have the
strict inequality $\phi_u^\omega(t,x) > \epsilon$ for all vertices
$u \in \omega^-$ and all times $t \in [0,T^\omega(x)-\delta]$.
The compactness of the latter interval and the continuity
of~$\phi^\omega$ then imply the existence of a
constant~$\rho > \epsilon$ such that
\begin{displaymath}
  \phi_u^\omega(t,x) \ge \rho > \epsilon
  \quad\mbox{ for all }\quad
  u \in \omega^-
  \;\mbox{ and }\;
  t \in [0,T^\omega(x)-\delta] .
\end{displaymath}
Since~$\phi^\omega$ is a continuous semiflow, there now
exists an $\eta > 0$ such that
\begin{displaymath}
  \left| \phi^\omega(t,y) - \phi^\omega(t,x) \right|
    \; < \; \rho - \epsilon
  \quad\mbox{ for all }\quad
  |y - x| < \eta
  \;\mbox{ and }\;
  t \in [0,T^\omega(x)-\delta] ,
\end{displaymath}
where~$|\cdot|$ denotes the Euclidean norm in~$\R^d$.
This uniform estimate implies for all $u \in \omega^-$, all
$t \in [0,T^\omega(x)-\delta]$, and all $y \in C_\omega$
with $|y - x| < \eta$ the bound
\begin{displaymath}
  \phi_u^\omega(t,y) \; = \;
  \left| \phi_u^\omega(t,y) \right| \; \ge \;
  \left| \phi_u^\omega(t,x) \right| -
    \left| \phi_u^\omega(t,x) - \phi_u^\omega(t,y) \right| \; > \;
  \rho - \left( \rho - \epsilon \right) \; = \; \epsilon .
\end{displaymath}
But then Corollary~\ref{cor:semiflowexsimplex} yields the
inclusion $\phi^\omega(t,y) \in C_\omega$ for all times
$t \in [0,T^\omega(x)-\delta]$ and all points $y \in C_\omega$
with $|y - x| < \eta$, and therefore
\begin{displaymath}
  T^\omega(y) \ge T^\omega(x) - \delta
  \quad\mbox{ for all }\quad
  y \in C_\omega
  \;\mbox{ with }\;
  |y - x| < \eta .
\end{displaymath}
Since this last estimate is trivially satisfied
if~$T^\omega(x) = 0$, this shows that the map~$T^\omega$
is lower semicontinuous. This completes the proof of the
lemma.
\end{proof}
\subsection{Gluing and the Final Strongly Admissible Semiflow}
\label{sec54}
After the preparations of the last two sections, we are finally in a
position to construct a strongly admissible semiflow for a given combinatorial vector field
$\cV$ on the underlying polytope~$X \subset \R^d$ of a simplicial complex~$\cX$.
So far, for every simplex $\omega\in\cX$ we have constructed a semiflow~$\phi^\omega$
on~$X$ which behaves as intended inside the flow tile~$C_\omega$.
In order to construct a stronlgy admissible semiflow $\phi : \R_0^+
\times X \to X$, we only have to concatenate solution pieces from the different
semiflows~$\phi^\omega$. We first construct the solution through a fixed point $x\in X$.
\begin{proposition}[Auxiliary Map]
\label{prop:finalsemiflow}
Consider the semiflows~$\phi^\omega : \R_0^+ \times \R^d \to \R^d$
guaranteed by Proposition~\ref{prop:semiflowexsimplex}, the associated
flow tiles~$C_\omega$ defined in~(\ref{def:flowtiles2}), and suppose
that~(\ref{prop:vectorfieldbound1}) holds. Let~$x \in X$ be fixed
and for $s\in\R^+_0\cup\{\infty\}$ set
\begin{displaymath}
  \Delta(s):=\begin{cases}
               [0,\infty) & \text{ if $s=\infty$,}\\
               [0,s] & \text{ otherwise.}
             \end{cases}
\end{displaymath}
There exist sequences~$(t_k)_{k\in\N}$ of non-negative real numbers,
$(x_k)_{k\in\N}$ of points in~$X$, and~$(\chi_k)_{k\in\N}$ of
partial maps $\chi_k:\R^+_0\pto X$ with $t_0 = 0$ and $x_0 = x$, and
such that for arbitrary $k\in\N$ and $\omega_k:=\sigma^\epsilon_{\min}(x_k)$
we have
\begin{itemize}
\item[(i)] $\dom \chi_k=\Delta(t_{k}+T^{\omega_k}(x_k))$, and
\item[(ii)] $\chi_k(t)=\phi^{\omega_k}(t-t_k,x_k)$ for all
$t\in\dom\chi_k$ with $t\geq t_k$.
\end{itemize}
Moreover, for arbitrary $k\geq 1$ we have
\begin{itemize}
\item[(iii)] $\left.\left(\chi_{k}\right)\right|_{\dom \chi_{k-1}}=\chi_{k-1}$,
as well as
\item[(iv)] $t_{k} = t_{k-1} + T^{\omega_{k-1}}(x_{k-1})$ and $x_{k} =
\phi^{\omega_{k-1}}(T^{\omega_{k-1}}(x_{k-1}),x_{k-1})$
if $T^{\omega_{k-1}}(x_{k-1})<\infty$.
\end{itemize}
In particular, $\chi_x:=\bigcup_{k\in\N}\chi_k$ is a well-defined partial
map with domain $\dom\chi_x=[0,T_x)$, where $T_x=\sup\setof{t_k\,:\,k\in\N}$, if the
sequence~$(t_{k})$ is strictly increasing, and $T_x=\infty$ otherwise.
\end{proposition}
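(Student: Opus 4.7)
The plan is a recursive construction based on the exit-time function $T^\omega$ provided by Lemma~\ref{lem:exittime}: at each stage $k$ one follows the semiflow $\phi^{\omega_k}$ inside the flow tile $C_{\omega_k}$ until it exits, at which point the process restarts on whichever tile contains the exit point. I would construct $(t_k, x_k, \chi_k)$ by induction on $k$, verifying (i)--(iv) along the way.

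For the base case, set $t_0 := 0$, $x_0 := x$, and $\omega_0 := \sigma^\epsilon_{\min}(x_0)$; by Lemma~\ref{lem:altcharepscell}, $x_0 \in \cl\cse{\omega_0} \subset C_{\omega_0}$, so $T^{\omega_0}(x_0)$ is well-defined. Setting $\chi_0(t) := \phi^{\omega_0}(t, x_0)$ on $\Delta(T^{\omega_0}(x_0))$ satisfies (i) and (ii) trivially, and by Corollary~\ref{cor:semiflowexsimplex} the image lies in $C_{\omega_0} \subset X$.

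For the inductive step, given $(t_j, x_j, \chi_j)$ for $j \le k$, I distinguish two cases. If $T^{\omega_k}(x_k) = \infty$, take $t_{k+1} := t_k$, $x_{k+1} := x_k$, and $\chi_{k+1} := \chi_k$; then $\omega_{k+1} = \omega_k$ and (i)--(iii) carry over, while (iv) is vacuous. If $T^{\omega_k}(x_k) < \infty$, set $t_{k+1} := t_k + T^{\omega_k}(x_k)$ and $x_{k+1} := \phi^{\omega_k}(T^{\omega_k}(x_k), x_k)$. Corollary~\ref{cor:semiflowexsimplex} ensures that $x_{k+1}$ is a boundary point of $C_{\omega_k}$ in $X$; in particular $x_{k+1} \in X$, so $\omega_{k+1} := \sigma^\epsilon_{\min}(x_{k+1})$ is well-defined and $x_{k+1} \in C_{\omega_{k+1}}$ by Lemma~\ref{lem:altcharepscell}. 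I then extend by
\begin{displaymath}
  \chi_{k+1}(t) := \begin{cases}
    \chi_k(t), & t \in \dom\chi_k = [0, t_{k+1}], \\
    \phi^{\omega_{k+1}}(t - t_{k+1}, x_{k+1}), & t \in [t_{k+1},\, t_{k+1} + T^{\omega_{k+1}}(x_{k+1})].
  \end{cases}
\end{displaymath}
Both branches agree at the junction $t = t_{k+1}$, where they both evaluate to $x_{k+1}$, so $\chi_{k+1}$ is well-defined; (i)--(iv) are then immediate from the construction.

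Since the domains $\dom\chi_k$ form a nested increasing family and the maps agree on overlaps by (iii), $\chi_x := \bigcup_k \chi_k$ is a well-defined partial map on $\bigcup_k \dom\chi_k = [0, T_x)$, matching the description in the proposition: if the sequence stabilizes at some stage $k_0$ with $T^{\omega_{k_0}}(x_{k_0}) = \infty$, then $\dom\chi_{k_0} = [0, \infty)$ and $T_x = \infty$; otherwise every $T^{\omega_k}(x_k)$ is finite (and in fact strictly positive, because $\omega_k = \sigma^\epsilon_{\min}(x_k)$ forces $(x_k)_v > \epsilon$ for all $v \in \omega_k^-$, so the last clause of Lemma~\ref{lem:exittime} rules out $T^{\omega_k}(x_k) = 0$), and the $t_{k+1}$ strictly increase up to $T_x$. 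The only delicate point in the argument is the tile-to-tile transfer at each exit: this is handled by Corollary~\ref{cor:semiflowexsimplex}, which guarantees that the exit point lies in $X$ on the boundary of $C_{\omega_k}$, together with Lemma~\ref{lem:altcharepscell}, which ensures that the newly chosen simplex $\omega_{k+1}$ places $x_{k+1}$ in the closure of the new $\epsilon$-cell so that the next application of $\phi^{\omega_{k+1}}$ is justified.
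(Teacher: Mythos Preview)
Your proof is correct and follows essentially the same recursive construction as the paper: set up $t_0,x_0,\chi_0$, then at each stage either stabilize (if $T^{\omega_k}(x_k)=\infty$) or advance to the exit point and restart, with (i)--(iv) verified directly from the definitions. You are in fact slightly more explicit than the paper in justifying that $x_{k+1}\in X$ via Corollary~\ref{cor:semiflowexsimplex} and that $T^{\omega_k}(x_k)>0$ via Lemma~\ref{lem:exittime}, which the paper defers to a remark after the proof.
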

\begin{proof}
We define the three sequences~$(t_k)$, $(x_k)$, and~$(\chi_k)$ recursively.
It is straightforward to verify that~$t_0 := 0$, $x_0 := x$ and
\begin{displaymath}
  \chi_0:\Delta(T^{\omega_0}(x_0))\ni t\mapsto \phi^{\omega_0}(t,x_0)\in X
\end{displaymath}
satisfy properties (i-iv). Assume now that~$t_k$, $x_k$, and~$\chi_k$ are
already defined in such a way that properties (i-iv) are satisfied. Let
\begin{displaymath}
  s_k:=\begin{cases}
         T^{\omega_k}(x_k) & \text{ if $T^{\omega_k}(x_k)<\infty$,} \\
         0 & \text{ otherwise.}
       \end{cases}
\end{displaymath}
Set $t_{k+1}:=t_k+s_k$, $x_{k+1}:=\phi^{\omega_k}(s_k,x_k)$, and define
$\chi_{k+1}:\Delta(t_{k+1}+T^{\omega_{k+1}}(x_{k+1}))\to X$ by
\begin{displaymath}
  \chi_{k+1}(t):=\begin{cases}
                   \chi_{k}(t) & \text{ if $t\in\dom\chi_{k}$,}\\
                   \phi^{\omega_{k+1}}(t-t_{k+1},x_{k+1})& \text{ otherwise.}
                 \end{cases}
\end{displaymath}
Again, it is straightforward to verify that~$t_{k+1}$, $x_{k+1}$,
and~$\chi_{k+1}$ satisfy properties (i-iv).

Note that due to $\omega_k = \sigma^\epsilon_{\min}(x_k)$ the exit
time~$T^{\omega_k}(x_k)$ defined in~(\ref{lem:exittime1}) is either~$+\infty$
or finite and strictly positive. If for some $k\in\N$ we have
$T^{\omega_k}(x_k)=+\infty$, then we get $s_k=0$, and therefore $t_{k+1}=t_k$,
$x_{k+1}=x_k$, and $\dom\chi_{k+1}=[0,\infty)$. Consequently,
all these sequences are constant for $k'>k$ and $T_x=\infty$.
If $T^{\omega_k}(x_k)<\infty$ for all $k\in\N$ , then the sequence~$(t_k)$
is strictly increasing and $T_x=\sup\setof{t_k\,\mid\,k\in\N}$.
\end{proof}
\begin{definition}[Construction of the Final Semiflow]
\label{def:finalsemiflow}
Under the assumptions of Proposition~\ref{prop:finalsemiflow} we set
$U:=\setof{(t,x)\in\R^+_0 \times X \,\mid\, 0\leq t<T_x}$, and we define
$\phi : U \to X$ for $(t,x)\in U$ by $\phi(t,x):=\chi_x(t)$.
\end{definition}
The construction of $\phi$ is visualized in Figure~\ref{fig:finalsemiflow}.
\begin{figure}[tb]
  \centering
  \raisebox{3.0cm}{\parbox{5.5cm}{
    \includegraphics[width=0.35\textwidth]{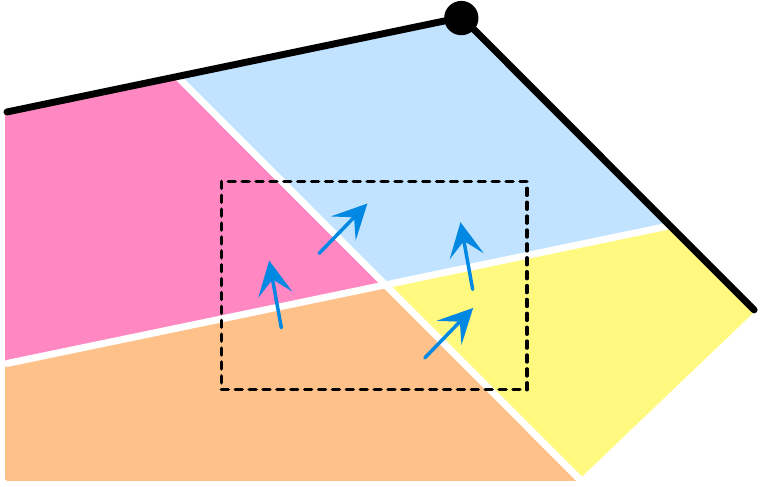}}}
  \hspace{0.5cm}
  \includegraphics[width=0.57\textwidth]{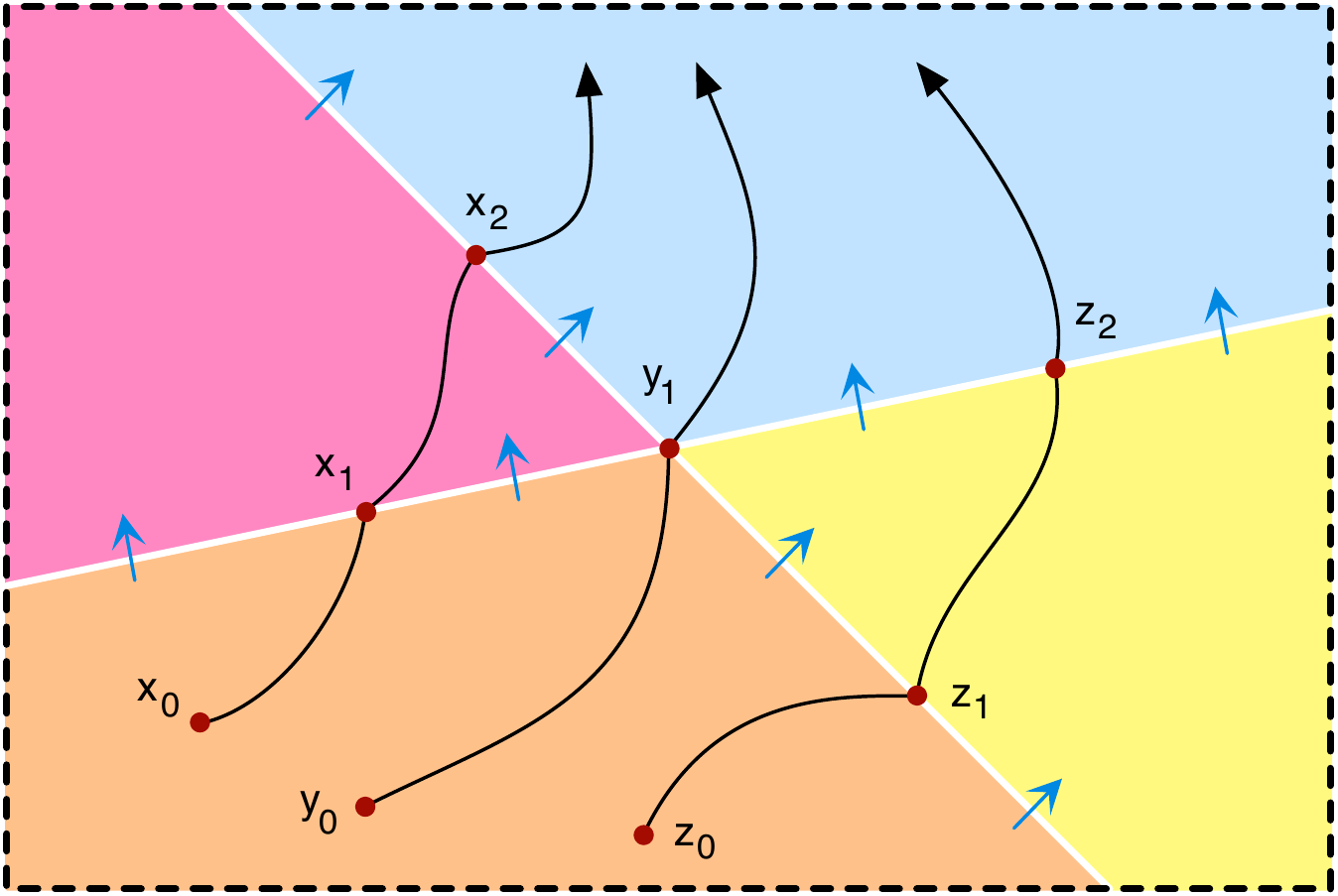}
  \caption{Construction of the final semiflow. Since in points~$\xi$
           on flow tile boundaries the semiflows~$\phi^\omega$ have
           to move from $\sigma^\epsilon_{\max}(\xi)$ to
           $\sigma^\epsilon_{\min}(\xi)$, the forward continuation
           is uniquely determined. The left image shows the region
           close to a triangle vertex in a two-dimensional simplex;
           part of its boundary is shown in black. The white lines
           indicate the $\epsilon$-cell boundaries. The image on the
           right is a larger version of the dashed region on the left.
           In it one can see three solution segments which all start in the
           lower left cell and end in the upper right cell, but in each
           case the cells visited along the way differ.}
  \label{fig:finalsemiflow}
\end{figure}
While the intuition for the above definition is straightforward,
its precise formulation may seem daunting at first. Therefore,
a few comments are in order:
\begin{itemize}
\item As it stands, we make no claim yet that
Definition~\ref{def:finalsemiflow} leads to a strongly admissible
semiflow. We still have to show that $T_x = \infty$ for all
$x \in X$, and that~$\phi : \R_0^+ \times X \to X$ is continuous.
Both of these statements are far from obvious.
\item It is, however, straightforward to verify that for all
$x \in X$ we have
\begin{equation} \label{def:finalsemiflow1}
  \phi(t, \phi(s,x)) = \phi(t+s,x)
  \quad\mbox{ for all }\quad
  s \in [0,T_x)
  \;\mbox{ and }\;
  t \in [0,T_{\phi(s,x)}) .
\end{equation}
This follows directly from the definition of~$\phi$ and
the semiflow properties of the involved semiflows~$\phi^{\omega}$.
\item By choosing the simplices~$\omega_k$ in
Definition~\ref{def:finalsemiflow} as~$\sigma^\epsilon_{\min}(x_k)$,
we have $t_{v}(x_k) > \epsilon$ for all vertices~$v \in \omega_k$,
and therefore this inequality is satisfied for all~$v \in \omega_k^-
\subseteq \omega_k$. In view of Lemma~\ref{lem:exittime} this is the
reason for the strict inequality $T^{\omega_k}(x_k) > 0$.
\item Notice further that the choice $\omega_{k+1} =
\sigma^\epsilon_{\min}(x_{k+1})$ is the only possible choice
for extending the semiflow definition in view of
Corollary~\ref{cor:vectorfieldbound}.
\end{itemize}
The remainder of this section is devoted to showing that
Definition~\ref{def:finalsemiflow} does indeed define a continuous
semiflow on~$X$ which is strongly admissible. This will be accomplished
by localizing the problem and using the compactness of the underlying
space. Our approach is based on the following two lemmas.
\begin{lemma}[Local Semiflow in the Interior of Flow Tiles]
\label{lem:localflowint}
Consider the mapping~$\phi$ introduced in
Definition~\ref{def:finalsemiflow}. Furthermore, let $\omega \in \cX$
denote an arbitrary simplex and let~$x \in \inte_X C_\omega$.
Then there exists a neighborhood~$U$
of~$x$ in~$C_\omega$ and a time~$\tau > 0$ such that for all $y \in U$ we have
$T_y \ge \tau$, and the map $\phi : [0,\tau] \times U \to X$
is continuous.
\end{lemma}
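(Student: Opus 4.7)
The plan is to show that $\phi = \phi^\omega$ on a product neighborhood $[0,\tau] \times U$, which reduces the lemma to the continuity asserted in Proposition~\ref{prop:semiflowexsimplex}. Two observations underpin this reduction: (i) the vector field~$f^\omega$, and hence the semiflow~$\phi^\omega$, depends on~$\omega$ only through the pair $(\omega^-, \omega^+)$, so in particular $\phi^{\omega^-} = \phi^{\omega^+} = \phi^\omega$; and (ii) the interior hypothesis $x \in \inte_X C_\omega$ forces $\cX^\epsilon(x) \subset \{\omega^-, \omega^+\}$, since any simplex $\sigma \in \cX^\epsilon(x)$ outside this set would place $x$ inside a flow tile distinct from~$C_\omega$, contradicting $x \in \inte_X C_\omega$.

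I would first verify that $T^\omega(x) > 0$. From (ii), no vertex $v \in \omega^-$ can satisfy $x_v = \epsilon$, because $\omega^- \setminus \{v\}$ would then also belong to~$\cX^\epsilon(x)$; hence Lemma~\ref{lem:exittime} yields $T^\omega(x) > 0$. Choose $\tau \in (0, T^\omega(x))$ when $T^\omega(x) < \infty$, or any finite $\tau > 0$ otherwise. Next I would construct a neighborhood~$U$ of~$x$ in~$C_\omega$ on which two things hold simultaneously for every $y \in U$. The first, $\sigma^\epsilon_{\min}(y) \in \{\omega^-, \omega^+\}$, follows from observation~(ii) combined with the strong upper semi-continuity of~$\cX^\epsilon$ established in Lemma~\ref{lem:xepssemicontinuity}. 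The second, $T^\omega(y) > \tau$, is immediate from the continuity of~$T^\omega$ asserted in Lemma~\ref{lem:exittime} when $T^\omega(x) < \infty$; in the remaining subcase $T^\omega(x) = \infty$, the trajectory $\phi^\omega([0,\tau], x) \subset C_\omega$ is compact, Proposition~\ref{prop:vectorfieldbound}(a) together with forward invariance gives $\rho := \min\{\phi_v^\omega(t,x) : v \in \omega^-,\, t \in [0,\tau]\} > \epsilon$, and then uniform continuity of $\phi^\omega$ on $[0,\tau] \times \{x\}$ combined with Corollary~\ref{cor:semiflowexsimplex} produces a neighborhood on which $\phi_v^\omega(t,y) > \epsilon$ for all $v \in \omega^-$ and $t \in [0,\tau]$, hence $T^\omega(y) \ge \tau$.

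To finish I would trace through Definition~\ref{def:finalsemiflow} on~$U$. Setting $\omega_0(y) := \sigma^\epsilon_{\min}(y)$, observation~(i) yields $\phi^{\omega_0(y)} = \phi^\omega$ and $C_{\omega_0(y)} = C_\omega$, so $T^{\omega_0(y)}(y) = T^\omega(y) > \tau$. Consequently the first segment~$\chi_0$ built in Proposition~\ref{prop:finalsemiflow} is already defined on $[0, T^\omega(y)] \supset [0,\tau]$ and equals $\phi^\omega(\cdot, y)$ there, so $\phi(t,y) = \phi^\omega(t,y)$ throughout $[0,\tau] \times U$. The claim $T_y \ge \tau$ and the continuity of~$\phi$ on $[0,\tau] \times U$ then follow from Proposition~\ref{prop:semiflowexsimplex}. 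The most delicate step is the uniform lower bound $T^\omega(y) > \tau$ in the subcase $T^\omega(x) = \infty$, where Lemma~\ref{lem:exittime} alone offers no information and the explicit velocity estimate of Proposition~\ref{prop:vectorfieldbound}(a) must be invoked to separate the compact trajectory from the relevant parts of the boundary of~$C_\omega$.
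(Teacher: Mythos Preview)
Your proof is correct and reaches the same conclusion as the paper --- namely, that $\phi$ coincides with $\phi^\omega$ on a product $[0,\tau]\times U$ --- but the route you take is considerably more elaborate than the paper's. The paper dispenses with the exit-time machinery entirely: it simply picks an open neighborhood $V$ of $x$ in $C_\omega$, shrinks to $U$ with $\cl U \subset V$, and then uses compactness of $\cl U$ together with continuity of $\phi^\omega$ (Proposition~\ref{prop:semiflowexsimplex}) and $\phi^\omega(0,\cl U)=\cl U\subset V$ to find $\tau>0$ with $\phi^\omega([0,\tau],\cl U)\subset V\subset C_\omega$. From this the identity $\phi=\phi^\omega$ on $[0,\tau]\times U$ is immediate, with no case distinction on whether $T^\omega(x)$ is finite.

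Your argument instead leans on Lemma~\ref{lem:exittime} and splits into the cases $T^\omega(x)<\infty$ and $T^\omega(x)=\infty$; in the latter you essentially reprove the lower-semicontinuity step already contained in the proof of Lemma~\ref{lem:exittime}. This is not wrong, but it is redundant: the single compactness argument in the paper handles both cases uniformly and avoids invoking Proposition~\ref{prop:vectorfieldbound}(a) or Corollary~\ref{cor:semiflowexsimplex} at all. What your write-up does buy is that observations (i) and (ii) --- that $\phi^{\omega^-}=\phi^{\omega^+}=\phi^\omega$ and that $\cX^\epsilon(x)\subset\{\omega^-,\omega^+\}$ for interior points --- are made explicit; the paper uses these facts silently when asserting $\phi(t,y)=\phi^\omega(t,y)$.
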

\begin{proof}
Let~$V$ denote an open neighborhood of~$x$ in~$C_\omega$.
Choose an open set~$U \subset V$ such that its closure~$\cl U$
is still contained in~$V$. Since~$X$ is compact, the same is true
for~$\cl U$. Due to the continuity of~$\phi^\omega : \R_0^+ \times
\R^d \to \R^d$, which was established in
Proposition~\ref{prop:semiflowexsimplex}, there exists a
time~$\tau > 0$ such that $\phi^\omega([0,\tau],\cl U) \subset V$.
According to $V \subset C_\omega$, this further implies the identity
$\phi(t,y) = \phi^\omega(t,y)$ for all $t \in [0,\tau]$ and
$y \in U$, and the result follows.
\end{proof}
\medskip

Lemma~\ref{lem:localflowint} shows that at an interior point of a flow tile,
there is a neighborhood~$U$ of the point and a positive time~$\tau$
such that~$\phi$ is defined on $[0,\tau] \times U$, as well as continuous
on this set. Notice, however, that both the neighborhood and the time do
depend on the chosen base point~$x$. The next result establishes an
analogous result for points~$x$ which lie on the boundary of at
least two flow tiles.
\begin{lemma}[Local Semiflow near the Boundary of Flow Tiles]
\label{lem:localflowbnd}
Consider the mapping~$\phi$ introduced in
Definition~\ref{def:finalsemiflow}. Furthermore, let~$x \in X$
be contained in at least two flow tiles. Then there exists a
neighborhood~$U \subset X$ of~$x$ and a time~$\tau > 0$ such that
for all $y \in U$ we have $T_y \ge \tau$, and the map $\phi :
[0,\tau] \times U \to X$ is continuous.
\end{lemma}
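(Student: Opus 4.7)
The plan is to show that every trajectory starting at $y$ near $x$ enters the open $\epsilon$-cell $\cse{\omega_0}$ (where $\omega_0:=\sigma^\epsilon_{\min}(x)$) within a short transit time $\Delta(y)$ that tends to zero as $y\to x$, after which the dynamics is governed uniformly by the single vector field $f^{\alpha_0}$ with $\omega_0\in\alpha_0\in\cV$, so that Lemma~\ref{lem:localflowint} applies at an interior point of $C_{\alpha_0}$. Continuity of $\phi$ on $[0,\tau]\times U$ will then follow by splicing the short transit phase to the longer interior phase.

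Set $\sigma_1:=\sigma^\epsilon_{\max}(x)$, so that $\omega_0\subsetneq\sigma_1$ by hypothesis, and let $\cA:=\{\alpha\in\cV\mid\alpha\cap\cX^\epsilon(x)\ne\emptyset\}$. By Lemma~\ref{lem:xepssemicontinuity} there is a neighborhood $V_0$ of $x$ in which $\cX^\epsilon(y)\subset\cX^\epsilon(x)$, so the initial partition element $\beta(y)$ used in Definition~\ref{def:finalsemiflow} always lies in $\cA$ for $y\in V_0$. Applying Proposition~\ref{prop:vectorfieldbound} directly at any $y\in V_0$ (not merely at $x$), and using the case analysis in the proof of Corollary~\ref{cor:vectorfieldbound} to cover the exceptional situation of part~(c), one obtains a constant $c>0$ and a possibly smaller neighborhood $V\subset V_0$ such that $f^\alpha_v(y)\le -c$ whenever $\alpha\in\cA$, $y\in V\cap C_\alpha$, and $v\in\sigma_1\setminus\omega_0$.

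Along any piecewise segment $\phi^\alpha$ of the trajectory from a point in $V$, each such $t_v$ decreases at rate at least $c$ while it exceeds $\epsilon$, and then evolves monotonically via $\dot{x}_v=-g(x_v)\le 0$ once $v\notin\alpha^+$ on the new segment. Hence the total elapsed time $\Delta(y)$ until the trajectory first lies in $\cse{\omega_0}$ satisfies $\Delta(y)\le\frac{1}{c}\sum_{v\in\sigma_1\setminus\omega_0}(t_v(y)-\epsilon)_+\to 0$ as $y\to x$, with at most $|\sigma_1\setminus\omega_0|$ tile-switches occurring. Each switching time and switching position depends continuously on $y$ by Lemma~\ref{lem:exittime} combined with the continuity of each $\phi^\alpha$ from Proposition~\ref{prop:semiflowexsimplex}, so $\Delta(y)$ and the eventual entry point $z(y)\in\cse{\omega_0}$ depend continuously on $y$ too, with $z(y)\to x$.

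To conclude, choose $\delta>0$ small and set $x^*:=\phi^{\alpha_0}(\delta,x)\in\cse{\omega_0}\subset\inte_X C_{\alpha_0}$; apply Lemma~\ref{lem:localflowint} at $x^*$ to obtain a neighborhood $W$ of $x^*$ in $C_{\alpha_0}$ and a time $\tau_2>0$ on which $\phi$ is continuous. Shrink the neighborhood of $x$ to $U\subset V$ and set $\tau:=\delta+\tau_2/2$ small enough that for every $y\in U$ one has $\Delta(y)<\delta/2$ and $\phi^{\alpha_0}(\delta-\Delta(y),z(y))\in W$. Continuity of $\phi$ on $[0,\tau]\times U$ will then follow by gluing the continuous transit phase to the continuous interior flow on $W$. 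I expect the main obstacle to be the uniform transversality estimate: the delicate point is to rule out the case of Proposition~\ref{prop:vectorfieldbound}(c) with only one vertex outside $\omega_0$, which is impossible under the hypothesis because it would force $\omega_0$ and $\sigma_1$ to belong to the same arrow of $\cV$, so that $x$ would lie in only one flow tile.
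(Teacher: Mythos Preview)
Your overall strategy matches the paper's: use the transversality bounds of Proposition~\ref{prop:vectorfieldbound} to show that trajectories starting near~$x$ reach the cell~$\cse{\omega_0}$ quickly, and then invoke the interior continuity supplied by~$\phi^{\alpha_0}$. Your handling of the exceptional case~(c) of Proposition~\ref{prop:vectorfieldbound} is also the same argument the paper uses (implicitly, via Corollary~\ref{cor:vectorfieldbound}).

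The genuine gap is in the continuity step. You write that ``each switching time and switching position depends continuously on~$y$ by Lemma~\ref{lem:exittime} combined with the continuity of each~$\phi^\alpha$,'' and conclude that~$\Delta(y)$ and~$z(y)$ are continuous. But Lemma~\ref{lem:exittime} only gives continuity of the exit time~$T^\omega$ once the flow tile~$C_\omega$ is fixed. The \emph{sequence} of tiles visited by the trajectory from~$y$ is not a continuous function of~$y$: after the first exit the trajectory may land on a lower-dimensional face shared by several cells, and an arbitrarily small perturbation of~$y$ can send it into a different cell, changing both the number of subsequent switches and which exit-time functions govern them (this is exactly the situation the paper illustrates in Figure~\ref{fig:finalsemiflow}). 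So your composition-of-continuous-pieces argument does not go through as stated, and in particular you have not established continuity of~$\phi(t,y)$ for~$t$ in the transit interval~$[0,\Delta(y)]$, nor continuity of~$\Delta$ and~$z$ on all of~$U$ (as opposed to merely at~$x$).

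The paper closes this gap by a subsequence argument: given~$y^{(k)}\to y$ in~$U$, there are only finitely many possible descending chains of simplices between~$\sigma^\epsilon_{\max}(x)$ and~$\sigma^\epsilon_{\min}(x)$, so one splits~$(y^{(k)})$ into finitely many subsequences along each of which the visited tile chain is constant. On each such subsequence Lemma~\ref{lem:exittime} can be applied inductively down the chain to show that all intermediate switching times and positions converge, even though the limit trajectory from~$y$ may traverse a \emph{different} (shorter) chain with some of the limiting exit times equal to zero. This is the missing idea you need to make your sketch into a proof.
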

\begin{proof}
Since the point is contained in at least two flow tiles, the
vertex set
\begin{displaymath}
  \cE = \left\{ v \in \cX_0 \; \mid \; x_v = \epsilon \right\}
\end{displaymath}
is nonempty, and the simplex set~$\cX^\epsilon(x)$ defined
in~(\ref{def:charactcells3}) contains at least the two distinct
simplices~$\sigma^\epsilon_{\min}(x)$ and~$\sigma^\epsilon_{\max}(x)$
defined in~(\ref{def:charactcells1}) and~(\ref{def:charactcells2}),
respectively. For example, Figure~\ref{fig:finalsemiflow} illustrates
the situation in which~$\cE$ contains exactly two vertices, the
cell associated with~$\sigma^\epsilon_{\max}(x)$ is on the lower
left, and the upper right cell is associated
with~$\sigma^\epsilon_{\min}(x)$.

According to Proposition~\ref{prop:vectorfieldbound} there exists
a neighborhood~$V$ of~$x$ in~$X$ and a positive constant~$\alpha > 0$
such that
\begin{equation} \label{lem:localflowbnd1}
  f_v^\omega(y) \le -\alpha < 0
  \quad\mbox{ for all }\quad
  \omega \in \cX^\epsilon(x) , \;\;
  y \in V \cap \cl\cse{\omega} , \;\mbox{ and all }\;
  v \in \cE .
\end{equation}
Since $\tilde{V}:=\setof{y\in X \; \mid \; v\not\in\cE \implies y_v\neq\epsilon}$
is open in $X$ and clearly $x\in \tilde{V}$, by replacing~$V$ with the
intersection~$\tilde{V} \cap V$ we may also assume that
\begin{equation} \label{lem:localflowbnd0}
  y \in V
  \;\mbox{ and }\;
  y_v = \epsilon
  \quad\Rightarrow\quad
  v \in \cE.
\end{equation}
In addition, despite them not being continuous, all of the vector
fields~$f^\omega$ are uniformly bounded on~$X$, i.e., there exists
a constant $\beta > 0$ such that
\begin{equation} \label{lem:localflowbnd2}
  \left| f^\omega(y) \right| \le \beta
  \quad\mbox{ for all }\quad
  y \in X .
\end{equation}
Based on these estimates and the definition of~$\phi$, which in
turn is based on the definition of the semiflows~$\phi^\omega$
as almost everywhere solutions of differential equations with
right-hand side~$f^\omega$, we see that
\begin{equation} \label{eq:macximum-speed}
  \text{the solutions~$\phi(\cdot,y)$ have a maximum speed
        while they move in~$V$.}
\end{equation}
We now turn to selecting an open neighborhood $U \subset V$
of~$x$ and a time $\tau > 0$ as in the formulation of the
lemma. We claim that
\begin{displaymath}
  T^\omega(x) > 0
  \quad\mbox{ for all }\quad
  \omega \in \cX^\epsilon(x) \cap
     \left\{ \sigma^\epsilon_{\min}(x)^- , \; \sigma^\epsilon_{\min}(x)^+
     \right\} ,
\end{displaymath}
as well as
\begin{displaymath}
  T^\omega(x) = 0
  \quad\mbox{ for all }\quad
  \omega \in \cX^\epsilon(x) \setminus
  \left\{ \sigma^\epsilon_{\min}(x)^- , \; \sigma^\epsilon_{\min}(x)^+
    \right\} .
\end{displaymath}
To see this assume first that $\omega \in \cX^\epsilon(x) \cap \left\{
\sigma^\epsilon_{\min}(x)^- , \; \sigma^\epsilon_{\min}(x)^+ \right\}$.
Then in both cases $\omega^-\subset \sigma^\epsilon_{\min}(x)$, and we get
$T^\omega(x) > 0$ from the definition and properties of the exit
times~$T^\omega$ guaranteed by Lemma~\ref{lem:exittime}.
Consider now $\omega \in \cX^\epsilon(x) \setminus \left\{
\sigma^\epsilon_{\min}(x)^- , \; \sigma^\epsilon_{\min}(x)^+ \right\}$.
We will show that this assumption excludes the inclusion $\omega^- \subset \sigma^\epsilon_{\min}(x)$.
Otherwise,  we have $\omega^- \subset \sigma^\epsilon_{\min}(x) \subset \omega
\subset \omega^+$, which in turn implies either $\sigma^\epsilon_{\min}(x)
= \omega^-$ or $\sigma^\epsilon_{\min}(x) = \omega = \omega^+$. In either
case one immediately obtains $\omega \in \left\{ \sigma^\epsilon_{\min}(x)^- , \;
\sigma^\epsilon_{\min}(x)^+ \right\}$, which is a contradiction. Thus,
we have $\omega^- \not\subset \sigma^\epsilon_{\min}(x)$, and therefore
there exists a vertex $v\in\omega^- \setminus \sigma^\epsilon_{\min}(x)$.
Together with $\omega \subset \sigma^\epsilon_{\max}(x)$ we finally get
$T^\omega(x)=0$, again from Lemma~\ref{lem:exittime}.

Notice further that at least one of~$\sigma^\epsilon_{\min}(x)^-$
and~$\sigma^\epsilon_{\min}(x)^+$ are contained in~$\cX^\epsilon(x)$, namely
the simplex~$\sigma^\epsilon_{\min}(x)$, but not necessarily both.
Due to the continuity of the exit times, we can therefore find a
neighborhood~$U \subset V$ of~$x$ in~$X$ and a time $\tau > 0$ such that
\begin{equation} \label{lem:localflowbnd3}
  T^\omega(y) \ge \tau > 0
  \quad\mbox{ for all }\quad
  \omega \in \cX^\epsilon(x) \cap \left\{ \sigma^\epsilon_{\min}(x)^- ,
    \; \sigma^\epsilon_{\min}(x)^+ \right\}
  \;\mbox{ and all }\;
  y \in U \cap \cl\cse{\omega} ,
\end{equation}
as well as
\begin{equation} \label{lem:localflowbnd4}
  T^\omega(y) < +\infty
  \quad\mbox{ for all }\quad
  \omega \in \cX^\epsilon(x) \setminus \left\{ \sigma^\epsilon_{\min}(x)^- ,
    \; \sigma^\epsilon_{\min}(x)^+ \right\}
  \;\mbox{ and all }\;
  y \in U \cap \cl\cse{\omega} .
\end{equation}
We will prove that by possibly shrinking~$U$ further and decreasing
the value of~$\tau$, we can also achieve that
\begin{equation} \label{lem:localflowbnd5}
  \mbox{no solution of~$\phi$ which originates in~$U$ can reach the
  boundary of~$V$ within time~$\tau$.}
\end{equation}
Indeed, we get this easily from \eqref{eq:macximum-speed}
if we choose~$U$ in such a way that its boundary
has a positive smallest distance to the boundary of~$V$, and we then
choose~$\tau > 0$ smaller than the ratio of this minimum distance
and~$\beta$.

With these choices we will prove that
\begin{equation} \label{lem:localflowbnd6}
  \phi(\cdot,y) \;\mbox{ exists at least on }\; [0,\tau]
  \;\mbox{ for all }\; y \in U .
\end{equation}
To see this, we just need to follow the construction of this
solution in Definition~\ref{def:finalsemiflow} based on Proposition~ \ref{prop:finalsemiflow}.
Let $y\in U$ be arbitrary. We have to prove that $\tau<T_y$.
Assume the contrary. Then $T_y<\infty$ and the sequence $(t_k)$ associated with $y$
as in Proposition~ \ref{prop:finalsemiflow} is strongly increasing. In particular, for every $m\in\N$
we have $t_m<\tau$, which means that,
due to~(\ref{lem:localflowbnd5}), the solution always stays in~$V$.
Note, however, that, by~\eqref{lem:localflowbnd0},
within~$V$ only solution components associated
with vertices in~$\cE$ can cross~$\epsilon$.
There are only  finitely many of these thresholds and,
once the $v$-component drops below $\epsilon$,
it cannot increase again while in $V$ due to \eqref{lem:localflowbnd1}.
Thus, there exists a $k\in\N$ such that
$y_k:=\varphi(t_k,y)\in \cse{\sigma^\epsilon_{\min}(x)} $.
Let $\omega_k:=\sigma^\epsilon_{\min}(y_k)$.
It follows from \eqref{lem:localflowbnd3} that $T^{\omega_k}(y_k)\geq \tau$
and, in consequence, $t_{k+1}=t_k+T^{\omega_k}(y_k)\geq \tau$,
a contradiction proving \eqref{lem:localflowbnd6}.

To summarize, so far we have constructed a neighborhood~$U$
of~$x$ and a time $\tau > 0$ such that all the numbered
statements in this proof are satisfied. In order to complete
the proof of the lemma, we only have to show that~$\phi :
[0,\tau] \times U \to V$ is continuous. Notice first that
in view of~(\ref{lem:localflowbnd2}) and
Definition~\ref{def:finalsemiflow} one has for all
$(t,y), (t_0,y_0) \in [0,\tau] \times U$ the estimate
\begin{eqnarray*}
  \left| \phi(t,y) - \phi(t_0,y_0) \right| & \le &
    \left| \phi(t,y) - \phi(t_0,y) \right| +
    \left| \phi(t_0,y) - \phi(t_0,y_0) \right| \\[1ex]
  & \le &
    \beta \left| t - t_0 \right| +
    \left| \phi(t_0,y) - \phi(t_0,y_0) \right| .
\end{eqnarray*}
This immediately implies that it suffices to establish the
continuity of~$\phi(t,\cdot) : U \to V$ for every fixed
$t \in [0,\tau]$.

Thus, from now on we let $\tau^* \in [0,\tau]$ be arbitrary,
but fixed. Furthermore, let~$(y^{(k)})_{k \in \N}$ denote
a sequence of points in~$U$ which converges to~$y \in U$.
Each solution~$\phi(\cdot,y^{(k)})$ exists on the
interval~$[0,\tau^*]$, and as outlined in
Definition~\ref{def:finalsemiflow} based on Proposition~ \ref{prop:finalsemiflow},
as~$t$ increases through this interval the solution visits a well-defined sequence of
flow tiles~$C_\omega$. More precisely, in view
of~(\ref{lem:localflowbnd0}) and~(\ref{lem:localflowbnd1}),
for each~$k$ this sequence is a finite chain of nested proper
faces which is contained in the simplex face poset interval
with maximal simplex~$\sigma^\epsilon_{\max}(x)$ and minimal
simplex~$\sigma^\epsilon_{\min}(x)$. Clearly, there are only
finitely many possibilities for such chains. Therefore, the
sequence~$(y^{(k)})_{k\in\N}$ may be split into a finite
number of subsequences such that for each of these subsequences
the associated sequence of simplex sequences is constant. Thus,
it suffices to prove that~$\phi(\tau^*,y_{(k)})$ converges
to~$\phi(\tau^*,y)$ under the additional assumption that the
associated simplex sequence is the same for each~$k$.

For illustration purposes, we refer the reader again to
Figure~\ref{fig:finalsemiflow}. In this situation, if we consider
$y^{(k)} \to y_0$, we would separately discuss two subsequences
of points --- one which takes the route taken by the rightmost
solution, and another one which takes the route of the leftmost.
Note that the simplex sequence associated with the limit initial
point is not required to be the same as the sequences for the
solutions starting at~$y^{(k)}$.

Thus, there exists a simplex chain of nested proper faces
\begin{displaymath}
  \sigma^\epsilon_{\max}(x) \supseteq
  \omega_0 \supsetneq \omega_1 \supsetneq \ldots
  \supsetneq \omega_m \supseteq \sigma^\epsilon_{\min}(x)
  ,
\end{displaymath}
and for every $k \in \N$ an associated sequence of times
\begin{displaymath}
  0 = t_0^{(k)} < t_1^{(k)} < \ldots t_m^{(k)} \le
      t_{m+1}^{(k)} = \tau^*
\end{displaymath}
such that for all $\ell = 0,\ldots,m$
\begin{displaymath}
  \phi(t, y^{(k)}) \in C_{\omega_\ell}
  \quad\mbox{ for }\quad
  t_\ell^{(k)} \le t \le t_{\ell+1}^{(k)},
\end{displaymath}
and the points $y_\ell^{(k)} = \phi(t_\ell^{(k)}, y^{(k)})$
for $\ell = 0,\ldots,m-1$ satisfy
\begin{equation} \label{eq:phi-t-ell}
  y_{\ell+1}^{(k)} =
    \phi^{\omega_\ell}(T^{\omega_\ell}(y_\ell^{(k)}), y_\ell^{(k)})
  \quad\mbox{ and }\quad
  t_{\ell+1}^{(k)}=t_{\ell}^{(k)}\;+\;T^{\omega_\ell}(y_\ell^{(k)}) ,
\end{equation}
as guaranteed by Definition~\ref{def:finalsemiflow}.

We will prove by induction in $\ell$ that for every $\ell\in\{1,2,\ldots,m\}$
the limits $t_\ell:=\lim_{k\to\infty}t_\ell^{(k)}$ and
$y_\ell:=\lim_{k\to\infty}y_\ell^{(k)}$ exist, $y_\ell=\phi(t_\ell,y)$
and $\ell<m$ implies
\begin{equation} \label{eq:tl_T}
  t_{\ell+1}=t_\ell+T^{\omega_\ell}(y_\ell).
\end{equation}
Since $t_0^{(k)}=0$ we see that $\lim_{k\to\infty}t_0^{(k)}$ exists
and $t_0=0$. Due to our choice of the sequence~$(y^{(k)})_{k \in \N}$
we know that~$y_0^{(k)} = y^{(k)} \to y$ as $k \to \infty$.
Hence, $y_0=y=\phi(0,y)=\phi(t_0,y)$, which means that our claim
is satisfied for $\ell=0$. Now assume that our claim holds for some
index $\ell \in \{ 0, 1, \ldots, m-1 \}$. In view of $y_\ell^{(k)} \in
C_{\omega_\ell}$ and the closedness of the flow tile, one obtains the
inclusion $y_\ell \in C_{\omega_\ell}$. The continuity of~$T^{\omega_\ell}$
then implies $T^{\omega_\ell}(y_\ell^{(k)}) \to T^{\omega_\ell}(y_\ell)$.
Therefore, property \eqref{eq:phi-t-ell} implies the convergence
of $(t_{\ell+1}^{(k)})_{k\in\N}$ and, consequently, also
property~\eqref{eq:tl_T}. Moreover, the continuity
of~$\phi^{\omega_\ell}$ further yields
\begin{displaymath}
  y_{\ell+1}^{(k)} =
  \phi^{\omega_\ell}(T^{\omega_\ell}(y_\ell^{(k)}),
    y_\ell^{(k)}) \to
  \phi^{\omega_\ell}(T^{\omega_\ell}(y_\ell),
    y_\ell) .
\end{displaymath}
This proves that $(y_{\ell+1}^{(k)}))_{k\in\N}$ is convergent
and we have
\begin{displaymath}
  y_{\ell + 1} = \phi^{\omega_\ell}(T^{\omega_\ell}(y_\ell), y_\ell) =
  \phi^{\omega_\ell}(T^{\omega_\ell}(y_\ell),\phi(t_\ell,y)) =
  \phi(t_\ell+T^{\omega_\ell}(y_\ell),y)=\phi(t_{\ell+1},y),
\end{displaymath}
where the last equality follows from~\eqref{eq:tl_T} and the
definition of~$\phi$. Thus, we have verified our claim
for~$\ell + 1$, which completes our induction argument.
We would like to point out that in this argument it is
certainly possible that $T^{\omega_\ell}(y_\ell) = 0$.
This happens for example in the situation shown in
Figure~\ref{fig:finalsemiflow}.

Now consider the situation when  we finally reach $\ell = m$.
Then according to our setting we have
$T^{\omega_m}(y_m^{(k)}) \ge \tau^* - t_m^{(k)}$, which in
the limit $k \to \infty$ implies $T^{\omega_m}(y_m) \ge \tau^* - t_m$.
This finally gives
\begin{displaymath}
  \phi(\tau^*,y^{(k)}) = \phi^{\omega_m}(\tau^* - t_m^{(k)},
    y_m^{(k)}) \to
  \phi^{\omega_m}(\tau^* - t_m, y_m) ,
\end{displaymath}
as well as
\begin{displaymath}
  \phi^{\omega_m}(\tau^* - t_m, \phi(t_m,y)) =
  \phi(\tau^* - t_m, \phi(t_m,y)) =
  \phi(\tau^*,y) ,
\end{displaymath}
where we also used~(\ref{def:finalsemiflow1}). This establishes
the continuity of~$\phi(\tau^*,\cdot) : U \to V$, and the proof
of the lemma is complete.
\end{proof}
\medskip

After these preparations, we can now prove the main result
of this paper. It shows that the mapping~$\phi$ constructed at
the beginning of this section is indeed a strongly admissible
semiflow, which adheres to our design decisions from the
introduction.
\begin{theorem}[Existence of Strongly Admissible Semiflows]
\label{thm:finalsemiflow}
Let~$\cV$ be a combinatorial vector field on the simplicial
complex~$\cX$, and let $X \subset \R^d$, where $d := \card{\cX_0}$, denote
the underlying polytope of the standard geometric realization of $\cX$.
Furthermore, suppose that we have $\epsilon\in(0, 1/(6d))$
and consider the mapping~$\phi$ introduced in
Definition~\ref{def:finalsemiflow}. Then for every~$x \in X$ the
solution~$\phi(\cdot,x)$ is defined on all of~$\R_0^+$, and
the resulting map $\phi : \R_0^+ \times X \to X$ is continuous.
In addition, we have
\begin{equation} \label{thm:finalsemiflow1}
  \phi(t, \phi(s,x)) = \phi(t+s,x)
  \qquad\mbox{ for all }\qquad
  t,s \in \R_0^+ ,
\end{equation}
and~$\phi$ is strongly admissible in the sense of
Definition~\ref{def:admissibleflow}.
\end{theorem}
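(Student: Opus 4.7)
The plan is to prove the four assertions of the theorem in the order (I) $T_x = +\infty$ for every $x \in X$, (II) continuity of $\phi$ on $\R_0^+ \times X$, (III) the semiflow identity~\eqref{thm:finalsemiflow1}, and (IV) strong admissibility. The backbone for (I) and (II) is a uniform local statement which is essentially already proved: every $x \in X$ either lies in the interior of a unique flow tile, in which case Lemma~\ref{lem:localflowint} applies, or lies on the boundary of at least two flow tiles, in which case Lemma~\ref{lem:localflowbnd} applies (these two cases are exhaustive because the flow tiles cover $X$). Thus for every $x \in X$ there is an open neighborhood $U \subset X$ of $x$ and a time $\tau > 0$ such that $T_y \ge \tau$ for all $y \in U$ and $\phi$ is continuous on $[0,\tau]\times U$.

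For (I), I would argue by contradiction. Suppose $T_x < +\infty$ for some $x$. Pick $t_n \uparrow T_x$; by compactness of $X$ a subsequence of $\phi(t_n,x)$ converges to some $y \in X$. Applying the local statement at $y$ yields a neighborhood $U$ of $y$ and $\tau > 0$ with $T_z \ge \tau$ for every $z \in U$. For $n$ large, $\phi(t_n,x) \in U$ and $T_x - t_n < \tau$, and the semiflow identity~\eqref{def:finalsemiflow1} then extends $\phi(\cdot,x)$ past $T_x$, a contradiction. For (II), I would fix $(t_0,x_0)$, cover the compact arc $\{\phi(s,x_0)\;:\;0\le s\le t_0\}$ by finitely many neighborhoods $U_1,\ldots,U_N$ from the local statement with uniform time $\tau_* > 0$, and chain the local continuities through~\eqref{def:finalsemiflow1} a finite number of times to conclude continuity at $(t_0,x_0)$. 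Part (III) is then immediate: \eqref{def:finalsemiflow1} combined with $T_x = +\infty$ from (I) gives the semiflow identity on all of $\R_0^+\times X$.

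For (IV) I would split the verification into two ingredients. The transversality condition~\eqref{def:admissibleflow1} at a point $x$ in at least two flow tiles holds in forward time directly by Definition~\ref{def:finalsemiflow}, which at each boundary crossing selects $\omega_{k+1} = \sigma^\epsilon_{\min}(x_{k+1})$, so the solution enters $\cse{\sigma^\epsilon_{\min}(x)}$ immediately after~$x$. For the backward part, Corollary~\ref{cor:vectorfieldbound} together with Proposition~\ref{prop:vectorfieldbound} forces $\dot\gamma_v < 0$ for every vertex $v \in \sigma^\epsilon_{\max}(x)\setminus\sigma^\epsilon_{\min}(x)$ along any solution $\gamma$ with $\gamma(0)=x$ just prior to $t = 0$, so $\gamma(t) \in \cse{\sigma^\epsilon_{\max}(x)}$ for small $t<0$. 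The remaining strong admissibility requirement, that no solution stays inside an arrow flow tile in either time direction, follows from Proposition~\ref{prop:arrowtileexit} applied to the local semiflow $\phi^\omega$, since $\phi$ coincides with $\phi^\omega$ inside $C_\omega$ until the exit time.

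The expected main obstacle is the book-keeping in (II): at a boundary point, perturbed initial conditions may traverse different combinatorial chains of faces before recombining, and one must rule out that these alternative routes produce cumulative discontinuities as the initial condition is varied. The subsequence argument already used in the proof of Lemma~\ref{lem:localflowbnd}, which partitions a convergent sequence of initial data into finitely many subsequences that all follow one fixed combinatorial chain and then takes limits along each, is exactly what is needed; it will have to be iterated finitely many times along the finite cover of the arc $\{\phi(s,x_0)\;:\;0\le s\le t_0\}$, but no genuinely new idea beyond compactness and the two local lemmas is required.
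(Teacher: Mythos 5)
Your proposal is correct and follows essentially the same route as the paper: both reduce to the two local statements (Lemmas~\ref{lem:localflowint} and~\ref{lem:localflowbnd}), invoke compactness of~$X$, derive the group property from~\eqref{def:finalsemiflow1}, and deduce strong admissibility from Corollary~\ref{cor:vectorfieldbound} together with Proposition~\ref{prop:arrowtileexit}. The only difference is bookkeeping: the paper extracts one uniform time $\tau>0$ from a finite subcover of $X$, so that both $T_x=\infty$ (by iterating the continuous time-$\tau$ map) and continuity of $\phi$ on $\R_0^+\times X$ follow at once, whereas you obtain $T_x=\infty$ by a subsequence/contradiction argument and continuity by chaining along the compact orbit segment $\{\phi(s,x_0)\,:\,0\le s\le t_0\}$ — equivalent in substance, just slightly less streamlined.
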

\begin{proof}
Let $x \in X$ be arbitrary. According to Lemmas~\ref{lem:localflowint}
and~\ref{lem:localflowbnd} there exists an open neighborhood~$U_x$
of~$x$ and a positive time~$\tau_x > 0$ such that for every $y \in U_x$
the mapping~$\phi(\cdot,y)$ is defined at least on~$[0,\tau_x]$, and
$\phi : [0,\tau_x] \times U_x \to X$ is continuous. Since~$X$ is compact,
we can find a finite set $\{x_1,x_2,\ldots,x_k\}\subset X$ such that
\begin{displaymath}
  X \subset U_{x_1} \cup \ldots \cup U_{x_n} .
\end{displaymath}
If we now let
\begin{displaymath}
  \tau := \min\left\{ \tau_{x_1},  \tau_{x_2},\ldots,  \tau_{x_n}
    \right\},
\end{displaymath}
then clearly $\tau>0$ and one can easily see that for every~$x \in X$ the
mapping~$\phi(\cdot,x)$
is defined at least on~$[0,\tau]$. Iteration of this map now implies that
in fact~$T_x = +\infty$ for all $x \in X$. The continuity of the resulting
map~$\phi : \R_0^+ \times X \to X$ is a direct consequence of the continuity of
the restricted maps, and the semiflow property~(\ref{thm:finalsemiflow1}) follows
from~(\ref{def:finalsemiflow1}). Finally, the strong admissibility of the
semiflow~$\phi$ is a direct consequence of Corollary~\ref{cor:vectorfieldbound}
and Proposition~\ref{prop:arrowtileexit}. This completes the proof of the
theorem.
\end{proof}
%
%
%

%
%
\bibliography{flowfill}
\bibliographystyle{abbrv}
\end{document}